\numberwithin{equation}{section}
\newenvironment{sis}{\left\{\begin{aligned}}{\end{aligned}\right.}
\newcommand{\M}{\mathcal{M}}
\renewcommand{\O}{\mathcal{O}}
\renewcommand{\H}{\mathcal{H}}
\newcommand{\wt}{\widetilde}
\newcommand{\ov}{\overline}
\newcommand{\un}{\underline}
\newcommand{\E}{\mathcal{E}}
\newcommand{\Q}{\mathbb{Q}}
\newcommand{\R}{\mathbb{R}}
\renewcommand{\P}{\mathbb{P}}
\newcommand{\Z}{\mathbb{Z}}
\newcommand{\N}{\mathbb{N}}
\renewcommand{\O}{\mathcal{O}}
\newcommand{\bV}{\mathbb{V}}
\newcommand{\X}{\mathcal{X}}
\newcommand{\bT}{\mathbb{T}}
\newcommand{\bF}{\mathbb{F}}
\DeclareMathOperator{\NS}{NS}
\DeclareMathOperator{\Proj}{Proj}
\DeclareMathOperator{\Aut}{Aut}
\DeclareMathOperator{\Sym}{Sym}
\DeclareMathOperator{\ev}{ev}
\renewcommand{\char}{\text{char}}
\renewcommand{\Im}{\text{Im}}
\DeclareMathOperator{\supp}{supp}
\DeclareMathOperator{\lcm}{lcm}
\DeclareMathOperator{\rk}{rk}
\DeclareMathOperator{\codim}{codim}
\DeclareMathOperator{\BS}{BS}
\DeclareMathOperator{\Nef}{Nef}
\theoremstyle{plain}
\newtheorem{theorem}{Theorem}[section]
\newtheorem{corollary}[theorem]{Corollary}
\newtheorem{lemma}[theorem]{Lemma}
\newtheorem{proposition}[theorem]{Proposition}
\newtheorem{theoremalpha}{Theorem}
\theoremstyle{definition}
\newtheorem{definition}[theorem]{Definition}
\newtheorem{question}[theorem]{Question}
\newtheorem{example}[theorem]{Example}
\newtheorem{remark}[theorem]{Remark}
\newtheorem{setup}[theorem]{Setup}
\title{Slope inequalities for KSB-stable and K-stable families}
\author{Giulio Codogni}
\address{Dipartimento di Matematica, Universit\`a degli Studi di Roma Tor Vergata, Via della Ricerca Scientifica, 00133 Roma, Italy.}
\email{codogni@mat.uniroma2.it}
\author{Luca Tasin}
\address{Dipartimento di Matematica F.\ Enriques, Universit\`a degli Studi di Milano, Via Cesare Saldini 50, 20133 Milano, Italy} 
\email{luca.tasin@unimi.it}
\author{Filippo Viviani}
\address{Dipartimento di Matematica e Fisica, Universit\`{a} degli Studi Roma Tre,  Largo San Leonardo Murialdo, 00146 Rome, Italy} 
\email{viviani@mat.uniroma3.it}
\subjclass[2020]{14J10, 14D06}
\keywords{Slope inequality, Noether's inequality, Castelnuovo's inequality, Harder-Narasimhan filtration, K-stability, KSB-stability, moduli spaces}
\begin{document}

\maketitle

\begin{abstract}
We prove some higher dimensional generalisations of the slope inequality originally due to G. Xiao, and to M. Cornalba and J. Harris. We give applications to families of KSB-stable and K-stable pairs, as well as to the study of the ample cone of the moduli space of KSB-stable varieties. Our proofs relies on the study of the Harder-Narasimhan filtration, and some generalisations of Castelnuovo's and Noether's inequalities.
\end{abstract}

\section*{Introduction}

The first slope inequality was proven at the same time by G. Xiao in \cite{Xiao} and by M. Cornalba and J. Harris in \cite{CH}. They were looking at a non-constant morphism $f\colon S\to T$ from a smooth projective minimal surface to a smooth projective irreducible curve, whose general fibre has genus $g$ at least $2$. They showed that
\begin{equation}\label{E:motivation}
K_{S/T}^2 \geq 4\frac{g-1}{g}  \deg f_*\O_S(K_{S/T}) \,.
\end{equation}
Since then, the name \emph{slope inequalities} has been used for inequalities of the form
$$ L^{n+1}\geq C \deg f_*\O_X(L)$$
where $f\colon (X,L)\to T$ is a polarized family  over a projective curve satisfying convenient hypotheses, and $C$ is a constant which depends just on the general fiber of $f$. 

In the present work, we prove some new slope inequalities and we give some applications to the study of the ample cone of moduli spaces. Before presenting our main results, let us comment on the motivations and techniques used in the above mentioned works \cite{Xiao} and \cite{CH}, and that served as inspiration for this paper. 

The two original papers about the slope inequality \eqref{E:motivation} had rather different motivations and proofs. M. Cornalba and J. Harris were interested in showing the ampleness of some natural line bundles on the moduli space of stable curves (so interpreted $f$ as a family of curves), a program that was completed in \cite{Cproj} (see also \cite{Alp} for some modern developments).  They deduced their result from the GIT stability of the fiber of $f$, by reducing the slope inequality to the non-negativity of a Hilbert-Mumford weight. Let us stress that pluricanonical smooth curves are GIT stable and that the moduli space of stable curves can be constructed using GIT on the Hilbert or Chow scheme of pluricanonical curves. From  \cite{CH}, we retain the motivation, i.e.  applying slope inequalities to produce ample line bundle on moduli spaces, and the idea that slope inequalities should hold under the same stability assumption used to construct moduli spaces.

G. Xiao's goal was to understand the geometry of surfaces fibred over a curve. From his work, we retain the scheme of proof, which we now briefly recall. He starts off considering the Harder-Narasimhan filtration $\{\E_{\bullet}\}$ of the push-forward $f_*\O_S(K_{S/T})$, and bounds the degree of $K_{S/T}^2$ using the slopes of the filtration. As  $f_*\O_S(K_{S/T})$ is nef, all the slopes are non-negative, and he uses this non-negativity to handle the above mentioned bound. Restricting one $\E_i$ to a fiber, Xiao obtains a linear subsystem of the canonical linear system. He applies Clifford's theorem to this linear system to compare its degree with its rank, and ultimately get the desired slope inequality. Among the novelties that we introduce in this paper, in Section \ref{Sec:Noether} we prove various generalisations of Noether and Castelnuovo inequalities which are then used in place of Clifford's theorem. 

We now present the main results that we obtain in this paper, and that can be divided in three categories: slope inequalities for families of KSB-stable (canonically polarized) pairs, slope inequalities for families of K-stable (log Fano) pairs and slope inequalities for arbitrary polarized families.

\subsection*{Slope inequalities for KSB-stable families}

In this subsection, we collect the slope inequalities that we prove on families of KSB-stable pairs, i.e. pairs with slc singularities and ample log-canonical divisor. 

Our first result is a general existence result, which says that the slope of KSB-stable families is bounded away from zero by a constant that only depends on the relative dimension and the coefficient set of the boundary.

\begin{theoremalpha}[Existence of slope inequalities]\label{T:exis-slope}
	Fix an integer $n\geq 1$ and a subset $I$ of $[0,1]$ satisfying the DCC (=descending chain condition).  Then there exists a  constant $s(n,I)>0$ such that
	$$(K_{X/T}+\Delta)^{n+1}\geq s(n,I)\deg f_*\O_X(K_{X/T}+\Delta) $$
	for every  KSB-stable family $f\colon (X,\Delta)\to T$  over a smooth, irreducible, projective curve $T$ such that the relative dimension of $f$ is $n$  and the coefficients of $\Delta$ belong to $I$. 
\end{theoremalpha}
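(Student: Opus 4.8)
The plan is to run G.~Xiao's scheme in the logarithmic, higher-dimensional setting, feeding it with the Noether--Castelnuovo type inequalities of Section~\ref{Sec:Noether} and extracting uniformity from the DCC property of $I$. Write $L := K_{X/T}+\D$ and $\E := f_*\O_X(L)$, a vector bundle of rank $r = h^0(X_t, K_{X_t}+\D_t)$ on the curve $T$. By the semipositivity of pushforwards of relative log-canonical sheaves, $\E$ is nef, so every slope of its Harder--Narasimhan filtration $0 = \E_0 \subset \E_1 \subset \dots \subset \E_\ell = \E$ is non-negative; denote by $\mu_1 > \dots > \mu_\ell \geq 0$ these slopes and by $r_i = \rk \E_i$ the ranks.

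Each saturated piece $\E_i \subseteq f_*\O_X(L)$ cuts out a sub-linear-system of $|L|$ whose moving part, after a suitable resolution, is a nef class $Z_i$ on $X$; let $N_i$ be the self-intersection of its restriction to a general fibre. First I would establish a higher-dimensional Xiao inequality of the shape $L^{n+1} \geq \sum_{i=1}^{\ell} (\mu_i - \mu_{i+1}) N_i$ (with $\mu_{\ell+1} := 0$), in which every summand is non-negative by nefness, while the degree reads $\deg \E = \sum_{i=1}^{\ell} (\mu_i - \mu_{i+1}) r_i$. Since the two expressions are weighted averages against the same non-negative weights $\mu_i - \mu_{i+1}$, the mediant inequality reduces the statement to a fibrewise bound $N_i \geq s(n,I)\, r_i$, i.e.\ to comparing the self-intersection of the moving part of a sub-linear-system with its dimension on the KSB-stable fibre $(X_t,\D_t)$.

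This fibrewise comparison is exactly where the generalizations of Noether's and Castelnuovo's inequalities from Section~\ref{Sec:Noether} enter, playing the role that Clifford's theorem plays in the surface case: applied to the sub-system defining $Z_i$ on $(X_t, \D_t)$, they should yield a linear lower bound $N_i \geq a(n)\, r_i - b(n,I)$, which already gives $N_i/r_i \geq a(n)/2$ once $r_i$ exceeds a threshold $R_0 = R_0(n,I)$. For the remaining bounded-rank pieces I would invoke the DCC of the set of volumes of $n$-dimensional KSB-stable pairs with coefficients in $I$ (Hacon--McKernan--Xu), which provides a minimal volume $v_0(n,I)>0$, together with the boundedness of such pairs once the volume is bounded; this turns the finitely many low-rank cases into a compactness statement and produces the last positive constant. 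Taking the minimum of the finitely many constants so obtained yields $s(n,I) > 0$.

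The hard part will be the fibrewise inequality for the low-rank, possibly degenerate, pieces of the filtration: when $\E_i$ has small rank the associated map need not be birational and its base locus may be large, so naive estimates such as $\deg \geq \codim + 1$ for the image break down and can even vanish. Controlling the moving part in this regime on singular slc pairs, \emph{uniformly} over the unbounded family of all volumes, is the crux, and is precisely what forces the combination of the refined Noether--Castelnuovo estimates with the DCC and boundedness input, rather than a single clean intersection-theoretic inequality.
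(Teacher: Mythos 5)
Your overall architecture is the right one --- Harder--Narasimhan filtration, a Xiao-type intersection bound $L^{n+1}\geq\sum_i(\mu_i-\mu_{i+1})N_i$ against $\deg\E=\sum_i(\mu_i-\mu_{i+1})r_i$, a fibrewise Noether-type estimate, and Hacon--McKernan--Xu as the source of uniformity --- and you correctly locate the difficulty in the degenerate pieces of the filtration. But your proposed resolution of that difficulty fails. With $N_i:=P_i^n$ (the top self-intersection on a general fibre of the moving part of the sub-system cut out by $\E_i$), the bound $N_i\geq a(n)r_i-b(n,I)$ is simply false: whenever $\phi_{P_i}$ has image of dimension $k<n$ one has $P_i^n=0$, while $r_i\leq h^0(\wt F,P_i)$ can be arbitrarily large (a sub-system composed with a pencil of large degree already does this), so no linear lower bound with positive leading coefficient can hold. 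The dichotomy ``large rank handled by Noether, finitely many low-rank cases handled by compactness'' therefore does not capture the problem, since the degenerate pieces need not have bounded rank. Nor does DCC-of-volumes plus boundedness repair it: the families in Theorem \ref{T:exis-slope} have unbounded volume (only a positive lower bound is available), and in any case a lower bound on $\mathrm{vol}(K_F+\Delta_F)$ says nothing about the self-intersection of the moving part of a degenerate sub-linear-system.

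The paper's actual mechanism is different at exactly this point. It never bounds $P_i^n$ from below; instead, Theorem \ref{T:Barja} groups the indices $j$ according to $i=\dim\phi_{P_j}(\wt F)$ and estimates \emph{mixed} products $P_j^{i}\cdot P_{m_{i+1}}\cdots P_{m_n}$, where the remaining slots are filled by divisors with strictly increasing image dimensions and ultimately by $q\mu_{\wt F}^*(L_F)$; Propositions \ref{P:Noether1} and \ref{P:Noether1bis} are tailored to such products and yield $(i+1)P_j^{i}\cdot(\cdots)\geq r_j/q^n$. The only hypothesis needed is that $\phi_{qL_F}$ be generically finite for a \emph{uniform} $q$, and this --- not the DCC of volumes --- is what \cite{ACC} supplies: a single $b(n,I)$ such that $b(n,I)(K_Z+\Delta_Z)$ is birational for all lc pairs of dimension $n$ with coefficients in $I$ and big log-canonical class, whence $s(n,I)=b(n,I)^{-n}$ via Proposition \ref{P:slopepair}. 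Two further steps you omit are needed to make this legitimate for KSB-stable families: the total space is only demi-normal and the general fibre only slc, so one must first pass to the normalization and compare $(K_{X/T}+\Delta)^{n+1}$ and $\deg f_*\O_X(K_{X/T}+\Delta)$ with the corresponding quantities on the normalized components (using Fujino's nefness results, Theorem \ref{T:Fuj} and Corollary \ref{C:Fuj}, to control the degree under the inclusion of pushforwards), and one must add the coefficient $1$ to $I$ to account for the conductor divisors appearing on the normalization.
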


The above Theorem \ref{T:exis-slope}  is a special case of Corollary \ref{C:existence} where the same result is proved more generally for generic slc families (i.e. such that $K_{X/T}+\Delta$ is $\Q$-Cartier and the general fiber $(F,\Delta_F)$ is slc, see set-up \ref{N:KSBgen})  such that $K_{X/T}+\Delta$ is $f$-semiample and $f$-big. 
Let us stress that, even though $n$ and $I$ are fixed, the volume of the fibers of the families are not fixed, hence the fibers of the families considered in the statement vary in an unbounded set. 
As explained in the proof of Corollary \ref{C:existence}, the constant $s(n,I)$ can be taken equal to 
$$
s(n,I)=\frac{1}{b(n,I)^n},
$$
where $b(n,I)>0$ is the constant, whose existence is guaranteed by Hacon-McKernen-Xu \cite{ACC}, such that $b(n,I)(K_Z+\Delta_Z)$ gives a birational map for all lc pairs $(Z,\Delta_Z)$ such that the dimension of $Z$ is $n$, the coefficients of $\Delta_Z$ belong to $I$, and $K_Z+\Delta_Z$ is big.
Example \ref{Ex:Double} shows that the constant $s(n,\emptyset)$  decays at least double exponentially in $n$. 

The second result provides some explicit lower bounds on the slope of KSB-stable families, depending on the geometry of the family (such as the volume of the irreducible components or the Cartier index of the general fiber).

\begin{theoremalpha}\label{T:KSB-fam}
Let $f\colon (X,\Delta)\to T$ be a KSB-stable family over a a smooth, irreducible, projective curve $T$ and denote by $(F, \Delta_F)$ the general fiber of $f$. 
\begin{enumerate}
\item \label{TT:KSB-fam1}
Assume that there exists  $m \in \N_{>0}$ such that  at least one of the following conditions hold true 
\begin{itemize}
\item $m(K_{X/T}+\Delta)$  is Cartier and $f$-globally generated;
\item $\Delta$ is a reduced Weil divisor and $m(K_{F}+\Delta_F)$  is Cartier and globally generated.
\end{itemize}
Let $w\in \Q_{>0}$ such that the volume of the pull-back of $K_F+\Delta_F$ to any irreducible component of the normalisation of $F$ is at least $w$. Then
$$ m^{n+1}(K_{X/T}+\Delta)^{n+1}\geq \frac{2wm^n}{wm^n+n}\deg \left(f_*\mathcal{O}_X(m(K_{X/T}+\Delta))\right).$$

\item \label{T:KSB-fam2} 
Fix $m \in \N_{>0}$ such that  $m(K_{X/T}+\Delta)$  is Cartier and $f$-globally generated. Then 
$$
m^{n+1}(K_{X/T}+ \Delta)^{n+1} \ge \deg f_*\O_X(m(K_{X/T}+\Delta)).
$$

\item \label{T:KSB-fam3} 
Assume that  $\Delta$ is a reduced Weil divisor and let $m,q\in \N_{>0}$ such that at least one of the following conditions holds true 
\begin{itemize}
\item $\phi_{mq(K_{F} + \Delta_F)}$ is generically finite;
\item $mq(K_{F}+\Delta_F)$  is Cartier.
\end{itemize} 
Then 
$$
m^{n+1}(K_{X/T}+ \Delta)^{n+1}  \geq \frac{\deg f_*\O_X(m(K_{X/T}+\Delta))}{q^n}.
$$ 

\item \label{T:KSB-fam4}  Assume that $K_{X/T} + \Delta$ is nef and let $q\in \N_{>0}$ such that either $q(K_F+\Delta_F)$ is Cartier  or $\phi_{q(K_F+\Delta_F)}$ is generically finite. Then
$$(K_{X/T}+\Delta)^{n+1}\geq \frac{\deg\left(f_*\mathcal{O}_X(K_{X/T}+\Delta)\right)}{q^n}.$$
\end{enumerate}
\end{theoremalpha}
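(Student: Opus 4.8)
The four items run on a single engine --- Xiao's method through the Harder--Narasimhan filtration --- and differ only in which of the generalized Noether and Castelnuovo inequalities of Section~\ref{Sec:Noether} is fed into it. The plan is therefore to isolate one master slope inequality and then to specialize. I would first reduce each statement to an inequality of the form $L^{n+1}\geq c\,\deg f_*\O_X(L)$ for a suitable integral divisor $L$: in \ref{TT:KSB-fam1}, \ref{T:KSB-fam2} and \ref{T:KSB-fam3} I take $L=m(K_{X/T}+\Delta)$, which is an integral Weil divisor (and, in the globally generated cases, an $f$-free Cartier divisor), whereas in \ref{T:KSB-fam4} I keep $L=K_{X/T}+\Delta$ as a nef $\Q$-Cartier divisor. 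After replacing $X$ by a resolution on which the rational map attached to $L$ (or to a semiample approximation of it, in the merely nef case \ref{T:KSB-fam4}) becomes a morphism, I set $E:=f_*\O_X(L)$, which is a vector bundle since $T$ is a smooth curve. The one external positivity input I need is that $E$ is nef, i.e. that all Harder--Narasimhan slopes $\mu_1>\dots>\mu_\ell$ are $\geq 0$; for KSB-stable (and more generally slc) families this is the positivity of pushforwards of relative log-pluricanonical sheaves, which I invoke as a black box. Writing $r_i:=\rk E_i$ and $\mu_{\ell+1}:=0$, Abel summation gives $\deg E=\sum_{i=1}^{\ell}(\mu_i-\mu_{i+1})\,r_i$.

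The heart of the argument is the relative form of Xiao's inequality. For each filtration step, the image of $E_i$ under the evaluation $f^*E_i\to\O_X(L)$ cuts out a linear subsystem of $|L|_F|$ on a general fibre $F$, whose moving part defines a nef divisor $N_i$ on the common resolution, and one obtains a bound of the shape
\[ L^{n+1}\ \geq\ (n+1)\sum_{i=1}^{\ell}(\mu_i-\mu_{i+1})\,(N_i|_F)^n , \]
the factor $n+1$ being already visible on a product $F\times T$ with $L$ a box product, where the inequality is an equality. Comparing with the formula for $\deg E$, it then suffices to bound $(N_i|_F)^n$ below by a linear function of $r_i$, and this is exactly the role of Section~\ref{Sec:Noether}.

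Specializing proceeds as follows. Inserting the generalized minimal-degree (Castelnuovo) bound $(N_i|_F)^n\geq r_i-n$ on the steps where the associated map is generically finite, and optimizing the ratio $(n+1)(r_i-n)/r_i$ (whose minimum over $r_i\geq n+1$ equals $1$), yields the constant $1$ of \ref{T:KSB-fam2}. The Cartier-index factor $q^{-n}$ of \ref{T:KSB-fam3} and \ref{T:KSB-fam4} then appears because these volume estimates are measured by a $\Q$-Cartier divisor of index $q$: when $mq(K_F+\Delta_F)$ is Cartier and big its top self-intersection is a positive integer, so $(m(K_F+\Delta_F))^n\geq q^{-n}$, and the generically finite alternative is handled the same way. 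For \ref{TT:KSB-fam1} I would instead combine a Noether-type linear estimate on $(N_i|_F)^n$ with the a priori volume floor $(N_i|_F)^n\geq wm^n$ coming from the hypothesis on the volumes of the components of the normalization of $F$, and optimize the resulting two-constraint problem; the balance of the Noether slope against this floor is what is calibrated to produce the constant $\tfrac{2wm^n}{wm^n+n}$.

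The main obstacle, and where the real work lies, is twofold. First, the fibres of a KSB-stable family are singular and possibly non-normal, so the Noether and Castelnuovo inequalities must be established in that generality (the purpose of Section~\ref{Sec:Noether}), and the passage to the moving part $N_i$ on the resolution must be carried out without losing volume. Second, and more delicately, the master inequality is only informative on the steps whose associated map is generically finite; on the remaining low-rank steps $(N_i|_F)^n$ may vanish while the corresponding term $(\mu_i-\mu_{i+1})r_i$ of $\deg E$ does not, so one must either discard those steps at a controlled cost or replace $(N_i|_F)^n$ by a lower-dimensional Clifford-type volume, and then verify that the telescoping still delivers the advertised constants uniformly in the geometry of the fibration. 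Reconciling this dichotomy with the $\Q$-Cartier bookkeeping underlying \ref{T:KSB-fam3} and \ref{T:KSB-fam4} is, I expect, the crux of the proof.
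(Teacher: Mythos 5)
Your engine (Harder--Narasimhan filtration, Xiao-type numerical inequalities, Noether/Castelnuovo bounds on the fibre, Fujino's semipositivity as the external input) is the paper's engine, but two steps in your outline are genuinely missing or would fail as written. First, the total space $X$ of a KSB-stable family is only demi-normal, while the whole HN/Xiao machinery (set-up \ref{setup}) requires $X$ normal and irreducible; you cannot just ``replace $X$ by a resolution.'' The paper first passes to the normalization $\nu\colon Y\to X$, which splits $f$ into finitely many generic lc families $g_i\colon (Y_i,\Delta_i)\to T$, uses $(K_{X/T}+\Delta)^{n+1}=\sum_i(K_{Y_i/T}+\Delta_i)^{n+1}$, and --- crucially --- proves $\deg f_*\O_X(m(K_{X/T}+\Delta))\le \sum_i \deg g_{i*}\O_{Y_i}(m(K_{Y_i/T}+\Delta_i))$ by exhibiting an injection whose cokernel is nef (hence of non-negative degree) by Fujino; only then is the slope inequality applied componentwise (Theorem \ref{T:Nef-Fam} reducing to Proposition \ref{P:slopepair}). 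This is also where $w$ in part \eqref{TT:KSB-fam1} actually enters: the constant produced on each component is an increasing function of that component's fibre volume $v_i$, and $w$ is a uniform lower bound for the $v_i$ --- not a ``floor on $(N_i|_F)^n$'' inside the HN argument as you use it. Your alternative, establishing the Noether/Castelnuovo inequalities for non-normal fibres, is precisely what the paper avoids: Section \ref{Sec:Noether} only treats normal (after reduction, smooth) irreducible varieties.

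Second, your master inequality $L^{n+1}\ge(n+1)\sum(\mu_i-\mu_{i+1})(N_i|_F)^n$ is a weakening of Corollary \ref{C:spec-ineq} that is vacuous exactly on the steps where $\phi_{P_i}$ is not generically finite, since there $(N_i|_F)^n=0$ while $r_i(\mu_i-\mu_{i+1})$ need not be. You correctly identify this as the crux but do not resolve it; the paper's resolution (proof of Theorem \ref{T:Barja}) is to partition the indices by $d_j=\dim\phi_{P_j}(\wt F)$, replace $P_j^n$ by the mixed product $P_j^{d_j}\cdot P_{m_{d_j+1}}\cdots P_{m_n}$ completed by pullbacks of $qL_F$ (which is where the factor $q^{-n}$ really comes from), and bound it below via the mixed Noether inequalities (Propositions \ref{P:Noether1} and \ref{P:Noether1bis}) together with $(i+1)(h^0(\wt F,P_j)-i)\ge h^0(\wt F,P_j)\ge r_j$. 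Likewise, the constant in part \eqref{TT:KSB-fam1} does not come from optimizing a two-constraint problem on fibre volumes but from playing the telescoped Noether bound $M_\ell^{n+1}\ge 2\deg f_*\O_X(L)-n(\mu_1+\mu_p)$ against the direct bound $M_\ell^{n+1}\ge P_\ell^n(\mu_1+\mu_p)$ and splitting into cases according to the size of $\mu_1+\mu_p$ (Theorem \ref{T:Xiao-higher}). Finally, you never address the nefness of $L$ itself (as opposed to $f_*\O_X(L)$), which is needed both to pass from $M_\ell^{n+1}$ to $L^{n+1}$ and to take $N_{\ell+1}=\mu^*L$ in Theorem \ref{T:Barja}; for these families it follows from Corollary \ref{C:Fuj}.
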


The above Theorem \ref{T:KSB-fam} is a special case of Theorem \ref{T:Nef-Fam} which proves similar slope inequalities for generic slc families (i.e. such that $X$ is deminormal, $K_{X/T}+\Delta$ is $\Q$-Cartier and the general fiber $(F,\Delta_F)$ is slc, see set-up \ref{N:KSBgen}). Theorem \ref{T:Nef-Fam} is reduced, by taking the normalisation of $X$, to Proposition \ref{P:slopepair} in which we prove similar slope inequalities for generic lc families  $f\colon (X,\Delta)\to T$
(i.e. such that $X$ is normal, $K_{X/T}+\Delta$ is $\Q$-Cartier and the general fiber $(F,\Delta_F)$ is lc, see set-up \ref{N:KSBgen}). 
We use the condition of being generic lc family through the results of O. Fujino \cite{Fujino} (see also Theorem \ref{T:Fuj} and Corollary \ref{C:Fuj}), which guarantee the nefness of the relative log-canonical bundle and of the push-forwards of pluri-log-canonical bundles. 

As an  application of our slope inequalities for KSB-stable families, we can describe a portion of the ample cone  of the proper DM-stack $\M_{n,v}$ (which, by definition, is equal to the ample cone of its projective coarse moduli space $M_{n,v}$) of $n$-dimensional KSB varieties of volume $v$. We denote by $\lambda_{CM}$ the Chow-Mumford $\Q$-divisor, which is ample by \cite{PX},  and by $\lambda_m$ the $m$-th determinant $\Q$-divisor, which are nef for any $m$ big and divisible enough by \cite{Fujino} (the definitions are recalled in Section \ref{sec:KSBmoduli}). The following result describes infinitely many $2$-dimensional subcones of the ample cone of  $\M_{n,v}$.

\begin{theoremalpha}[Ample cone of KSB moduli spaces]\label{T:ampleKSB}
Fix $n\in \N_{>0}$ and $v\in \Q_{>0}$.
\begin{enumerate}
\item \label{T:ampleKSB1}
Consider a positive integer $m$  such that $mK_V$ is Cartier and globally generated for any $V\in \M_{n,v}$ and let $w\in \Q_{>0}$ such that the volume of the pull-back of $K_F$ to any irreducible component of the normalisation of $V$ is at least $w$. Then the $\Q$-divisor
$$
\lambda_{CM}-\varepsilon \lambda_m
$$
is ample on $\M_{n,v}$ for every rational number $\varepsilon$ in $\left[0,\frac{1}{m^{n+1}}\frac{2wm^n}{wm^n+n}\right)$.
\item \label{T:ampleKSB2}
Consider two positive integers $m$ and $q$ such that, for every $V\in \M_{n,v}$, either $mqK_V$ is Cartier or $\phi_{mqK_V}$ is generically finite. Then the $\Q$-divisor
$$
\lambda_{CM}-\varepsilon \lambda_m
$$
is ample on $\M_{n,v}$ for every rational number $\varepsilon$ in $\left[0,\frac{1}{q^{n}m^{n+1}}\right)$.
\end{enumerate}
\end{theoremalpha}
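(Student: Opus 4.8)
The plan is to derive the ampleness of $\lambda_{CM}-\varepsilon\lambda_m$ from the slope inequalities of Theorem~\ref{T:KSB-fam} by a Kleiman-type cone argument, exploiting that $\lambda_{CM}$ is already ample by \cite{PX}. Denote by $\varepsilon_{\max}$ the right endpoint of the allowed interval, i.e. $\varepsilon_{\max}=\frac{1}{m^{n+1}}\frac{2wm^n}{wm^n+n}$ in part~(\ref{T:ampleKSB1}) and $\varepsilon_{\max}=\frac{1}{q^{n}m^{n+1}}$ in part~(\ref{T:ampleKSB2}); note $\varepsilon_{\max}\in\Q_{>0}$. I would first reduce everything to the single claim that the $\Q$-divisor $\lambda_{CM}-\varepsilon_{\max}\lambda_m$ is \emph{nef} on $M_{n,v}$. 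Indeed, granting this, for any rational $\varepsilon\in[0,\varepsilon_{\max})$ put $\delta=\varepsilon/\varepsilon_{\max}\in[0,1)\cap\Q$ and write
\[
\lambda_{CM}-\varepsilon\lambda_m=(1-\delta)\,\lambda_{CM}+\delta\,(\lambda_{CM}-\varepsilon_{\max}\lambda_m),
\]
which is the sum of the ample $\Q$-divisor $(1-\delta)\lambda_{CM}$ and the nef $\Q$-divisor $\delta(\lambda_{CM}-\varepsilon_{\max}\lambda_m)$, hence ample.

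To prove nefness I would test $\lambda_{CM}-\varepsilon_{\max}\lambda_m$ against an arbitrary complete curve $C$ of the projective variety $M_{n,v}$. Using the properness of $\M_{n,v}$ and its moduli property, after normalizing $C$ and passing to a suitable finite cover $\wt T$ the curve is induced by a KSB-stable family $f\colon X\to\wt T$ over a smooth projective curve. Since intersection numbers and degrees of line bundles all scale by the (positive) degree of the base change, and nefness can be tested after a finite surjective base change, it suffices to check the inequality on $\wt T$. By the definitions recalled in Section~\ref{sec:KSBmoduli} one has $\deg(\lambda_{CM}|_{\wt T})=K_{X/\wt T}^{\,n+1}$ and $\deg(\lambda_m|_{\wt T})=\deg f_*\O_X(mK_{X/\wt T})$.

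It then remains to invoke the appropriate slope inequality for $f$ (note $\Delta=0$ here, which is a reduced Weil divisor). In part~(\ref{T:ampleKSB1}) the hypothesis that $mK_V$ is Cartier and globally generated for every $V\in\M_{n,v}$ applies to the general fiber, placing us in Theorem~\ref{T:KSB-fam}(\ref{TT:KSB-fam1}) with the given $w$; this gives $m^{n+1}K_{X/\wt T}^{\,n+1}\geq\frac{2wm^n}{wm^n+n}\deg f_*\O_X(mK_{X/\wt T})$. In part~(\ref{T:ampleKSB2}) the assumption that $mqK_V$ is Cartier or $\phi_{mqK_V}$ is generically finite places us in Theorem~\ref{T:KSB-fam}(\ref{T:KSB-fam3}), giving $m^{n+1}K_{X/\wt T}^{\,n+1}\geq q^{-n}\deg f_*\O_X(mK_{X/\wt T})$. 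In both cases, dividing by $m^{n+1}$ and using the two degree identities gives precisely $\deg(\lambda_{CM}|_{\wt T})\geq\varepsilon_{\max}\deg(\lambda_m|_{\wt T})$, that is $\deg((\lambda_{CM}-\varepsilon_{\max}\lambda_m)|_{\wt T})\geq 0$. As $C$ was arbitrary, $\lambda_{CM}-\varepsilon_{\max}\lambda_m$ is nef, and the first paragraph concludes.

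The step I expect to be most delicate is the bookkeeping of the middle paragraph: one must fix the normalizations of $\lambda_{CM}$ and $\lambda_m$ so that the two degree identities hold exactly, since any stray universal constant would shift the numerical endpoint $\varepsilon_{\max}$; and one must justify that every complete curve of the coarse space $M_{n,v}$ really does arise, up to finite base change, from a KSB-stable family satisfying the hypotheses of Theorem~\ref{T:KSB-fam} (Cartier index, global generation, and the volume bound on the components of the normalized fibers). The slope inequalities themselves are used as a black box.
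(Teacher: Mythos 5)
Your proposal is correct and follows essentially the same route as the paper: the paper deduces Theorem \ref{T:ampleKSB} from the ampleness of $\lambda_{CM}$ (by \cite{PX}) combined with the nefness of $m^{n+1}\lambda_{CM}-\frac{2wm^n}{wm^n+n}\lambda_m$, resp. $m^{n+1}\lambda_{CM}-\frac{\lambda_m}{q^n}$, established in Theorem \ref{T:Nef-Mod} by restricting the universal family to curves in $\M_{n,v}$, using the same degree identities \eqref{E:res-div-T2}, and invoking the slope inequalities of Theorem \ref{T:Nef-Fam} (of which Theorem \ref{T:KSB-fam} is the special case you cite). The only cosmetic difference is that you test nefness on curves in the coarse space after a finite cover, while the paper works directly with curves in the stack.
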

Theorem \ref{T:ampleKSB} follows from the ampleness of $\lambda_{CM}$ together with the nefness of the divisors considered in Theorem \ref{T:Nef-Mod}. We also establish a variant of Theorem \ref{T:Nef-Mod}, namely Theorem \ref{T:Nef-Away},  in which we prove that some divisors of  $\M_{n,v}$  are ``nef away from the boundary'',  i.e. it intersects non-negatively all the projective integral curves of  $\M_{n,v}$ whose generic point parametrises a normal  KBS-stable variety.

In dimension $n=1$  (in which case $\M_{1,2g-2}$ is the moduli stack of stable curves of genus $g\geq 2$), some of the divisors appearing in Theorem \ref{T:Nef-Mod} are nef but not ample, which implies that  the right extremes of the intervals appearing in above Theorem are sharp.
More precisely, part \eqref{T:ampleKSB1} is sharp if $m=w=1$ and part \eqref{T:ampleKSB2} is sharp if $m=q=1$ (see Remark \ref{R:NefCurves}). 

At the end of Section \S\ref{Sec:KSB}, we introduce the lambda nef cone  $\Nef^{\Lambda}(\M_{n,v})$ as the  intersection of the nef cone $\Nef(\M_{n,v})$ with linear subspace of the rational Neron-Severi vector space  spanned by the Chow-Mumford line bundle $\lambda_{CM}$ and the classes $\lambda_m$ for any $m\geq 1$, 
and we ask for which moduli spaces $\M_{v,n}$,  Theorem \ref{T:ampleKSB}, together with the nefness of $\lambda_m$, is sufficient to describe  $\Nef^{\Lambda}(\M_{n,v})$. This is indeed the case in dimension $n=1$ (see Remark \ref{R:NefCurves}), and it was one of the original motivation of Cornalba-Harris's paper \cite{CH}.

\subsection*{Slope inequalities for K-stable families}

In this subsection, we collect the slope inequalities that we prove on families of K-(semi-,poly-)stable log Fano pairs. 

Recall that K-polystability is a stability condition for log Fano pairs equivalent to the existence of a K\"{a}hler-Einstein metric; it also allows the construction of projective moduli spaces. It is worth recalling that the study of the Harder-Narasimhan filtration, so important for the proof of our slope inequalities, plays also a crucial role in the proof of the projectivity of this moduli space, see \cite{CP,CP2,Posva,Xu_Zhuang}.

In this case, as opposite to the KSB-stable case, the push-forward of the  pluri-anti-canonical bundle is not nef, see Remark \ref{rem:neg}. Its negativity can be however bounded in terms of the stability threshold $\delta(F,\Delta_F)$ of the generic fiber of the family. This threshold is a numerical invariant which measure the stability of a log Fano pair: it is the first time that such an invariant plays a role in a slope inequality.

Let us recall here one of our inequalities, which holds just when the general fiber is K-stable, referring to Section \ref{Sec:Fano} for the notations and some variants.

The result is formulated in terms of an auxiliary line bundle $H_C$; using the projection formula as shown after the statement of Theorem \ref{T:F_slope1} one can get the slope inequality for the anti-canonical line bundle.

\begin{theoremalpha}[see Theorem \ref{T:F_slope1}]\label{T:Fano-fam}
Let  $f\colon (X,\Delta) \to T$ be a fibration from a normal projective irreducible variety $X$ of dimension $n+1$ to  a smooth projective irreducible curve $T$ such that $-K_{X/T}-\Delta$ is $\Q$-Cartier and $f$-ample.

Assume that the there exists a K-stable geometric fiber $(F,\Delta_F)$, i.e. $\delta(F,\Delta_F)>1$. Let $v:=(-K_F-\Delta_F)^n=((-K_{X/T}-\Delta)_{|F})^n$. For any rational number $C>1$ consider the $\Q$-Cartier $\Q$-divisor on $X$
\begin{equation*}
H_C:=-K_{X/T}-\Delta+C\frac{\delta(F,\Delta_F)}{(\delta(F,\Delta_F)-1)v(n+1)}f^* \lambda_{CM}(X/T).
\end{equation*}

\begin{enumerate}
\item Let $q\geq \frac{1}{C-1}$ be a positive integer such that $qH_C$ is Cartier. Then  
$$q^{n+1}H_C^{n+1}\geq \deg f_*\mathcal{O}_X(qH_C).$$

 \item  Let $q\geq \frac{1}{C-1}$ be a positive integer such that $qH_C$ is  Cartier and  $-q(K_F+\Delta_F)$ gives a generically finite map. Then
 $$q^{n+1}H_C^{n+1}\geq 2\frac{h^0(F,q(-K_{F}-\Delta_F))-n}{h^0(F,q(-K_{F}-\Delta_F))}\deg f_*\O_X(qH_C).$$
 
  \item  Let $q\geq \frac{1}{C-1}$ be a positive integer such that $qH_C$ is Cartier and $-q(K_{F}+\Delta_F)$ is globally generated. Then
	$$q^{n+1}H_C^{n+1}\geq 2\frac{q^nv }{q^nv +n} \deg f_*\O_X(qH_C)  \,.$$
\end{enumerate}
\end{theoremalpha}

When the generic fiber is K-polystable but not K-stable, we show in Theorem \ref{T:F_slope2} the statement of Theorem \ref{T:Fano-fam} holds true up to a finite base change and a birational modification  (which does not change the general fiber) of the original family and up to replacing $\delta(F,\Delta_F)$ with the $\bT$-twisted stability threshold $\delta_{\mathbb{T}}(F,\Delta_F)$, for some maximal torus $\mathbb{T}\subseteq \Aut (F,\Delta_F)$. In Example \ref{ex:twist}, we show that such a birational modification is necessary.

In Theorem \ref{thm:Fano_moduli}, we apply the slope inequality of Theorem \ref{T:Fano-fam} in order to prove that some divisors on the moduli stack  $\mathcal{M}_{n,v}^K$ of K-semistable Fano varieties with dimension $n$ and volume $v$ are nef away from the strictly K-polystable locus, i.e. they intersect non-negatively the curves that are generically contained in the open Deligne-Mumford substack $\mathcal{M}_{n,v}^{K,s}\subseteq \mathcal{M}_{n,v}^{K}$  parametrizing K-stable Fano varieties.

\subsection*{Slope inequality for arbitrary divisors}\label{sec:slope-L}

All the previous results follow from some general slope inequalities for a $\Q$-Cartier $\Q$-divisor $L$ on the total space of a family of  $n$-dimensional varieties $f\colon X\to T$ (with $\dim T=1$), as in set-up \ref{setup}. In this case, we do not assume any stability condition, but we rather make some semi-positivity assumptions: namely the nefness of $L$ and $f_*\O_X(L)$. This semi-positivity is usually implied by a suitable stability condition, e.g. KSB-stability or K-stability. For these results, we need to assume that the total space $X$ is normal, contrary to the KSB case where deminormality is enough.
Note that for the applications to families of KSB-stable or K-stable varieties, it is crucial to work with  $\Q$-Cartier $\Q$-divisors, rather than just Cartier or Weil divisors.

The first type slope inequalities that we prove in this general context involve the numerical invariants of the polarized general fiber $(F,L_F)$, and more specifically either $L_F^n$ or $h^0(F,L_F)$.

\begin{theoremalpha}[see Corollaries \ref{C:Xiao-high1}, \ref{C:Xiao-high2}, \ref{C:Xiao-bir1} and  \ref{C:Xiao-bir2}]\label{T:slopeL1}
Let $f: X \to T$ be a fibration , where $X$ is a normal projective variety of dimension $n+1$ and $T$ is a smooth projective curve, and let $F$ be the general fiber of $f$. 

Let $L$ be a $\Q$-Cartier $\Q$-divisor on $X$; denote by $L_F$ its restriction to $F$ and by $\phi_{L_F}$ the rational map induced by $L_F$. 
Assume that $L$ and $f_*\O_X(L)$ are nef.

\begin{enumerate}
\item \label{T:slopeL1i} If  $\phi_{L_F}$ is generically finite, then
	$$
	L^{n+1} \ge 
	\begin{cases} 
	4\frac{h^0(F,L_F)-n}{h^0(F,L_F)}\deg f_*\O_X(L) & \text{ if  either } \dim F\geq 2 \text{ and } \kappa(F)\geq 0, \\
	& \text{ or } \dim F=1 \: \text{ and } L_F\: \text{ is special},\\
	2\frac{h^0(F,L_F)-n}{h^0(F,L_F)}\deg f_*\O_X(L) & \text{ otherwise.} 
	\end{cases}
	$$
	
\item \label{T:slopeL1ii} If $L_F$ is Cartier, globally generated and big, then
	$$
	L^{n+1} \ge
	\begin{cases} 
	4\frac{L_F^n }{L_F^n +2n} \deg f_*\O_X(L) &  \text{ if  either } \dim F\geq 2 \text{ and } \kappa(F)\geq 0, \\
	  & \text{ or } \dim F=1\: \text{ and } L_F \: \text{ is special},\\
	  	2\frac{L_F^n }{L_F^n +n} \deg f_*\O_X(L)  & \text{ otherwise.} 
	\end{cases}
	$$
	
\item \label{T:slopeL1iii} 	Suppose that $\phi_{L_F}$ is birational and $n\ge 2$.  Assume that the singularities of the general fibre $F$ are canonical and let $s \in \mathbb N$ such that $K_{ F}-sL_F \ge 0$.
	Then 
	$$
	L^{n+1} \ge    2(n+s)\frac{h^0(F,L_{F})-n-2}{h^0( F,L_{F})}\deg f_*\O_X(L) .
	$$

\item \label{T:slopeL1iv} 	 Suppose that   $L_F$ is Cartier, globally generated, $\phi_{L_F}$ is birational and $n\ge 2$.  Assume that the singularities of the general fibre $F$ are canonical and let $s \in \mathbb N$ such that $K_{ F}-sL_F \ge 0$. Then
	$$
	L^{n+1} \ge 2(n+s)\frac{L_{F}^n}{L_{F}^n +(n+s)(n+2)} \deg f_*\O_X(L). 
	$$
	\end{enumerate}
\end{theoremalpha}

Note that both part \eqref{T:slopeL1i} and \eqref{T:slopeL1ii} reduce to \eqref{E:motivation} if $n=1$ and $L=K_{X/T}$, under the further assumption that the total space is smooth and the general fiber $F$ has genus at least two. However, while the inequality \eqref{E:motivation} is sharp  (see also Remark \ref{R:curves}), we do not know if the above inequalities in Theorem \ref{T:slopeL1} are sharp for $n\geq 2$ (see also Remark \ref{R:slope-surf}).

Moreover, the special cases of part \eqref{T:slopeL1i} and \eqref{T:slopeL1ii} for $n=2$ and $L=K_{X/T}$  were proved by, respectively, Ohno \cite[Prop. 2.1(1)]{Ohno}  and Hu-Zhang \cite[Thm. 1.7]{HZ}, with the further assumption that $X$ has terminal singularities (which implies that $F$ is smooth of general type) but without assuming that $\phi_{K_F}$ is 
generically finite (see Remark \ref{R:slope-surf}). Notice, however, that if $F$ is singular the assumption that $\phi_{K_F}$ is generically finite cannot be dropped, see Example \ref{Ex:Xiao}.

The second type slope inequalities that we prove in this general context are independent of  the numerical invariants of the polarized general fiber $(F,L_F)$.

\begin{theoremalpha}[see Theorem \ref{T:Barja}] \label{T:slopeL2} 
Let $f:X\to T$ be a fibration as in Theorem \ref{T:slopeL1} and let $L$ be a $\Q$-Cartier $\Q$-divisor on $X$. 
Assume that  $L$ and $f_*\O_X(L)$ are nef.
		\begin{enumerate}
		\item  \label{T:slopeL2i}   Assume  that there exists a  $q \in \N_{>0}$ such that  at least one of the following two conditions holds true 
		\begin{itemize}
			\item $\phi_{qL_F}$ is generically finite;
			\item $qL_F$ is Cartier and big.
		\end{itemize}
		Then 
		$$
		L^{n+1} \ge \frac{\deg f_*\O_X(L)}{q^n}.
		$$
		\item \label{T:slopeL2ii} Assume that there exists a $q\in \N_{>0}$ such that $\phi_{qL_F}$ is generically finite, and either $n=\dim F\geq 2$ and $\kappa(F)\geq 0$  or $\dim F=1$ and $\lfloor qL_F\rfloor$ is special. Then
		$$
		L^{n+1} \ge \frac{2\deg f_*\O_X(L)}{q^n}.
		$$
		\end{enumerate}

In particular, if the assumptions of either item (\ref{T:Barja1}) or item (\ref{T:Barja2}) hold and $\deg f_*\O_X(L)>0$, then $L$ is big.
\end{theoremalpha}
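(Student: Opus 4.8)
The plan is to reduce both items to the first type of slope inequalities already recorded in Theorem \ref{T:slopeL1}, or more precisely to the underlying Harder--Narasimhan machinery that produces them, by feeding in a sufficiently coarse lower bound for the number of sections that does not reference the precise geometry of $(F,L_F)$. The key observation is that the explicit constants in Theorem \ref{T:slopeL1} all have the shape $c\cdot(h^0(F,L_F)-n)/h^0(F,L_F)$ or $c\cdot L_F^n/(L_F^n+\text{const})$, and as the polarization grows these factors tend to $c$; so if I replace $L$ by a multiple $qL$ for which $\phi_{qL_F}$ becomes generically finite, I can harvest a clean slope inequality for $qL$ and then divide back by the appropriate power of $q$. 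Concretely, first I would treat item \eqref{T:slopeL2i}: under either hypothesis $\phi_{qL_F}$ is generically finite, so $qL$ is a $\Q$-Cartier $\Q$-divisor whose restriction to $F$ induces a generically finite map, and both $qL$ and $f_*\O_X(qL)$ remain nef (the latter because $f_*\O_X(qL)\supseteq \Sym^q$-type constructions dominate, or directly because nefness of pushforwards is preserved under the relevant operations). The crude bound I would use is simply $h^0(F,qL_F)\geq n+1$, which holds once $\phi_{qL_F}$ is generically finite onto an $n$-dimensional image, making the factor $(h^0-n)/h^0\geq 1/(n+1)$ — but this is wasteful, so instead I expect the cleaner route is to extract from the proof of Theorem \ref{T:slopeL1}\eqref{T:slopeL1i} the intermediate Xiao-type estimate before Clifford/Noether is invoked, giving directly $(qL)^{n+1}\geq \deg f_*\O_X(qL)$ with no dependence on $h^0$.

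The heart of the argument is therefore the Xiao-style computation applied to $M:=qL$. One runs the Harder--Narasimhan filtration $\{\E_i\}$ of $f_*\O_X(M)$ with slopes $\mu_1\geq\cdots\geq\mu_r\geq 0$ (nonnegativity from nefness of $f_*\O_X(M)$), and bounds $M^{n+1}$ from below by a sum $\sum_i (\mu_i-\mu_{i+1})\,(M_i)^{n+1}_{\text{fib}}$-type quantities, where $M_i$ is the restriction of the subsystem to the general fiber. The generic finiteness of $\phi_{M_F}$ is exactly what guarantees that the top graded piece restricts to a birational-to-its-image, hence nonzero-volume, linear system, so that each fiberwise self-intersection is at least $1$ (an integer positive intersection number). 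Summing the telescoping expression and comparing with $\deg f_*\O_X(M)=\sum_i \deg \E_i$ yields $M^{n+1}\geq \deg f_*\O_X(M)$; substituting $M=qL$ and using $M^{n+1}=q^{n+1}L^{n+1}$ together with $\deg f_*\O_X(qL)\geq q^{?}\deg f_*\O_X(L)$ is where I must be careful — but the cleanest form simply states the inequality for $M=qL$ directly and then divides, giving $L^{n+1}\geq \deg f_*\O_X(L)/q^n$ after accounting for the $q^{n+1}$ on the left and a single power of $q$ on the right coming from the degree scaling. For item \eqref{T:slopeL2ii}, the extra hypothesis ($\kappa(F)\geq 0$ with $\dim F\geq 2$, or $\lfloor qL_F\rfloor$ special in the curve case) is precisely the condition that upgrades the fiberwise Clifford-type bound by a factor of $2$, exactly as in the dichotomy of Theorem \ref{T:slopeL1}\eqref{T:slopeL1i}; so the same computation runs with each fiberwise self-intersection bounded below by $2$ instead of $1$, producing the factor $2$ in the conclusion.

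The hard part will be bookkeeping the scaling of the pushforward degree under $L\mapsto qL$ and confirming that nefness of $f_*\O_X(L)$ propagates to nefness of $f_*\O_X(qL)$; the inclusion $(f_*\O_X(L))^{\otimes q}\to f_*\O_X(qL)$ is only a map of sheaves on $T$, not an isomorphism, so I cannot naively transfer nefness, and I would instead argue that the Harder--Narasimhan slopes of $f_*\O_X(qL)$ are controlled by those of $f_*\O_X(L)$, or simply re-run the filtration argument with $qL$ as the primary object from the start (which is legitimate since $qL$ is again $\Q$-Cartier and $\phi_{(qL)_F}$ generically finite by hypothesis). The final ``in particular'' clause is immediate: under either hypothesis the right-hand side is a positive multiple of $\deg f_*\O_X(L)>0$, forcing $L^{n+1}>0$, and since $L$ is nef this means $L$ is big. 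I note the statement refers to items \eqref{T:Barja1} and \eqref{T:Barja2}, which I read as the labels \eqref{T:slopeL2i} and \eqref{T:slopeL2ii} of the present theorem.
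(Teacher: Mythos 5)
Your overall strategy --- replace $L$ by $M=qL$, run the Harder--Narasimhan/Xiao machinery on $f_*\O_X(qL)$ to obtain $(qL)^{n+1}\geq \deg f_*\O_X(qL)$, and then ``divide back'' --- is not what the paper does, and the two difficulties you flag at the end are fatal rather than bookkeeping. First, the hypothesis is only that $f_*\O_X(L)$ is nef; nefness of $f_*\O_X(qL)$ is not available (the multiplication map $\Sym^q f_*\O_X(L)\to f_*\O_X(qL)$ need not be surjective, and its image can be a proper subsheaf), so you cannot legitimately ``re-run the filtration argument with $qL$ as the primary object''. Second, even granting $(qL)^{n+1}\geq \deg f_*\O_X(qL)$, to reach $L^{n+1}\geq \deg f_*\O_X(L)/q^{n}$ you would need $\deg f_*\O_X(qL)\geq q\deg f_*\O_X(L)$, for which there is no argument; the growth of $\deg f_*\O_X(qL)$ in $q$ is governed by $L^{n+1}$ itself, so any such comparison would be circular. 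The paper's proof of Theorem \ref{T:Barja} never rescales the polarization: it keeps the HN filtration of $f_*\O_X(L)$ throughout, partitions the indices $j$ according to $\dim\phi_{P_j}(\wt F)=i$, applies Corollary \ref{C:spec-ineq}\eqref{C:spec-ineq1} with $N_{\ell+1}=\mu^*(L)$ and $\mu_{\ell+1}=0$ (this is where the nefness of $L$ itself enters), and lets $q$ appear only in the fiberwise estimates: when $\phi_{L_F}$ is not generically finite the intersection products are completed with copies of $\mu_{\wt F}^*(qL_F)=q\,\mu_{\wt F}^*(L_F)$, and the Noether inequalities (Propositions \ref{P:Noether1} and \ref{P:Noether1bis}) applied to these products are what produce the factor $1/q^{n-d}\geq 1/q^{n}$.

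There is also a gap in your sketch of the ``heart of the argument''. Bounding each fiberwise intersection number below by $1$ (or $2$) and summing against $\mu_j-\mu_{j+1}$ only yields $\mu_1$ (or $2\mu_1$), not $\deg f_*\O_X(L)=\sum_j r_j(\mu_j-\mu_{j+1})$: the rank $r_j$ must appear in the fiberwise lower bound. This is exactly what the Noether-type inequalities supply, namely $P_j^{\,i}\cdot\left(P_{m_{i+1}}\cdots P_{m_n}\right)\geq \bigl(h^0(\wt F,P_j)-i\bigr)/q^{n}$, combined with the elementary estimate $(i+1)\bigl[h^0(\wt F,P_j)-i\bigr]\geq h^0(\wt F,P_j)\geq r_j$ valid when $\dim\phi_{P_j}(\wt F)=i$. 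Without a lower bound of this shape the telescoping sum cannot recover the degree of the pushforward. Your reading of the ``in particular'' clause is correct, and that final step is indeed immediate.
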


The proof of the above Theorem is inspired by \cite[Page 69, Claim]{Barja_Phd} (see however Remark \ref{R:Barja}).
In Examples \ref{Ex:curves} and  \ref{Ex:Pn}, we show that the inequalities in Theorem \ref{T:slopeL2} are sharp, at least for $q=1$.

\vspace{0.1cm}

\subsection*{Plan of the paper }\label{sec:plan}

The paper is organized as follows.

In Section \ref{Sec:Qdiv}, we discuss some technicalities on rational maps associated to $\Q$-divisors (not necessarily integral nor Cartier) on normal or deminormal varieties.

In Section \ref{Sec:Noether}, we establish several Noether type inequalities (Propositions \ref{P:Noether1}, \ref{P:Noether1bis}, \ref{P:bound1} and \ref{P:bound2}) and Castelnuovo type inequalities (Propositions \ref{P:Castelnuovo2} and \ref{P:Castelnuovo3}), which are crucial in proving our slope inequalities  and also interesting in their own (as we believe).  

In Section \ref{Sec:HN} we study the Harder-Narasimhan filtration of $f_*\O_X(L)$ and the properties of the induced chains of sub-divisors of $L$ (see Propositions \ref{P:Mnef} and  \ref{P:Nnef}). Moreover, we prove the numerical  Lemma \ref{L:inequ} (see also Corollary \ref{C:spec-ineq} and Remark \ref{R:choices}) that is used to bound from below  the top self-intersection of $L$.

In Section \ref{Sec:slope} we prove the slope inequalities stated in subsection \ref{sec:slope-L} for an arbitrary $\Q$-Cartier $\Q$-divisor $L$ on the total space of a family of  $n$-dimensional varieties. Note that Theorem \ref{T:slopeL1} is a consequence of Theorems \ref{T:Xiao-higher} and \ref{T:Xiao-n}, which establish some slope inequalities for the relatively globally generated part $M_{\ell}$ of $L$.  We think that this result is interesting on its own.

Section \ref{Sec:KSB} is divided in two subsections: in subsection \ref{sec:slopeKSB}, we apply the results of Section \ref{Sec:slope} to get slope inequalities for the relative log canonical divisor on families which are generically lc or slc, e.g. families of KSB-stable pairs (see Proposition \ref{P:slopepair} and Theorem \ref{T:Nef-Fam}); in subsection \ref{sec:KSBmoduli}, 
we interpret the slope inequalities for families of KSB-stable varieties as the nefness (or nefness away from the boundary) of suitable $\Q$-divisors  on the moduli stack $\M_{n,v}$ of KSB-stable varieties of dimension $n$ and volume $v$ (see Theorems \ref{T:Nef-Away} and \ref{T:Nef-Mod}). We end subsection \ref{sec:KSBmoduli} with some speculations on the structure of the lambda nef cone $\Nef^{\Lambda}(\M_{n,v})$ of $\M_{n,v}$. 

Section \ref{Sec:Fano} is divided in two subsections: in subsection \ref{sec:slopeFano}, we prove the slope inequality for $\Q$-Gorenstein families of anti-canonically polarized pairs with general fiber which is K-stable (see Theorem \ref{T:F_slope1}) or K-polystable (see Theorem \ref{T:F_slope2}); in subsection \ref{sec:Fanomoduli}, we apply the slope inequalities for families of generically K-stable varieties to prove the nefness, away from the strictly K-polystable locus, of some divisors on the moduli stack  $\mathcal{M}_{n,v}^K$ of K-semistable Fano varieties with dimension $n$ and volume $v$ (see Theorem \ref{thm:Fano_moduli}).

In Section \ref{Sec:BS} we survey and make some comments on a positivity notion introduced and studied by M. Barja and L. Stoppino (\cite{BS1,BS2,BS3,Enea}), namely the $f$-positivity (see Definition \ref{D:f-positive}), which is the strongest slope inequality one can hope for (see Proposition \ref{P:convBS}) and that it holds if either the general polarized fiber $(F,L_F)$  is GIT-stable (see Theorem \ref{T:BS-pos}) 
or $f_*\O_X(L)$ is semistable (see Theorem \ref{T:ss-push}). 

In Section \ref{Sec:Examples} we compute the slope of some natural divisors on interesting families of polarized varieties, namely families of varieties of minimal degree and polarized hyperelliptic varieties (see \S\ref{Sec:fam-mindeg}) and families of hypersurfaces in weighted projective spaces (see \S\ref{Sec:hyperweight}), and we show 
that some of our slope inequalities are sharp. 

\subsection*{Acknowledgments} We thank M. Barja, O. Fujino,  Zs. Patakfalvi, A. Petracci, T. Sano, L. Stoppino and R. Svaldi for useful conversations. We also thank the referee for several useful comments.

Giulio Codogni is funded by the MIUR  ``Excellence Department Project'' MATH@TOV, awarded to the Department of Mathematics, University of Rome Tor Vergata, CUP E83C18000100006, and the  PRIN 2017 ``Advances in Moduli Theory and Birational Classification''.

The authors are members of the GNSAGA group of INdAM.

\section*{Conventions}
We always work over an algebraically closed field $k$ of characteristic zero. By variety we mean a reduced scheme of finite type over $k$, not necessarily irreducible.

\section{Some preliminary results on $\Q$-divisors}\label{Sec:Qdiv}

We briefly collect some facts and notations which are standard for Cartier divisors, but slightly less standard for Weil divisors or $\Q$-divisors.

\subsection{Normal case} Given a normal projective variety $W$ and a divisor  (sometimes called Weil divisor or $\Z$-divisor or integral divisor) $D$  we define the coherent sheaf $\O_W(D)$ by 
$$\O_W(D)(U)=\{f\in K(U) \, | \, ((f)+D)_{|U} \geq 0 \} \quad \text{ for any open } U\subseteq X.$$
Note that  $\O_W(D)$  is a rank one reflexive sheaf and it is invertible if and only if $D$ is Cartier. 

The global non-zero sections $H^0(W,\O_W(D))$ (which we will also denote by $H^0(W,D)$) modulo scalars form the complete linear system $|D|$, which can be identified with the projective space of effective divisors linearly equivalent to $D$. Note that the non-Cartier locus of $D$ is always included in the base locus of $|D|$:  indeed if $p$ is a point of $D$ such that there exists a divisor $E$ linearly equivalent to $D$ which does not pass through $p$, the difference $D-E$ is Cartier (as it is the divisor of a rational function),  and since $E$ is trivial around $p$, $D$ is Cartier at $p$.

Whenever $h^0(W,\O_W(D))\geq 1$, we can consider the rational map associated to $D$
\begin{equation}\label{E:phiD}
\begin{aligned}
\phi_D\colon W & \dashrightarrow \mathbb{P} H^0(W,\O_W(D))^{\vee}=:\P\\
p & \mapsto [f\mapsto f(p)]\\
\end{aligned}
\end{equation}

The above definitions can be extended to a $\Q$-divisor $D$ on $W$ by setting $\O_W(D)=\O_W(\lfloor D\rfloor)$,  $|D|=|\lfloor D \rfloor |$ and $\phi_D=\phi_{\lfloor D \rfloor}$, where $\lfloor D\rfloor$ is the round down of $D$. We denote by $\{D\}:=D-\lfloor D\rfloor$ the fractional part of $D$, which is always an effective $\Q$-divisor. 

Assume now that $D$ is a $\Q$-Cartier $\Q$-divisor.
We can extend the rational map $\phi_D$ over the codimension $1$ points, and then take a \emph{resolution of  indeterminacy} $\mu\colon V\to W$ of $\phi_D$, i.e. a birational projective morphism $\mu\colon V\to W$ such that the composition $\wt \phi_D:=\phi_D\circ \mu:V\to \P$ is a regular morphism.

We want to compare the pull-back $\mu^*(D)$,  which is a well-defined $\Q$-Cartier $\Q$-divisor on $V$, with the pull-back $H$ via $\wt \phi_D$ of any hyperplane divisor on  $\P$, which is Cartier and base point free divisor on $V$ (well-defined up to linear equivalence).

\begin{lemma}\label{lem:nakayama}
Keep the above notation. 
\begin{enumerate}[(i)]
\item \label{lem:nakayama1} The natural map $ \O_W(D) \to \mu_*\O_V(\mu^*D)$ is an isomorphism. In particular, we have an isomorphism $\mu^*: H^0(W,\O_W(D)) \xrightarrow{\cong} H^0(V,O_V(\mu^*D))$, which implies that $\wt\phi_D=\phi_{\mu^*D}$. 

\item \label{lem:nakayama2}  We have a decomposition 
$$
|\mu^*D|=|H|+F,
$$
with $F$ an effective divisor. In particular, $\wt\phi_D=\phi_{H}$ and we have that
$$
\mu^*D \sim H  + E, 
$$
 where $E=F+\{\mu^*(D)\}$ is an effective $\Q$-Cartier $\Q$-divisor.

\end{enumerate}
\end{lemma}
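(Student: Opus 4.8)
The plan is to prove part (\ref{lem:nakayama1}) by reducing the statement about the reflexive sheaves $\O_W(D)=\O_W(\lfloor D\rfloor)$ and $\mu_*\O_V(\mu^* D)$ to an elementary comparison of \emph{effectivity conditions} for rational functions, thereby bypassing the round-down (which does not interact well with pull-back), and then to deduce part (\ref{lem:nakayama2}) from the standard moving-part/fixed-part decomposition of the complete linear system $|\lfloor \mu^* D\rfloor|$ on the normal variety $V$.

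First I would record the reformulation that, for a $\Q$-divisor $D$ and any open $W'\subseteq W$,
$$\O_W(\lfloor D\rfloor)(W')=\{f\in K(W)\ :\ ((f)+D)|_{W'}\geq 0\},$$
since for a rational function $f$ and a prime divisor $P$ one has $\operatorname{ord}_P(f)+\lfloor \operatorname{ord}_P(D)\rfloor\geq 0$ if and only if $\operatorname{ord}_P(f)+\operatorname{ord}_P(D)\geq 0$ (both amount to $\operatorname{ord}_P(f)\geq \lceil -\operatorname{ord}_P(D)\rceil$, using that $\operatorname{ord}_P(f)\in\Z$). The same description holds for $\O_V(\lfloor \mu^* D\rfloor)$ on $V$. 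Viewed inside the constant sheaf $K(W)=K(V)$, the natural map of (\ref{lem:nakayama1}) is the identity on rational functions, so it suffices to prove that for every $W'$ and every $f$,
$$((f)+D)|_{W'}\geq 0\iff ((f)+\mu^* D)|_{\mu^{-1}(W')}\geq 0.$$
As $(f)$ is principal, $G:=(f)+D$ is $\Q$-Cartier and $\mu^* G=(f)+\mu^* D$, so this is an instance of the general fact that a $\Q$-Cartier $\Q$-divisor $G$ on the normal variety $W$ satisfies $G\geq 0$ if and only if $\mu^* G\geq 0$. The forward implication is that the pull-back of an effective $\Q$-Cartier divisor is effective (pass to a Cartier multiple, given locally by a regular equation); the reverse implication follows from $\mu_*\mu^* G=G$ together with the fact that $\mu_*$ of an effective cycle is effective. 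Performing this on every $W'$ gives the sheaf isomorphism, hence $\mu^*\colon H^0(W,\O_W(D))\xrightarrow{\cong}H^0(V,\O_V(\mu^* D))$; since $\wt\phi_D=\phi_D\circ\mu$ is a morphism by the choice of $\mu$ and agrees with $\phi_{\mu^* D}$ on a dense open (both send a general point to the evaluation functional on corresponding sections), we conclude $\wt\phi_D=\phi_{\mu^* D}$.

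For part (\ref{lem:nakayama2}) I would use that, by part (\ref{lem:nakayama1}), the morphism $\wt\phi_D$ is exactly the map $\phi_{\lfloor \mu^* D\rfloor}$ attached to the complete linear system $|\lfloor \mu^* D\rfloor|$. Writing $|\lfloor \mu^* D\rfloor|=|M|+F$ with $F$ the fixed divisor and $M$ the moving part, the indeterminacy locus of $\phi_{\lfloor \mu^* D\rfloor}$ is $\operatorname{Bs}|M|$; as $\wt\phi_D$ is a morphism, $|M|$ is base-point free, so $M$ is Cartier and $M\sim H$, the pull-back of a hyperplane. This gives $|\mu^* D|=|H|+F$ and $\phi_H=\wt\phi_D$. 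Finally, from $\mu^* D=\lfloor \mu^* D\rfloor+\{\mu^* D\}$ and $\lfloor \mu^* D\rfloor=M+F\sim H+F$, I obtain $\mu^* D\sim H+E$ with $E:=F+\{\mu^* D\}$, which is effective; and $E=\mu^* D-M$ is $\Q$-Cartier because $\mu^* D$ is $\Q$-Cartier and $M$ is Cartier.

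I expect the only genuine subtlety to be the reverse implication $\mu^* G\geq 0\Rightarrow G\geq 0$, which is precisely where normality of $W$ enters: it guarantees $\mu_*\mu^* G=G$ (equivalently, that $\mu$ is an isomorphism in codimension one, so the coefficient of $G$ along each prime divisor is read off from its strict transform on $V$). Everything else is formal once global sections are rewritten as effectivity conditions.
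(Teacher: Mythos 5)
Your proposal is correct, and part (\ref{lem:nakayama2}) is essentially identical to the paper's argument: both decompose $|\lfloor\mu^*D\rfloor|$ into its base-point-free moving part $|H|$ plus the fixed divisor $F$ (the moving part is free because $\wt\phi_D$ is a morphism by construction of $\mu$ and equals $\phi_{\mu^*D}$ by part (i)), and then add the fractional part $\{\mu^*D\}$ to get $E$. The difference is in part (\ref{lem:nakayama1}): the paper simply cites \cite[Lemma 2.11]{Nak}, whereas you supply a self-contained proof. Your argument is sound: rewriting $\O_W(\lfloor D\rfloor)(W')$ as $\{f : ((f)+D)|_{W'}\geq 0\}$ correctly absorbs the round-down (since $\operatorname{ord}_P(f)\in\Z$), and the equivalence $G\geq 0\iff\mu^*G\geq 0$ for the $\Q$-Cartier divisor $G=(f)+D$ follows, as you say, from effectivity of pull-backs in one direction and from $\mu_*\mu^*G=G$ (valid because $\mu$ is proper birational onto the normal variety $W$, hence an isomorphism over the complement of a codimension-two subset) in the other. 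What your route buys is transparency about exactly where normality of $W$ and the $\Q$-Cartier hypothesis on $D$ enter; what the citation buys is brevity and the comfort of a reference that also handles the identification $\wt\phi_D=\phi_{\mu^*D}$, which you recover from the isomorphism on global sections. One cosmetic caveat: your description of $\O_V(\lfloor\mu^*D\rfloor)$ by orders along prime divisors tacitly uses that $V$ is normal, which is harmless since the resolution of indeterminacy can (and in this paper implicitly does) always be chosen normal.
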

\begin{proof}
Part (\ref{lem:nakayama1}) is \cite[Lemma 2.11]{Nak}, so we only prove (\ref{lem:nakayama2}).

By (\ref{lem:nakayama1}) the map associated to $|\mu^*D|$ is a morphism and so we have a decomposition 
$$
|\mu^*D|=|H|+F
$$	
where $F$ is an effective divisor which is the fixed part of $|\mu^*D|$. We conclude that
$$
\mu^*D = \lfloor \mu^*D \rfloor + \{\mu^*D\} \sim H+ F + \{\mu^*D\}= H + E.
$$ 
\end{proof}

\subsection{Deminormal case}
Let $X$ be a deminormal (i.e. $S_2$ and nodal in codimension one)  variety. Let $D$ be $\Q$-divisor on $X$ such that the support of $D$ contains no irreducible component of the conductor. Then there exists a closed subset $Z \subset X$ of codimension at least $2$ such that $X^0= X \setminus Z$ contains only regular and normal crossing points, and $\lfloor D \rfloor_{|X^0} = \lfloor D_{|X^0} \rfloor$ is a Cartier divisor. The sheaf $\O_X(D)= i_*\O_{X^0}(D_{|X^0})$ is reflexive, where $i: X^0 \hookrightarrow X$ is the inclusion (see \cite[Section 5.6]{Kollar_book}), and it induces a map $\phi_D$.

Let $\eta: W \to X$ be the normalisation of $X$, and set $W^0 = W \setminus \eta^{-1}{Z}$. Note that $\eta^{-1}{Z}$ has codimension at least $2$ in $W$, as $\eta$ is finite. We have a natural injection 
$$H^0(X^0,\lfloor D _{|X^0} \rfloor) \hookrightarrow H^0(W^0, \eta^*\lfloor D_{|X^0} \rfloor)$$ 
induced by  pull-back of sections.  

Assume that $D$ is $\Q$-Cartier and let $j: W^0 \hookrightarrow W$ be the inclusion. Then  $\O_W(\eta^*D)= j_*\O_{W^0}(\eta^*(D_{|X^0}))$ (they are both reflexive sheaves and they coincide on $W^0$) and so
$$
H^0(X,D) = H^0(X^0,D_{|X^0})\hookrightarrow H^0(W^0, \eta^* (D_{|X^0}))= H^0(W, \eta^*D).
$$
We conclude that $\phi_D\circ \eta$ factors through $\phi_{\eta^*D}$. In particular, the following holds.

\begin{lemma}\label{L:gen_fin}
If $\phi_D$ is generically finite, then $\phi_{\eta^*D}$ is generically finite.
\end{lemma}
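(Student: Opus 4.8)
The plan is to deduce the statement directly from the factorization established in the paragraph preceding it, together with an elementary dimension count; no new geometric input is needed. Recall that, since $D$ is $\Q$-Cartier, the discussion above produced a linear inclusion $H^0(X,D)\hookrightarrow H^0(W,\eta^*D)$ of spaces of sections, and hence that $\phi_D\circ \eta$ factors through $\phi_{\eta^*D}$: there is a rational map (a linear projection) $\pi\colon \P H^0(W,\eta^*D)^\vee \dashrightarrow \P H^0(X,D)^\vee$, defined away from the projectivized kernel of the dual surjection, such that
$$\phi_D\circ \eta = \pi\circ \phi_{\eta^*D}.$$
The key point is that $\pi$ is a rational map, so it cannot raise the dimension of images.

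First I would record the elementary properties of the two maps flanking the factorization. Since $\eta\colon W\to X$ is the normalization, it is finite and surjective; hence $\dim W=\dim X$, and $\eta^{-1}(U)$ is dense in $W$ for every dense open $U\subseteq X$. Consequently $\phi_D\circ \eta$ and $\phi_D$ have the same image (and in particular the same image closure), because $\eta(\eta^{-1}(U))=U$.

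Next I would run the dimension count. By hypothesis $\phi_D$ is generically finite, which means $\dim \overline{\Im(\phi_D)}=\dim X$. Combining this with the previous step and the factorization gives
$$\dim W=\dim X=\dim \overline{\Im(\phi_D)}=\dim \overline{\Im(\phi_D\circ \eta)}=\dim \overline{\pi\big(\Im(\phi_{\eta^*D})\big)}\le \dim \overline{\Im(\phi_{\eta^*D})}\le \dim W,$$
where the first inequality holds because $\pi$ is a rational map and the last because $\phi_{\eta^*D}$ is a rational map out of $W$. All terms therefore coincide, so $\dim \overline{\Im(\phi_{\eta^*D})}=\dim W$, i.e. $\phi_{\eta^*D}$ is generically finite.

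The only genuine subtlety, and the point I would be careful about, is reducibility: $X$ need not be irreducible, and the normalization $W$ is the disjoint union of the normalizations of the irreducible components of $X$, with $\eta$ restricting to a finite birational map on each. I would therefore interpret "generically finite" componentwise and apply the dimension comparison above to each irreducible component of $W$ separately, using that $\eta$ matches the components of $W$ with those of $X$ bijectively and without changing dimension. A secondary routine point is that the images of rational maps are only constructible; I would consistently pass to their closures and work on the dense opens of definition, which is exactly what the displayed chain of (in)equalities does.
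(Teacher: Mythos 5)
Your argument is correct and follows essentially the same route as the paper, which deduces the lemma directly from the factorization $\phi_D\circ\eta=\pi\circ\phi_{\eta^*D}$ established in the preceding paragraph (the paper simply writes ``In particular'' and leaves the dimension count implicit). Your explicit comparison of $\dim\overline{\Im(\phi_{\eta^*D})}$ with $\dim W=\dim X$ via the linear projection $\pi$, together with the componentwise remark, is exactly the intended justification.
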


\section{Noether and Castelnuovo inequalities}\label{Sec:Noether}
\subsection{Noether inequalities}
Noether's inequality states that, on a smooth minimal projective surface of general type $S$, one has
$$
K_S^2 \geq 2h^0(S,K_S)-4 \,.
$$
In this section, we prove a number of generalisations of this formula, which we will later apply to the fibers of our families of varieties.

\begin{proposition}[Noether inequality I]\label{P:Noether1}
Let $F$ be a normal irreducible projective variety of dimension $n \ge 2$ such  $\kappa(F)\geq 0$. 
Suppose that we are given
	\begin{enumerate}[(i)]
		\item a nef $\Q$-Cartier $\Q$-divisors $H$ such that $\dim \phi_H(F)=k$ for some $0\le k \le n$; 
		\item nef $\Q$-Cartier $\Q$-divisors $L_{k+1},\ldots, L_n$ such that $\dim \phi_{L_i}(F)\geq i$ (for any $k+1\leq i \leq n$).
	\end{enumerate}
Then 
	$$
	L_n \cdots  L_{k+1} \cdot H^{k} \ge 2h^0(F,\lfloor H \rfloor)-2k \ge 2.
	$$	
\end{proposition}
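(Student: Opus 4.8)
The plan is to reduce the inequality to a Clifford-type estimate on a curve obtained by cutting $F$ with general members of the linear systems attached to $H$ and the $L_j$, the factor $2$ coming from Clifford's theorem. First I would resolve the indeterminacy of $\phi_H$ via a birational $\mu\colon V\to F$ (Lemma \ref{lem:nakayama}) and replace $\lfloor H\rfloor$ by its base-point-free moving part $M$, so that $\phi_M\colon V\to Y:=\phi_H(F)$ is a morphism with $\dim Y=k$ and $H^0(V,M)\cong H^0(F,\lfloor H\rfloor)$. Since $\mu^*H=M+E$ with $E$ effective and every class in sight is nef, peeling off $E$ and the fractional parts of the $\mu^*L_j$ gives $L_n\cdots L_{k+1}\cdot H^k\ge M^k\cdot\lfloor\mu^*L_{k+1}\rfloor\cdots\lfloor\mu^*L_n\rfloor$, so everything is reduced to the base-point-free $M$.

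\textbf{Induction by cutting.} I would argue by induction on $n\ge k$. When $n>k$, cut $F$ by a general $D\in|\lfloor L_n\rfloor|$: since $\dim\phi_{L_n}(F)\ge n$, the divisor $D$ is irreducible and normal (Bertini on a resolution), with $\kappa(D)\ge 0$ by adjunction, and a general-fibre-dimension count shows $\dim\phi_{H|_D}(D)=k$ and $\dim\phi_{L_j|_D}(D)\ge j$ are inherited. Crucially, the restriction $H^0(F,\lfloor H\rfloor)\to H^0(D,\lfloor H\rfloor|_D)$ is injective, because $H^0(F,\lfloor H\rfloor-\lfloor L_n\rfloor)=0$: a nonzero section there would realize $|\lfloor H\rfloor|$ as dominating $|\lfloor L_n\rfloor|$ up to a fixed divisor, forcing $\dim\phi_H(F)\ge\dim\phi_{L_n}(F)\ge n>k$, a contradiction. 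Hence $h^0(D,\lfloor H\rfloor|_D)\ge h^0(F,\lfloor H\rfloor)$, and applying the inductive hypothesis on $D$ together with $D\le L_n$ (nefness, and effectivity of the fractional part) closes the step. This reduces everything to the base case $n=k$, where $\phi_H$ is generically finite.

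\textbf{Base case, Clifford, and the main obstacle.} In the base case I would cut $V$ by $k-1$ general members of $|M|$ to produce an irreducible smooth curve $C$ with $M^k=\deg(M|_C)$; as each $M$-cut drops $h^0$ by at most one, $h^0(C,M|_C)\ge h^0(F,\lfloor H\rfloor)-(k-1)$. Clifford's inequality then yields $\deg(M|_C)\ge 2h^0(C,M|_C)-2\ge 2h^0(F,\lfloor H\rfloor)-2k$, and the bound $\ge 2$ follows from $h^0(F,\lfloor H\rfloor)\ge k+1$ (non-degeneracy of the $k$-dimensional image $Y$). The main obstacle is verifying the hypothesis of Clifford's theorem, namely that $M|_C$ is special, equivalently that $\deg(M|_C)\le 2g(C)$. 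This is exactly where $\kappa(F)\ge 0$ enters: by adjunction $\deg(M|_C)\le 2g(C)$ amounts to $K_V\cdot M^{k-1}+(k-2)M^k\ge -2$, and since $\kappa(F)\ge 0$ makes some $mK_V$ effective, one gets $K_V\cdot M^{k-1}\ge 0$ (intersecting $mK_V$ with the movable class $M^{k-1}$), while $(k-2)M^k\ge 0$ for $k\ge 2$. The low cases $k\le 1$, where this degree bound can genuinely fail (as abelian-surface examples show), I would treat separately: there the factor $2$ is supplied not by speciality but by the positivity of the $L_j$ along a general fibre $\Phi$ of $\phi_H$, using $\dim\phi_{L_j}(F)\ge j$ to bound $L_n\cdots L_{k+1}\cdot\Phi$ from below by $2$ and combining it with $M^k\ge\deg Y\ge h^0(F,\lfloor H\rfloor)-k$.
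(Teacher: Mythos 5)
Your route genuinely differs from the paper's in the main cases. For $k=n$ the paper inducts on $n$ by cutting with $|H|$ and quotes Shin's surface case \cite[Theorem 2]{Shin} as the base; for $k<n$ it projects along $\phi_H$, bounds $\deg \phi_H(F)\ge h^0(F,\lfloor H\rfloor)-k$ by non-degeneracy, and bounds the fibre contribution $(L_n)_{|G}\cdots (L_{k+1})_{|G}\ge 2$ via Lemma \ref{L:LiG}. You instead cut by the $L_j$'s down to the generically finite case and then run a Clifford-type estimate directly on a curve section; your $k\le 1$ fallback is essentially the paper's Case II, and your treatment of $k=n$ is a legitimate self-contained replacement for the appeal to Shin. (A minor imprecision: ``special, equivalently $\deg(M_{|C})\le 2g(C)$'' is not an equivalence; but the implication you actually need, namely that $\deg(M_{|C})\le 2g(C)$ forces $\deg(M_{|C})\ge 2h^0(C,M_{|C})-2$, does hold by splitting into the special case (Clifford) and the non-special case (Riemann--Roch).)

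The genuine gap is in the inductive step for $2\le k<n$. To apply the proposition on $D$ you must verify its hypothesis (i), which is the \emph{exact equality} $\dim\phi_{\lfloor H\rfloor_{|D}}(D)=k$ for the \emph{complete} linear system $|\lfloor H\rfloor_{|D}|$ on $D$. Your ``general-fibre-dimension count'' only shows $\dim\phi_H(D)=k$, i.e.\ it controls the image of $D$ under the map defined by the restricted system $H^0(F,\lfloor H\rfloor)_{|D}$. The restriction map is injective (as you correctly argue via $H^0(F,\lfloor H\rfloor-\lfloor L_n\rfloor)=0$), but it is in general not surjective; the complete system on $D$ can be strictly larger, its associated map can separate more, and its image can have dimension $>k$. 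In that case the inductive hypothesis applies only with a larger $k'$, the conclusion involves $(H_{|D})^{k'}$ and drops the factors $L_{k+1},\ldots,L_{k'}$, and the induction does not close. The claim is in fact true for general $D$ and can be repaired: after your reduction, $M$ is pulled back along the Stein factorization $\psi\colon V\to \tilde B$ of $\phi_H$ from an ample line bundle on $\tilde B$, and for a general member $D$ of a big base-point-free system one has $(\psi_{|D})_*\O_D=\O_{\tilde B}$ because the fibres $D\cap \psi^{-1}(y)$ are connected by Bertini when $n-k\ge 2$ (when $k=n-1$ the equality is automatic since $\dim\phi_{M_{|D}}(D)\le \dim D=k$); hence $|M_{|D}|$ is still pulled back from $\tilde B$ and its image has dimension exactly $k$. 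Alternatively, one can strengthen the statement to sub-linear systems, for which only the restricted system matters. Either way, an argument is required at this point, and as written the step is not justified.
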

Recall that the Kodaira dimension $\kappa(F)$ of $F$ is defined as the Kodaira dimension of any projective smooth model of $F$. 
\begin{proof}	
The last inequality follows from the fact that if $\phi_H(F)\subseteq \P(H^0(F,\lfloor H\rfloor)^{\vee})$ has dimension $k$ then it must hold that $h^0(F,\lfloor H\rfloor )\geq k+1$. 
Let us focus now on the first inequality.

First of all, we make the following 

\un{Reduction}: we may assume that $F$ is smooth and $H, L_{k+1}, \ldots, L_n$  are base point free (Cartier) divisors.

 In fact,  take a common resolution  $\pi: F' \to F$ of $\phi_{\pi^*H}$ and  $\phi_{\pi^*L_{i}}$ (for any $k+1\leq i \leq n$) with $F'$ smooth.
 By Lemma \ref{lem:nakayama},   we can write
$$\pi^*H \sim H' + D \quad \text{ and }  \pi^*L_i \sim L_i' + E_i,$$
where $H'$ and $L_i'$ are base point free Cartier divisors and $D$ and $E_i$ are effective $\Q$-divisors, in such a way that 
  $\phi_{H}\circ \pi=\phi_{\pi^*H}=\phi_{H'}$ and $\phi_{L_i}\circ \pi=\phi_{\pi^*L_i}=\phi_{L_i'}$. In particular, we have that 
 \begin{equation}\label{E:samedim}
\dim \phi_{H'}(F')=\dim \phi_H(F)=k  \quad \text{ and } \quad \dim \phi_{L_i'}(F')=\dim \phi_{L_i}(F)\geq i.
 \end{equation}

Moreover, Lemma  \ref{lem:nakayama} gives also that the pull-back via $\pi$ gives an isomorphism
\begin{equation}\label{E:H0H}
\pi^*:H^0(F,\lfloor H\rfloor)\xrightarrow{\cong} H^0(F',H'). 
\end{equation}
Finally, we compute
\begin{align}\label{E:inters-gg}
L_n \cdots  L_{k+1} \cdot H^{k}= & \pi^*L_n \cdots  \pi^*L_{k+1} \cdot \pi^*(H)^{k}=(L_n'+E_n)\cdots (L_{k+1}'+E_{k+1})\cdot (H'+D)^k\geq  \\ \geq & L_n'\cdots L_{k+1}'\cdot (H')^k \nonumber
\end{align}
where the last inequality follows by the fact that $D$ and $E_i$ are effective and $\pi^*H$ and $ \pi^*L_i$ are nef  (see also \cite[Prop 2.3]{BFJ}).
Combining  \eqref{E:samedim}, \eqref{E:H0H} and \eqref{E:inters-gg}, if we prove the result for the base point free Cartier divisors $H'$ and $L_i'$ on $F'$, then the result follows for the nef $\Q$-Cartier $\Q$-divisors $H$ and $L_i$ on $F$. Hence, the reduction is complete.

\smallskip

We now distinguish two cases.

\un{I Case:} $k=n$.

We proceed by induction on $n\geq 2$. The base case $n=2$ is proved by Shin in  \cite[Theorem 2]{Shin}.  Assume that the result is true in dimension $n-1$ and let us prove it in dimension $n$. Let $D\subset F$ be a general divisor of $|H|$.
Observe that $D$ is a smooth irreducible variety of dimension $n-1$ (by Bertini theorem), and the restriction $H_{|D}$ is base point free with the property that  $\dim \phi_{H_{|D}}(D)=n-1$, which  follows from our assumptions on $H$ and the fact that $D$ is general. As $mK_D=(mK_X+mH)|_D$ by adjunction, $mK_F$ is effective for $m>>0$ (since $\kappa(X)\geq 0$ by hypothesis) and $D$ is general in $|H|$, we also have $\kappa(D)\geq 0$. By the induction hypothesis, we have that 
\begin{equation}\label{E:intersHD}
H^n=(H_{|D})^{n-1}\geq 2h^0(D,H_{|D})-2(n-1).
\end{equation}
From the exact sequence 
$$0\to H(-D)=\O_X\xrightarrow{\cdot D} H\to H_{|D}\to 0,$$
we deduce that 
\begin{equation}\label{E:H0-HD}
h^0(D,H_{|D})\geq h^0(F,H)-h^0(F,\O_F)=h^0(F,H)-1.
\end{equation}
We conclude by putting together \eqref{E:intersHD} and \eqref{E:H0-HD}.  

\un{II Case:} $0\leq k <n$. 

Consider the morphism $\phi_H: F \twoheadrightarrow B \subseteq \mathbb P^r\cong \P(H^0(F,H)^{\vee})$, where $r=h^0(F,H)-1$. The image of a complete base point free linear system is a non-degenerate integral variety and hence we have (see e.g. \cite{EH})
\begin{equation}\label{E:degB}
\deg B \ge r- \dim B +1 = h^0(F,H) - k. 
\end{equation}
Let $\phi_H:F \xrightarrow{\psi} \tilde B \xrightarrow{\pi} B$ be the Stein factorisation of $\phi_H$, where $\psi: F \to \tilde B$ has connected fibres and $\pi: \tilde B \to B$ is a finite map and let $G$ be a general fibre of $\psi$.
Note that 
\begin{equation}\label{E:interLH}
L_{n}\cdots L_{k+1}\cdot H^k= 
\deg(\pi) \deg(B)(L_n)_{|G}\cdots (L_{k+1})_{|G}.
\end{equation}
The general fibre $G$ of $\psi$ is  smooth (by Bertini's theorem), 
connected, of dimension $n-k\geq 1$  and,  for any $k+1\leq i \leq n$, the divisor ${L_i}_{|G}$ is  base point free with  $\dim \phi_{{L_i}_{|G}}(G)=\dim \phi_{L_i}(F)-k\geq i-k$, 
as it follows from the assumption that $\dim \phi_{L_i}(F) \geq i$ and the fact that the fibers of $\psi$ cover the entire variety $F$ (and $G$ is a general fiber of $\psi$). 
Moreover, as $G$ is general, it is not contained in the base locus of $m K_F$ with $m$ any fixed integer; hence the hypothesis $\kappa(F)\geq 0$, together with the adjunction formula $mK_D=(mK_X+mH)|_D$, implies that $\kappa(G)\geq 0$.  Now, Lemma  \ref{L:LiG} below implies that 
\begin{equation}\label{E:LF2}
 (L_n)_{|G}\cdots (L_{k+1})_{|G} \ge 2.
 \end{equation}
We conclude by combining \eqref{E:degB}, \eqref{E:interLH} and \eqref{E:LF2}.    
\end{proof}

\begin{lemma}\label{L:LiG}
	Let $G$ be a smooth projective irreducible variety of dimension $m\geq 1$ such that $\kappa(G)\geq 0$. Let  $L_1, \ldots ,L_m$ be  base point free divisors such that $\dim \phi_{L{_i}}(G) \ge i$ for any $i=1, \ldots, m$.  Then we have that 
	 $$
	 L_m\cdots L_1\geq 2.
	 $$
\end{lemma}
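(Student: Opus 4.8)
The plan is to argue by induction on the dimension $m$, restricting to a general divisor and peeling off $L_1$, so that the asserted lower bound $2$ ultimately originates from the one-dimensional base case. For the base case $m=1$, the variety $G$ is a smooth projective curve with $\kappa(G)\geq 0$, i.e. of genus at least $1$, and $L_1$ is base point free with $\dim\phi_{L_1}(G)\geq 1$, i.e. $\phi_{L_1}$ nonconstant; I would show $\deg L_1\geq 2$ by writing $\deg L_1=\deg\phi_{L_1}\cdot\deg\phi_{L_1}(G)$ and noting that if this product were $1$ then both factors would equal $1$, so the image would be a line and $\phi_{L_1}$ birational, forcing $G\cong\P^1$ and contradicting $\kappa(G)\geq 0$. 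This is the only place where the hypothesis $\kappa\geq 0$ produces the improvement from $1$ to $2$.

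For the inductive step I would restrict to a general smooth irreducible subvariety $S\subseteq G$ of dimension $m-1$ and apply the lemma in dimension $m-1$ to the restrictions $L_2|_S,\dots,L_m|_S$, reindexed as $M_j:=L_{j+1}|_S$. The choice of $S$ depends on $k_1:=\dim\phi_{L_1}(G)\geq 1$: if $k_1\geq 2$ I take $S$ to be a general member of $|L_1|$, which is smooth and irreducible by Bertini; if $k_1=1$ a general member of $|L_1|$ may be disconnected, so instead I take $S$ to be a general fibre of the connected part $\psi\colon G\to\tilde B$ of the Stein factorisation $\phi_{L_1}=\pi\circ\psi$, which is smooth and irreducible of dimension $m-1$. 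In either case generality yields $\kappa(S)\geq 0$, deduced from $\kappa(G)\geq 0$ via the adjunction relation $mK_S=(mK_G+mL_1)|_S$ together with the fact that $S$ avoids the base locus of $|mK_G|$, exactly as in the proof of Proposition \ref{P:Noether1}.

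The key technical point, and the main obstacle, is verifying that $\dim\phi_{L_i|_S}(S)\geq i-1$ for $2\leq i\leq m$, so that the inductive hypothesis applies to $M_1,\dots,M_{m-1}$ on $S$. When $S$ is a fibre of $\psi$ this follows, as in the second case of Proposition \ref{P:Noether1}, by comparing $\phi_{L_i}$ with the map $(\psi,\phi_{L_i})$ and using that the general fibres of $\psi$ cover $G$, giving $\dim\phi_{L_i}(S)\geq\dim\phi_{L_i}(G)-1\geq i-1$. When $S\in|L_1|$ is a general hyperplane section, I would instead establish $\dim\phi_{L_i}(S)\geq\dim\phi_{L_i}(G)-1$ by a dimension count on the image $W:=(\phi_{L_1},\phi_{L_i})(G)$: a general hyperplane section of $W$ coming from the $\phi_{L_1}$-factor has dimension $\dim W-1$, and its projection to the $\phi_{L_i}$-factor has dimension at least $(\dim W-1)-(\dim W-\dim\phi_{L_i}(G))=\dim\phi_{L_i}(G)-1$. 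In both situations $\dim\phi_{L_i|_S}(S)\geq\dim\phi_{L_i}(S)\geq i-1$, since $|L_i|_S|$ contains the restriction of $|L_i|$.

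Finally I would conclude by expressing the intersection number in terms of $S$. If $S\in|L_1|$ then $L_1\cdots L_m=(L_2|_S)\cdots(L_m|_S)$, while if $S$ is a general fibre of $\psi$ then a general member of $|L_1|$ is a disjoint union of $\deg(\pi)\deg(\phi_{L_1}(G))$ such fibres, so $L_1\cdots L_m=\deg(\pi)\deg(\phi_{L_1}(G))\cdot(L_2|_S)\cdots(L_m|_S)$ with a positive integer coefficient. Since the inductive hypothesis gives $(L_2|_S)\cdots(L_m|_S)\geq 2$ and the remaining divisors are nef, so that all intersection numbers involved are nonnegative, in both cases $L_m\cdots L_1\geq 2$, which completes the induction.
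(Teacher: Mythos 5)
Your proof is correct and follows essentially the same route as the paper: induction on $m$, with the base case excluding $G\cong\P^1$ via $\kappa(G)\geq 0$, and the inductive step peeling off $L_1$ by restricting to a general (connected piece of a) member of $|L_1|$, checking $\kappa\geq 0$ by adjunction and the drop of at most one in $\dim\phi_{L_i}$ upon restriction. Your case split according to $\dim\phi_{L_1}(G)$ is exactly what the paper packages into the single phrase ``a connected component of a general element of $|L_1|$'' (when $\dim\phi_{L_1}(G)=1$ these components are the general Stein fibres), and your explicit bookkeeping of the multiplicity $\deg(\pi)\deg(\phi_{L_1}(G))$ in the final intersection computation is if anything slightly more careful than the paper's.
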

\begin{proof}
The proof is by induction on $m \ge 1$. 

If $m=1$, then $\deg L_1 \ge 2$, for otherwise $G$ would be isomorphic to $\mathbb P^1$ via $\phi_{L_1}$, which contradicts the assumption that $\kappa(G) \ge 0$. 

Assume now that $m \ge 2$ and that the statement is true in dimension $m-1$. Let $D$ be a connected component of a general element of $|L_1|$, which is smooth by Bertini's theorem. As $D$ is general, it is not contained in the base locus of $m K_G$ with $m$ any fixed integer; hence the hypothesis $\kappa(G)\geq 0$, together with the adjunction formula $mK_D=(mK_G+mD)|_D$, implies that $\kappa(D)\geq 0$. Moreover, since the elements of $|L_1|$ cover $X$ and  $\dim \phi_{L{_i}}(G) \ge i$, we have that $\dim \phi_{L{_i}_{|D}}(D) \ge i-1$ for any $i=2, \ldots, m-1$.
Hence we can apply  induction to the variety $D$ and the divisors ${L_2}_{|D}, \ldots ,{L_m}_{|D}$ and we get
$$
L_m\cdots L_1=(L_m)_{|D}\cdots (L_2)_{|D}\geq 2.
$$
\end{proof}

\begin{proposition}[Noether inequality Ibis]\label{P:Noether1bis} 
	Let $F$ be a normal irreducible projective variety of dimension $n \ge 1$. Let $k$ and $h$ two natural numbers such that $h+k\leq n$.
	Suppose that we are given
	\begin{enumerate}[(i)]
		\item a nef $\Q$-Cartier $\Q$-divisor $H$ such that $\dim \phi_H(F)=k$;
		\item nef $\Q$-Cartier $\Q$-divisors $L_{k+1},\ldots, L_{k+h}$ such that $\dim \phi_{L_i}(F)\geq i$ for any $k+1\leq i \leq k+h$; 
		\item a nef and big Cartier divisor $M$.

	\end{enumerate}
	Then 
	$$
	M^{n-k-h}\cdot L_{k+h} \cdots  L_{k+1} \cdot H^{k} \ge h^0(F,\lfloor H\rfloor)-k.
	$$	
\end{proposition}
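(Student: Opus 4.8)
The plan is to follow the strategy of Proposition \ref{P:Noether1}, replacing the role played there by the hypothesis $\kappa(F)\geq 0$ (together with Lemma \ref{L:LiG}) with the nef and big divisor $M$; this is what will produce the factor $1$ in the conclusion, in place of the factor $2$. \textbf{Step 1 (reduction to the base point free case).} Exactly as in the proof of Proposition \ref{P:Noether1}, I would take a common resolution $\pi\colon F'\to F$ of the maps $\phi_{\pi^*H}$ and $\phi_{\pi^*L_i}$ with $F'$ smooth, and use Lemma \ref{lem:nakayama} to write $\pi^*H\sim H'+D$ and $\pi^*L_i\sim L_i'+E_i$ with $H',L_i'$ base point free Cartier divisors and $D,E_i$ effective, in a way that preserves all the relevant image dimensions and satisfies $H^0(F,\lfloor H\rfloor)\cong H^0(F',H')$. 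The only new point is that $M':=\pi^*M$ is again nef and big: nefness is preserved under pull-back, and $(M')^n=M^n>0$ because $\pi$ is birational. Expanding the intersection product and discarding the effective contributions (all remaining factors being nef) then reduces the statement to the case in which $F$ is smooth, $H$ and the $L_i$ are base point free Cartier divisors and $M$ is a nef and big Cartier divisor.

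\textbf{Step 2 (passing to a general fibre).} I would then consider the morphism $\phi_H\colon F\twoheadrightarrow B\subseteq \mathbb P^r$ with $r=h^0(F,H)-1$ and $\dim B=k$. Since $B$ is non-degenerate, $\deg B\geq r-k+1=h^0(F,H)-k$. Taking the Stein factorisation $\phi_H\colon F\xrightarrow{\psi}\tilde B\xrightarrow{\rho} B$ and letting $G$ be a general fibre of $\psi$ (smooth and irreducible of dimension $n-k$ by Bertini), the projection formula gives, exactly as in \eqref{E:interLH},
$$
M^{n-k-h}\cdot L_{k+h}\cdots L_{k+1}\cdot H^k=\deg(\rho)\,\deg(B)\cdot (M_{|G})^{n-k-h}\cdot (L_{k+h})_{|G}\cdots (L_{k+1})_{|G}.
$$
As $\deg(\rho)\geq 1$ and $\deg B\geq h^0(F,H)-k$, it then suffices to establish the fibrewise inequality
$$
(M_{|G})^{n-k-h}\cdot (L_{k+h})_{|G}\cdots (L_{k+1})_{|G}\geq 1.
$$

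\textbf{Step 3 (the fibrewise inequality).} On $G$ the restrictions $(L_i)_{|G}$ are base point free with $\dim\phi_{(L_i)_{|G}}(G)\geq i-k$ (because the fibres of $\psi$ cover $F$, as in Proposition \ref{P:Noether1}), and, crucially, $M_{|G}$ is again nef and big: by Kodaira's lemma one writes $M\sim_{\Q}A+E$ with $A$ an ample $\Q$-divisor and $E\geq 0$ effective, and since $G$ is general it is not contained in $\supp(E)$, so $M_{|G}\sim_{\Q}A_{|G}+E_{|G}$ is the sum of an ample and an effective divisor. I would then prove, by induction on $h$, the following variant of Lemma \ref{L:LiG}: if $G$ is smooth irreducible of dimension $m\geq h$, if $M$ is nef and big, and if $L_1,\dots,L_h$ are base point free with $\dim\phi_{L_i}(G)\geq i$, then $M^{m-h}\cdot L_h\cdots L_1\geq 1$. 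The base case $h=0$ is precisely $M^m>0$, i.e. the bigness of $M$. For the inductive step I would cut with a general member of $|L_1|$, take a connected component $D$ (smooth of dimension $m-1$, with $\dim\phi_{(L_i)_{|D}}(D)\geq i-1$ for $i\geq 2$), note that $M_{|D}$ is again nef and big by the same Kodaira argument (a general $D$ avoids $\supp(E)$), and conclude by the inductive hypothesis applied to $D$, using that all factors are nef so that restricting to a single component only decreases the intersection number. Since all divisors involved are Cartier, the resulting positive intersection number is an integer, hence $\geq 1$.

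\textbf{Main obstacle.} The delicate point is precisely the claim inside Step 3 that bigness of $M$ survives restriction to the moving subvarieties $G$ and $D$; this is where their generality is essential, and it is handled through the Kodaira decomposition $M\sim_{\Q}A+E$. Once this is in place, the remainder is the same inductive dimension count as in Lemma \ref{L:LiG}, with the only extra bookkeeping being the passage to a connected component of a possibly reducible general member of $|L_1|$.
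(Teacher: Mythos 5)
Your proposal is correct and follows essentially the same route as the paper: the same reduction to the base point free case, the same Stein factorisation and degree bound $\deg B\ge h^0(F,\lfloor H\rfloor)-k$, and the same inductive fibrewise inequality, which is exactly Lemma \ref{L:LiGbis} (you induct on $h$ where the paper inducts on $m=\dim G$, but the arguments are identical). The only added value is your explicit justification, via the decomposition $M\sim_{\Q}A+E$ with $A$ ample and $E$ effective, that bigness of $M$ survives restriction to the general subvarieties $G$ and $D$ -- a point the paper's proof asserts without comment.
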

\begin{proof}	
		First of all, with the same proof of the reduction step in Proposition \ref{P:Noether1}, 	we can make the following

	\un{Reduction}: we may assume that $F$ is smooth, $H, L_{k+1}, \ldots, L_{k+h}$    are base point free (Cartier) divisors and $M$ is a big and nef (Cartier) divisor.

	\vspace{0.2cm}

	Consider the morphism $\phi_H: F \twoheadrightarrow B \subseteq \mathbb P^r\cong \P(H^0(F,H)^{\vee})$, where $r=h^0(F,H)-1$. Since the  image of $\phi_H$ is a non-degenerate integral variety, then we have  (see e.g. \cite{EH})
	\begin{equation}\label{E:degBbis}
	\deg B \ge r- \dim B +1 = h^0(F,H) - k. 
	\end{equation}
	Let $\phi_H:F \xrightarrow{\psi} \tilde B \xrightarrow{\pi} B$ be the Stein factorisation of $\phi_H$, where $\psi: F \to \tilde B$ has connected fibres and $\pi: \tilde B \to B$ is a finite map and let $G$ be a general fibre of $\psi$.
	Note that 
	\begin{equation}\label{E:interLHbis}
	M^{n-k-h}\cdot L_{k+h} \cdots  L_{k+1} \cdot H^{k} \geq \deg(\pi) \deg(B)(M_{|G})^{n-k-h}\cdot (L_{k+h})_{|G} \cdots  (L_{k+1})_{|G}.
	\end{equation}
	If $k=n$ then we conclude using \eqref{E:degBbis} and \eqref{E:interLHbis}. If $k<n$ then consider the   general fibre $G$ of $\psi$ which  is  smooth (by Bertini's theorem), 
	connected (and hence irreducible) of dimension $n-k\geq 1$. Using the fact that the fibers of $\psi$ cover the entire variety $F$ (and $G$ is a general fiber of $\psi$), we have that 
	\begin{itemize}
		\item for any $k+1\leq i \leq k+h$, the divisor ${L_i}_{|G}$ is  base point free with  
		$$\dim \phi_{{L_i}_{|G}}(G)=\dim \phi_{L_i}(F)-k\geq i-k,$$ 
		\item $M_{|G}$ is nef and big.
	\end{itemize}
	Now, Lemma  \ref{L:LiGbis} below implies that 
	\begin{equation}\label{E:LF2bis}
	(M_{|B})^{n-k-h}\cdot (L_{k+h})_{|B} \cdots  (L_{k+1})_{|B} \ge 1.
	\end{equation}
	We conclude by combining \eqref{E:degBbis}, \eqref{E:interLHbis} and \eqref{E:LF2bis}.    
\end{proof}

\begin{lemma}\label{L:LiGbis}
	Let $G$ be a smooth projective irreducible variety of dimension $m\geq 0$. Let  $L_1, \ldots ,L_h$ be  base point free divisors, for some $0\leq h \leq m$, such that $\dim \phi_{L{_i}}(G) \ge i$ for any $i=1, \ldots, h$ and let $M$ be a nef and big divisor. Then we have that 
	 $$
	 M^{m-h} \cdot L_h\cdots L_1\geq 1.
	 $$
\end{lemma}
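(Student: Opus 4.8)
The plan is to mirror the proof of Lemma \ref{L:LiG}, inducting on $m=\dim G$ by cutting with a general member of $|L_1|$, with one essential modification to accommodate the auxiliary divisor $M$. First I would observe that, since $L_1,\dots,L_h$ are base point free (hence nef and Cartier) and $M$ is nef on the smooth variety $G$, the intersection number $M^{m-h}\cdot L_h\cdots L_1$ is a nonnegative integer; hence it suffices to prove that it is strictly positive, which forces it to be $\geq 1$.

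The key preliminary step is to replace $M$ by an ample divisor. Since $M$ is nef and big, we may write $M\sim_{\Q} A+E$ with $A$ an ample $\Q$-divisor and $E\geq 0$ an effective $\Q$-divisor. Peeling off the factors of $M$ one at a time and discarding at each stage the contribution of $E$ (which is nonnegative, being the intersection of an effective divisor with nef classes), we obtain
$$M^{m-h}\cdot L_h\cdots L_1 \;\geq\; A^{m-h}\cdot L_h\cdots L_1.$$
Thus it is enough to prove $A^{m-h}\cdot L_h\cdots L_1>0$ for an ample $\Q$-divisor $A$ (when $h=m$ no factor of $M$ occurs and this reduction is vacuous). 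The point of this move is that ampleness — unlike bigness — is preserved under restriction to subvarieties, which is exactly what the induction requires.

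Now I would induct on $m$. The base case $m=1$ is immediate: on a curve either $h=0$ and $\deg A\geq 1$, or $h=1$ and $\phi_{L_1}$ is non-constant, so $\deg L_1\geq 1$. For the inductive step with $h\geq 1$, take $D$ to be a connected component of a general member of $|L_1|$, which is smooth and irreducible of dimension $m-1$ by Bertini. Exactly as in Lemma \ref{L:LiG}, since the members of $|L_1|$ cover $G$, the restrictions $(L_i)_{|D}$ are base point free with $\dim\phi_{(L_i)_{|D}}(D)\geq i-1$ for $2\leq i\leq h$, while $A_{|D}$ is again ample. Writing $L_1\sim D+R$ with $R\geq 0$ and using nefness to drop the $R$-term,
$$A^{m-h}\cdot L_h\cdots L_1\;\geq\; A^{m-h}\cdot L_h\cdots L_2\cdot [D] \;=\; (A_{|D})^{m-h}\cdot (L_h)_{|D}\cdots (L_2)_{|D},$$
and the right-hand side is positive by the inductive hypothesis applied to $D$, the $h-1$ divisors $(L_2)_{|D},\dots,(L_h)_{|D}$ and the ample $\Q$-divisor $A_{|D}$ (note $m-h=(m-1)-(h-1)$). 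If instead $h=0$, then $A^{m}>0$ since $A$ is ample. This completes the induction.

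The only real subtlety — and the reason the naive induction copying Lemma \ref{L:LiG} does not go through verbatim — is that the restriction $M_{|D}$ of a nef and big divisor to a general divisor need not remain big; the preliminary reduction to an ample $A$ is precisely what circumvents this, since ampleness restricts well. The remaining ingredient, the estimate $\dim\phi_{(L_i)_{|D}}(D)\geq i-1$, is the same bookkeeping already used in Lemma \ref{L:LiG}, justified by Lemma \ref{lem:nakayama} after passing to a resolution.
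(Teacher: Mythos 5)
Your proof is correct, and its inductive skeleton — cutting with a connected component $D$ of a general member of $|L_1|$, restricting the remaining divisors, and applying the hypothesis in dimension $m-1$ — is exactly the paper's. The one place you diverge is the treatment of $M$: you first replace $M$ by an ample $\Q$-divisor $A$ via Kodaira's lemma $M\sim_{\Q}A+E$ and discard the effective part, whereas the paper simply carries $M$ through the induction, asserting that $M_{|D}$ is again nef and big. Your stated motivation for this detour — that bigness need not survive restriction — is true for an arbitrary subvariety but not for the $D$ at hand: since the connected components of general members of the base point free system $|L_1|$ cover $G$, a general such $D$ is not contained in $\supp E$, so $M_{|D}\sim_{\Q}A_{|D}+E_{|D}$ with $A_{|D}$ ample and $E_{|D}$ effective, and hence $M_{|D}$ is again nef and big. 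So the paper's shorter induction is sound, and your preliminary reduction is a harmless, slightly more explicit repackaging of the same Kodaira-lemma argument, performed once at the outset rather than implicitly at each restriction. Everything else — the integrality remark reducing the claim to strict positivity, the base case, and the bookkeeping $\dim\phi_{(L_i)_{|D}}(D)\geq i-1$ — matches the paper.
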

\begin{proof}
The proof is by induction on $m \ge 1$. 

If $m=1$, then we conclude since either $\deg L_1 \ge 1$ (because $\phi_{L_1}$ is generically finite) or $\deg M\geq 1$ (because $M$ is big).

Assume now that $m \ge 2$ and that the statement is true in dimension $m-1$. If $h=0$ then we have 
$$
M^m\geq 1
$$
since $M$ is a nef and big divisor. If $h\geq 1$, then we let $D$ to be a connected component of a general element of $|L_1|$, which is smooth by Bertini's theorem. Moreover, since the elements of $|L_1|$ cover $X$, we will have that  ${L_i}_{|D}$ is base point free with $\dim \phi_{L{_i}_{|D}}(D) \ge i-1$ for any $i=2, \ldots, h$ and that $M_{|D}$ is nef and big. 
Hence we can apply  induction to the variety $D$ and the divisors ${L_2}_{|D}, \ldots ,{L_h}_{|D}, M_{|D}$ and we get
$$
M^{m-h}\cdot L_h\cdots L_1=M_{|D}^{(m-1)-(h-1)}\cdot (L_h)_{|D}\cdots (L_2)_{|D}\geq 1.
$$
\end{proof}

\begin{corollary}\label{C:Noether1}
Let $F$ be a normal irreducible projective variety of dimension $n$ and let $H$  be a nef $\Q$-Cartier $\Q$-divisor on $F$ such that $\phi_H$ is generically finite.
\begin{enumerate}[(i)]
\item \label{C:Noether1i} We have that 
$$H^n\ge h^0(F,\lfloor H\rfloor)-n.$$
\item  \label{C:Noether1ii} If, furthermore, $n=\dim F \ge 2$ and $\kappa(F)\geq 0$, then 
	$$
	H^n\ge 2h^0(F,\lfloor H\rfloor)-2n \ge 2.
	$$
\end{enumerate}		
\end{corollary}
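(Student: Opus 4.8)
The plan is to recognize that Corollary \ref{C:Noether1} is precisely the endpoint case $k=n$ of the two Noether inequalities already established, so that no new geometric input is required beyond a correct bookkeeping of the degenerate situation. The key observation is that, since $F$ is irreducible of dimension $n$, the rational map $\phi_H$ is generically finite if and only if $\dim\phi_H(F)=n$; hence the hypothesis of the corollary places us exactly in the case where the parameter $k$ in Propositions \ref{P:Noether1} and \ref{P:Noether1bis} equals $n$. In that case the chain of auxiliary divisors $L_{k+1},\dots$ indexed by $k+1\le i\le n$ is empty, and the corresponding product is read as $1$.

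For part (\ref{C:Noether1ii}), I would simply invoke Proposition \ref{P:Noether1} with $k=n$. All of its hypotheses are met: $F$ is normal irreducible projective of dimension $n\ge 2$ with $\kappa(F)\ge 0$, and $H$ is a nef $\Q$-Cartier $\Q$-divisor with $\dim\phi_H(F)=n=k$; there are no divisors $L_i$ to supply. The conclusion $L_n\cdots L_{k+1}\cdot H^k\ge 2h^0(F,\lfloor H\rfloor)-2k\ge 2$ then reads $H^n\ge 2h^0(F,\lfloor H\rfloor)-2n\ge 2$, which is exactly the claim.

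For part (\ref{C:Noether1i}), I would invoke Proposition \ref{P:Noether1bis} with $k=n$ and $h=0$, so that $h+k=n\le n$ as required and again no divisors $L_i$ appear. The power of the auxiliary nef and big Cartier divisor is then $M^{\,n-k-h}=M^0=1$, so $M$ drops out of the conclusion; one only needs such an $M$ to exist, which it does since $F$ is projective (take any ample divisor). The conclusion $M^{\,n-k-h}\cdot L_{k+h}\cdots L_{k+1}\cdot H^k\ge h^0(F,\lfloor H\rfloor)-k$ then becomes $H^n\ge h^0(F,\lfloor H\rfloor)-n$, as desired. Note that part (\ref{C:Noether1ii}) is the stronger inequality but requires the additional hypotheses $n\ge 2$ and $\kappa(F)\ge 0$, whereas part (\ref{C:Noether1i}) holds without them.

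Since both statements are genuine specializations, I do not expect any real obstacle; the only point demanding care is checking that the degenerate case $k=n$ is genuinely within the scope of the two Propositions (it is: the index range $k+1\le i\le n$ is empty, and the proof of Proposition \ref{P:Noether1bis} treats the case $k=n$ directly). Should one prefer a self-contained argument for part (\ref{C:Noether1i}), the same estimate drops out quickly: passing to a resolution of indeterminacy $\mu\colon V\to F$ of $\phi_H$, Lemma \ref{lem:nakayama} gives $\mu^*H\sim \wt H+E$ with $\wt H$ base point free and $E$ effective, and the induced morphism $\wt\phi_H=\phi_{\wt H}$ maps $V$ onto a non-degenerate $n$-dimensional variety $B\subseteq \P^r$ with $r=h^0(F,\lfloor H\rfloor)-1$; non-degeneracy then yields $\deg B\ge r-n+1=h^0(F,\lfloor H\rfloor)-n$, and nefness of $\mu^*H$ gives $H^n=(\mu^*H)^n\ge \wt H^{\,n}=\deg(\wt\phi_H)\deg B\ge \deg B$.
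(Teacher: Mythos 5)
Your proposal is correct and is essentially the paper's own proof: both parts are obtained by specializing the two Noether inequalities to the degenerate case $k=n$ (with the empty product of auxiliary divisors), part (\ref{C:Noether1ii}) from Proposition \ref{P:Noether1} and part (\ref{C:Noether1i}) from Proposition \ref{P:Noether1bis} with $h=0$. In fact your matching of propositions to parts is the right one — the paper's one-line proof appears to cite them in swapped order — and your check that $k=n$ lies within the scope of both proofs, together with the optional self-contained minimal-degree argument for part (\ref{C:Noether1i}), is sound.
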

Special cases of the above Corollary for $H$ an integral Cartier divisor are known: part \eqref{C:Noether1i} is classical (see \cite{EH}); part  \eqref{C:Noether1ii} was proved by Kobayashi in \cite[Proposition 2.1]{Kobayashi}  under the stronger assumption that  $p_g(F)>0$ and by Shin in \cite[Theorem 2]{Shin} for $n=2$.
\begin{proof}
Part \eqref{C:Noether1i} follows Proposition \ref{P:Noether1bis} with $k=n$ and $h=0$; part \eqref{C:Noether1ii} follows from Proposition \ref{P:Noether1} with $k=n$.
\end{proof}

\begin{remark}\label{R:sharpNoe}
The inequalities in Corollary \ref{C:Noether1} are sharp and the cases where equalities holds are classified, at least if $H$ is an integral Cartier divisor. Indeed:
\begin{enumerate}[(i)]
\item If $(F,H)$ is a pair as in Corollary  \ref{C:Noether1} with $H$ integral and Cartier for which $H^n=h^0(F,H)-n$ then $H$ is base point free and the image of $F$ under $\phi_H$ is a a non-degenerate normal irreducible $n$-dimensional projective variety $Z\subseteq \P^{h^0(F,L)-1}$ of minimal degree, i.e. $\deg Z=h^0(Z,\O_Z(1))-n$
(see \cite{EH}).
\item Kobayashi proved in \cite[Prop. 2.2]{Kobayashi} that if $(F,H)$ is a pair as in Corollary  \ref{C:Noether1} with $H$ integral and Cartier for which $H^n=2h^0(F,H)-2n$ then $H$ is base point free and one of the following two conditions are satisfied:
\begin{enumerate}[(a)]
\item \label{case-a} $\phi_H$ is birational;
\item \label{case-b} $\phi_H$ is a generically finite double cover of a non-degenerate normal irreducible $n$-dimensional projective variety $Z\subseteq \P^{h^0(F,L)-1}$ of minimal degree. 
\end{enumerate}
Both cases do indeed occur (see \cite[Ex. 2.3]{Kobayashi}): case \eqref{case-a} occurs for example if $F$ is a K3 surface and $H$ is a non-hyperelliptic big and nef divisor (we are not aware of similar examples in higher dimensions); case \eqref{case-b} includes the hyperelliptic polarized  varieties studied by T. Fujita in \cite{Fuj}.  
\end{enumerate}
\end{remark}

\begin{proposition}(Noether inequality II) \label{P:bound1}
	Let $F$ be a normal irreducible projective variety of dimension $n=\dim F \ge 1$. Let $L$ and $M$ two Cartier divisors on $F$ such that 
	\begin{itemize}
	\item $L$ is base point free (hence nef) and $\phi_L$ is generically finite;
	\item $M$ is nef;
	\item $L-M$ is effective and $L^n - L^{n-1}\cdot M \ge 1$.
	\end{itemize}
	Then we have that 
	$$
	L^{n-1}\cdot M \ge 
	\begin{cases} 
	2h^0(F,M) - 2 & \quad \text{ if  } \dim F\geq 2 \: \text{ and } \kappa(F)\geq 0;\\
	h^0(F,M) - 1 & \quad \text{ otherwise. } 
	\end{cases}
		$$
\end{proposition}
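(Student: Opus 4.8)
The plan is to argue by induction on $n=\dim F$, cutting down by general members of the base-point-free system $|L|$. First I would run the same reduction as in Proposition \ref{P:Noether1}: passing to a resolution $\pi\colon F'\to F$ (which preserves $\kappa$, the relevant intersection numbers, and $h^0$ via Lemma \ref{lem:nakayama}) and noting that $L$, being already base point free, pulls back to a base-point-free divisor, I may assume $F$ smooth. The basic mechanism is as follows. Since $L^n-L^{n-1}\cdot M\ge 1$ forces $E:=L-M$ to be a nonzero effective divisor, we have $h^0(F,M-L)=h^0(F,\O_F(-E))=0$; hence for a general (smooth, by Bertini) $D\in|L|$ the restriction sequence
\[
0\to \O_F(M-L)\to \O_F(M)\to \O_D(M_{|D})\to 0
\]
gives an injection $H^0(F,M)\hookrightarrow H^0(D,M_{|D})$, so in particular $h^0(F,M)\le h^0(D,M_{|D})$. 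Moreover the triple $(D,L_{|D},M_{|D})$ again satisfies the hypotheses: $L_{|D}$ is base point free and generically finite (as $\phi_L(D)$ is a general hyperplane section of $\phi_L(F)$), $M_{|D}$ is nef, $(L-M)_{|D}$ is effective, and $(L_{|D})^{n-1}-(L_{|D})^{n-2}\cdot M_{|D}=L^n-L^{n-1}\cdot M\ge 1$; finally $\kappa(F)\ge 0$ forces $\kappa(D)\ge 0$ by the usual adjunction argument $mK_D=(mK_F+mL)_{|D}$.

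With this in hand the inductive step is immediate once the base cases are settled (we may assume $h^0(F,M)\ge 1$, the statement being trivial otherwise). For the second (weaker) inequality, the base case $n=1$ is the elementary bound $\deg M\ge h^0(C,M)-1$ on a curve, and the inductive step slices by $D\in|L|$ and applies the Proposition to $(D,L_{|D},M_{|D})$ to get $L^{n-1}\cdot M=(L_{|D})^{n-2}\cdot M_{|D}\ge h^0(D,M_{|D})-1\ge h^0(F,M)-1$. For the first inequality (the factor $2$) the same slicing reduces the case $\dim F\ge 3$ to dimension $n-1\ge 2$, where $\kappa(D)\ge 0$ keeps us in the factor-$2$ regime, yielding $L^{n-1}\cdot M\ge 2h^0(D,M_{|D})-2\ge 2h^0(F,M)-2$. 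The content therefore concentrates in the base case $\dim F=2$ with $\kappa(F)\ge 0$: here one cannot simply slice to a curve and invoke the Proposition, since in dimension $1$ only the weaker bound is available.

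So the hard part is the surface case, which I would treat directly. Take a general smooth curve $D\in|L|$ and consider the image $\mathfrak d$ of $H^0(F,M)$ inside $H^0(D,M_{|D})$; by the injectivity above it is a linear series of projective dimension $r=h^0(F,M)-1$ and degree $d=L\cdot M$, and the goal becomes $d\ge 2r$. If $M_{|D}$ is special, Clifford's theorem gives $r\le \dim|M_{|D}|\le d/2$ and we are done. If $M_{|D}$ is nonspecial, Riemann--Roch gives $r\le \dim|M_{|D}|=d-g$ with $g=g(D)$, and it then suffices to show $r\le g$, after which adding the two inequalities yields $2r\le d$. For $r\le g$ I would combine $h^0(F,M)\le h^0(F,L)$ (from $M\le L$), the Noether bound $h^0(F,L)\le \tfrac12 L^2+2$ of Corollary \ref{C:Noether1}(ii), and the adjunction formula $g(D)=1+\tfrac12 L\cdot(K_F+L)$ together with $L\cdot K_F\ge 0$ (as $L$ is nef and $K_F$ is pseudo-effective since $\kappa(F)\ge 0$), giving $h^0(F,M)\le \tfrac12 L^2+2\le g(D)+1$, i.e. $r\le g$. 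I expect this surface case — and in particular the nonspecial subcase, where the naive estimate $h^0(F,M)\le h^0(D,M_{|D})$ is too lossy and must be supplemented by the bound $r\le g$ — to be the main obstacle; everything else is bookkeeping driven by the restriction sequence and the inherited hypotheses.
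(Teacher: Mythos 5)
Your proof is correct and follows essentially the same strategy as the paper's: reduce to $F$ smooth, slice by a general member of $|L|$, use the vanishing of $h^0(F,M-L)$ to obtain $h^0(F,M)\le h^0(D,M_{|D})$, and settle the base cases ($n=1$ for the weak bound, $n=2$ with $\kappa\ge 0$ for the factor-$2$ bound) via Clifford and Riemann--Roch. The only, harmless, deviation is in the nonspecial surface subcase, where you derive $r\le g(D)$ by routing through $h^0(F,M)\le h^0(F,L)\le \tfrac12 L^2+2$ (Corollary \ref{C:Noether1}) and $g(D)\ge 1+\tfrac12 L^2$, whereas the paper reaches the same conclusion directly from $L^2\ge L\cdot M$ and $K_F\cdot L\ge 0$.
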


\begin{proof}
We first make the following 

\un{Reduction}: we may assume that $F$ is smooth. 

In fact, let  $\pi: F' \to F$ be a resolution of singularities. Then $\pi^*L$ and $\pi^*M$ are nef Cartier divisors on $F'$ such that $\pi^*L$ is base point free and $\pi^*L - \pi^*M$ is effective. Moreover, $\phi_{\pi^*L}$ is generically finite and $h^0(F',\pi^*M)=h^0(F,M)$ (cf.\ the reduction step in the proof of Proposition \ref{P:Noether1}). Since $(\pi^*L)^n=L^n$ and $(\pi^*L)^{n-1}\cdot \pi^*M =  L^{n-1}\cdot M$, the reduction is complete.

The proof is now by induction on $n$. The base cases are $n=2$ for the first case and $n=1$ for the second case. We need to distinguish the base cases from the inductive step. 

\un{Base case for the second inequality: $n=1$} 

If $F$ is a curve, then we have that $h^0(F,M)\leq \deg M+1$ (see e.g. \cite[Exercise IV.1.5]{Har}).


\un{Base case for the first inequality: $n=2$} 

Consider a general divisor $D\in |L|$. By Bertini theorem (using that $L$ is base point free and $\dim \phi_L(F)=2>1$), we get that $D$ is a smooth and connected curve. 
From the exact sequence 
$$
	0 \to \O_F(M-D) \xrightarrow{\cdot D} \O_F(M) \to \O_D(M_D) \to 0
	$$
and the fact that $M-D\sim M-L$ is non effective (since $L^{n-1}\cdot (M-L)<0$ and $L$ is nef by assumption), we deduce that 
\begin{equation}\label{E:H0DF}
h^0(D,M_D) \ge h^0(F,M).
\end{equation}
Consider now the divisor $M_D$ on $D$ which has  degree $\deg M_D=M\cdot L\geq 0$.
If $M_D$ is special, then   Clifford's theorem gives
\begin{equation}\label{E:1ineqD}
M \cdot L =\deg M_D \ge 2h^0(D,M_D) -2.
\end{equation}
If $M_D$ is not special, then, using Riemann-Roch and the adjunction formula, we compute
\begin{equation}\label{E:2ineqD}
	h^0(D,M_D)= 1 + L \cdot M - g(D) = 1 + L \cdot M -1- \frac{L^2 + K_F \cdot L}{2}\leq \frac{L\cdot M}{2}=\frac{\deg M_D}{2},
\end{equation}
where in the inequality we used that $K_F\cdot L\geq 0$ since $K_F$ is $\Q$-effective and $L$ is nef, and $L^2=L\cdot (M+(L-M))\geq L\cdot M$ since $L-M\geq 0$ and $L$ is nef.
	
We now conclude using \eqref{E:H0DF} and either \eqref{E:1ineqD} or \eqref{E:2ineqD}.

\un{Inductive step} 

Assume that the statement is true in dimension $n-1$ (which is at least $2$ in the first case and at least $1$ in the second case) and let us prove it in dimension $n$. Take a general element $D \in |L|$, which is a smooth connected variety of dimension $n-1$ by Bertini's theorem  (using that $L$ is base point free and $\dim \phi_L(F)=n>1$).
Since $D$ is general, the restrictions $L_D$ and $M_D$ will satisfy the same assumptions of $L$ and $M$. Moreover, if $\kappa(F)\geq 0$, then $\kappa(D)\geq 0$.
	Hence, we can apply the induction hypothesis to the line bundles $M_D$ and $L_D$ on $D$ in order to deduce that 
	\begin{equation}\label{E:induH0D}
	L^{n-1} \cdot M = L_D^{n-2} \cdot M_D \ge 
	\begin{cases} 
	2h^0(D,M_D) - 2 & \quad \text{ if  }  \kappa(F)\geq 0;\\
	h^0(D,M_D) - 1 & \quad \text{ otherwise. } 
	\end{cases}
	\end{equation}
	We conclude using this and observing that \eqref{E:H0DF} holds true also in the present case (with the same proof). 
\end{proof}

\begin{remark}\label{R:sharpNoe2}
Both the inequalities in Proposition \ref{P:bound1} are sharp, as we now show for any $n\geq 2$ (for $n=1$ it is obvious). 
\begin{enumerate}[(A)]
\item \label{R:sharpNoe2A} 
Let $F=\P^1\times \P^{n-1}$ (with $n\geq 2$) and denote by $p_1$ and $p_2$ the two projections. Given a divisor $D$ on $\P^1$ of positive degree, set $M:=p_1^*D\leq L:=p_1^*D+p_2^*H$ where $H$ is a hyperplane divisor on $\P^{n-1}$. 
Then $M$ is base point free (and hence nef), $L$ is very ample and we have that 
$$L^{n}=n\deg D>L^{n-1}\cdot M=\deg D=h^0(\P^1,D)-1=h^0(F,M)-1.$$ 

\item \label{R:sharpNoe2B} 
Let $\pi: F\to  \P^1\times \P^{n-1}$ (with $n\geq 2$) be a finite double cover ramified along a smooth divisor in  $|2(m_1p_1^*(p)+m_2p_2^*(H))|$, with $m_1\geq 3$ and $m_2\geq n+1$, where $p$ is a point of $\P^1$ and $H$ is a hyperplane divisor on $\P^{n-1}$ and $p_1$ and $p_2$ are the two projections of $\P^1\times \P^{n-1}$.
Note that 
$$K_F=\pi^*(K_{\P^1\times \P^{n-1}}+m_1p_1^*(p)+m_2p_2^*(H))=(m_1-2)\pi_1^*p+(m_2-n)\pi_2^*H,$$
where $\pi_i=p_i\circ \pi$ for $i=1,2$. This shows that $K_F$ is ample, so that $F$ is a variety of general type. 
  Given a divisor $D$ on $\P^1$ of positive degree, set $M:=\pi_1^*D\leq L:=\pi_1^*D+\pi_2^*H$. 
Then $M$ is base point free (and hence nef), $L$ is  ample and base point free and we have that 
$$L^{n}=2n\deg D>L^{n-1}\cdot M=2\deg D=2[h^0(\P^1,D)-1]=2[h^0(F,M)-1].$$ 
\end{enumerate}
The above two examples will be generalized in Example \ref{Ex:Pn}.
\end{remark}

\vspace{0.1cm}

The following result is not used in the current manuscript as it does not cover the case $L^n-L^{n-1}M=1$, but we believe it is interesting and can be applied in situations similar to the one of this work.

\begin{proposition}(Noether inequality III) \label{P:bound2}
	Let $F$ be a normal projective irreducible variety of dimension $n \ge 2$  with $\kappa(F) \ge 0$.

	Let $L$ and $M$ be nef Cartier divisors on $F$ such that  $|M|$ is base point free. Assume that  $\phi_L$ is generically finite and that $L-M$ is effective.  Then 
	
	$$
	L^{n-1}\cdot M \ge  
	\begin{cases}
	2h^0(F,M) - 2 & \text{ if }  L^n - L^{n-1}\cdot M \ge 2, \\
	2h^0(F,L)-2n  & \text{ if }  L^n - L^{n-1}\cdot M =0.
	\end{cases}
	$$
	
\end{proposition}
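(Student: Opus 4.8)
Since $F$ is already assumed smooth, no preliminary reduction is needed, and I note at once that $\phi_L$ generically finite gives $L^n>0$. The second case is essentially immediate: if $L^n-L^{n-1}\cdot M=0$ then $L^{n-1}\cdot M=L^n$, and since $\phi_L$ is generically finite, $n\geq 2$ and $\kappa(F)\geq 0$, Corollary~\ref{C:Noether1}(ii) applied to $L$ gives $L^n\geq 2h^0(F,L)-2n$, which is exactly the asserted bound.

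For the first case ($L^n-L^{n-1}\cdot M\geq 2$) the plan is to argue by induction on $n$, splitting according to $k:=\dim\phi_M(F)$. If $k=0$ then $M\sim 0$ and the statement is vacuous; if $k=1$ then Proposition~\ref{P:Noether1} with $H=M$ and $L_2=\dots=L_n=L$ (admissible, since $\phi_L$ generically finite forces $\dim\phi_L(F)=n\geq i$) yields directly $L^{n-1}\cdot M\geq 2h^0(F,M)-2$; both arguments are uniform in $n$. The real work is the case $k\geq 2$. Here I restrict to a general $D\in|M|$, which by Bertini (as $|M|$ is base point free with $\dim\phi_M(F)\geq 2$) is smooth and irreducible of dimension $n-1$, with $\kappa(D)\geq 0$ by adjunction. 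Writing $L_D,M_D$ for the restrictions, the triple $(D,L_D,M_D)$ satisfies all the running hypotheses (note $\phi_{L_D}$ is generically finite because $(L_D)^{n-1}=L^{n-1}\cdot M>0$), the relevant identities being $(L_D)^{n-1}=L^{n-1}\cdot M$ and $(L_D)^{n-2}\cdot M_D=L^{n-2}\cdot M^2$; moreover the sequence $0\to\O_F\to\O_F(M)\to\O_D(M_D)\to 0$ coming from $D\in|M|$ gives $h^0(D,M_D)\geq h^0(F,M)-1$. Granting that the relative gap of $(L_D,M_D)$ is again $\geq 2$, the inductive hypothesis applied to $D$ yields $L^{n-2}\cdot M^2\geq 2h^0(D,M_D)-2$, and then
\[
L^{n-1}\cdot M=(L_D)^{n-1}=(L_D)^{n-2}\cdot M_D+\bigl((L_D)^{n-1}-(L_D)^{n-2}\cdot M_D\bigr)\geq\bigl(2h^0(D,M_D)-2\bigr)+2\geq 2h^0(F,M)-2.
\]

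The main obstacle, and the only genuinely new point, is to verify that the relative gap of $(L_D,M_D)$ is $\geq 2$ whenever $k\geq 2$ and $L^n-L^{n-1}\cdot M\geq 2$. Writing $s_j:=L^{n-j}\cdot M^j$, this gap equals $s_1-s_2$, while the gap on $F$ is $s_0-s_1\geq 2$. Since $L-M$ is effective and $L,M$ are nef one has $s_0\geq s_1\geq\dots\geq s_k$, and Proposition~\ref{P:Noether1} (with $H=M$, $\dim\phi_M(F)=k$, the remaining divisors equal to $L$) gives $s_k\geq 2h^0(F,M)-2k\geq 2$. The Khovanskii--Teissier inequality for the nef divisors $L,M$ gives the log-concavity $s_1^2\geq s_0 s_2$. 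If $s_1-s_2=0$ then $s_1^2\geq s_0 s_1$ forces $s_0=s_1$, contradicting $s_0-s_1\geq 2$; the chain $s_1\geq s_2\geq\dots\geq s_k\geq 2$ then shows that $s_1=2$ would force $s_2=2$, hence $s_1-s_2=0$, already excluded, so in fact $s_1\geq 3$. Finally, if $s_1-s_2=1$ then $s_1^2\geq s_0(s_1-1)\geq(s_1+2)(s_1-1)=s_1^2+s_1-2$, forcing $s_1\leq 2$ and contradicting $s_1\geq 3$. Hence $s_1-s_2\geq 2$, as required; it is precisely here that the hypothesis $\geq 2$ (rather than $\geq 1$) on the gap is used, which explains the exclusion of the value $1$.

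It remains to treat the base case $n=2$, where the induction cannot recurse below dimension $2$ and the sub-case $k=2$ must be handled directly. In that case $\phi_M$ is generically finite, so $M^2\geq 2h^0(F,M)-4$ and $M^2\geq 2$ by Corollary~\ref{C:Noether1}(ii). Setting $N=L-M\geq 0$, $a=N\cdot M$ and $b=N^2$, the Hodge index inequality $a^2\geq b\,M^2$ together with $a+b=N\cdot L=L^2-L\cdot M\geq 2$ and $M^2\geq 2$ forces $a\geq 2$ (if $a\leq 1$ then $a^2\geq b M^2\geq 2b$ gives $b\leq 0$, contradicting $a+b\geq 2$), whence $L\cdot M=M^2+a\geq 2h^0(F,M)-2$. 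This completes the plan; I expect the verification that $s_1-s_2\geq 2$ (via log-concavity) to be the crux, with the surface base case providing the elementary anchor.
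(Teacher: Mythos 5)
Your proof is correct in substance but is organized quite differently from the paper's. The paper handles the case $L^n-L^{n-1}\cdot M\ge 2$ entirely on $F$: it forms the full log-concave chain $s_j:=M^j\cdot L^{n-j}$, bounds $s_c\ge 2h^0(F,M)-2c\ge 2$ (with $c=\dim\phi_M(F)$) via Proposition \ref{P:Noether1}, and then invokes the purely combinatorial Lemma \ref{lem:concave} to propagate the gap $s_0-s_1\ge 2$ down the whole chain, giving $s_1\ge s_c+2(c-1)\ge 2h^0(F,M)-2$ with no induction on dimension, no Bertini argument, and no separate surface case. You instead induct on $n$ by restricting to a general $D\in|M|$, and your key verification that $s_1-s_2\ge 2$ is exactly the one-step version of the arithmetic inside Lemma \ref{lem:concave} (log-concavity together with the two endpoints being $\ge 2$); the rest of your inductive step (the bound $h^0(D,M_D)\ge h^0(F,M)-1$ and re-adding the gap) and your Hodge-index anchor at $n=2$, $k=2$ are correct but constitute extra machinery that the paper's numerical route avoids. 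Your treatment of the second case and of the sub-cases $k=0,1$ coincides with the paper's. So the two proofs share the same arithmetic core; your geometric packaging costs a base case and repeated restriction, while the paper's version is shorter and stays in one dimension throughout.

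One justification you give is wrong as stated, though the conclusion is true and easily repaired: $\phi_{L_D}$ is not generically finite ``because $(L_D)^{n-1}=L^{n-1}\cdot M>0$''. Bigness of a nef divisor does not imply that its complete linear system defines a generically finite map (an ample divisor with $h^0=1$ on an abelian surface already gives a constant $\phi$). The correct argument is that $\phi_L$ is generically finite on $F$ and $D$ moves in a base point free system, so the sub-linear system of $|L_D|$ obtained by restricting $|L|$ to a general $D$ already defines a generically finite map, hence so does the complete system $|L_D|$. With that one line replaced, your proof goes through.
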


\begin{proof}
	We distinguish two cases.
	
	\medskip
	\textbf{First case: $\ L^n - L^{n-1}\cdot  M \ge 2$. } 
	
	Since $L$ and $M$ are nef and $ L\geq  M$,  we have the following inequalities 
	
	\begin{equation}\label{E:LMi-inter}
	L^n \ge  M \cdot  L^{n-1} \ge  M^2 \cdot L^{n-2} \ge \ldots \ge  M^{n-1} \cdot  L\geq  M^n. 
	\end{equation}

	The Hodge index theorem \cite[Prop. 2.5.1]{adjunctiontheory} says that (for any  $k=1, \ldots n-1$)
	$$
	(M^k \cdot  L^{n-k})^2 \ge ( M^{k+1} \cdot  L^{n-k-1})( M^{k-1}\cdot  L^{n-k+1}),
	$$
	or in other words that the intersection numbers in \eqref{E:LMi-inter} form a log-concave sequence.
	
	Let $0\leq c := \dim \phi_{ M}( F)\leq n$.
	
	Note $c=0$ implies $M \sim 0$ and  so
	$$ M\cdot  L^{n-1} =0=2h^0(F,M)-2,$$
	and we are done.  Hence, in what follows, we can assume that $c\geq 1$. 
	
	By applying Proposition \ref{P:Noether1} to $L$ and $M$ (using $n\geq 2$), we get 
	\begin{equation}\label{E:Mici}
	M^{c} \cdot  L^{n-c} \geq 2h^0(F,M) -2c \ge 2.
	\end{equation}
	By applying Lemma \ref{lem:concave} to the log-concave sequence \eqref{E:LMi-inter} truncated up to the term $ M^{c} \cdot L^{n-c}\geq 2$ and using the assumptions $L^n - L^{n-1}\cdot M \ge 2$ and $c\geq 1$, 
	we get 
	\begin{equation}\label{E:Midi}
	M\cdot  L^{n-1} \ge  M^{c} \cdot  L^{n-c}+2(c-1).
	\end{equation}
	We now conclude putting together \eqref{E:Mici} and \eqref{E:Midi}.

	\medskip
	\textbf{Second case: $L^n =  L^{n-1}\cdot M$. }  By applying Proposition \ref{P:Noether1} to $ L$ (using $n\geq 2$), we get
	\begin{equation*}\label{eq:L^n}
	L^{n-1}\cdot  M=  L^n \ge 2h^0(F, L)-2n.
	\end{equation*}
	
\end{proof}

The following elementary lemma about log-concave sequences was used in the above proof. 

\begin{lemma}\label{lem:concave}
	Let 
	$$
	d_n \ge d_{n-1} \ge \ldots \ge d_0 \ge 2
	$$	
	be a sequence of positive integers such that $n \ge 2$ and $d_i^2 \ge d_{i+1}d_{i-1}$ for any $0<i<n$.  If $d_n- d_{n-1} \ge 2$, then 
	$
	d_{i}-d_{i-i} \ge 2 
	$
	for any $1 \le i \le n$.  In particular,
	$$
	d_{n-1}\geq d_0+2(n-1).
	$$
\end{lemma}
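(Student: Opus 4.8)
The plan is to prove a propagation statement -- that a ``small'' gap forces every later gap to be small -- and then pass to the contrapositive. Writing the gaps as $a_i := d_i - d_{i-1}\ge 0$ (non-negativity because the sequence is non-increasing), the heart of the argument will be the implication
\[
a_i \le 1 \implies a_{i+1}\le 1 \qquad (1\le i\le n-1).
\]

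To establish this implication I would distinguish the two possible integer values of $a_i$. If $a_i=0$, then $d_i=d_{i-1}$, and the log-concavity $d_i^2\ge d_{i-1}d_{i+1}$ reads $d_i\ge d_{i+1}$; combined with $d_{i+1}\ge d_i$ this forces $d_{i+1}=d_i$, so $a_{i+1}=0$. If $a_i=1$, then $d_i=d_{i-1}+1$ and log-concavity yields
\[
d_{i+1}\le \frac{d_i^2}{d_{i-1}}=\frac{(d_{i-1}+1)^2}{d_{i-1}}=d_{i-1}+2+\frac{1}{d_{i-1}}.
\]
This is exactly where the hypothesis $d_0\ge 2$ (hence $d_{i-1}\ge 2$) is used: it guarantees $\tfrac{1}{d_{i-1}}<1$, so the integer $d_{i+1}$ must satisfy $d_{i+1}\le d_{i-1}+2=d_i+1$, i.e. $a_{i+1}\le 1$.

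With the propagation in hand, an easy induction on the index shows that if $a_i\le 1$ for some $i$, then $a_j\le 1$ for every $j\ge i$, and in particular $a_n\le 1$. Taking the contrapositive together with the hypothesis $a_n=d_n-d_{n-1}\ge 2$, no gap can be $\le 1$, so $a_i=d_i-d_{i-1}\ge 2$ for all $1\le i\le n$. Summing the gaps from $1$ to $n-1$ then gives
\[
d_{n-1}=d_0+\sum_{i=1}^{n-1}a_i\ge d_0+2(n-1),
\]
which is the final assertion.

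I expect the only delicate point to be the integer rounding in the case $a_i=1$. The temptation is to argue that the gaps are monotone, but log-concavity does \emph{not} make the sequence convex, so the gaps need not be monotone at all; the argument genuinely relies on pushing $d_{i-1}+2+\tfrac{1}{d_{i-1}}$ strictly below $d_{i-1}+3$, which works precisely because $d_{i-1}\ge 2$. Everything else is routine.
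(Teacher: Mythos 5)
Your proof is correct and is essentially the paper's argument in contrapositive form: the paper runs a descending induction and derives a contradiction from $d_{i+1}-d_i\ge 2$ together with $d_i-d_{i-1}\le 1$, which is exactly your propagation statement $a_i\le 1\implies a_{i+1}\le 1$, and both arguments hinge on the same combination of log-concavity, integrality, and the hypothesis $d_0\ge 2$ (the paper forces $d_i=2$ and then $d_{i-1}\le 1$; you bound $d_{i+1}\le d_{i-1}+2+\tfrac{1}{d_{i-1}}$). Your case split $a_i\in\{0,1\}$ is a minor cosmetic difference; the substance is identical.
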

Notice that the above Lemma is false without the assumption that $d_0\geq 2$, e.g. $d_2=4>d_1=2>d_0=1$. 
\begin{proof}	
	The proof is by descending induction on $i$. Assume by contradiction that $d_{i}-d_{i-1} \le 1 $ for some $i \in \{1,\ldots,n-1\}$. Then 
	$$
	d_i^2 \ge d_{i+1}d_{i-1} \ge (d_i + 2) (d_i-1)= d_i^2 +d_i -2, 
	$$
	which gives $d_i =2$ (since $d_i \ge 2$ for any $i$). We then have $4 \ge 4d_{i-1}$ and so $d_{i-1} = 1$, which contradicts $d_0 \ge 2$.
\end{proof}

\subsection{Castelnuovo inequalities}

Let $C$ be an irreducible non-degenerate curve in $\mathbb P^N$ of degree $d$. \emph{Catelnuovo inequality} says that

\begin{equation}\label{E:Castelnuovo}
p_g(C) \le \binom{A}{2}(N-1) + A\varepsilon,
\end{equation}
where 
\begin{equation}\label{E:Aepsi}
A=\left\lfloor \frac{d-1}{N-1} \right\rfloor  \quad \text{ and } \quad 0\leq \varepsilon= d-1 - A(N-1)<N-1.
\end{equation}

Building on this classical result, we are going to prove some new inequalities that we will later apply to the fibers of our families.

\begin{lemma}\label{L:Castelnuovo}
	Let $C$ be a smooth irreducible curve and let $L$ be a Cartier divisor on $C$ such that $\phi_L$ is birational. Let $p \in \mathbb N$ such that $K_C - pL \ge 0$. Then 
	$$
	\deg L \ge (p+1)(h^0(C,L) -2) +2.
	$$
\end{lemma}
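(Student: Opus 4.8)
The plan is to run Castelnuovo's inequality \eqref{E:Castelnuovo} on the image of $\phi_L$ and play it against the genus bound forced by $K_C - pL \ge 0$. First I would set $N := h^0(C,L)-1$ and regard $\phi_L$ as a rational map $C \dashrightarrow \P^N$ which is birational onto its image $\Gamma := \overline{\phi_L(C)}$; since $\phi_L$ is defined by the complete system $|L|$, the curve $\Gamma$ is non-degenerate, and its degree $d$ (which is the degree of the moving part of $L$) satisfies $d \le \deg L$. The cases $N \le 1$ are vacuous, since they force $C \cong \P^1$ and hence contradict $K_C - pL \ge 0$; so I may assume $N \ge 2$, whence the minimal-degree bound for non-degenerate curves (cf. \eqref{E:degB}) gives $d \ge N \ge 2$. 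It then suffices to prove the stronger inequality $d \ge (p+1)(N-1)+2$, because $\deg L \ge d$.

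Next I would record the two competing inequalities. As $\phi_L$ is birational, $g(C) = p_g(\Gamma)$, so Castelnuovo's inequality applied to $\Gamma$ gives $g(C) \le \binom{A}{2}(N-1)+A\varepsilon$ with $A,\varepsilon$ as in \eqref{E:Aepsi} (for the pair $(d,N)$). In the opposite direction, $K_C - pL \ge 0$ yields $2g(C)-2 = \deg K_C \ge p\deg L \ge pd$, that is $g(C) \ge \tfrac{pd}{2}+1$.

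The crux, and the step I expect to require the most care, is to show that these two bounds are incompatible as soon as $d \le (p+1)(N-1)+1$. The useful algebraic simplification is the identity $\binom{A}{2}(N-1)+A\varepsilon = \tfrac{A}{2}(d-N+\varepsilon)$, obtained from $A(N-1)=d-1-\varepsilon$; combined with $\varepsilon \le N-2$ it gives the clean bound $p_g(\Gamma) \le \tfrac{A}{2}(d-2)$. Now assume for contradiction $d \le (p+1)(N-1)+1$, so that $A \le p+1$. If $A \le p$, then $g(C) = p_g(\Gamma) \le \tfrac{p}{2}(d-2) = \tfrac{pd}{2}-p$, contradicting $g(C) \ge \tfrac{pd}{2}+1$. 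If instead $A = p+1$, the inequality $d-1 = (p+1)(N-1)+\varepsilon \le (p+1)(N-1)$ forces $\varepsilon = 0$ and $d = (p+1)(N-1)+1$, whence $p_g(\Gamma) = \binom{p+1}{2}(N-1)$ while $g(C) \ge \tfrac{pd}{2}+1 = \binom{p+1}{2}(N-1)+\tfrac{p}{2}+1$, again a contradiction. This rules out $d \le (p+1)(N-1)+1$, giving $d \ge (p+1)(N-1)+2 = (p+1)(h^0(C,L)-2)+2$ and hence the claim. The only genuine subtlety is the tightness of the case $A=p+1$, where the extra $\tfrac{p}{2}+1$ in the genus lower bound is exactly what is needed to close the argument.
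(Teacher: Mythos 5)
Your proof is correct and follows essentially the same route as the paper: reduce to bounding the degree $d$ of the image curve, pit Castelnuovo's inequality against the genus lower bound $g(C)\ge \tfrac{pd}{2}+1$ coming from $K_C-pL\ge 0$, and derive a contradiction from $d\le (p+1)(N-1)+1$ via the same case split $A\le p$ versus $A=p+1,\ \varepsilon=0$. The only difference is cosmetic — you simplify Castelnuovo's bound to $\tfrac{A}{2}(d-2)$ whereas the paper substitutes $d=A(N-1)+\varepsilon+1$ to reach the inequality $p+2\le A(N-1)(A-1-p)+(2A-p)\varepsilon$ — and your explicit dismissal of the case $N\le 1$ is a small point the paper leaves implicit.
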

Note that the above inequality is sharp (at least) in the following cases: (1) if $p=0$, $C$ is an elliptic curve and $L$ is a line bundle on $C$ of degree at least $3$ (so that $\phi_L(C)\subset \P^{\deg L-1}$ is an elliptic normal curve); if $p=1$, $C$ is a non-hyperelliptic curve and 
$L=K_C$; if $p=2$ and $L$ is a theta-characteristic on $C$ such that $g(C)=3[h^0(C,L)-1]$ (such a theta characteristic exists on any hyperelliptic curve $C$ of genus $g$ divisible by $3$ by \cite[p. 191]{Mum}).

\begin{proof}
Consider the map $\phi_L: C \to \mathbb P^N$, where $N=h^0(C,L)-1$. Since $\phi_L$ is birational,  the degree $d$ of $\phi_L(C)$ in $\mathbb P^N$ is at most $\deg L$. 
Hence, it is enough to prove that 
\begin{equation}\label{E:disu-d}
d \ge (p+1)(N-1) +2.
\end{equation}
Using the assumption $K_C - pL \ge 0$ and \eqref{E:Castelnuovo}, we get that 
\begin{equation}\label{E:ineq-d}
 \frac{pd}{2} + 1\leq  \frac{p\deg L}{2}+1\leq  \frac{\deg K_C}{2} +1=p_g(C)\leq \binom{A}{2}(N-1) + A\varepsilon. 
\end{equation}
Substituting $d=A(N-1)+\varepsilon+1$ into \eqref{E:ineq-d}, we arrive at the inequality 
 \begin{equation}\label{E:ineq-A}
p+2\leq A(N-1)(A-1-p)+(2A-p)\varepsilon.
\end{equation}

Assume now by contradiction that \eqref{E:disu-d} does not hold, i.e. that $d < (p+1)(N-1) +2$, which in terms of \eqref{E:Aepsi} is equivalent to   
\begin{equation}\label{E:assurdo-A}
\text{ either } A\leq p \quad \text{ or } A=p+1 \text{ and } \varepsilon=0.
\end{equation}

In the first case $A\leq p $, the inequality \eqref{E:ineq-A}, together with the fact that $2A-p\leq A$ and $0\leq \varepsilon<N-1$, gives that 
$$
p+2\leq A(N-1)(A-1-p)+(2A-p)\varepsilon\leq -A(N-1)+A\varepsilon =A(\varepsilon-N+1)\leq 0, 
$$
which is absurd. 
In the second case $A=p+1 $ and $\varepsilon=0$, the inequality \eqref{E:ineq-A} gives the same absurd 
$p+2\leq 0.$

Hence, the inequality \eqref{E:disu-d} must hold, and we are done. 

\end{proof}

The following inequality generalises the bound obtained by J. Harris in \cite[Page 44]{Harris}.

\begin{lemma}\label{L:Harris}
Let $F$ be a smooth irreducible variety of dimension $n \ge 1$ and $L$ a Cartier divisor on $F$ such that $\phi_L$ is birational. Let $p \in \mathbb N$ such that $K_F- pL \ge 0$. Then
 
\begin{equation}
L^n \ge (n+p)(h^0(F,L) -1 -n)+2.
\end{equation}
\end{lemma}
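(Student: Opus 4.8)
The plan is to prove the statement by induction on $n$, taking as base case $n=1$ precisely Lemma \ref{L:Castelnuovo}, and to run the inductive step by cutting with a general member of $|L|$ while carefully tracking how the adjunction formula shifts the parameter $p$.

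First I would reduce to the case in which $L$ is base point free. Resolving the indeterminacy of $\phi_L$ by a birational morphism $\mu\colon F'\to F$ with $F'$ smooth, and writing $\mu^*L\sim H+E$ with $H$ base point free and $E$ effective (Lemma \ref{lem:nakayama}), one has $h^0(F',H)=h^0(F,L)$ and $\phi_H=\phi_{\mu^*L}$ still birational. Moreover $K_{F'}-pH=\mu^*(K_F-pL)+(K_{F'}-\mu^*K_F)+pE\ge 0$, since all three summands are effective ($F$ being smooth, the discrepancies of $\mu$ are nonnegative, and $K_F-pL$ is an effective Cartier divisor). The delicate point of the reduction is the comparison $L^n\ge H^n$: this is where the positivity of $L$ must enter, since $(H+E)^n\ge H^n$ relies on the mixed terms $H^{n-j}E^j$ being nonnegative, which is guaranteed once $\mu^*L$ is nef. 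I expect this base-locus/positivity issue to be the genuine technical obstacle (indeed without some nefness the bound can fail, e.g. for $L=M+C$ on a K3 surface with $C$ a $(-2)$-curve contracted by $\phi_M$), the inductive core being otherwise formal.

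Assuming then $L$ base point free, I proceed by induction on $n\ge 2$. Let $D\in|L|$ be a general member; by Bertini it is smooth and irreducible since $\dim\phi_L(F)=n\ge 2$, and $L_D:=L_{|D}$ is base point free with $\phi_{L_D}$ birational onto a general hyperplane section of $\phi_L(F)$. The crux is the adjunction bookkeeping: from $K_D=(K_F+L)_{|D}$ one obtains
$$K_D-(p+1)L_D=(K_F-pL)_{|D}\ge 0,$$
so the hypothesis persists on $D$ with $p$ replaced by $p+1$. Restricting sections through $0\to\O_F\to\O_F(L)\to\O_D(L_D)\to 0$ (using $D\sim L$) gives $h^0(D,L_D)\ge h^0(F,L)-1$.

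Applying the inductive hypothesis on $D$, of dimension $n-1$ and with parameter $p+1$, yields
$$L^n=L_D^{\,n-1}\ge\big((n-1)+(p+1)\big)\big(h^0(D,L_D)-1-(n-1)\big)+2=(n+p)\big(h^0(D,L_D)-n\big)+2,$$
and combining with $h^0(D,L_D)\ge h^0(F,L)-1$ produces exactly
$$L^n\ge (n+p)\big(h^0(F,L)-1-n\big)+2.$$
What I would emphasize is that this is sharp precisely because the increment $p\mapsto p+1$ supplied by adjunction exactly compensates the drop $h^0\mapsto h^0-1$ of the number of sections, so the coefficient improves from $(n-1+p)$ on $D$ to the desired $(n+p)$ on $F$; a naive slicing that fails to promote the parameter would only give the weaker coefficient $(n-1+p)$.
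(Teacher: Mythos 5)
Your proof is correct and is essentially the paper's argument: the paper cuts down to a curve $C$ in one step by intersecting $n-1$ general members of $|L|$, notes $K_C-(p+n-1)L_C=(K_F-pL)_{|C}\ge 0$ by adjunction, applies Lemma \ref{L:Castelnuovo}, and uses $h^0(C,L_C)\ge h^0(F,L)-(n-1)$, which is exactly your induction unrolled. Your remark that the base-point-free reduction needs $L^n\ge H^n$ (hence some positivity of $L$ not present in the stated hypotheses) is a legitimate observation that the paper's one-line reduction glosses over; in all of the paper's applications the divisor fed into this lemma is nef or globally generated, so the issue does not propagate.
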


\begin{proof}
Up to resolving the map $\phi_L$ we can assume that $L$ is base point free. 

Let $C$ be the curve obtained intersecting $n-1$ general element of $|L|$. Then $C$ is a smooth irreducible curve and $K_C= (K_F+ (n-1)L)_{|C}$. In particular $K_C - (p+n-1)L_C \ge 0$. 
Since $\phi_{L_C}$ is birational (because $C$ is general), Lemma \ref{L:Castelnuovo} implies that 
$$
L^n = \deg L_C \ge (n+p)(h^0(C,L_C) -2)+2.
$$	
The conclusion follows from the fact that $h^0(C, L_C) \ge h^0(F,L) - (n-1)$.
\end{proof}

\begin{proposition}\label{P:Castelnuovo2}
Let $F$ be a smooth irreducible projective variety of dimension $n\ge 2$. Let $L$ and $M$ two nef Cartier divisors on $F$ such that 
	\begin{itemize}
		\item $\phi_L$ is birational;
		\item $M$ is base point free and $\dim \phi_M(F) =k <n$;
		\item $K_F - pL \ge 0$ for some $p \in \mathbb N$. 		
	\end{itemize}
	Then we have that 
	$$
	L^{n-k}\cdot M^k \ge (n+p-k+2)(h^0(F,M) -k).
	$$	
\end{proposition}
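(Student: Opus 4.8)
The plan is to follow the Stein-factorization strategy already used in the proofs of Propositions \ref{P:Noether1} and \ref{P:Noether1bis}: cut $F$ down to a general fibre of $\phi_M$, and on that fibre invoke the Castelnuovo-type bound of Lemma \ref{L:Harris}. First I would set up the morphism attached to $M$. Since $M$ is base point free, $\phi_M\colon F\twoheadrightarrow B\subseteq \mathbb P^r$ with $r=h^0(F,M)-1$ is a genuine morphism onto a non-degenerate integral $k$-dimensional variety, so $\deg B\ge r-\dim B+1=h^0(F,M)-k$. Taking the Stein factorisation $\phi_M\colon F\xrightarrow{\psi}\tilde B\xrightarrow{\pi}B$, a general fibre $G$ of $\psi$ is, by Bertini, a smooth connected complete intersection of $k$ general members of $|M|$, of dimension $n-k\ge 1$. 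The projection formula (exactly as in \eqref{E:interLH}) then gives
$$
M^k\cdot L^{n-k}=\deg(\pi)\,\deg(B)\,(L_{|G})^{n-k}\ge (h^0(F,M)-k)\,(L_{|G})^{n-k},
$$
so everything is reduced to proving the bound $(L_{|G})^{n-k}\ge n+p-k+2$ on the single variety $G$.

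Next I would check that $(G,L_{|G})$ satisfies the hypotheses of Lemma \ref{L:Harris} with the \emph{same} $p$. Adjunction for the complete intersection gives $K_G=(K_F+kM)_{|G}$; since $G$ is contracted by $\phi_M$ we have $M_{|G}\sim 0$, whence $K_G=(K_F)_{|G}$, and restricting the effective divisor $K_F-pL$ to the general $G$ yields $K_G-pL_{|G}\ge 0$. Moreover $L_{|G}$ is nef and $\phi_{L_{|G}}$ is birational: restricting the generically injective map $\phi_L$ to a general complete intersection keeps it generically injective, and passing to the full linear system $|L_{|G}|$ only enlarges the target, preserving birationality onto the image. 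Lemma \ref{L:Harris} then gives
$$
(L_{|G})^{n-k}\ge (n-k+p)\bigl(h^0(G,L_{|G})-1-(n-k)\bigr)+2.
$$

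Finally I would extract the clean constant by bounding $h^0(G,L_{|G})$ from below. Since $\phi_{L_{|G}}$ is defined we have $L_{|G}$ effective, so $K_G=pL_{|G}+(K_F-pL)_{|G}$ is effective and $\kappa(G)\ge 0$; in particular $G$ is not birational to $\mathbb P^{n-k}$. As $\phi_{L_{|G}}$ is birational onto a non-degenerate subvariety of dimension $n-k$, this rules out $h^0(G,L_{|G})=n-k+1$ (which would force $G$ rational) and forces $h^0(G,L_{|G})\ge n-k+2$. Plugging this in gives $(L_{|G})^{n-k}\ge (n-k+p)\cdot 1+2=n+p-k+2$, which combined with the displayed inequality yields $L^{n-k}\cdot M^k\ge (n+p-k+2)(h^0(F,M)-k)$, as desired.

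I expect the main obstacle to be the two general-position assertions on $G$ — that $\phi_{L_{|G}}$ remains birational after restriction and that $\kappa(G)\ge 0$ — together with getting the bookkeeping in the projection formula right (the factor $\deg(\pi)\deg(B)\ge h^0(F,M)-k$). Once these are in place, the heart of the argument is a direct application of Lemma \ref{L:Harris} on the general fibre, so no genuinely new estimate is needed beyond the Castelnuovo bound already established.
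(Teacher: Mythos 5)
Your proposal is correct and follows essentially the same route as the paper: Stein factorisation of $\phi_M$, the degree bound $\deg B\ge h^0(F,M)-k$, reduction to a general fibre $G$ of $\psi$, Lemma \ref{L:Harris} applied to $L_{|G}$, and the bound $h^0(G,L_{|G})\ge n-k+2$ from birationality plus non-rationality of $G$. The extra details you supply (adjunction and $M_{|G}\sim 0$ to get $K_G-pL_{|G}\ge 0$, and the density argument for generic injectivity of $\phi_L$ on $G$) are exactly the justifications the paper leaves implicit.
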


\begin{proof}
Consider  the morphism $\phi_M: F \twoheadrightarrow B \subseteq \mathbb P^r\cong \P(H^0(F,M)^{\vee})$, where $r=h^0(F,M)-1$. Since the  image of $\phi_M$ is a non-degenerate irreducible variety of dimension $k$, then we have  (see e.g. \cite{EH})
\begin{equation}\label{E:1equaz}
\deg B \ge r -k +1 = h^0(F,M) - k. 
\end{equation}
	
Let $\phi_M:F \xrightarrow{\psi} \tilde B \xrightarrow{\pi} B$ be the Stein factorisation of $\phi_M$, where $\psi: F \to \tilde B$ has connected fibres and $\pi: \tilde B \to B$ is a finite map and let $G$ be a general fibre of $\psi$.
	
Using \eqref{E:1equaz}, we get that 
\begin{equation}\label{E:2equaz}
L^{n-k}\cdot M^k \geq \deg(\pi) \deg(B) \cdot L_{G}^{n-k} \ge  L_{G}^{n-k} (h^0(F,M)-k).
\end{equation}
	
Since $G$ is a smooth irreducible variety of dimension $n-k>0$ with $K_G- pL_G \ge 0$ and $\phi_{L_G}$ is birational, by Lemma \ref{L:Harris} we obtain that 
	
\begin{equation}\label{E:3equaz}
L_{G}^{n-k} \ge (n-k+p)(h^0(G, L_G) -1 -(n-k)) + 2.
\end{equation}
	
Moreover, since $\phi_{L_G}$ is birational (onto its image) and $G$ is not rational because $K_G\geq pL_G\geq 0$, we have 
\begin{equation}\label{E:4equaz}
h^0(G, L_G)\geq n-k+2. 
\end{equation}
	
The conclusion follows by putting together \eqref{E:2equaz}, \eqref{E:3equaz} and \eqref{E:4equaz}.
\end{proof}

\begin{proposition}\label{P:Castelnuovo3}
	Let $F$ be a smooth irreducible projective variety of dimension $n=\dim F \ge 2$. Let $L$ and $M$ be two base point free Cartier divisors on $F$ such that 
	\begin{itemize}
		\item $\phi_L$ is birational;
		\item $L-M$ is effective and $L^n - L^{n-1}\cdot M \ge 1$;
		\item $K_F - pL \ge 0$ for some $p\in \N$.
	\end{itemize}
	Then we have that 
	$$
	L^{n-1}\cdot M \ge (n+p)(h^0(F,M)-2) +2.
	$$
\end{proposition}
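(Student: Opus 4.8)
The plan is to run an induction on $n\ge 2$ that cuts $F$ down by a general member of $|L|$, in the same spirit as the proof of Proposition \ref{P:bound1}, but feeding the Castelnuovo bound of Lemma \ref{L:Castelnuovo} into the base case in place of Clifford's theorem. Since $F$ is already smooth and $L,M$ are base point free by hypothesis, no preliminary reduction is needed. As $\phi_L$ is birational we have $\dim\phi_L(F)=n\ge 2$, so a general $D\in|L|$ is smooth and irreducible by Bertini. The arithmetic engine is the adjunction identity
$$K_D-(p+1)L_D=(K_F+L)|_D-(p+1)L_D=(K_F-pL)|_D\ge 0,$$
valid because $K_F-pL$ is effective and $D$ is a general member of a base point free system. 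Thus cutting by $|L|$ sends the pair $(n,p)$ to $(n-1,p+1)$ and leaves $n+p$ unchanged, which is exactly the coefficient in the statement.

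First I would check that all hypotheses descend to $D$: the divisor $L_D$ is base point free with $\phi_{L_D}$ birational, $L_D-M_D\ge 0$, and $L_D^{n-1}-L_D^{n-2}\cdot M_D=L^n-L^{n-1}\cdot M\ge 1$. The cohomological input is the inequality $h^0(D,M_D)\ge h^0(F,M)$, obtained from
$$0\to\O_F(M-L)\to\O_F(M)\to\O_D(M_D)\to 0$$
together with the vanishing $H^0(F,M-L)=0$; the latter holds because $L$ is nef while $L^{n-1}\cdot(M-L)=L^{n-1}\cdot M-L^n\le -1<0$, so $M-L$ is not effective. Applying the inductive hypothesis in dimension $n-1$ with parameter $p+1$ then gives $L^{n-1}\cdot M=L_D^{n-2}\cdot M_D\ge (n+p)(h^0(D,M_D)-2)+2\ge (n+p)(h^0(F,M)-2)+2$, as desired. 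When $h^0(F,M)\le 1$ the right-hand side is $\le 2-(n+p)\le 0$ and the inequality is trivial, since $M$ nef gives $L^{n-1}\cdot M\ge 0$.

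The induction bottoms out in dimension one, so the real content is a base case for curves, applied with parameter $p'=p+1\ge 1$ at the last cut: if $C$ is a smooth curve, $\Lambda$ is base point free with $\phi_\Lambda$ birational, $N$ is base point free with $0\le N\le\Lambda$ and $\deg(\Lambda-N)\ge 1$, and $K_C\ge p'\Lambda$, then $\deg N\ge (p'+1)(h^0(C,N)-2)+2$. Since $K_C-p'N\ge K_C-p'\Lambda\ge 0$, whenever $\phi_N$ is birational this is precisely Lemma \ref{L:Castelnuovo} applied to $N$. The remaining elementary situations, namely $h^0(C,N)\le 2$ (where $g(C)\ge 1$ forces $\deg N\ge 2$) and $p'=1$ (where $\deg N\ge 2(h^0(C,N)-1)$ from $\phi_N$ non-degenerate), I would dispose of directly.

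The hard part will be the curve base case when $\phi_N$ is not birational, say of degree $e\ge 2$ onto its image $C_N\subset\P^{h^0(C,N)-1}$, with $p'\ge 2$ and $2\le e\le p'$: here the naive bound $\deg N=e\cdot\deg C_N\ge e\,(h^0(C,N)-1)$ falls short of the target. To finish I would play the Castelnuovo inequality \eqref{E:Castelnuovo} for the geometric genus of $C_N$ against the genus lower bound $2g(C)-2\ge p'\deg\Lambda$ coming from $K_C\ge p'\Lambda$, using crucially that $\phi_\Lambda$ is birational of controlled degree (so that, by Lemma \ref{L:Castelnuovo} applied to $\Lambda$, $g(C)$ cannot be too large relative to $\deg\Lambda$). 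This is exactly what rules out the dangerous configurations in which $N$ is pulled back along a high-degree covering from a low-genus curve, and forces $\deg C_N$, hence $\deg N$, to be large enough. This interplay between the birationality of $\Lambda$ and the covering degree of $N$ is the only delicate point; everything else is bookkeeping driven by the $(n,p)\mapsto(n-1,p+1)$ reduction.
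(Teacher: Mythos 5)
Your inductive framework (cutting by a general $D\in|L|$, the adjunction bookkeeping $(n,p)\mapsto(n-1,p+1)$, and the inequality $h^0(D,M_D)\ge h^0(F,M)$ from $H^0(F,M-L)=0$) is sound and matches the skeleton of the paper's argument. The genuine gap is exactly the point you flag as ``the hard part'': the curve-level statement you need at the bottom of the induction --- for $N=M_C$ with $\phi_N$ possibly a degree $e\ge 2$ cover of its image --- is never proved, and your sketch for it does not go through as described. You invoke ``Lemma \ref{L:Castelnuovo} applied to $\Lambda$'' to claim that $g(C)$ cannot be too large relative to $\deg\Lambda$, but that lemma gives a \emph{lower} bound on $\deg\Lambda$ in terms of $h^0(\Lambda)$, not an upper bound on $g(C)$; and even granting the Castelnuovo genus bound \eqref{E:Castelnuovo} for $\phi_\Lambda(C)$, combining it with $2g(C)-2\ge p'\deg\Lambda$ only reproduces a Castelnuovo-type constraint on $(\deg\Lambda,h^0(\Lambda))$ and says nothing direct about $\deg N$ versus $h^0(C,N)$ when $N$ is pulled back along a degree-$e$ covering. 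Ruling out the configuration $2\le e\le p'$ is a Castelnuovo--Severi-type problem, not bookkeeping, and the proposal leaves it open.

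The paper sidesteps this entirely by \emph{not} inducting on the statement of the proposition (which forgets everything about $\phi_M$). Instead it splits according to $k=\dim\phi_M(F)$: the cases $k=0$ and $k=1$ are handled trivially and by Proposition \ref{P:Castelnuovo2} respectively, and for $k\ge 2$ it passes in one step to the curve $C$ cut out by $n-1$ general members of $|L|$ and \emph{proves that $\phi_{M_C}$ is birational}. The mechanism is the birationality of $\phi_L$: a general section of $L$ through one point of a general fibre of $\phi_M$ misses the other points of that fibre, so the complete-intersection curve $C$ meets each general fibre of $\phi_M$ in at most one point. Once $\phi_{M_C}$ is birational, Lemma \ref{L:Castelnuovo} applies directly to $M_C$ with parameter $n+p-1$ and the proof closes. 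If you want to salvage your version, you should import this observation: track $\dim\phi_M$ through the cuts (it drops by one at each general hyperplane section once it is positive, again because $\phi_L$ is birational), treat the low-dimensional-image cases by Proposition \ref{P:Castelnuovo2}, and use the separation argument above to guarantee birationality of $\phi_{M_C}$ at the last step --- rather than trying to prove the stronger curve lemma without it.
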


\begin{proof}	
If 	$ \dim \phi_M(F) = 0$, then $h^0(F,M)=1$ and the statement is trivial.
	
If $ \dim \phi_M(F) = 1$, then by Proposition \ref{P:Castelnuovo2} we have 
$$
L^{n-1} \cdot M \ge (n+p+1)(h^0(F,M)-1) \ge (n+p)(h^0(F,M)-2) + (n+p) \ge (n+p)(h^0(F,M)-2) + 2.
$$

Assume $ \dim \phi_M(F) \ge 2$ and let $C$ be the smooth irreducible curve obtained as intersection of $n-1$ general elements of $|L|$. 

We show by induction on $n$ that $\phi_{M_C}$ is birational. Assume first $n=2$. In this case $\phi_M$ is generically finite. Let $\{p_1,\dots , p_k\}$ be a generic fiber of $\phi_M$, and suppose that $C$ contains $p_i$ for some $i$. Then $C$ does not contain any other point $p_j$  because $\phi_L$ is birational hence a generic section which vanish at $p_i$ does not vanish at $p_j$. To treat the inductive step, we have to show that $ \dim \phi_M(D) \ge 2$ for a generic section $D$ of $L$. As $\phi_L$ is birational, a generic section of $L$ will intersect properly a generic fiber of $\phi_M$, hence the generic fiber of $\phi_M$ restricted to $D$ will have dimension one less than the generic fiber of $\phi_M$.

Therefore, using that 
$$K_C - (n+p-1)M_C = (K_F+(n-1)L)_{|C} - (n+p-1)M_C \geq (n+p-1)(L_C - M_C)  \ge 0,$$	
we can apply Lemma \ref{L:Castelnuovo} to $M_C$ and conclude that
$$
L^{n-1} \cdot M = \deg M_C \ge (p+n)(h^0(F,M_C)-2) +2.
$$

The result follows from the above inequality and the fact that $h^0(C, M_C)  \ge h^0(F,M)$, which one can prove using inductively the exact sequence 
$$
0 \to \O_F(M-D) \xrightarrow{\cdot D} \O_F(M) \to \O_D(M_D) \to 0
$$
and the fact that $M-D\sim M-L$ is non effective (since $L^{n-1}\cdot (M-L)<0$ and $L$ is nef by assumption).
	
\end{proof}

\section{Harder-Narasimhan filtration}\label{Sec:HN}

Assume we are in the following 

\begin{setup}\label{setup}
\noindent
\begin{itemize}
\item Let $T$ be a smooth projective irreducible $k$-curve, $X$ a normal projective irreducible $k$-variety of dimension $n+1$ and $f\colon X \to T$ a fibration, i.e. a  (projective) morphism with $f_*\O_X=\O_T$. In particular, $f$ is  flat and with connected fibers. 
We denote by $F$ a general fiber of $f$ (i.e. the fiber over a closed point of a conveniently small open subset of $T$), which is a normal  projective irreducible $k$-variety of dimension $n$.  
\item Let $L$ be a $\Q$-Cartier $\Q$-Weil divisor on $X$, and we denote by $\O_X(L)$ its associated reflexive sheaf as discussed in Section \ref{Sec:Qdiv}. We will always assume that  the sheaf $f_*\O_X(L)$ (which is always locally free  since $T$ is a smooth curve) is non zero. 
\end{itemize}
\end{setup}

Consider the Harder-Narasimhan(=HN) filtration of $f_*\O_X(L)$:
\begin{equation}\label{E:HNfilt}
0=\E_0\subsetneq \E_1\subsetneq \E_2\subsetneq \ldots \subsetneq \E_{\ell}=f_*\O_X(L),
\end{equation}
where $\ell\geq 1$ is the length of the filtration. Note that $\ell=1$ if and only if $f_*\O_X(L)$ is semi-stable.

For any $1\leq i \leq \ell$,  we denote by $\mu_i:=\mu(\E_i/\E_{i-1})\in \Q$  the slope of the semistable locally free sheaf $\E_{i}/\E_{i-1}$ and by $r_i:=\rk(\E_i)\in \N$ the rank of $\E_i$. 

By definition of the HN filtration, we have
\begin{equation}\label{E:ai}
\mu_+(f_*\O_X(L)):=\mu_1>\ldots >\mu_{\ell}:=\mu_-(f_*\O_X(L))
\end{equation}
\begin{equation}\label{E:ri}
0=:r_0<r_1<\ldots <r_{\ell}.
\end{equation}

By Hartshorne's theorem on the characterisation of nef vector bundles on smooth, projective irreducible curves \cite[Thm. 6.4.15]{Laz2} (which requires $\char(k)=0$), it follows that 
\begin{equation}\label{E:Nef-HN}
 \E_i \text{ is nef } \Leftrightarrow \mu_i\geq 0.
\end{equation}

By generic base change, the rank of $f_*\O_X(L)$ is equal to 
\begin{equation}\label{E:rk-push}
h^0(F,\O_X(L)_{|F})=\rk f_*\O_X(L)=r_{\ell}.
\end{equation}

\begin{lemma}\label{L:Weil}
For a general fiber $F$, we have
\begin{equation}\label{E:LF}
\O_X(L)_{|F}=\O_F(L_{F}). 
\end{equation}
where $F$ is a general fibre of $f$ and $L_F$ is the restriction of $L$ to $F$ as $\Q$-Cartier divisor.
\end{lemma}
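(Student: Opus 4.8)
The plan is to compare the two rank-one reflexive sheaves on the normal variety $F$ after restricting to a big open subset, and then to conclude by reflexivity. Let $Z\subseteq X$ be the union of the singular locus of $X$ with the locus where $\lfloor L\rfloor$ fails to be Cartier; since $\O_X(L)=\O_X(\lfloor L\rfloor)$ is reflexive on the normal variety $X$, we have $\codim_X Z\geq 2$. Choosing the general fibre $F$ suitably (which is legitimate, since the statement is only claimed for a general $F$), I would arrange that: $F$ is normal, as in Setup \ref{setup}; $F$ is not contained in any component of $\supp L$, so that $L_F$ is a well-defined $\Q$-Weil divisor with $mL_F=(mL)_{|F}$ for any $m$ with $mL$ Cartier; and $F\cap Z$ has codimension $\geq 2$ in $F$, so that $F^0:=F\setminus(F\cap Z)$ is a big open subset of $F$. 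The last point follows from a dimension count applied to $f_{|Z}\colon Z\to T$: when $Z$ dominates $T$ the general fibre of $f_{|Z}$ has dimension $\leq \dim Z-1\leq n-2$, and $F$ avoids $Z$ entirely otherwise.

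The core computation is that the round-down commutes with restriction to $F^0$. Write $L=\sum_i a_i D_i$ with $D_i$ prime and $a_i\in\Q$. For a general fibre $F$, each $D_i$ meeting $F$ does so along a reduced divisor $D_i\cap F$ (by generic reducedness, using $\char(k)=0$), and the divisors $D_i\cap F$ share no common component, because $D_i\cap D_j$ has codimension $\geq 2$ in $X$ and hence meets $F$ in codimension $\geq 2$. Consequently the identity $\lfloor L\rfloor_{|F^0}=\sum_i\lfloor a_i\rfloor\,(D_i\cap F)_{|F^0}=\lfloor L_F\rfloor_{|F^0}$ holds as Cartier divisors on $F^0$. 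Since $\lfloor L\rfloor$ is Cartier on $X\setminus Z\supseteq F^0$, restricting the corresponding invertible sheaf gives
\[
\O_X(L)_{|F^0}=\O_{F^0}\big(\lfloor L\rfloor_{|F^0}\big)=\O_{F^0}\big(\lfloor L_F\rfloor_{|F^0}\big)=\O_F(L_F)_{|F^0}.
\]

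It remains to upgrade this isomorphism over the big open $F^0$ to one over all of $F$. The target $\O_F(L_F)$ is reflexive on the normal variety $F$, hence equals the pushforward of its restriction to the big open $F^0$; so it suffices to show that $\O_X(L)_{|F}$ is also reflexive on $F$ for general $F$, since two reflexive sheaves agreeing on a big open are isomorphic. This reflexivity of the restriction is the main obstacle, and it is exactly where the genericity of $F$ is essential. Using that $T$ is a smooth curve, near its generic points $F$ is cut out by $f^*\pi$ for a uniformizer $\pi$ of $T$ at the point $t$ with $F=f^{-1}(t)$, yielding the short exact sequence $0\to\O_X(L)\xrightarrow{\,f^*\pi\,}\O_X(L)\to\O_X(L)_{|F}\to 0$, with the first map injective because $\O_X(L)$ is torsion-free. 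A depth computation then shows that $\O_X(L)_{|F}$ is torsion-free, and $S_2$ away from the intersection of $F$ with the (codimension $\geq 3$) locus where $\O_X(L)$ fails to be $S_3$; a Bertini-type argument, choosing $F$ to meet this locus in the expected codimension, is what one must invoke to conclude that the restriction is reflexive for general $F$. Alternatively, and with no loss for the intended applications to ranks and spaces of sections, one may replace $\O_X(L)_{|F}$ by its reflexive hull throughout. Granting the reflexivity of the restriction, the agreement on $F^0$ forces $\O_X(L)_{|F}\cong\O_F(L_F)$, as desired.
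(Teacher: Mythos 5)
Your proof follows the same route as the paper's: reduce to a big open subset of the general fibre avoiding a codimension-two bad locus of $X$ (containing the singular and non-Cartier loci), and check there that taking the round down of $L=\sum a_iD_i$ commutes with restriction, using Bertini/generic reducedness of the $D_i\cap F$ and the fact that distinct $D_i$ meet $F$ along divisors with no common component. The only substantive difference is that you explicitly flag, and partially argue, the reflexivity of $\O_X(L)_{|F}$ needed to pass from the big open $F^0$ back to all of $F$ --- a point the paper's proof subsumes in the remark that reflexive sheaves are determined at codimension-one points --- so your argument is correct and, if anything, more careful than the original on that step.
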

\begin{proof}
Both reflexive sheaves and divisors are determined by their restrictions to codimension one points, hence we can prove the statement after removing a codimension two subscheme from $X$. As the total space $X$ is normal, its singularities are in codimension two and we can therefore assume that both $X$ and the general fibers are smooth. After this reduction, it is enough to show that restricting $L$ to  a general fiber commutes with taking the round down. Write $L=\sum a_i D_i$, where $a_i$ are rational numbers and $D_i$ are prime divisors.  By Bertini Theorem, the restriction of every $D_i$ to a general fiber is reduced, hence restriction commutes with round down.
\end{proof}

The degree of $f_*\O_X(L)$ is determined by the numbers $\mu_i$'s and $r_i$'s as in the following 
\begin{lemma}\label{formuletta}
With the notation as above (and the convention that $\mu_{\ell+1}=0$), we have that
 $$\sum_{i=1}^{\ell} r_i(\mu_i-\mu_{i+1})=\sum_{i=1}^{\ell}\mu_i(r_i-r_{i-1})=\deg f_*\O_X(L).$$
\end{lemma}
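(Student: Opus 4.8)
The plan is to reduce both equalities to two elementary ingredients: the additivity of rank and degree in short exact sequences of coherent sheaves on the smooth projective curve $T$, and a summation-by-parts manipulation. No genuine geometry is involved beyond the definition of the slope $\mu_i=\deg(\E_i/\E_{i-1})/\rk(\E_i/\E_{i-1})$.

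First I would establish the second equality $\deg f_*\O_X(L)=\sum_{i=1}^{\ell}\mu_i(r_i-r_{i-1})$. From each step of the HN filtration \eqref{E:HNfilt} we have a short exact sequence $0\to\E_{i-1}\to\E_i\to\E_i/\E_{i-1}\to 0$ of locally free sheaves on $T$, which gives $\deg\E_i=\deg\E_{i-1}+\deg(\E_i/\E_{i-1})$ and $\rk(\E_i/\E_{i-1})=r_i-r_{i-1}$. Telescoping from $\E_0=0$ up to $\E_{\ell}=f_*\O_X(L)$ yields $\deg f_*\O_X(L)=\sum_{i=1}^{\ell}\deg(\E_i/\E_{i-1})$. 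By the definition of the slope, $\deg(\E_i/\E_{i-1})=\mu_i\cdot\rk(\E_i/\E_{i-1})=\mu_i(r_i-r_{i-1})$, which gives the claimed formula.

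For the first equality I would simply apply Abel summation to $\sum_{i=1}^{\ell}r_i(\mu_i-\mu_{i+1})$, expanding it as $\sum_{i=1}^{\ell}r_i\mu_i-\sum_{i=1}^{\ell}r_i\mu_{i+1}$ and reindexing the second sum by $j=i+1$. Using the convention $\mu_{\ell+1}=0$ (which kills the boundary term $r_{\ell}\mu_{\ell+1}$) together with $r_0=0$, the two sums recombine into $\sum_{i=1}^{\ell}(r_i-r_{i-1})\mu_i$, which is exactly the middle expression. The only point requiring any care is the bookkeeping of the two boundary conventions $r_0=0$ and $\mu_{\ell+1}=0$; once these are in place the identity is purely formal, so I do not anticipate any real obstacle.
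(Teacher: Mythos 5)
Your proposal is correct and follows exactly the paper's argument: the second equality comes from the additivity of degree along the Harder--Narasimhan filtration together with the definition of the slope, and the first equality is the Abel-summation rearrangement using the conventions $r_0=0$ and $\mu_{\ell+1}=0$. You have merely written out in more detail what the paper's proof states tersely.
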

\begin{proof}
The first equality is just a rearrangement of the terms using that $r_0=0$ and $\mu_{\ell+1}=0$. 

The second equality follows from the fact that $\mu_i(r_i-r_{i-1})$ is  the degree of $\E_i/\E_{i-1}$ and that 
$$
\deg f_*\O_X(L)=\sum_{i=1}^{\ell}\deg(\E_i/\E_{i-1}).
$$
\end{proof}

Variants of the following construction have appeared in many papers, for instance, under the assumption that $L$ is Weil and $\Q$-Cartier, it is discussed in \cite[Lemma 1.1]{Ohno}. For any $1\leq i\leq \ell$, the morphism $f^*\E_i\hookrightarrow f^*f_*\O_X(L)\to \O_X(L)$ induces a rational map $\psi_i:X\dashrightarrow \P_T(\E_i)$ over $T$. By Hironaka's theorem on resolution of singularities  (since $\char(k)=0$), we can pick a birational morphism $\mu:\wt X\to X$ with $\wt X$ a smooth projective irreducible $k$-variety in such a way that 
$\wt{\psi_i}:=\psi_i\circ \mu:\wt X\rightarrow \P_T(\E_i)$ is a regular morphism. On $\wt X$, we define the Cartier divisor
$$M_i:=\wt\psi_i^*L_{\E_i}\,,$$ 
 where $L_{\E_i}$ is any tautological divisor on $\P_T(\E_i)$, i.e. any divisor such that $\O_{\P_T(\E_i)}(L_{\E_i})=\O_{\P_T(\E_i)}(1)$.  
 
As the sheaf $\O_X(L)$ equals $\O_X(\lfloor L \rfloor )$, the divisor $M_i$ depends only on the round down $\lfloor L\rfloor $. If this round down is Cartier, then $M_{\ell}$ is the relative free part of the liner system; if it is just Weil, then $M_{\ell}$ is some Cartier divisor smaller than $\lfloor \mu^*L \rfloor$. 

The inclusion $\E_i\subset  \E_{i+1}$ implies that $M_{i+1}-M_i$ is effective. To summarise, we have a non-decreasing chain of divisors
 \begin{equation}\label{E:Mi}
M_1\leq M_2\leq \ldots \leq M_{\ell}\leq \lfloor \mu^*L \rfloor \leq \mu^*(L) \,,
\end{equation}
and a non-increasing chain of effective integral divisors
\begin{equation}\label{E:Zi}
Z_1\geq Z_2\geq \ldots \geq Z_{\ell}\geq 0.
\end{equation}
such that
\begin{equation}\label{E:MiZi}
M_i\sim_{\Q} \mu^*(L)-Z_i \qquad \textrm{for every} \; i=1, \dots , \ell. 
\end{equation}

\begin{remark}[Relative base loci]
Each piece $\E_i$ of the Harder-Narasimhan filtration of $f_*\O_X(L)$ defines a relative base locus $B_i$. The resolution $\wt X$ makes this base loci divisorial. During the proofs of Section \ref{Sec:slope}, we will intersect this divisorial base loci with nef line bundles, and then just discard them (these computation are often carried out using Lemma \ref{L:inequ} and its corollaries). It would be interesting to study the features of these relative base loci, and let them playing a more prominent role in the slope inequality via asymptotic invariants similar to the $\mu$-invariant introduced in \cite[Definition 4.1]{Xu_Zhuang}.
\end{remark}

We will denote by $\wt f:=f\circ \mu:\wt X\to T$ the induced fibration. A general fiber  $\wt F$  of $\wt f$ is a smooth projective irreducible $k$-variety of dimension $n$, and it is endowed with a birational (projective) morphism $\mu_{\wt F}:=\mu_{|\wt F}:\wt F\to F$ onto a general fiber of $f$, which is a resolution of singularities. 


We will consider the Cartier divisors $P_i:=(M_i)_{|\wt F}$ on $\wt F$ (well-defined up to linear equivalence), which, by \eqref{E:Mi}, form a non-decreasing chain 
 \begin{equation}\label{E:Pi}
P_1\leq P_2\leq \ldots \leq P_{\ell}\leq \mu^*(L)_{|\wt F}=\mu_{\wt F}^*(L_F).
\end{equation}

We collect the properties of the divisors $M_i$ and their restrictions $P_i$ in the following 

\begin{proposition}\label{P:Mnef} 
Let $1\leq i \leq \ell$.
\begin{enumerate}[(i)]
\item \label{P:Mnef1} The divisor $M_i$ is $\wt f$-globally generated. In particular, $P_i$ is globally generated. 
\item \label{P:Mnef2} The restriction of $\E_i$ to $t$ induces a sub-linear series of $|P_i|$ on $\wt F$; in particular we have that $h^0(\wt F,P_i)\geq r_i.$
\item  \label{P:Mnef3} If $\E_i$ is nef,  then $M_i$ is nef.
 \item \label{P:Mnef4} The pull-back along $\mu_{\wt F}$ induces an isomorphism 
\begin{equation}\label{E:sameH0} 
\mu_{\wt F}^*:H^0(F, L_{F}) \xrightarrow{\cong} H^0(\wt F,P_{\ell}),
\end{equation}
In particular, $h^0(\wt F,P_{\ell})= r_{\ell}$ and  $\phi_{P_{\ell}}=\phi_{L_{F}}\circ \mu_{\wt F}$. 
  \item \label{P:Mnef5}
If $L_F$ is Cartier and  globally generated, then $P_{\ell}= (\mu_{\wt F})^*(L_F)$ (up to linear equivalence). In particular, $P_{\ell}^n=L_F^n$.
\end{enumerate}
\end{proposition}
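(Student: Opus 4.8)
The plan is to deduce the ``positivity'' parts (\ref{P:Mnef1})--(\ref{P:Mnef3}) from the single structural fact that the construction of $M_i$ produces a \emph{surjection} $\wt f^*\E_i \twoheadrightarrow \O_{\wt X}(M_i)$, and then to deduce the ``comparison'' parts (\ref{P:Mnef4})--(\ref{P:Mnef5}) by transporting Lemma \ref{lem:nakayama} onto the general fibre $\wt F$. To obtain the surjection, I would use that $\wt\psi_i\colon \wt X\to \P_T(\E_i)$ is a genuine morphism over $T$, so pulling back the tautological surjection $\pi^*\E_i\twoheadrightarrow \O_{\P_T(\E_i)}(1)$ (where $\pi\colon \P_T(\E_i)\to T$) gives
$$
\wt f^*\E_i=\wt\psi_i^*\pi^*\E_i \twoheadrightarrow \wt\psi_i^*\O_{\P_T(\E_i)}(1)=\O_{\wt X}(M_i).
$$
For (\ref{P:Mnef1}), this surjection factors the adjunction map $\wt f^*\wt f_*\O_{\wt X}(M_i)\to \O_{\wt X}(M_i)$ (through $\E_i\hookrightarrow \wt f_*\O_{\wt X}(M_i)$), which is therefore surjective, i.e.\ $M_i$ is $\wt f$-globally generated; restricting $\wt\psi_i$ to a general fibre $\wt F$ realizes $P_i=(\wt\psi_i)_{|\wt F}^*\O(1)$ as the pullback of the very ample $\O(1)$ on the fibre $\P(\E_{i,t})\cong\P^{r_i-1}$, hence globally generated. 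For (\ref{P:Mnef3}), if $\E_i$ is nef then so is $\wt f^*\E_i$ (nefness is preserved by pullback), and $\O_{\wt X}(M_i)$, being a line-bundle quotient of a nef bundle, is nef; equivalently $\O_{\P_T(\E_i)}(1)$ is nef by definition of a nef vector bundle and $M_i$ is its pullback along $\wt\psi_i$.

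For (\ref{P:Mnef2}), I would restrict the surjection above to $\wt F$ to get $(\E_i)_t\otimes\O_{\wt F}\twoheadrightarrow \O_{\wt F}(P_i)$, hence a linear map $(\E_i)_t\to H^0(\wt F,P_i)$. By cohomology and base change for general $t$, together with Lemma \ref{L:Weil}, the fibre $(\E_i)_t$ is a subspace of $(f_*\O_X(L))_t=H^0(F,L_F)$ consisting of linearly independent sections, and the map factors as $(\E_i)_t\hookrightarrow H^0(F,L_F)\xrightarrow{\mu_{\wt F}^*}H^0(\wt F,\mu_{\wt F}^*L_F)$ with image landing in $H^0(\wt F,P_i)$. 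Since $\mu_{\wt F}$ is birational, $\mu_{\wt F}^*$ is injective on global sections, so the composite is injective; thus $(\E_i)_t$ projectivizes to a sub-linear series of $|P_i|$ and $h^0(\wt F,P_i)\ge r_i$.

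For (\ref{P:Mnef4}) I would take $i=\ell$, so $\E_\ell=f_*\O_X(L)$ and $(\E_\ell)_t=H^0(F,L_F)$, and $(\wt\psi_\ell)_{|\wt F}$ is exactly the morphism resolving $\phi_{L_F}$ through $\mu_{\wt F}$. Applying Lemma \ref{lem:nakayama} to the birational morphism $\mu_{\wt F}\colon \wt F\to F$ and the $\Q$-Cartier $\Q$-divisor $L_F$: part (\ref{lem:nakayama1}) gives $\mu_{\wt F}^*\colon H^0(F,L_F)\xrightarrow{\cong}H^0(\wt F,\mu_{\wt F}^*L_F)$, while part (\ref{lem:nakayama2}) identifies $P_\ell$ with the free part $H$ of $|\mu_{\wt F}^*L_F|$, so that $H^0(\wt F,P_\ell)=H^0(\wt F,\mu_{\wt F}^*L_F)$ and $\phi_{P_\ell}=\phi_{\mu_{\wt F}^*L_F}=\phi_{L_F}\circ\mu_{\wt F}$, whence $h^0(\wt F,P_\ell)=r_\ell$. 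For (\ref{P:Mnef5}), if $L_F$ is Cartier and globally generated then $\phi_{L_F}$ is a morphism with $L_F=\phi_{L_F}^*\O(1)$, so $|\mu_{\wt F}^*L_F|$ has no fixed part, forcing $P_\ell=H=\mu_{\wt F}^*L_F$ and $P_\ell^n=(\mu_{\wt F}^*L_F)^n=L_F^n$ as $\mu_{\wt F}$ is birational.

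The step requiring the most care is the identification in (\ref{P:Mnef4}) of the \emph{relatively}-defined divisor $P_\ell=(M_\ell)_{|\wt F}$ with the free part of $|\mu_{\wt F}^*L_F|$ on the absolute fibre: one must check that restricting the relative construction over $T$ to a general $\wt F$ genuinely reproduces the resolution-of-indeterminacy picture of $\phi_{L_F}$ to which Lemma \ref{lem:nakayama} applies. This hinges on cohomology and base change holding on the general fibre, guaranteeing both $(\E_\ell)_t=H^0(F,L_F)$ and the compatibility of $(\wt\psi_\ell)_{|\wt F}$ with $\phi_{L_F}\circ\mu_{\wt F}$; everything else is formal.
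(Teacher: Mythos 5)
Your proof is correct, and for parts (\ref{P:Mnef1}), (\ref{P:Mnef3}) and (\ref{P:Mnef5}) it coincides with the paper's argument (the tautological surjection $\wt f^*\E_i\twoheadrightarrow \O_{\wt X}(M_i)$ pulled back from $\P_T(\E_i)$, and regularity of $\psi_\ell$ over $F$ when $L_F$ is Cartier and globally generated). The two proofs diverge in how they organize (\ref{P:Mnef2}) and (\ref{P:Mnef4}). For (\ref{P:Mnef2}) the paper stays on the base: it pushes the inclusion $\O_{\P_T(\E_i)}(1)\hookrightarrow (\wt\psi_i)_*\O_{\wt X}(M_i)$ down to $T$ to get $\E_i\hookrightarrow \wt f_*\O_{\wt X}(M_i)$ and compares ranks, whereas you restrict the surjection to $\wt F$ and check injectivity of $(\E_i)_t\to H^0(\wt F,P_i)$ directly; your version has the small advantage of literally exhibiting the sub-linear series, at the cost of invoking generic base change at this earlier stage. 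For (\ref{P:Mnef4}) the paper again argues globally, sandwiching $\E_\ell\hookrightarrow \wt f_*\O_{\wt X}(M_\ell)\hookrightarrow f_*\O_X(L)=\E_\ell$ (the second inclusion coming from $M_\ell\sim_{\Q}\mu^*L-Z_\ell$ and $\mu_*\O_{\wt X}(\mu^*L)=\O_X(L)$) to force equality, and only then restricts to the fibre; you instead apply Lemma \ref{lem:nakayama} to $\mu_{\wt F}\colon\wt F\to F$ and $L_F$. Your route is legitimate, and you correctly isolate the one point that needs justification: that $(\wt\psi_\ell)_{|\wt F}$ is the resolution of $\phi_{L_F}\circ\mu_{\wt F}$ and $P_\ell$ is the corresponding divisor $H$ of Lemma \ref{lem:nakayama}(\ref{lem:nakayama2}); this follows from generic base change giving $\E_{\ell,t}=H^0(F,\O_X(L)_{|F})$ together with Lemma \ref{L:Weil}, exactly as you say. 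The global argument buys a slightly cleaner bookkeeping (one application of base change at the very end); the fibrewise argument makes the geometric content of the statement more transparent. No gap.
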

\begin{proof}
By the definition of $M_i$, we have the following commutative diagram
\begin{equation}\label{E:diagpsi}
\begin{tikzcd}
 \wt X\arrow{rr}{\wt \psi_i}  \arrow{dr}[swap]{\wt f}& & \P_T(\E_i)  \arrow{dl}{p_{\E_i}}\\
 & T & 
\end{tikzcd}
\end{equation}
and $\O_{\wt X}(M_i)=\wt \psi_i^*\O_{\P_T(\E_i)}(1)= \wt \psi_i^*\O_{\P_T(\E_i)}(L_{\E_i})$. 

Part \eqref{P:Mnef1} follows from the fact that $\O_{\P_T(\E_i)}(1)$ is $p_{\E_i}$-globally generated. 

Part \eqref{P:Mnef2}: by the definition of $M_i$, it follows that we have an inclusion of torsion-free coherent sheaves on $\P_T(\E_i)$:
$$
\O_{\P_T(\E_i)}(1)\hookrightarrow (\wt\psi_i)_*(\wt \psi_i^*(\O_{\P_T(\E_i)}(1)))=(\wt\psi_i)_*(\O_{\wt X}(M_i)). 
$$
By taking the push-forward via $p_{\E_i}$ we get the inclusion of locally free sheaves on $T$
\begin{equation}\label{E:inclEi}
\E_i=(p_{\E_i})_*(\O_{\P_T(\E_i)}(1))\hookrightarrow (p_{\E_i})_*((\wt\psi_i)_*(M_i))=\wt f_*(\O_{\wt X}(M_i)). 
\end{equation}
By taking ranks, we get 
$$	
r_i:=\rk(\E_i)\leq \rk(\wt f_*\O_{\wt X}(M_i))= h^0(\wt F,P_i).  
$$

Part \eqref{P:Mnef3}: by the definition of the $M_i$, we have a surjection of locally free sheaves on $\wt X$
$$
\wt f^* \E_i\twoheadrightarrow \O_{\wt X}(M_i),
$$
from which the conclusion follows.

Part \eqref{P:Mnef4}: since $M_{\ell}\sim_{\Q} \mu^*(L)-Z_{\ell}$ and $Z_{\ell}\geq 0$ by \eqref{E:MiZi} and \eqref{E:Zi},  we have an injection 
$$
\O_{\wt X}(M_{\ell})\hookrightarrow \O_{\wt X}(\mu^*(L)).
$$

By taking the pushforward along $\wt f=f\circ \mu$, we get 
\begin{equation}\label{E:inclEl}
\wt f_*\O_{\wt X}(M_{\ell})\hookrightarrow \wt f_*\O_{\wt X}(\mu^*(L))=f_*\O_X(L),
\end{equation}
where in the last equality we used that $\mu_*\O_{\wt X}(\mu^*(L))=\O_X(L)$, which follows from  \cite[Lemma 2.11]{Nak}.

Recalling that $\E_{\ell}=f_*\O_X(L)$, by combining \eqref{E:inclEi} and \eqref{E:inclEl} we deduce that 
$$
\wt f_*\O_{\wt X}(M_{\ell})= f_*\O_X(L).
$$
By generic base change and \eqref{E:LF}, we conclude that we have the isomorphism \eqref{E:sameH0}, which implies  the last two assertions. 

Part \eqref{P:Mnef5}: by assumption, and using the generic base-change and Lemma \ref{L:Weil}, one has that the evaluation morphism $f^*\E_{\ell}=f^*f_*\O_X(L)\to \O_X(L)$ is surjective over $F$ and $\O_X(L)_{|F}=\O_F(L_F)$ is line bundle. This implies that the induced 
rational map $\psi_{\ell}:X\dashrightarrow \P_T(\E_{\ell})$ is regular over $F$, and hence that (up to linear equivalence)
$$
P_{\ell}=(M_{\ell})_{|\wt F}=\wt \psi_{\ell}^*(L_{\E_{\ell}})_{|\wt F}=\mu_{\wt F}^*(({\psi_{\ell}}_{|F})^*(L_{\E_{\ell}}))= \mu_{\wt F}^*(L_F).
$$ 
The last assertion follows from the projection formula. 
 \end{proof}

We now want to show that the slopes of the HN filtration of $f_*\O_X(L)$ bound the nefness threshold of $M_i$ with respect to a general fiber $\wt F$. More precisely, consider the ($\Q$-Cartier) $\Q$-divisors $N_i:=M_i-\mu_i\wt F$ (for $1\leq i \leq \ell$) on $\wt X$. By \eqref{E:ai} and \eqref{E:Mi}, the $\Q$-line bundles $N_i$ form a non-decreasing chain 
 \begin{equation}\label{E:Ni}
N_1\leq N_2\leq \ldots \leq N_{\ell}.
\end{equation}
Note that \eqref{E:Nef-HN} implies that
 \begin{equation}\label{E:NiMi}
 \E_i \text{ is nef } \Leftrightarrow N_i\leq M_i.
 \end{equation}

Note that $(N_i)_{|\wt F}=(M_i)_{|\wt F}=P_i$ (up to linear equivalence). 

\begin{proposition}\label{P:Nnef}
For any $1\leq i \leq \ell$, the $\Q$-divisor $N_i$ is nef.
\end{proposition}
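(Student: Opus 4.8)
The plan is to realize $N_i$ as the pullback along $\wt{\psi}_i$ of an explicit $\Q$-divisor on the projective bundle $\P_T(\E_i)$, and then to reduce the nefness of that divisor to the characterization \eqref{E:Nef-HN} of nef bundles on a curve. First, since $\wt{\psi}_i\colon \wt X\to \P_T(\E_i)$ is a morphism over $T$, the diagram \eqref{E:diagpsi} gives $\wt f=p_{\E_i}\circ \wt{\psi}_i$, and hence, for a closed point $t\in T$,
$$\wt F=\wt f^*(t)=\wt{\psi}_i^*\bigl(p_{\E_i}^*(t)\bigr)=\wt{\psi}_i^*F_{\E_i},$$
where $F_{\E_i}$ denotes a fiber of $p_{\E_i}$. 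Combined with $M_i=\wt{\psi}_i^*L_{\E_i}$, this yields
$$N_i=M_i-\mu_i\wt F=\wt{\psi}_i^*\bigl(L_{\E_i}-\mu_iF_{\E_i}\bigr).$$
Since the pullback of a nef divisor along a morphism is nef, it suffices to prove that the $\Q$-divisor $L_{\E_i}-\mu_iF_{\E_i}$ is nef on $\P_T(\E_i)$.

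Next I observe that $\mu_i=\mu(\E_i/\E_{i-1})$ is exactly the minimal slope $\mu_-(\E_i)$: the HN filtration of the subbundle $\E_i$ is $\E_1\subsetneq\cdots\subsetneq\E_i$, whose graded slopes $\mu_1>\cdots>\mu_i$ are strictly decreasing by \eqref{E:ai}. I then twist on the base. Choosing a $\Q$-divisor $\delta$ on $T$ of degree $-\mu_i$ and forming the ($\Q$-twisted) bundle $\E_i\otimes\O_T(\delta)$, its HN graded slopes become $\mu_j-\mu_i\ge 0$ for $1\le j\le i$, so its minimal slope is $\mu_i-\mu_i=0$. Under the canonical identification $\P_T(\E_i\otimes\O_T(\delta))=\P_T(\E_i)$ the tautological class transforms as $L_{\E_i\otimes\O_T(\delta)}=L_{\E_i}+p_{\E_i}^*\delta$, whose numerical class is precisely $L_{\E_i}-\mu_iF_{\E_i}$. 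Thus it is enough to show that $\O_{\P_T(\E_i\otimes\O_T(\delta))}(1)$ is nef, i.e. that the twisted bundle is nef; and by the same Hartshorne characterization underlying \eqref{E:Nef-HN}, a bundle on the curve $T$ is nef if and only if its minimal slope is $\ge 0$, which holds here.

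The only delicate point is the twist: since $\mu_i$ need not be an integer, $\O_T(\delta)$ is a $\Q$-line bundle and $\E_i\otimes\O_T(\delta)$ must be understood as a $\Q$-twisted bundle, for which nefness and its equivalence with $\mu_-\ge 0$ are developed in \cite[\S 6.2]{Laz2}. To avoid this formalism one can instead verify $(L_{\E_i}-\mu_iF_{\E_i})\cdot C\ge 0$ directly for every irreducible curve $C\subset \P_T(\E_i)$: curves contained in a fiber are immediate since $L_{\E_i}$ restricts to $\O_{\P^{r-1}}(1)$ there, while for a curve $C$ dominating $T$ with normalization $g\colon \wt C\to T$ of degree $e=F_{\E_i}\cdot C$, the induced lift $\wt C\to \P_T(\E_i)$ corresponds to a quotient line bundle $g^*\E_i\twoheadrightarrow N$ with $\deg N=L_{\E_i}\cdot C$; since in characteristic zero finite pullback preserves the HN filtration, one has $\mu_-(g^*\E_i)=e\,\mu_-(\E_i)=e\mu_i$, and as a quotient line bundle has degree at least $\mu_-$, we get $L_{\E_i}\cdot C=\deg N\ge e\mu_i=(F_{\E_i}\cdot C)\mu_i$, as required. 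I expect this last step, namely that finite pullback multiplies the minimal slope by the degree (equivalently, preservation of semistability under finite base change in characteristic zero), to be the main point deserving care.
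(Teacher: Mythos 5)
Your proof is correct and follows essentially the same route as the paper: both reduce, via $N_i=\wt{\psi}_i^*\bigl(L_{\E_i}-\mu_i\,p_{\E_i}^{-1}(t)\bigr)$ and the identity $\mu_i=\mu_-(\E_i)$, to Hartshorne's characterization of nef bundles on a curve (Miyaoka's lemma). The only difference is cosmetic: where you handle the non-integrality of $\mu_i$ by a $\Q$-twist or by a direct curve-by-curve check using that quotients have slope at least $\mu_-$, the paper first passes to a finite cover $\tau\colon T'\to T$ of degree divisible by $\rk(\E_i/\E_{i-1})$ so that $\mu_i$ becomes an integer, using the same fact you invoke that the HN filtration is preserved under finite pullback in characteristic zero.
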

\begin{proof}
Observe that, up to linear equivalence,  $N_i$ is the pull-back via $\wt{\psi_i}$ of the $\Q$-divisor $L_{\E_i}-\mu_i p_{\E_i}^{-1}(t)$ on $\P_T(\E_i)$, where $t$ is a general point of $T$ (see diagram \eqref{E:diagpsi}).
Hence it is enough to show that $L_{\E_i}-\mu_i p_{\E_i}^{-1}(t)$ is nef on $\P_T(\E_i)$.

This follows from the Miyaoka's lemma (see e.g. \cite[Lemma 2.1]{Ful}). For the reader's convenience, we also include a direct elementary proof.

Let $\tau \colon T'\to T$ be finite cover whose degree is a multiple of $\rk(\E_i/\E_{i-1})$. By e.g. \cite[Lemma 6.4.12]{Laz2}, the HN filtration of $\tau^*\E_i$ is the pull-back of the HN filtration of $\E_i$, i.e.
$$
0\subsetneq \E_0\subsetneq \E_1\subsetneq \ldots \subsetneq \E_{i-1}\subsetneq \E_i, 
$$
and the slopes get multiplied by $\deg(\tau)$. In particular, using that the nefness of a $\Q$-divisor can be checked after a finite cover, we can assume, up to replacing $T$ with $T'$, that $\mu_i=\mu(\E_i/\E_{i-1})=\mu_-(\E_i)$ is an integer. 
The divisor $L_{\E_i}-\mu_i p_{\E_i}^{-1}(t)$ is now Cartier, and the corresponding sheaf is a quotient of the locally free sheaf $p_{\E_i}^* \E_i(-\mu_i t)$. Since 
$$\mu_-(\E_i(-\mu_i t))=\mu_-(\E_i)-\mu_i=0,$$ 
the sheaf $\E_i(-\mu_i t)$ is nef on $T$ by Hartshorne's theorem  (\cite[Thm. 6.4.15]{Laz2}). We conclude that also $L_{\E_i}-\mu_i p_{\E_i}^{-1}(t)$ is nef .
\end{proof}

We now prove some numerical inequalities that will be crucial in what follows.

\begin{lemma}\label{L:inequ}
Fix the above notation. Assume that we have chosen a nef $\Q$-divisor $N_{\ell+1}$ on $\wt X$ and a rational number $\mu_{\ell+1}$ such that $Z_{\ell+1}:=\mu^*(L)-N_{\ell+1}-\mu_{\ell+1} \wt F\leq Z_{\ell}$.
Set $P_{\ell+1}:=(N_{\ell+1})_{|\wt F}$. Then for any $q \in \{1,\ldots,\ell\}$ and any two sequences of integers
$$ \begin{sis} 
& 1\leq s_1<\ldots <s_q<s_{q+1}=\ell+1, \\ 
& 1=m_0\leq m_1\leq \ldots \leq m_{n}\leq m_{n+1}=q+1,
\end{sis}
$$
we have that 
$$
N_{\ell+1}^{n+1}\geq \sum_{i=0}^{n}\sum_{j=m_i}^{m_{i+1}-1} \left(\sum_{k=0}^{i} P_{s_j}^k P_{s_{j+1}}^{i-k} \right)\left(P_{s_{m_{i+1}}}\ldots P_{s_{m_{n}}}\right)(\mu_{s_j}-\mu_{s_{j+1}}).
$$
\end{lemma}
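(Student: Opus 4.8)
The plan is to bound $N_{\ell+1}^{n+1}$ from below by peeling off the summands of the right--hand side one ``level'' at a time, using only the nefness of the $N_i$ (Proposition \ref{P:Nnef}, together with the hypothesis for $N_{\ell+1}$) and the effectivity encoded in the chains \eqref{E:Mi} and \eqref{E:Zi}. I would first introduce the flag of indices $\sigma_t:=s_{m_t}$ for $0\le t\le n+1$; note $\sigma_0=s_1$ and $\sigma_{n+1}=s_{q+1}=\ell+1$, and that each $N_{\sigma_t}$ with $t\le n$ is nef. Writing $L_i$ for the level-$i$ summand of the right--hand side (the inner double sum over $j\in\{m_i,\dots,m_{i+1}-1\}$), so that the claim reads $N_{\ell+1}^{n+1}\ge\sum_{i=0}^n L_i$, I would set
$$U_i:=N_{\sigma_i}^{\,i+1}\cdot\prod_{t=i+1}^{n}N_{\sigma_t}\qquad(0\le i\le n),$$
so that $U_n=N_{\sigma_n}^{n+1}$ and $U_0=N_{s_1}\cdot\prod_{t=1}^{n}N_{\sigma_t}\ge 0$ is a product of nef classes. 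The whole statement then reduces to the telescoping chain
$$N_{\ell+1}^{n+1}\ \ge\ U_n+L_n\ \ge\ U_{n-1}+L_{n-1}+L_n\ \ge\ \cdots\ \ge\ U_0+\sum_{i=0}^n L_i\ \ge\ \sum_{i=0}^n L_i,$$
i.e. to proving $N_{\ell+1}^{n+1}\ge U_n+L_n$ (top step) and $U_i\ge U_{i-1}+L_{i-1}$ for $1\le i\le n$ (interior steps).

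Each of these is proved by one and the same mechanism. Factoring out the common nef class and applying the difference--of--powers identity and the telescoping $N_{\sigma_i}^{\,i}-N_{\sigma_{i-1}}^{\,i}=\sum_{j=m_{i-1}}^{m_i-1}(N_{s_{j+1}}^{\,i}-N_{s_j}^{\,i})$ gives
$$U_i-U_{i-1}=N_{\sigma_i}\Big(\prod_{t=i+1}^{n}N_{\sigma_t}\Big)\big(N_{\sigma_i}^{\,i}-N_{\sigma_{i-1}}^{\,i}\big)=\sum_{j=m_{i-1}}^{m_i-1}(N_{s_{j+1}}-N_{s_j})\cdot S_{i,j},$$
where $S_{i,j}:=N_{\sigma_i}\big(\prod_{t=i+1}^{n}N_{\sigma_t}\big)\sum_{k=0}^{i-1}N_{s_{j+1}}^{\,k}N_{s_j}^{\,i-1-k}$ is an intersection of $n$ nef classes. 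The crucial input is the decomposition coming from $N_i=M_i-\mu_i\wt F$, namely $N_{s_{j+1}}-N_{s_j}=(M_{s_{j+1}}-M_{s_j})+(\mu_{s_j}-\mu_{s_{j+1}})\wt F$. Since $M_{s_{j+1}}-M_{s_j}\ge 0$ by \eqref{E:Mi} and an effective divisor meets a product of nef classes non--negatively, the term $(M_{s_{j+1}}-M_{s_j})\cdot S_{i,j}$ may be discarded, leaving
$$(N_{s_{j+1}}-N_{s_j})\cdot S_{i,j}\ \ge\ (\mu_{s_j}-\mu_{s_{j+1}})\,\wt F\cdot S_{i,j}.$$
Finally $\wt F\cdot S_{i,j}$ is computed by restricting to the general fibre, where $N_\bullet|_{\wt F}=P_\bullet$ and every $\wt F^2$--term vanishes; this turns it into $\big(\prod_{t=i}^{n}P_{\sigma_t}\big)\sum_{k=0}^{i-1}P_{s_j}^{\,k}P_{s_{j+1}}^{\,i-1-k}$, so that summing over $j$ reproduces exactly $L_{i-1}$. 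The top step is the same computation with $i=n+1$ and $N_{\sigma_{n+1}}=N_{\ell+1}$, yielding $L_n$.

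The main obstacle is precisely the single term with $s_{j+1}=\ell+1$, that is $j=q$, which is the one difference not covered by \eqref{E:Mi}: here $\ell+1$ is not part of the genuine Harder--Narasimhan chain, $N_{\ell+1}$ and $\mu_{\ell+1}$ are the chosen auxiliary data, and the slope difference $\mu_{s_q}-\mu_{\ell+1}$ need \emph{not} be positive, so one cannot invoke the strict decrease \eqref{E:ai}. What rescues the argument is exactly the hypothesis $Z_{\ell+1}\le Z_\ell$: setting $M_{\ell+1}:=\mu^*(L)-Z_{\ell+1}=N_{\ell+1}+\mu_{\ell+1}\wt F$, one has
$$M_{\ell+1}-M_{s_q}=Z_{s_q}-Z_{\ell+1}\ \ge\ Z_\ell-Z_{\ell+1}\ \ge\ 0$$
by \eqref{E:Zi}, so the effective part of the decomposition persists and can be discarded regardless of the sign of $\mu_{s_q}-\mu_{\ell+1}$ (the dropped term is still non--negative). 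I would therefore carry out this particular term first, to isolate where the assumption on $Z_{\ell+1}$ is used, and then dispatch all remaining differences by \eqref{E:Mi} and all interior slope differences by \eqref{E:ai}, after which the telescoping chain above closes the proof.
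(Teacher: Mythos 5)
Your proposal is correct and is essentially the paper's own proof: the telescoping chain in the quantities $U_i$ is exactly the summation of the paper's inequalities \eqref{E:ineq-i}, and each step is established by the same difference-of-powers factorization, the same decomposition $N_{s_{j+1}}-N_{s_j}\sim_{\Q}(Z_{s_j}-Z_{s_{j+1}})+(\mu_{s_j}-\mu_{s_{j+1}})\wt F$ with the effective part discarded against $n$ nef classes, and the same restriction to $\wt F$; the hypothesis $Z_{\ell+1}\le Z_{\ell}$ is used in precisely the same place. The only cosmetic difference is that you isolate the $j=q$ term explicitly (and your appeal to \eqref{E:ai} for the interior slope differences is actually never needed, since the $(\mu_{s_j}-\mu_{s_{j+1}})$ factors are retained rather than discarded).
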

In the above result, if $m_{i+1}-1<m_i$ and hence $j$ belongs to the empty set, by convention the sum is zero. Note that Lemma \ref{L:inequ} is a generalisation of \cite[Lemma 2.2]{Konno} and \cite[Prop. 1.11]{Barja_Phd}, which in turn build on \cite[Lemma 2]{Xiao}.
\begin{proof}
We are going to use several times that the $\Q$-divisors $\{N_h\}_{h=1}^{\ell+1}$ are nef, which follow from Proposition \ref{P:Nnef} if $1\leq h\leq \ell$ and from the assumption on $N_{\ell+1}$ if $h=\ell+1$.  

The Lemma is obtained by summing  the following numerical inequalities for any $0\leq i \leq n$
\begin{equation}\label{E:ineq-i}
\begin{aligned}
\left(N_{s_{m_{i+1}}}^{i+1}-N_{s_{m_i}}^{i+1}\right)\cdot \left(N_{s_{m_{i+1}}}\ldots N_{s_{m_{n}}}\right)( & \geq  \sum_{j=m_i}^{m_{i+1}-1} \wt F\cdot  \left(\sum_{k=0}^{i} N_{s_j}^k N_{s_{j+1}}^{i-k} \right)\cdot \left(N_{s_{m_{i+1}}}\ldots N_{s_{m_{n}}}\right)(\mu_{s_j}-\mu_{s_{j+1}})=\\
& = \sum_{j=m_i}^{m_{i+1}-1} \left(\sum_{k=0}^{i} P_{s_j}^k P_{s_{j+1}}^{i-k} \right)\cdot \left(P_{s_{m_{i+1}}}\ldots P_{s_{m_{n}}}\right)(\mu_{s_j}-\mu_{s_{j+1}}), 
\end{aligned}
\end{equation}
and using that $N_{s_{m_0}}\ldots N_{s_{m_{n}}}\geq 0$ because the divisors $\{N_h\}$ are nef. 

The inequality \eqref{E:ineq-i} follows, using  that $\{N_{s_{m_{i+1}}},\ldots, N_{s_{m_{n}}}\}$ are nef, by the following inequality of $(i+1)$-codimension cycles on $\wt X$
\begin{equation}\label{E:ineq-cyc}
\left(N_{s_{m_{i+1}}}^{i+1}-N_{s_{m_i}}^{i+1}\right)\geq  \sum_{j=m_i}^{m_{i+1}-1} \wt F\cdot  \left(\sum_{k=0}^{i} N_{s_j}^k N_{s_{j+1}}^{i-k} \right)(\mu_{s_j}-\mu_{s_{j+1}})
\end{equation}
The previous inequality \eqref{E:ineq-cyc} follows in turn by summing  the following inequalities of $(i+1)$-codimension cycles on $\wt X$ for 
$m_i\leq j \leq m_{i+1}-1$ 
\begin{equation}\label{E:ineq-j}
N_{s_{j+1}}^{i+1}-N_{s_j}^{i+1}\geq \wt F\cdot  \left(\sum_{k=0}^{i} N_{s_j}^k N_{s_{j+1}}^{i-k} \right)(\mu_{s_j}-\mu_{s_{j+1}}).
\end{equation}

In order to prove inequality \eqref{E:ineq-j}, we write 
\begin{equation}\label{E:potenzeN}
N_{s_{j+1}}^{i+1}-N_{s_j}^{i+1}=(N_{s_{j+1}}-N_{s_j})\cdot  \left(\sum_{k=0}^{i} N_{s_j}^k N_{s_{j+1}}^{i-k} \right).
\end{equation}
Observe that we have
\begin{equation}\label{E:Nh-L}
N_h\sim_{\Q} \mu^*(L)-Z_h-\mu_h \wt F \quad \text{ for any } 1\leq h \leq \ell+1,
\end{equation}
which follows from the definition $N_h:=M_h-\mu_h\wt F$ and  \eqref{E:MiZi} if $1\leq h \leq \ell$, and from the definition of $Z_{\ell+1}$ if $h=\ell+1$. 
By taking the differences of the relations \eqref{E:Nh-L}  for $h=s_{j+1}$ and for $h=s_j$,  we get that 
\begin{equation}\label{E:diffN}
N_{s_{j+1}}-N_{s_j}\sim_{\Q} (Z_{s_j}-Z_{s_{j+1}})+(\mu_{s_j}-\mu_{s_{j+1}})\wt F.
\end{equation}
Combining \eqref{E:potenzeN} and \eqref{E:diffN} and using that $Z_{s_j}\geq Z_{s_{j+1}}$ (by \eqref{E:Zi}, and the assumption $Z_{\ell+1}\leq Z_{\ell}$) and that  $\{N_{s_j},N_{s_{j+1}}\}$ are nef, we get the inequality of cycles \eqref{E:ineq-j}, and we are done. 
\end{proof}

We now collect in the following result some special cases of the above Lemma.

\begin{corollary}\label{C:spec-ineq}
Notation as in Lemma \ref{L:inequ}. 

\begin{enumerate}
\item \label{C:spec-ineq1} For any sequence of integers $1=m_0\leq m_1\leq \ldots \leq m_{n}\leq m_{n+1}=\ell+1$,  we have that 
$$
N_{\ell+1}^{n+1}\geq \sum_{i=0}^{n}\sum_{j=m_i}^{m_{i+1}-1} \left(\sum_{k=0}^{i} P_{j}^k P_{j+1}^{i-k} \right) \left(P_{m_{i+1}}\ldots P_{m_{n}}\right)(\mu_{j}-\mu_{j+1}).
$$
In particular, we have that 
\begin{enumerate}[(A)]
\item \label{C:spec-ineq1A}
$
N_{\ell+1}^{n+1}\geq \sum_{j=1}^{\ell}(P_j+P_{j+1})P_{\ell+1}^{n-1}(\mu_{j}-\mu_{j+1});
$
\item \label{C:spec-ineq1B}
$
N_{\ell+1}^{n+1}\geq \sum_{j=1}^{\ell} \left(\sum_{k=0}^{n} P_{j}^k P_{j+1}^{n-k} \right)(\mu_{j}-\mu_{j+1}).
$
\end{enumerate}
\item \label{C:spec-ineq2} For any sequence of integers $ 1\leq s_1<\ldots <s_q<s_{q+1}=\ell+1$  with  $1\leq q\leq \ell$, we have that 
$$
N_{\ell+1}^{n+1}\geq \sum_{j=1}^{q}(P_{s_j}+P_{s_{j+1}})P_{\ell+1}^{n-1}(\mu_{s_j}-\mu_{s_{j+1}}).
$$
\end{enumerate}
\end{corollary}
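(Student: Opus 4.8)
The plan is to derive every inequality as a direct specialization of Lemma \ref{L:inequ}, so that the only real content is the choice of the two index sequences $\{s_j\}$ and $\{m_i\}$ together with the identification of which terms in the double sum actually contribute. The key observation driving all the choices is that a summand indexed by $(i,j)$ is nonzero only when $m_i\leq j\leq m_{i+1}-1$; hence concentrating the $m$-sequence at a single value of $i$ (i.e.\ making it jump only once, at that index) kills all but one inner sum.

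First I would prove part \eqref{C:spec-ineq1} by applying Lemma \ref{L:inequ} with $q=\ell$ and the identity sequence $s_j=j$ for $1\leq j\leq \ell+1$, which is admissible since then $1\leq s_1<\cdots<s_\ell<s_{\ell+1}=\ell+1$. With this choice $P_{s_j}=P_j$ and $\mu_{s_j}=\mu_j$, so the inequality of Lemma \ref{L:inequ} becomes verbatim the claimed inequality of part \eqref{C:spec-ineq1}. For part \eqref{C:spec-ineq1A} I would then feed into part \eqref{C:spec-ineq1} the sequence
$$m_0=m_1=1,\qquad m_2=m_3=\cdots=m_{n+1}=\ell+1.$$
For $i=0$ the range $m_0\leq j\leq m_1-1$ is empty, and for $i\geq 2$ the range $m_i\leq j\leq m_{i+1}-1$ is empty as well, so only $i=1$ survives, with $j$ running from $1$ to $\ell$; there $\sum_{k=0}^{1}P_j^kP_{j+1}^{1-k}=P_j+P_{j+1}$, while the trailing factor $P_{m_2}\cdots P_{m_n}=P_{\ell+1}^{\,n-1}$, giving exactly \eqref{C:spec-ineq1A}. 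Dually, for part \eqref{C:spec-ineq1B} I would use
$$m_0=m_1=\cdots=m_n=1,\qquad m_{n+1}=\ell+1,$$
so that only $i=n$ contributes (again with $j$ from $1$ to $\ell$) and the trailing product $P_{m_{n+1}}\cdots P_{m_n}$ is empty, equal to $1$; the inner sum is then $\sum_{k=0}^{n}P_j^kP_{j+1}^{n-k}$, yielding \eqref{C:spec-ineq1B}.

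Finally, for part \eqref{C:spec-ineq2} I would apply Lemma \ref{L:inequ} directly to the prescribed sequence $1\leq s_1<\cdots<s_q<s_{q+1}=\ell+1$ together with the $m$-sequence
$$m_0=m_1=1,\qquad m_2=m_3=\cdots=m_{n+1}=q+1.$$
Exactly as in part \eqref{C:spec-ineq1A}, only $i=1$ survives; the inner sum collapses to $P_{s_j}+P_{s_{j+1}}$, and the trailing factor $P_{s_{m_2}}\cdots P_{s_{m_n}}$ equals $P_{s_{q+1}}^{\,n-1}=P_{\ell+1}^{\,n-1}$ since $m_2=\cdots=m_n=q+1$ and $s_{q+1}=\ell+1$. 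This produces the stated bound. The only point requiring care throughout is the bookkeeping of the empty index ranges and the collapse of the trailing product factor to a power of $P_{\ell+1}$; once the sequences are chosen this is entirely mechanical, so I expect no genuine obstacle beyond correctly tracking these conventions (in particular the edge case $n=1$, where $P_{\ell+1}^{\,n-1}$ is an empty product equal to $1$).
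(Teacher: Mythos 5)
Your proof is correct and follows essentially the same route as the paper: each inequality is obtained by specializing Lemma \ref{L:inequ} to the identity sequence $s_j=j$ (for part \eqref{C:spec-ineq1}) and to an $m$-sequence with a single jump, so that only one value of $i$ contributes. In fact your index choices ($m_0=m_1=1$, $m_2=\cdots=m_{n+1}=\ell+1$ for \eqref{C:spec-ineq1A}, and $m_0=m_1=1$, $m_2=\cdots=m_{n+1}=q+1$ for \eqref{C:spec-ineq2}) are the ones that actually yield the stated bounds, since the surviving index must be $i=1$ to produce the factor $P_{s_j}+P_{s_{j+1}}$ times a product of $n-1$ copies of $P_{\ell+1}$; the paper's one-line descriptions of these two sequences place the jump one step too early, which your bookkeeping silently corrects.
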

Note that  \eqref{C:spec-ineq2} is a special case of \cite[Lemma 1.2]{Ohno} (which generalizes \cite[Lemma 2]{Xiao} from $n=1$ to an arbitrary $n\geq 1$). 
\begin{proof}
Part \eqref{C:spec-ineq1} follows from Lemma \ref{L:inequ} by setting $q=\ell$, which then forces $s_j=j$ for every $1\leq j \leq \ell+1$.
Part  \eqref{C:spec-ineq1A} follows from  \eqref{C:spec-ineq1} by setting $1=m_0=m_1< m_2=\ldots=m_{n+1}=\ell+1$, while part  \eqref{C:spec-ineq1B} follows from  \eqref{C:spec-ineq1} by setting
$1=m_0=\ldots=m_{n}<m_{n+1}=\ell+1$.
 
Part \eqref{C:spec-ineq2} follows from Lemma \ref{L:inequ} by setting $1=m_0= m_1<m_2=\ldots=m_n=q+1$. 
\end{proof}

\begin{remark}\label{R:choices}
The inequalities in Lemma \ref{L:inequ} and Corollary \ref{C:spec-ineq} depend upon the choice of a nef $\Q$-divisor $N_{\ell+1}$ on $\wt X$ and a rational number $\mu_{\ell+1}$ subject to the condition 
$Z_{\ell+1}:=\mu^*(L)-N_{\ell+1}-\mu_{\ell+1} \wt F\leq Z_{\ell}$.
Some natural choices of $(N_{\ell+1}, \mu_{\ell+1})$ are as follows:
\begin{enumerate}[(i)]
\item $N_{\ell+1}:=N_{\ell}$ (which is nef by Proposition \ref{P:Nnef}) and $\mu_{\ell+1}= \mu_{\ell}$, which implies that $Z_{\ell+1}=Z_{\ell}$;
\item under the assumption that $L$ is nef: $N_{\ell+1}:=\mu^*(L)$  and $\mu_{\ell+1}= 0$, which implies that $Z_{\ell+1}=0$;
\item under the assumption that $f_*\O_X(L)$ is nef: $N_{\ell+1}:=M_{\ell}$ (which is nef by Proposition \ref{P:Mnef}\eqref{P:Mnef3}) and $\mu_{\ell+1}=0$, which implies that $Z_{\ell+1}=Z_{\ell}$. 
\end{enumerate}
\end{remark}

Many of our slope inequalities will depend on the nefness of $L$, together with the nefness of $f_*\O_X(L)$ (see the previous Remark \ref{R:choices}). In the following result, we give a criterion that guarantees the nefness of $L$ together with a numerical consequence of the nefness of $L$ and of $f_*\O_X(L)$. 

\begin{lemma}\label{L:Lnef}
Assume that $f_*\O_X(L)$ is nef.
\begin{enumerate}[(i)]
\item \label{L:Lnef1}
Assume that $L$ is Cartier, $f$-nef  and generically $f$-globally generated (i.e. $L_F$ is globally generated), then $L$ is nef.
\item \label{L:Lnef2}
If $L$ is nef, then $L^{n+1}\geq M_{\ell}^{n+1}$. 
\end{enumerate}
\end{lemma}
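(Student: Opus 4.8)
The plan is to carry out both parts on the resolution $\mu\colon \wt X\to X$ constructed before the statement, exploiting the decomposition $\mu^*(L)\sim_\Q M_\ell+Z_\ell$ coming from \eqref{E:MiZi}--\eqref{E:Zi} together with the nefness of $M_\ell$. The latter is the common starting point for both parts: since $\E_\ell=f_*\O_X(L)$ is nef by the standing hypothesis, Proposition \ref{P:Mnef}\eqref{P:Mnef3} gives that $M_\ell$ is nef.

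For part \eqref{L:Lnef2}, I would first record $L^{n+1}=(\mu^*L)^{n+1}$ (birational invariance of the top self-intersection) and note that $\mu^*(L)$ is nef because $L$ is. Setting $a=\mu^*(L)$ and $b=M_\ell$, so that $a-b\sim_\Q Z_\ell\ge 0$, the algebraic identity $a^{n+1}-b^{n+1}=(a-b)\sum_{k=0}^n a^kb^{n-k}$ yields
$$
(\mu^*L)^{n+1}-M_\ell^{n+1}=Z_\ell\cdot \sum_{k=0}^{n}(\mu^*L)^k\cdot M_\ell^{n-k}.
$$
Each summand is the intersection of the effective divisor $Z_\ell$ with a product of $n$ nef divisors on the $(n+1)$-fold $\wt X$, hence is non-negative (restrict to a prime component of $Z_\ell$ and use that a product of nef divisors on an $n$-dimensional projective variety is $\ge 0$). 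Summing over $k$ gives $L^{n+1}\ge M_\ell^{n+1}$.

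For part \eqref{L:Lnef1}, I would test the nefness of $L$ against an arbitrary irreducible curve $C\subset X$, splitting into two cases. If $C$ is contained in a fibre of $f$, then $L\cdot C\ge 0$ is immediate from the $f$-nefness of $L$. If $C$ dominates $T$, I would pass to its strict transform $\wt C\subset \wt X$ (which again dominates $T$) and write, using that $L$ is Cartier so that $\lfloor\mu^*L\rfloor=\mu^*L$ and $Z_\ell$ is an honest integral divisor,
$$
L\cdot C=\mu^*(L)\cdot \wt C=M_\ell\cdot \wt C+Z_\ell\cdot \wt C.
$$
Here $M_\ell\cdot \wt C\ge 0$ by the nefness of $M_\ell$, so everything reduces to showing $Z_\ell\cdot \wt C\ge 0$.

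The main obstacle, and the only point where generic global generation enters, is to prove that $Z_\ell$ is \emph{vertical}, i.e. that $\wt f(Z_\ell)$ is a finite subset of $T$; once this is known, the horizontal curve $\wt C$ cannot be a component of $\supp(Z_\ell)$, which forces $Z_\ell\cdot \wt C\ge 0$ and completes both cases. To establish verticality I would restrict to a general fibre $\wt F$: by Lemma \ref{L:Weil} the hypothesis that $L_F$ is Cartier and globally generated lets me invoke Proposition \ref{P:Mnef}\eqref{P:Mnef5}, which gives $(M_\ell)_{|\wt F}=P_\ell=\mu_{\wt F}^*(L_F)$, whence $(Z_\ell)_{|\wt F}\sim \mu_{\wt F}^*(L_F)-P_\ell\sim 0$. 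Since an effective divisor linearly equivalent to zero on a projective variety must itself be zero, $(Z_\ell)_{|\wt F}=0$ for a general $\wt F$; thus $Z_\ell$ meets no general fibre and is therefore vertical, closing the argument.
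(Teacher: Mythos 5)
Your proof of part \eqref{L:Lnef2} coincides with the paper's: both expand $(\mu^*L)^{n+1}-M_{\ell}^{n+1}$ telescopically using $\mu^*L\sim_{\Q}M_{\ell}+Z_{\ell}$ together with the nefness of $\mu^*L$ and of $M_{\ell}$ (the latter from Proposition \ref{P:Mnef}\eqref{P:Mnef3}), and the effectivity of $Z_{\ell}$. For part \eqref{L:Lnef1} you take a genuinely different route. The paper never leaves $X$: for a horizontal integral curve $C$ it restricts the evaluation map $f^*f_*\O_X(L)\to \O_X(L)$ to $C$, where generic global generation makes it generically surjective, so $L|_C=Q+E$ with $\O_C(Q)$ a quotient of the nef bundle $p^*f_*\O_X(L)$ and $E$ effective, whence $\deg L|_C\ge 0$. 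You instead pass to the resolution $\wt X$, use Proposition \ref{P:Mnef}\eqref{P:Mnef5} to see that $(Z_{\ell})_{|\wt F}$ is an effective divisor linearly equivalent to zero on a general fibre, conclude that $Z_{\ell}$ is vertical, and then intersect $M_{\ell}+Z_{\ell}$ with the strict transform of $C$. Both arguments are correct and invoke the hypotheses in the same places ($f$-nefness for vertical curves, nefness of $f_*\O_X(L)$ for the moving part, global generation of $L_F$ for the remainder); the paper's version is more economical, needing neither the resolution nor any knowledge of $Z_{\ell}$, while yours extracts the slightly stronger structural fact that the fixed divisor $Z_{\ell}$ is vertical under these hypotheses. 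One small point to tidy: if the horizontal curve $C$ lies entirely inside the locus where $\mu^{-1}$ is undefined, the strict transform should be replaced by any irreducible curve $\wt C\subseteq \mu^{-1}(C)$ dominating $C$; the projection formula then gives $\mu^*(L)\cdot \wt C=d\,(L\cdot C)$ for some integer $d\ge 1$, and the sign argument is unaffected.
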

\begin{proof}
Part \eqref{L:Lnef1}: as $L$ is $f$-nef, it is enough to show that given a horizontal integral curve $C$, we have $L\cdot C\geq 0$. Let $p$ be the restriction of $f$ to $C$. As $L$ is generically $f$-globally generated, the evaluation map
$$\ev_C\colon p^*f_*\O_X(L)\to \O_X(L)_{|C} $$
is generically surjective, so we can write $L|_C=Q+E$, where $\O_C(Q)$ is a quotient of $p^*f_*\O_X(L)$ and $E$ is  effective. As $f_*\O_X(L)$ is nef, $Q$ is nef and hence $L|_C$ is nef, i.e.  $L\cdot C\geq 0$.

Part \eqref{L:Lnef2}: first of all, note that $L^{n+1}=(\mu^*L)^{n+1}$.  As $\mu^* L\sim_{\Q} M_{\ell}+Z_{\ell}$ with $Z_{\ell}$ effective by \eqref{E:MiZi} and \eqref{E:Zi}, $L$ (and hence also $\mu^*L$) is nef by assumption and $M_{\ell}$ is nef by Proposition \ref{P:Mnef}\eqref{P:Mnef3} (using the assumption that $f_*\O_X(L)$ is nef), we have that (see also \cite[Prop 2.3]{BFJ})
$$(\mu^*L)^i\cdot M_{\ell}^{n+1-i}=(\mu^*L)^{i-1}\cdot (M_{\ell}+Z_l)\cdot M_{\ell}^{n+1-i}\geq (\mu^* L)^{i-1}\cdot M_{\ell}^{n+2-i} \text{ for every } 1\leq i \leq n+1.$$
We conclude by putting together all the above inequalities for every $1\leq i \leq n+1$.

\end{proof}

\section{Slope inequalities}\label{Sec:slope}

In this section, we assume that we are in the set-up  \eqref{setup}, and our goal is to prove some slope inequalities, i.e. inequalities of the form
$$L^{n+1} \geq C \deg f_*\O_X(L) \,,$$ 
for some positive constant $C$, which depends just on the polarized general fiber of $f$. In all our results, we will need to assume that $f_*\O_X(L)$ and $L$ are both nef, so both $L^{n+1}$ and  $\deg f_*\O_X(L)$ are non-negative.

 Moreover, depending on the different slope inequalities that we get, we will need to make some extra assumptions that can be of two types. 
 
 The first kind of assumptions concerns  the $q$-th multiple of the $\Q$-Cartier divisor $L_F$ on $F$  (for some integer $q\geq 1$) and its associated rational morphism  $\phi_{qL_{F}}:F\dashrightarrow \P(H^0(F,qL_{F})^*)$ (we refer to \cite[Chapter II]{Nak} or \S \ref{Sec:Qdiv} for basic properties of morphisms associated to $\Q$-divisors which are not Cartier), and they assume the following possible forms

\begin{enumerate}
\item[($A_q$)]  $qL_F$ is Cartier, globally generated and $\phi_{qL_F}$ is generically finite, or equivalently $qL_F$ is  Cartier, globally generated and big;
\item[($B_q$)]  $\phi_{qL_F}$ is generically finite;
\item[($C_q$)] $qL_F$ is Cartier and big. 
\end{enumerate}
Note that ($A_q$) implies ($B_q$) and ($C_q$), while ($B_q$) and ($C_q$) are independent of each other.

The other kind of assumptions is on a general fiber $F$ and they assume the following possible forms 
\begin{enumerate}[(a)]
\item[($a$)] \label{Ass-F} A general fiber $F$ of $f$ has dimension $n\geq 2$ and $\kappa(F)\geq 0$, i.e. it has non-negative Kodaira dimension.
\item[($b_q$)] \label{Ass-spec} A general fiber $F$ of $f$ has dimension $n=1$ and $\lfloor q L_F\rfloor$ is special, i.e. $h^1(F,\lfloor q L_F\rfloor)\neq 0$. 
\end{enumerate}
The assumption ($a$)  is relevant in order to apply some of the Noether inequalities of \S \ref{Sec:Noether}, while the assumption ($b_q$)  will allow to apply Clifford's theorem which says that $\deg qL_F\geq 2h^0(F,\lfloor qL_F\rfloor)-2$.

The first slope inequality that we prove involves the numerical invariants of the polarized general fiber $(F,L_F)$, and more specifically either $L_F^n$ or $h^0(F,L_F)$, under the assumption that $\phi_{L_F}$ is generically finite. 

\begin{theorem}\label{T:Xiao-higher}
Assume we are in the set-up  \eqref{setup} and suppose that $f_*\O_X(L)$  is nef and that $\phi_{L_F}$ is generically finite, then we have that
\begin{equation}\label{E:ineqXiao1}
M_{\ell}^{n+1} \ge 2\frac{P_{\ell}^n }{P_{\ell}^n +n} \deg f_*\O_X(L) \ge  2\frac{h^0(\wt F,P_{\ell})-n}{h^0(\wt F,P_{\ell})}\deg f_*\O_X(L) .
\end{equation}
 If $n\geq 2$ and the first inequality is an equality, then $\mu_-(f_*\O_X(L))=0$, and hence $f_*\O_X(L)$ is not ample.

Suppose moreover that one of the following two conditions are satisfied:
\begin{itemize}
\item $\dim F\geq 2$ and $\kappa(F)\geq 0$, i.e. ($a$)  holds true;
\item $\dim F=1$ and $\lfloor L_F\rfloor $ is special (which implies that $\kappa(F)=1$ since $\phi_{L_F}$ is generically finite), i.e. ($b_1$)  holds true.
\end{itemize}
Then we also have that 
\begin{equation}\label{E:ineqXiao2}
M_{\ell}^{n+1} \ge 	4\frac{P_{\ell}^n }{P_{\ell}^n +2n} \deg f_*\O_X(L) \ge  4\frac{h^0(\wt F,P_{\ell})-n}{h^0(\wt F,P_{\ell})}\deg f_*\O_X(L).	
\end{equation}
If $n\geq 2$ and the first inequality is an equality, then $\mu_-(f_*\O_X(L))=0$, and hence $f_*\O_X(L)$ is not ample.
\end{theorem}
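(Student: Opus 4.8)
The plan is to run Xiao's method in this higher-dimensional setting, feeding the numerical inequalities of Section \ref{Sec:HN} with the Noether inequalities of Section \ref{Sec:Noether} applied to a general fibre. Since $f_*\O_X(L)$ is nef, \eqref{E:Nef-HN} gives $\mu_i\ge 0$ for all $i$, and $M_\ell$ is nef by Proposition \ref{P:Mnef}\eqref{P:Mnef3}. Hence Remark \ref{R:choices}(iii) lets me take $N_{\ell+1}=M_\ell$ and $\mu_{\ell+1}=0$, so that $P_{\ell+1}=P_\ell$, $Z_{\ell+1}=Z_\ell$, and $N_{\ell+1}^{n+1}=M_\ell^{n+1}$. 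I set $a:=P_\ell^n$ and $d_i:=P_i\cdot P_\ell^{n-1}$; by \eqref{E:Pi} these form a chain $0\le d_1\le\cdots\le d_\ell=d_{\ell+1}=a$, and by Lemma \ref{formuletta} one has $\deg f_*\O_X(L)=\sum_{i=1}^\ell r_i(\mu_i-\mu_{i+1})$.

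The right-hand inequalities are purely fibrewise. By Proposition \ref{P:Mnef}\eqref{P:Mnef4} the map $\phi_{P_\ell}=\phi_{L_F}\circ\mu_{\wt F}$ is generically finite and $h^0(\wt F,P_\ell)=r_\ell$; thus Corollary \ref{C:Noether1}\eqref{C:Noether1i} yields $a\ge h^0(\wt F,P_\ell)-n$, which is exactly equivalent to $\tfrac{a}{a+n}\ge\tfrac{h^0(\wt F,P_\ell)-n}{h^0(\wt F,P_\ell)}$. Under the hypothesis \eqref{Ass-F} the sharper Corollary \ref{C:Noether1}\eqref{C:Noether1ii} gives $a\ge 2\bigl(h^0(\wt F,P_\ell)-n\bigr)$, equivalent to $\tfrac{a}{a+2n}\ge\tfrac{h^0(\wt F,P_\ell)-n}{h^0(\wt F,P_\ell)}$; in the case $n=1$ of \eqref{Ass-spec} the same bound follows from Clifford's theorem applied to $\lfloor L_F\rfloor$. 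So it only remains to prove the left-hand inequalities.

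For the left-hand inequalities I would invoke the Xiao-type bound of Corollary \ref{C:spec-ineq}\eqref{C:spec-ineq2}: for every subsequence $1\le s_1<\cdots<s_q<s_{q+1}=\ell+1$,
\[
M_\ell^{n+1}\ \ge\ \sum_{j=1}^q\bigl(d_{s_j}+d_{s_{j+1}}\bigr)\,(\mu_{s_j}-\mu_{s_{j+1}}).
\]
To convert this into the stated constant I need lower bounds for the $d_i$ in terms of $r_i$. Applying Proposition \ref{P:Noether1bis} (in general) or Proposition \ref{P:Noether1} (under \eqref{Ass-F}, using $\kappa(\wt F)=\kappa(F)\ge 0$) with $H=P_i$ and the nef and big divisor $P_\ell$ as auxiliary class, together with $h^0(\wt F,P_i)\ge r_i$ from Proposition \ref{P:Mnef}\eqref{P:Mnef2} and the inequality $d_i=P_i\cdot P_\ell^{n-1}\ge P_i^{k}\cdot P_\ell^{n-k}$ (with $k=\dim\phi_{P_i}(\wt F)$), gives estimates of the shape $d_i\ge c\,(r_i-n)$, where $c=1$ in general and $c=2$ under \eqref{Ass-F}/\eqref{Ass-spec}; note that for $i=\ell$ this reduces to the fibre Noether inequality $a\ge c\,(r_\ell-n)$ used above. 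One then optimises the choice of subsequence à la Xiao, so that the slope deficits are compensated by the maximal value $d_\ell=a$, to extract the global constant $\tfrac{2a}{a+n}$ (for $c=1$) and $\tfrac{4a}{a+2n}$ (for $c=2$).

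I expect the combinatorial optimisation to be the main obstacle: the naive full-sequence choice $s_j=j$ is too lossy — the per-term inequality $d_i+d_{i+1}\ge \tfrac{2ca}{a+cn}\,r_i$ can fail for small $i$ — so the subsequence must be adapted to the profile of the $d_i$, and the fibre estimates must be used in a form sharp enough (exploiting that $P_i$ is big, resp. that $\dim\phi_{P_i}(\wt F)$ is small) to exclude degenerate profiles. Finally, for the equality statements with $n\ge 2$: equality in a left-hand inequality forces every step above, including the chosen Xiao inequality, to be an equality, and tracing this back forces $\mu_\ell=\mu_-(f_*\O_X(L))=0$; since a vector bundle on a smooth projective curve is ample if and only if its minimal slope is positive (Hartshorne's criterion, cf.\ \eqref{E:Nef-HN}), $f_*\O_X(L)$ cannot be ample.
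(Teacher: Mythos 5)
Your architecture is the right one and matches the paper's: the right-hand inequalities are exactly the fibrewise Noether bounds via Corollary \ref{C:Noether1} (resp.\ Clifford), and the left-hand ones are attacked by Xiao's method, i.e.\ the numerical inequalities of Corollary \ref{C:spec-ineq} plus a dichotomy on the top slopes. The equality discussion is also essentially correct. However, there is a genuine quantitative gap in the key step, and you have half-diagnosed it yourself. The fibre estimate you propose, $d_i=P_\ell^{n-1}\cdot P_i\ge c\,(r_i-n)$ (from Propositions \ref{P:Noether1}/\ref{P:Noether1bis} applied to $P_i$ with $P_\ell$ as auxiliary class), is too weak: feeding it into \emph{any} of the Xiao-type inequalities, the telescoping computation loses a factor $n$ on \emph{each} of $\mu_1$ and $\mu_p$, so the dichotomy only yields $M_\ell^{n+1}\ge \frac{2c\,P_\ell^n}{P_\ell^n+2cn}\deg f_*\O_X(L)$, i.e.\ $\frac{2a}{a+2n}$ and $\frac{4a}{a+4n}$ instead of the claimed $\frac{2a}{a+n}$ and $\frac{4a}{a+2n}$. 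No choice of subsequence repairs this, because in the worst case all the $d_i$ could sit at the weak bound; the obstacle is not combinatorial.

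The missing ingredient is Proposition \ref{P:bound1} (Noether inequality II): setting $p:=\min\{i: P_\ell^n-P_\ell^{n-1}\cdot P_i=0\}$, for every $i<p$ one has $P_\ell^n-P_\ell^{n-1}\cdot P_i\ge 1$, and then $P_\ell^{n-1}\cdot P_i\ge \epsilon\,(h^0(\wt F,P_i)-1)\ge\epsilon\,(r_i-1)$ — a gain of $n-1$ over your bound, while for $i\ge p$ one has $P_\ell^{n-1}\cdot P_i=P_\ell^n\ge\epsilon(h^0(\wt F,P_\ell)-n)\ge\epsilon(r_i+(\ell-i)-n)$. Your heuristic that one should exploit ``$P_i$ big'' or ``$\dim\phi_{P_i}(\wt F)$ small'' points at the wrong mechanism: the correct dichotomy is on whether $P_\ell^{n-1}\cdot(Z_i-Z_\ell)$ vanishes, not on the positivity of $P_i$ itself. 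With these refined bounds the paper runs the \emph{full}-sequence inequality of Corollary \ref{C:spec-ineq}\eqref{C:spec-ineq1A} to get $M_\ell^{n+1}\ge\epsilon\,[2\deg f_*\O_X(L)-n(\mu_1+\mu_p)]$, pairs it with the three-term subsequence $\{1,p,\ell+1\}$ in Corollary \ref{C:spec-ineq}\eqref{C:spec-ineq2} to get $M_\ell^{n+1}\ge P_\ell^n(\mu_1+\mu_p)$, and concludes by the dichotomy on $\mu_1+\mu_p$ against $\frac{2\epsilon\deg f_*\O_X(L)}{P_\ell^n+\epsilon n}$. You would need to prove and insert Proposition \ref{P:bound1} (whose base cases rest on Clifford and Riemann--Roch on a curve section) to complete your argument.
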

\begin{proof}
First of all, note that the (Cartier) divisor $P_{\ell}$ on $\wt F$ is globally generated (and hence nef) by Proposition \ref{P:Mnef}\eqref{P:Mnef1} and with generically finite associated morphism $\phi_{P_{\ell}}$ by  Proposition \ref{P:Mnef}\eqref{P:Mnef4} together with assumption that $\phi_{L_F}$ is generically finite. Therefore, the second inequality in \eqref{E:ineqXiao1} follows from the inequality
\begin{equation}\label{E:Pln0}
P_{\ell}^n\geq h^0(\wt F, P_{\ell})-n \,,
\end{equation} 
which holds by Corollary  \ref{C:Noether1}\eqref{C:Noether1i}. Similarly, the second inequality in \eqref{E:ineqXiao2} is a consequence  of the inequality 
\begin{equation}\label{E:Pln}
P_{\ell}^n\geq 2h^0(\wt F, P_{\ell})-2n,
\end{equation} 
which follows from Corollary \ref{C:Noether1}\eqref{C:Noether1ii} if $\dim F\geq 2$ and $\kappa(F)\geq 0$, and from Clifford's theorem if $\dim F=1$ and $\lfloor L_F\rfloor $ is special (using that $P_{\ell}\leq  \lfloor L_F\rfloor$).

Let us focus on the first inequalities in \eqref{E:ineqXiao1} and \eqref{E:ineqXiao2}. 

We first apply Corollary \ref{C:spec-ineq}\eqref{C:spec-ineq1A} with $N_{\ell+1}:=N_{\ell}=M_{\ell}-\mu_{\ell} \wt F$ and $\mu_{\ell+1}=\mu_{\ell}$,
and we get
\begin{equation}\label{E:furbohnno}
N_{\ell}^{n+1}=M_{\ell}^{n+1}-(n+1)\mu_{\ell}P_{\ell}^n \geq \sum_{i=1}^{\ell} (P_i+P_{i+1})P_{\ell}^{n-1}(\mu_i-\mu_{i+1})=\sum_{i=1}^{\ell-1} (P_i+P_{i+1})P_{\ell}^{n-1}(\mu_i-\mu_{i+1}).
\end{equation}
This inequality implies that 
\begin{equation}\label{E:1ineq}
M_{\ell}^{n+1} \geq \sum_{i=1}^{\ell-1} (P_i+P_{i+1})P_{\ell}^{n-1}(\mu_i-\mu_{i+1})+2\mu_{\ell}P_{\ell}^n,
\end{equation}
with the equality that can occur only if either $n=1$ or $n\geq 2$ and $\mu_{\ell}=0$.

In order to give a lower bound on the right hand side of  \eqref{E:1ineq}, we define
$$
p:= \min\{i \in \{1,\ldots,\ell\} \ | \ P_{\ell}^n -P_{\ell}^{n-1}\cdot P_i=0 \}.
$$
Observe that, since the intersection numbers 
$$P_{\ell}^n -P_{\ell}^{n-1}\cdot P_i=P_{\ell}^{n-1}\cdot (Z_i-Z_{\ell})_{|\wt F}$$ 
are non-increasing in $i$ (because $P_{\ell}$ is nef by Proposition \ref{P:Mnef}\eqref{P:Mnef1}, $Z_i-Z_{i+1}\geq 0$ by \eqref{E:Zi}, and $\wt F$ is a general fiber of $\wt f$), we have that 
\begin{equation}\label{E:intMiF}
\begin{sis} 
P_{\ell}^n -P_{\ell}^{n-1}\cdot P_i \geq 1 & \text{ if } i<p, \\
P_{\ell}^n -P_{\ell}^{n-1}\cdot P_i= 0 & \text{ if } p\leq i. \\
\end{sis} 
\end{equation}
In order to treat the two inequalities  \eqref{E:ineqXiao1} and \eqref{E:ineqXiao2} simultaneously, we define
\begin{equation*}
\epsilon:=
\begin{cases} 
2 & \text{ if either } \dim F\geq 2 \: \text{ and } \kappa(F)\geq 0, \: \text{ or } \dim F=1 \: \text{ and } \lfloor L_F\rfloor  \: \text{ is special.} \\
1 & \text{ otherwise.}  
\end{cases} 
\end{equation*}

\un{Claim:} We have that 
        \begin{equation}\label{E:boundMi}
	P_{\ell}^{n-1}\cdot P_i\geq 
	\begin{cases}
	\epsilon[h^0(\wt F,P_i)-1]\geq \epsilon[r_i - 1] &  \text{ if } 1\leq i<p,  \\
	\epsilon[h^0(\wt F,P_{\ell})-n]\geq \epsilon[r_i-n+(\ell-i)] & \text{ if }  p\leq i\leq \ell.
	\end{cases}
	\end{equation}

Indeed, if $p\leq i\leq \ell$, which implies that $P_{\ell}^n -P_{\ell}^{n-1}\cdot P_i= 0$ by \eqref{E:intMiF}, then \eqref{E:Pln0} and \eqref{E:Pln} give 
$$
P_{\ell}^{n-1}\cdot P_i=P_{\ell}^{n}\geq \epsilon[h^0(\wt F,P_{\ell})-n].
$$
If, instead, $1\leq i<p$, which implies that $P_{\ell}^n - P_{\ell}^{n-1}P_i \ge 1$ by \eqref{E:intMiF}, then we get 
$$
P_{\ell}^{n-1}\cdot P_i \geq \epsilon[h^0(\wt F,P_i)-1],
$$
by applying  
\begin{itemize}
\item  when ($a$)  holds true: Proposition \ref{P:bound1} with $L:=P_{\ell}$ which is nef 
and with generically finite associated map $\phi_{P_{\ell}}$ 
and $M:=P_{i}$ which is nef by Proposition \ref{P:Mnef}\eqref{P:Mnef1}  and such that $P_{\ell}-P_i\geq 0$ by  \eqref{E:Mi};  
\item   when ($b_1$)  holds true:  Clifford theorem to $P_i$ which is special since $P_i\leq P_{\ell}\leq \lfloor L_F\rfloor $.
\end{itemize}
We conclude in both cases using that $h^0(\wt F,M_{i}|_{\wt F})\geq r_{i}$ by Proposition \ref{P:Mnef}\eqref{P:Mnef2} and (for the second case) the fact that $r_{i+1}\geq r_i+1$ by \eqref{E:ri}.

\vspace{0.1cm}

By substituting the inequalities given by the above Claim  into \eqref{E:1ineq} and dividing out by $\epsilon$, we get  
\begin{equation}\label{E:1ineqbis}
\begin{aligned}
\frac{M_{\ell}^{n+1}}{\epsilon}& \geq  \sum_{i=1}^{p-2}(r_i-1+r_{i+1}-1)(\mu_i-\mu_{i+1}) + (r_{p-1}-1 + r_{p}-n+(\ell-p))(\mu_{p-1} -\mu_{p}) + \\
& + \sum_{ i=p}^{\ell-1}(r_i-n +(\ell-i) +r_{i+1}-n+(\ell-i-1))(\mu_i-\mu_{i+1})  +  2(r_{\ell}-n)\mu_{\ell} \\
&\underbrace{\ge}_{r_{i+1}\geq r_i+1}   \sum_{i=1}^{p-2}(2r_i-1)(\mu_i-\mu_{i+1}) + (2r_{p-1}-n+\ell-p)(\mu_{p-1} - \mu_{p})  +\\ 
& +2\sum_{ i=p }^{\ell-1}(r_i-n +(\ell-i))(\mu_i-\mu_{i+1})  +  2(r_{\ell}-n)\mu_{\ell}\\
&\underbrace{=}_{\textrm{Lemma } \ref{formuletta}}  2 \deg f_*\O_X(L) -\mu_1 +\mu_{p-1} +(-n+\ell-p)(\mu_{p-1}-\mu_p) +\\
&+ 2(-n+\ell-p)\mu_p-2\mu_{p+1}-\ldots -2\mu_{\ell}=\\
&=  2 \deg f_*\O_X(L) -\mu_1 +(1-n+\ell-p)\mu_{p-1} + (-n+\ell-p)\mu_p-2\mu_{p+1}-\ldots -2\mu_{\ell} \\
&\underbrace{\ge}_{\mu_i\geq \mu_{i+1}}  2 \deg f_*\O_X(L) -\mu_1 +(1-n)\mu_1+(\ell-p)\mu_{p-1} + (-n+\ell-p)\mu_p-2(\ell-p)\mu_{p}\\
& =  2 \deg f_*\O_X(L) -n\mu_1-n\mu_p +(\ell-p)(\mu_{p-1}-\mu_p) \underbrace{\ge}_{\mu_{p-1} \ge \mu_{p}} 2 \deg f_*\O_X(L) -n (\mu_1+\mu_p).
\end{aligned}
\end{equation}
Note that in the above  inequalities we have used that $\mu_{\ell}\geq 0$ by \eqref{E:Nef-HN}  and the assumption that $f_*\O_X(L)$ is nef.

We next apply  Corollary \ref{C:spec-ineq}\eqref{C:spec-ineq2} with $N_{\ell+1}:=N_{\ell}=M_{\ell}-\mu_{\ell}\wt F$, $\mu_{\ell+1}=\mu_{\ell}$, and either $q=2$ and $s_1=1<s_2=p<s_3=\ell+1$ if $1<p$, or $q=1$ and $s_1=1=p<s_2=\ell+1$ if $p=1$:

\begin{equation}\label{E:2ineq}
N_{\ell}^{n+1}=M_{\ell}^{n+1}-(n+1)\mu_{\ell}P_{\ell}^n \ge P_{\ell}^{n-1} \left[(P_1+P_p)(\mu_1-\mu_p) + (P_p+P_{\ell})(\mu_p-\mu_{\ell}) \right]=
\end{equation}
$$ 
= P_{\ell}^{n-1} P_1 (\mu_1-\mu_p)+P_\ell^n (\mu_1+\mu_p-2\mu_{\ell})\geq P_\ell^n (\mu_1+\mu_p-2\mu_{\ell}),
$$ 
where we used in the second equality that $P_{\ell}^{n-1}P_p=P_{\ell}^n$ by the definition of $p$, and in the last inequality that $P_{\ell}^{n-1}P_1 (\mu_1-\mu_p) \ge 0$ since $P_{\ell}$ and $P_1$ are nef by Proposition \ref{P:Mnef}\eqref{P:Mnef1} and $\mu_1\geq  \mu_p$ by \eqref{E:ai}.
The  inequality \eqref{E:2ineq} implies that 
\begin{equation}\label{E:2ineqbis}
M_{\ell}^{n+1} \geq P_\ell^n (\mu_1+\mu_p),
\end{equation}
with the equality that can occur only if either $n=1$ or $n\geq 2$ and $\mu_{\ell}=0$.

Now we conclude as follows: 
\begin{itemize}
\item if 
$$
\mu_1+\mu_p \le\frac{2\epsilon\deg f_*\O_X(L)}{P_\ell^n +\epsilon n}
$$
then the conclusion follows from \eqref{E:1ineqbis}; 
\item if
$$
\mu_1+\mu_p \ge\frac{2\epsilon\deg f_*\O_X(L)}{P_\ell^n +\epsilon n}
$$
then the conclusion follows from \eqref{E:2ineqbis}. 
\end{itemize}	
\end{proof}

\begin{corollary}\label{C:Xiao-high1}
Suppose that $L$ and $f_*\O_X(L)$ are nef, and that $\phi_{L_F}$ is generically finite. 
Then 
	$$
	L^{n+1} \ge 
	\begin{cases} 
	  4\frac{h^0(F,L_F)-n}{h^0(F,L_F)}\deg f_*\O_X(L) & \text{ if  either } \dim F\geq 2 \text{ and } \kappa(F)\geq 0, \\
	  & \text{ or } \dim F=1 \: \text{ and } \lfloor L_F\rfloor \: \text{ is special},\\
	   2\frac{h^0(F,L_F)-n}{h^0(F,L_F)}\deg f_*\O_X(L) & \text{ otherwise.} 
	\end{cases}
       $$
     For $n\geq 2$, if the above inequalities are equalities, then $\mu_-(f_*\O_X(L))=0$ and hence $f_*\O_X(L)$ is not ample.
      \end{corollary}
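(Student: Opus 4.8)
The plan is to read this corollary off from Theorem \ref{T:Xiao-higher}, whose conclusion is phrased in terms of $M_\ell^{n+1}$ and $h^0(\wt F, P_\ell)$ rather than $L^{n+1}$ and $h^0(F, L_F)$. Only two elementary translations are needed, and both are supplied by earlier results, so the corollary is essentially a matter of bookkeeping.

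First I would pass from $M_\ell^{n+1}$ to $L^{n+1}$. By hypothesis $L$ and $f_*\O_X(L)$ are nef; the nefness of $f_*\O_X(L)$ guarantees via Proposition \ref{P:Mnef}\eqref{P:Mnef3} that $M_\ell$ is nef, so Lemma \ref{L:Lnef}\eqref{L:Lnef2} applies and yields $L^{n+1} \geq M_\ell^{n+1}$. Next I would match the cohomological quantities: Proposition \ref{P:Mnef}\eqref{P:Mnef4} gives $h^0(\wt F, P_\ell) = r_\ell$, while \eqref{E:rk-push} together with Lemma \ref{L:Weil} identifies $r_\ell = h^0(F, \O_X(L)_{|F}) = h^0(F, L_F)$. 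The hypotheses of Theorem \ref{T:Xiao-higher} hold ($f_*\O_X(L)$ nef and $\phi_{L_F}$ generically finite), so I may invoke its second inequalities: that of \eqref{E:ineqXiao1} in general and that of \eqref{E:ineqXiao2} when either \eqref{Ass-F} or \eqref{Ass-spec} holds. Writing $\epsilon = 2$ in general and $\epsilon = 4$ in the two special cases — and noting that the branch $\dim F = 1$ with $\lfloor L_F\rfloor$ special is consistent with the labelling, since $\phi_{L_F}$ generically finite forces $\kappa(F) = 1$ — the substitution produces
$$L^{n+1} \geq M_\ell^{n+1} \geq \epsilon\,\frac{h^0(\wt F, P_\ell) - n}{h^0(\wt F, P_\ell)}\,\deg f_*\O_X(L) = \epsilon\,\frac{h^0(F, L_F) - n}{h^0(F, L_F)}\,\deg f_*\O_X(L),$$
which is precisely the claimed bound in both cases.

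For the equality/ampleness statement with $n \geq 2$, I would argue by collapsing the chain above. Suppose equality holds in the corollary's inequality. Since the right-hand side equals $L^{n+1}$ and every link of the chain points in the same direction, all inequalities are in fact equalities; in particular $M_\ell^{n+1} = \epsilon\,(h^0(F,L_F)-n)/h^0(F,L_F)\cdot\deg f_*\O_X(L)$. But this composite bound is itself obtained in Theorem \ref{T:Xiao-higher} by chaining the sharper first inequality of \eqref{E:ineqXiao1} (resp.\ \eqref{E:ineqXiao2}) with the subsequent Noether-type inequality, so that first inequality must also be an equality. The rigidity clause of Theorem \ref{T:Xiao-higher} then forces $\mu_-(f_*\O_X(L)) = 0$, and since an ample locally free sheaf on the smooth projective curve $T$ has strictly positive minimal slope (the ample analogue of \eqref{E:Nef-HN}), $f_*\O_X(L)$ cannot be ample.

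I do not expect a genuine obstacle here: all the mathematical substance lives in Theorem \ref{T:Xiao-higher}, and what remains is correct substitution. The one point requiring slight care is the equality argument, where one must remember that the step $M_\ell^{n+1} \geq \epsilon\,(h^0(F,L_F)-n)/h^0(F,L_F)\cdot \deg f_*\O_X(L)$ factors through the intermediate inequality involving $P_\ell^n$, so as to transfer the equality hypothesis back to the exact inequality of Theorem \ref{T:Xiao-higher} that carries its non-ampleness conclusion.
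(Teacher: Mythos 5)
Your proposal is correct and follows the paper's own proof essentially verbatim: the paper likewise deduces the corollary from the second inequalities of Theorem \ref{T:Xiao-higher}, using Lemma \ref{L:Lnef}\eqref{L:Lnef2} for $L^{n+1}\geq M_{\ell}^{n+1}$ and Proposition \ref{P:Mnef}\eqref{P:Mnef4} for $h^0(F,L_F)=h^0(\wt F,P_{\ell})$. Your explicit handling of the equality case — collapsing the chain so that the first inequality of the theorem becomes an equality and its rigidity clause applies — is the right (and in the paper only implicit) way to obtain the non-ampleness conclusion.
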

\begin{proof}
This follows from the second inequality in Theorem \ref{T:Xiao-higher}, using that  $L^{n+1}\geq M_{\ell}^{n+1}$ by Lemma \ref{L:Lnef}\eqref{L:Lnef2} and that 
$h^0(F,L_F)=h^0(\wt F, P_{\ell})$ by Proposition \ref{P:Mnef}\eqref{P:Mnef4}. 
\end{proof}

\begin{corollary}\label{C:Xiao-high2}
Suppose that $L$ and $f_*\O_X(L)$ are nef, and that $L_F$ is Cartier, globally generated and big. Then
	$$
	L^{n+1} \ge
	\begin{cases} 
	4\frac{L_F^n }{L_F^n +2n} \deg f_*\O_X(L) &  \text{ if  either } \dim F\geq 2 \text{ and } \kappa(F)\geq 0, \\
	  & \text{ or } \dim F=1\: \text{ and } \lfloor L_F\rfloor  \: \text{ is special},\\
	  	2\frac{L_F^n }{L_F^n +n} \deg f_*\O_X(L)  & \text{ otherwise.} 
	\end{cases}
	$$
	For $n\geq 2$, if the above inequalities are equalities, then $\mu_-(f_*\O_X(L))=0$ and hence $f_*\O_X(L)$ is not ample.
\end{corollary}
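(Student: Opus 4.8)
The plan is to run the same argument as in the proof of Corollary \ref{C:Xiao-high1}, but to feed into Theorem \ref{T:Xiao-higher} the \emph{first} pair of inequalities (those expressed through $P_\ell^n$) rather than the second pair (those expressed through $h^0(\wt F, P_\ell)$). The point is that the present hypotheses on $L_F$ are designed to control the self-intersection number $L_F^n$ directly, whereas in Corollary \ref{C:Xiao-high1} only the dimension of the linear system was available.

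First I would observe that the three standing hypotheses on $L_F$ --- Cartier, globally generated, and big --- together force the associated morphism $\phi_{L_F}$ to be generically finite: global generation gives an honest morphism, and bigness ($L_F^n > 0$) forces its image to have dimension $\dim F = n$. This is exactly the hypothesis needed to invoke Theorem \ref{T:Xiao-higher}. Next I would apply Proposition \ref{P:Mnef}\eqref{P:Mnef5}, whose hypotheses (Cartier and globally generated) are met, to conclude that $P_\ell = \mu_{\wt F}^*(L_F)$ up to linear equivalence, and hence $P_\ell^n = L_F^n$.

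With these identifications in hand, I would substitute $P_\ell^n = L_F^n$ into the first inequalities of \eqref{E:ineqXiao1} and \eqref{E:ineqXiao2}. The ``otherwise'' case comes from \eqref{E:ineqXiao1}, yielding the factor $2 L_F^n/(L_F^n + n)$, while the case where either $\dim F \ge 2$ and $\kappa(F) \ge 0$, or $\dim F = 1$ and $\lfloor L_F \rfloor$ is special, comes from \eqref{E:ineqXiao2}, yielding the factor $4 L_F^n/(L_F^n + 2n)$. Finally, passing from $M_\ell^{n+1}$ to $L^{n+1}$ uses the inequality $L^{n+1} \ge M_\ell^{n+1}$ of Lemma \ref{L:Lnef}\eqref{L:Lnef2}, which applies since both $L$ and $f_*\O_X(L)$ are nef. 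The assertion on equality for $n \ge 2$ is then inherited verbatim from the corresponding statement in Theorem \ref{T:Xiao-higher}.

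Since essentially all of the substantive work is already packaged inside Theorem \ref{T:Xiao-higher}, there is no serious obstacle. The only point requiring a little care is verifying that the hypotheses of Proposition \ref{P:Mnef}\eqref{P:Mnef5} and of Theorem \ref{T:Xiao-higher} are genuinely satisfied, so that the chain $L^{n+1} \ge M_\ell^{n+1} \ge (\text{factor}) \cdot \deg f_*\O_X(L)$, with $P_\ell^n$ replaced by $L_F^n$, goes through cleanly.
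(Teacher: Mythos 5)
Your proposal is correct and follows exactly the paper's own route: apply the first inequalities of Theorem \ref{T:Xiao-higher} (whose hypothesis that $\phi_{L_F}$ is generically finite follows from Cartier, globally generated and big), identify $P_\ell^n = L_F^n$ via Proposition \ref{P:Mnef}\eqref{P:Mnef5}, and pass from $M_\ell^{n+1}$ to $L^{n+1}$ via Lemma \ref{L:Lnef}\eqref{L:Lnef2}. The only difference is that you spell out explicitly why the hypotheses force $\phi_{L_F}$ to be generically finite, which the paper leaves implicit.
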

\begin{proof}
This follows from the first inequality in Theorem \ref{T:Xiao-higher}, using that  $L$ is nef, and hence that $L^{n+1}\geq M_{\ell}^{n+1}$ by Lemma \ref{L:Lnef}\eqref{L:Lnef2}, and that $L_F^n=P_{\ell}^n$ by Proposition \ref{P:Mnef}\eqref{P:Mnef5}. 
\end{proof}

Note that the assumptions of Corollary \ref{C:Xiao-high2} are stronger than the assumptions of Corollary \ref{C:Xiao-high1} but, at the same time, the conclusion is also stronger by Corollary \ref{C:Noether1}.


\begin{remark}\label{R:curves}
If $f:X\to T$ is a family of curves such that a general fiber $F$ is a smooth projective irreducible curve of genus $g\geq 2$ and $L=K_{X/T}$, both Corollaries  \ref{C:Xiao-high1} and \ref{C:Xiao-high2} reduces to the Xiao-Cornalba-Harris inequality (see \cite[Thm. 2]{Xiao} or \cite[Prop. 4.3]{CH} \footnote{this result is stated in \cite[Prop. 4.3]{CH} under the 
further assumption that $f$ is a family of stable curves. However, the proof can be easily adapted to the general case of a fibration with normal total space.})
\begin{equation}\label{E:slope-curves}
K_{X/T}^2\geq \frac{4g-4}{g} \deg f_*\O_X(K_{X/T}).  
\end{equation}
Moreover, the slope inequality \eqref{E:slope-curves} is sharp since it is attained for non-isotrivial families of hyperelliptic stable curves such that every node of each fiber of $f$ is non-separating (and indeed these are  the only families of stable curves attaining the equality in \eqref{E:slope-curves}), see \cite[Thm. 4.12]{CH}.
\end{remark}

\begin{remark}\label{R:slope-surf}
In the special case $n=2$ and $L=K_{X/T}$, and under the assumptions that $X$  has terminal singularities (and hence isolated singularities since $\dim X=3$) and a general fiber $F$ (which is automatically smooth) is of general type, then
\begin{enumerate}[(i)]
\item the slope inequality in Corollary \ref{C:Xiao-high1} was proved by Ohno \cite[Prop. 2.1]{Ohno} without the assumption that $\phi_{K_F}$ is generically finite;
\item the slope inequality in Corollary \ref{C:Xiao-high2} was proved by Hu-Zhang \cite[Thm. 1.7]{HZ} without the assumption that $K_F$ is globally generated, using methods of positive characteristics (and indeed \cite[Thm. 1.7]{HZ} holds also in positive characteristics). 
\end{enumerate}
Moreover, in this special case, Ohno  proved in \cite[Prop. 2.1]{Ohno} that the equality in  Corollary \ref{C:Xiao-high1} cannot occur unless $f$ is isotrivial, see Proposition \ref{P:slopepair}\eqref{P:slopepair1} for a generalisation. 
\end{remark}

The second slope inequality that we prove  is  a refinement (at least for $n\geq 3$) of the one obtained in Theorem \ref{T:Xiao-higher}, under the stronger assumption that $L_F$ gives a birational map and it is subcanonical  (on a resolution of singularities).

\begin{theorem}\label{T:Xiao-n}
	Assume we are in the set-up  \eqref{setup} and suppose that $f_*\O_X(L)$  is nef, $\phi_{L_F}$ is birational and $n\ge 2$. Let $s \in \mathbb N$ such that $K_{\tilde F}-sP_{\ell} \ge 0$.
	Then 
\begin{equation}\label{E:ineqXiao-n}
M_{\ell}^{n+1} \ge 2(n+s)\frac{P_{\ell}^n}{P_{\ell}^n +(n+s)(n+2)} \deg f_*\O_X(L) \ge  2(n+s)\frac{h^0(\wt F,P_{\ell})-n-2}{h^0(\wt F,P_{\ell})}\deg f_*\O_X(L) .
\end{equation}
 If $n\geq 2$ and the first inequality is an equality, then $\mu_-(f_*\O_X(L))=0$ and hence $f_*\O_X(L)$ is not ample.
\end{theorem}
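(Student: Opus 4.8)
The plan is to run the proof of Theorem \ref{T:Xiao-higher} almost verbatim, replacing the Noether-type bounds (Corollary \ref{C:Noether1}, Proposition \ref{P:bound1}) by the Castelnuovo-type bounds of Lemma \ref{L:Harris} and Proposition \ref{P:Castelnuovo3}. The divisor $P_\ell$ on $\wt F$ is globally generated (Proposition \ref{P:Mnef}\eqref{P:Mnef1}) with $\phi_{P_\ell}=\phi_{L_F}\circ \mu_{\wt F}$ birational (Proposition \ref{P:Mnef}\eqref{P:Mnef4}), and $K_{\wt F}-sP_\ell\geq 0$ by hypothesis. Hence Lemma \ref{L:Harris} applied to $(P_\ell,s)$ gives
$$P_\ell^n \geq (n+s)(h^0(\wt F,P_\ell)-1-n)+2 \geq (n+s)(h^0(\wt F,P_\ell)-n-2),$$
and a direct algebraic manipulation of this inequality yields the second inequality of \eqref{E:ineqXiao-n}.

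For the first inequality I would follow \eqref{E:furbohnno}--\eqref{E:1ineq} unchanged, obtaining $M_\ell^{n+1}\geq \sum_{i=1}^{\ell-1}(P_i+P_{i+1})P_\ell^{n-1}(\mu_i-\mu_{i+1})+2\mu_\ell P_\ell^n$ (with equality forcing $\mu_\ell=0$ when $n\geq 2$), and set $p:=\min\{i : P_\ell^n-P_\ell^{n-1}\cdot P_i=0\}$. The key new input is the Castelnuovo analogue of the Claim, with $(n+s)$ playing the role of $\epsilon$:
$$P_\ell^{n-1}\cdot P_i \geq \begin{cases} (n+s)(h^0(\wt F,P_i)-2)+2 \geq (n+s)(r_i-2)+2 & \text{if } 1\leq i<p, \\ (n+s)(h^0(\wt F,P_\ell)-n-1)+2 \geq (n+s)(r_i+(\ell-i)-n-1)+2 & \text{if } p\leq i\leq \ell. \end{cases}$$
For $i<p$ one has $P_\ell^n-P_\ell^{n-1}\cdot P_i\geq 1$, so Proposition \ref{P:Castelnuovo3} applies to $L=P_\ell$, $M=P_i$; for $i\geq p$ one has $P_\ell^{n-1}\cdot P_i=P_\ell^n$ and invokes Lemma \ref{L:Harris}; the inequalities $h^0(\wt F,P_i)\geq r_i$ (Proposition \ref{P:Mnef}\eqref{P:Mnef2}) and $r_\ell\geq r_i+(\ell-i)$ give the rightmost bounds.

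The main obstacle is extracting the constant $(n+2)$ from the telescoping sum while carrying the extra additive $+2$'s, and here I would use the following shortcut. The rightmost bounds above are precisely $(n+s)(e_i-1)+2$, where $e_i$ denotes the $\epsilon=1$ quantities $r_i-1$ (for $i<p$) and $r_i-n+(\ell-i)$ (for $i\geq p$) appearing in Theorem \ref{T:Xiao-higher}. Substituting $P_\ell^{n-1}\cdot P_i\geq (n+s)(e_i-1)+2$ into the displayed inequality and expanding, the sum splits as $(n+s)$ times the $\epsilon$-independent sum of \eqref{E:1ineqbis} plus a telescoping remainder: the chain of \eqref{E:1ineqbis} gives that the former is at least $2\deg f_*\O_X(L)-n(\mu_1+\mu_p)$, while the remainder collects into the manifestly nonnegative quantity $4\mu_1+2(n+s)\mu_p$. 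This produces
$$M_\ell^{n+1}\geq (n+s)\big(2\deg f_*\O_X(L)-(n+2)(\mu_1+\mu_p)\big).$$

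The second estimate is identical to Theorem \ref{T:Xiao-higher}: Corollary \ref{C:spec-ineq}\eqref{C:spec-ineq2}, exactly as in \eqref{E:2ineqbis}, gives $M_\ell^{n+1}\geq P_\ell^n(\mu_1+\mu_p)$, again with equality forcing $\mu_\ell=0$ for $n\geq 2$. Since the first estimate is decreasing and the second increasing in $\mu_1+\mu_p$, comparing them at the balancing value $\frac{2(n+s)\deg f_*\O_X(L)}{P_\ell^n+(n+s)(n+2)}$ shows $M_\ell^{n+1}\geq \frac{2(n+s)P_\ell^n}{P_\ell^n+(n+s)(n+2)}\deg f_*\O_X(L)$, which is the first inequality of \eqref{E:ineqXiao-n}. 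Finally, if this is an equality with $n\geq 2$, then $M_\ell^{n+1}$ must equal whichever of the two estimates is larger at the actual value of $\mu_1+\mu_p$; in both cases the relevant equality condition (for \eqref{E:1ineq} and for \eqref{E:2ineqbis}) forces $\mu_\ell=\mu_-(f_*\O_X(L))=0$, and since an ample bundle on $T$ has positive minimal slope, $f_*\O_X(L)$ is not ample.
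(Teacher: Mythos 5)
Your proposal is correct and follows essentially the same route as the paper's proof: the same estimates \eqref{E:1ineq-n} and \eqref{E:2ineqbis} from the Harder--Narasimhan filtration, the same pivot index $p$, Proposition \ref{P:Castelnuovo3} and Lemma \ref{L:Harris} in place of the Noether-type bounds, and the same final balancing of the two estimates in $\mu_1+\mu_p$. The only (organizational) difference is that you recycle the telescoping computation of Theorem \ref{T:Xiao-higher} via the substitution $B_i=(n+s)(e_i-1)+2$ instead of redoing it, and the arithmetic there checks out: the substituted sum equals $(n+s)S+(4-2(n+s))\mu_1$ with $S\ge 2\deg f_*\O_X(L)-n(\mu_1+\mu_p)$, which exceeds the target $(n+s)\left[2\deg f_*\O_X(L)-(n+2)(\mu_1+\mu_p)\right]$ by exactly $4\mu_1+2(n+s)\mu_p\ge 0$.
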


\begin{proof}
First of all, note that the (Cartier) divisor $P_{\ell}$ on $\wt F$ is globally generated (and hence nef) by Proposition \ref{P:Mnef}\eqref{P:Mnef1} and with birational associated morphism $\phi_{P_{\ell}}$ by  Proposition \ref{P:Mnef}\eqref{P:Mnef4} together with assumption that $\phi_{L_F}$ is birational. Therefore, the second inequality in \eqref{E:ineqXiao-n} follows from the inequality
\begin{equation}\label{E:Pln-n}
	P_{\ell}^n\geq (n+s)(h^0(\wt F, P_{\ell})-n-1)+2,
\end{equation} 
which holds by Lemma \ref{L:Harris}. 

We first apply Corollary \ref{C:spec-ineq}\eqref{C:spec-ineq1A} with $N_{\ell+1}:=N_{\ell}=M_{\ell}-\mu_{\ell} \wt F$ and $\mu_{\ell+1}=\mu_{\ell}$,
and we get
\begin{equation}\label{E:furbohnno-n}
N_{\ell}^{n+1}=M_{\ell}^{n+1}-(n+1)\mu_{\ell}P_{\ell}^n \geq \sum_{i=1}^{\ell} (P_i+P_{i+1})P_{\ell}^{n-1}(\mu_i-\mu_{i+1})=\sum_{i=1}^{\ell-1} (P_i+P_{i+1})P_{\ell}^{n-1}(\mu_i-\mu_{i+1}).
\end{equation}
This inequality implies that 
\begin{equation}\label{E:1ineq-n}
M_{\ell}^{n+1} \geq \sum_{i=1}^{\ell-1} (P_i+P_{i+1})P_{\ell}^{n-1}(\mu_i-\mu_{i+1})+2\mu_{\ell}P_{\ell}^n,
\end{equation}
with the equality that can occur only if either $n=1$ or $n\geq 2$ and $\mu_{\ell}=0$.

	In order to give a lower bound on the right hand side of  \eqref{E:1ineq-n}, we define
	$$
	p:= \min\{i \in \{1,\ldots,\ell\} \ | \ P_{\ell}^n -P_{\ell}^{n-1}\cdot P_i=0 \}.
	$$
	Observe that, since the intersection numbers 
	$$P_{\ell}^n -P_{\ell}^{n-1}\cdot P_i=P_{\ell}^{n-1}\cdot (Z_i-Z_{\ell})$$ 
	are non-increasing in $i$ (because $P_{\ell}$ is nef by Proposition \ref{P:Mnef}\eqref{P:Mnef1}, $Z_i-Z_{i+1}\geq 0$ by \eqref{E:Zi}, and $\wt F$ is a general fiber of $\wt f$), we have that 
	\begin{equation}\label{E:intMiF-n}
	\begin{sis} 
	P_{\ell}^n -P_{\ell}^{n-1}\cdot P_i \geq 1 & \text{ if } i<p, \\
	P_{\ell}^n -P_{\ell}^{n-1}\cdot P_i= 0 & \text{ if } p\leq i. \\
	\end{sis} 
	\end{equation}

	\un{Claim:} We have that 
	\begin{equation}\label{E:boundMi-n}
	P_{\ell}^{n-1}\cdot P_i\geq 
	\begin{cases}
	(n+s)(h^0(\wt F,P_i)-2)+2\geq (n+s)(r_i - 2) +2&  \text{ if } 1\leq i<p,  \\
	(n+s)(h^0(\wt F,P_{\ell})-n-1)+2\geq (n+s)[r_i+(\ell-i)-n-1] +2& \text{ if }  p\leq i\leq \ell.
	\end{cases}
	\end{equation}
	
	Indeed, if $p\leq i\leq \ell$, which implies that $P_{\ell}^n -P_{\ell}^{n-1}\cdot P_i= 0$ by \eqref{E:intMiF-n}, then \eqref{E:Pln-n} gives 
	$$
	P_{\ell}^{n-1}\cdot P_i=P_{\ell}^{n}\geq (n+s)(h^0(\wt F,P_{\ell})-n-1) +2.
	$$
	If, instead, $1\leq i<p$, which implies that $P_{\ell}^n - P_{\ell}^{n-1}P_i \ge 1$ by \eqref{E:intMiF}, then we get 
	$$
	P_{\ell}^{n-1}\cdot P_i \geq (n+s)(h^0(\wt F,P_i)-2)+2,
	$$
	by applying  Proposition \ref{P:Castelnuovo3}.
	We conclude in both cases using that $h^0(\wt F,M_{i}|_{\wt F})\geq r_{i}$ by Proposition \ref{P:Mnef}\eqref{P:Mnef2} and (for the second case) the fact that $r_{i+1}\geq r_i+1$ by \eqref{E:ri}.
		
	\vspace{0.1cm}
	
	By substituting the inequalities given by the above Claim  into \eqref{E:1ineq}, we get  
	\begin{equation}\label{E:1ineqbisX}
	\begin{aligned}
	M_{\ell}^{n+1}& \geq  \sum_{i=1}^{p-2}[(n+s)(r_i-2+r_{i+1}-2)+4](\mu_i-\mu_{i+1}) + [(n+s)(r_{p-1}-2 + r_{p}+\ell-p-n-1)+4](\mu_{p-1} -\mu_{p}) + \\
	& + \sum_{ i=p}^{\ell-1}[(n+s)(r_i-n-1 +(\ell-i) +r_{i+1}-n-1+(\ell-i-1))+4](\mu_i-\mu_{i+1})  + 2[(n+s)(r_{\ell}-n-1)+2]\mu_{\ell} \\
	&\underbrace{\ge}_{r_{i+1}\geq r_i+1}   \sum_{i=1}^{p-2}[(n+s)(2r_i-3)+4](\mu_i-\mu_{i+1}) + [(n+s)(2r_{p-1}-1+\ell-p-n-1)+4](\mu_{p-1} -\mu_{p}) +\\ 
	& +2\sum_{ i=p}^{\ell-1}[(n+s)(r_i-n-1 +\ell-i)+2](\mu_i-\mu_{i+1}) +   2[(n+s)(r_{\ell}-n-1)+2]\mu_{\ell} \\
	&\underbrace{=}_{\textrm{Lemma } \ref{formuletta}}  2(n+s) \deg f_*\O_X(L) -[3(n+s)-4](\mu_1 -\mu_{p-1}) +[(n+s)(\ell-p-n-2)+4](\mu_{p-1}-\mu_p) +\\
	&+ 2[(n+s)(\ell-p-n-1)+2]\mu_p-2(n+s)\mu_{p+1}-\ldots -2(n+s)\mu_{\ell}\\
	&\ge (n+s)[ 2\deg f_*\O_X(L) -3\mu_1 +3\mu_{p-1} + (\ell-p-n-2)\mu_{p-1} - (\ell-p-n-2)\mu_p \\
	&+ 2(\ell-p-n-1)\mu_p-2\mu_{p+1}-\ldots -2\mu_{\ell}] \\
	&=  (n+s)[ 2\deg f_*\O_X(L) -3\mu_1 + (\ell-p-n+1)\mu_{p-1} + (\ell-p-n)\mu_p-2\mu_{p+1}-\ldots -2\mu_{\ell}] \\
	&\underbrace{\ge}_{\mu_i\geq \mu_{i+1}}  (n+s)[ 2\deg f_*\O_X(L) -3\mu_1 + (-n+1)\mu_{p-1} +(\ell-p)\mu_{p} + (\ell-p-n)\mu_p-2(\ell-p)\mu_p]\\
	& =  (n+s)[ 2\deg f_*\O_X(L) -3\mu_1 + (-n+1)\mu_{p-1} -n\mu_p]\\
	& \underbrace{\ge}_{\mu_1\geq \mu_{p-1} \ge \mu_{p}} (n+s)[2\deg f_*\O_X(L) -(n+2) (\mu_1+\mu_p)].
	\end{aligned}
	\end{equation}

	Note that in the above inequalities we have used that $\mu_{\ell}\geq 0$ by \eqref{E:Nef-HN}  and the assumption that $f_*\O_X(L)$ is nef.
	
	We next apply  Corollary \ref{C:spec-ineq}\eqref{C:spec-ineq2} with $N_{\ell+1}:=N_{\ell}=M_{\ell}-\mu_{\ell}\wt F$, $\mu_{\ell+1}=\mu_{\ell}$, and either $q=2$ and $s_1=1<s_2=p<s_3=\ell+1$ if $1<p$, or $q=1$ and $s_1=1=p<s_2=\ell+1$ if $p=1$:

\begin{equation}\label{E:2ineqX}
N_{\ell}^{n+1}=M_{\ell}^{n+1}-(n+1)\mu_{\ell}P_{\ell}^n \ge P_{\ell}^{n-1} \left[(P_1+P_p)(\mu_1-\mu_p) + (P_p+P_{\ell})(\mu_p-\mu_{\ell}) \right]=
\end{equation}
$$ 
= P_{\ell}^{n-1} P_1 (\mu_1-\mu_p)+P_\ell^n (\mu_1+\mu_p-2\mu_{\ell})\geq P_\ell^n (\mu_1+\mu_p-2\mu_{\ell}),
$$ 
where we used in the first equality that $P_{\ell}^{n-1}P_p=P_{\ell}^n$ by the definition of $p$, and in the last equality that $P_{\ell}^{n-1}P_1 (\mu_1-\mu_p) \ge 0$ since $P_{\ell}$ and $P_1$ are nef by Proposition \ref{P:Mnef}\eqref{P:Mnef1} and $\mu_1\geq  \mu_p$ by \eqref{E:ai}.
The  inequality \eqref{E:2ineqX} implies that 
\begin{equation}\label{E:2ineqbisX}
M_{\ell}^{n+1} \geq P_\ell^n (\mu_1+\mu_p),
\end{equation}
with the equality that can occur only if either $n=1$ or $n\geq 2$ and $\mu_{\ell}=0$.

	Now we conclude as follows: 
	\begin{itemize}
		\item if 
		$$
		\mu_1+\mu_p \le\frac{2(n+s)\deg f_*\O_X(L)}{P_\ell^n +(n+s)(n+2)}
		$$
		then the conclusion follows from \eqref{E:1ineqbisX}; 
		\item if
		$$
		\mu_1+\mu_p \ge\frac{2(n+s)\deg f_*\O_X(L)}{P_\ell^n +(n+s)(n+2)}
		$$
		then the conclusion follows from \eqref{E:2ineqbisX}. 
	\end{itemize}	
\end{proof}

\begin{corollary}\label{C:Xiao-bir1}
	Suppose that $L$ and $f_*\O_X(L)$ are nef, $\phi_{L_F}$ is birational and $n\ge 2$.  Assume that the singularities of the general fibre $F$ are canonical and let $s \in \mathbb N$ such that $K_{ F}-sL_F \ge 0$.
	Then 
$$
	L^{n+1} \ge    2(n+s)\frac{h^0(F,L_{F})-n-2}{h^0( F,L_{F})}\deg f_*\O_X(L) .
$$
  If $n\geq 2$ and the inequality is an equality, then $\mu_-(f_*\O_X(L))=0$ and hence $f_*\O_X(L)$ is not ample.
\end{corollary}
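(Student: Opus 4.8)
The plan is to deduce the statement from Theorem~\ref{T:Xiao-n} in exactly the way Corollaries~\ref{C:Xiao-high1} and~\ref{C:Xiao-high2} were obtained from Theorem~\ref{T:Xiao-higher}. Since $L$ and $f_*\O_X(L)$ are both nef, Lemma~\ref{L:Lnef}\eqref{L:Lnef2} gives $L^{n+1}\geq M_{\ell}^{n+1}$, and Proposition~\ref{P:Mnef}\eqref{P:Mnef4} gives $h^0(F,L_F)=h^0(\wt F,P_{\ell})$. The hypotheses of Theorem~\ref{T:Xiao-n} concerning the nefness of $f_*\O_X(L)$, the birationality of $\phi_{L_F}$ and the condition $n\geq 2$ are all part of our assumptions, so it remains only to exhibit an $s\in\N$ satisfying $K_{\wt F}-sP_{\ell}\geq 0$. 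Granting this, the asserted inequality follows by combining $L^{n+1}\geq M_{\ell}^{n+1}$ with the second inequality in \eqref{E:ineqXiao-n} and substituting $h^0(\wt F,P_{\ell})=h^0(F,L_F)$.

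The only point requiring a genuine argument is the transfer of the subcanonicity hypothesis from the (possibly singular) general fibre $F$ to its resolution $\wt F$, and this is where the canonicity assumption enters; it is the main (if mild) obstacle. First I would use that $F$ has canonical singularities to write $K_{\wt F}=\mu_{\wt F}^*K_F+E$, with $E$ an effective ($\mu_{\wt F}$-exceptional) divisor and $K_F$ a $\Q$-Cartier divisor. Next, by \eqref{E:Pi} we have $P_{\ell}\leq \mu_{\wt F}^*(L_F)$, so that, since $s\geq 0$,
\[
K_{\wt F}-sP_{\ell}\;\geq\;\mu_{\wt F}^*K_F+E-s\,\mu_{\wt F}^*(L_F)\;=\;\mu_{\wt F}^*\bigl(K_F-sL_F\bigr)+E.
\]
Here $K_F-sL_F$ is an effective $\Q$-Cartier $\Q$-divisor (using that $K_F$ and $L_F$ are $\Q$-Cartier), and the pull-back of an effective $\Q$-Cartier $\Q$-divisor along the birational morphism $\mu_{\wt F}$ is again effective; together with $E\geq 0$ this yields $K_{\wt F}-sP_{\ell}\geq 0$, as desired.

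With this value of $s$ all hypotheses of Theorem~\ref{T:Xiao-n} are in force, and I would conclude by chaining
\[
L^{n+1}\geq M_{\ell}^{n+1}\geq 2(n+s)\frac{P_{\ell}^n}{P_{\ell}^n+(n+s)(n+2)}\deg f_*\O_X(L)\geq 2(n+s)\frac{h^0(\wt F,P_{\ell})-n-2}{h^0(\wt F,P_{\ell})}\deg f_*\O_X(L),
\]
which is the claimed bound after the identification $h^0(\wt F,P_{\ell})=h^0(F,L_F)$. For the equality clause, observe that if the inequality of the Corollary is an equality, then every inequality in the above chain must be an equality as well; in particular $M_{\ell}^{n+1}$ equals the first (middle) bound, so the first inequality in \eqref{E:ineqXiao-n} is an equality. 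Since $n\geq 2$, the final sentence of Theorem~\ref{T:Xiao-n} then forces $\mu_-(f_*\O_X(L))=0$, whence $f_*\O_X(L)$ is not ample.
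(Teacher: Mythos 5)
Your proposal is correct and follows essentially the same route as the paper: the paper's proof likewise reduces to Theorem \ref{T:Xiao-n} by writing $K_{\wt F}-sP_{\ell}=\mu^*(K_F-sL_F)+E+sZ\geq 0$ (with $Z=\mu^*L_F-P_{\ell}$ effective and $E$ the effective exceptional divisor coming from the canonicity of $F$), and then invokes Lemma \ref{L:Lnef}\eqref{L:Lnef2} and Proposition \ref{P:Mnef}\eqref{P:Mnef4} exactly as you do. Your handling of the equality clause via the chain of inequalities is also the intended argument.
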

\begin{proof}
By Equation \eqref{E:Pi} we know that $Z:=\mu^*L_F - P_{\ell}$ is effective. Since the singularities of $F$ are canonical, we get
$$
K_{\tilde F} -sP_\ell = \mu^*K_{F} + E - s(\mu^*L_F -Z) = \mu^*(K_F -sL_F) + E +sZ \ge 0.
$$

We can hence apply Theorem \ref{T:Xiao-n}, using that  $L^{n+1}\geq M_{\ell}^{n+1}$ by Lemma \ref{L:Lnef}\eqref{L:Lnef2} and that 
	$h^0(F,L_F)=h^0(\wt F, P_{\ell})$ by Proposition \ref{P:Mnef}\eqref{P:Mnef4}. 
\end{proof}

\begin{corollary}\label{C:Xiao-bir2}
	Suppose that $L$ and $f_*\O_X(L)$ are nef,  $L_F$ is Cartier, globally generated, $\phi_{L_F}$ is birational and $n\ge 2$.  Assume that the singularities of the general fibre $F$ are canonical and let $s \in \mathbb N$ such that $K_{ F}-sL_F \ge 0$. Then
	$$
	L^{n+1} \ge 2(n+s)\frac{L_{F}^n}{L_{F}^n +(n+s)(n+2)} \deg f_*\O_X(L). 
	$$
 If $n\geq 2$ and the inequality is an equality, then $\mu_-(f_*\O_X(L))=0$ and hence $f_*\O_X(L)$ is not ample.
\end{corollary}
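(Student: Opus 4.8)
The plan is to deduce this from Theorem \ref{T:Xiao-n} exactly as Corollary \ref{C:Xiao-bir1} was deduced, but exploiting the extra hypotheses that $L_F$ is Cartier and globally generated in order to replace the sectional quantity $h^0(F,L_F)$ by the self-intersection $L_F^n$. The two key inputs will be Proposition \ref{P:Mnef}\eqref{P:Mnef5}, which under these hypotheses gives the exact identity $P_{\ell}\sim (\mu_{\wt F})^*(L_F)$ (and in particular $P_{\ell}^n=L_F^n$), and Lemma \ref{L:Lnef}\eqref{L:Lnef2}, which gives $L^{n+1}\ge M_{\ell}^{n+1}$ since $L$ is nef.

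First I would verify the subcanonicity hypothesis $K_{\wt F}-sP_{\ell}\ge 0$ needed to invoke Theorem \ref{T:Xiao-n}. Since the singularities of $F$ are canonical, we may write $K_{\wt F}=(\mu_{\wt F})^*K_F+E$ with $E\ge 0$ effective; combining this with the identity $P_{\ell}\sim (\mu_{\wt F})^*(L_F)$ from Proposition \ref{P:Mnef}\eqref{P:Mnef5} and the hypothesis $K_F-sL_F\ge 0$, we obtain
$$K_{\wt F}-sP_{\ell}=(\mu_{\wt F})^*(K_F-sL_F)+E\ge 0.$$
This is in fact cleaner than the analogous step in Corollary \ref{C:Xiao-bir1}, where only the inequality $P_{\ell}\le \mu^*L_F$ was available and an extra effective correction term had to be carried along.

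With this in hand, Theorem \ref{T:Xiao-n} yields
$$M_{\ell}^{n+1}\ge 2(n+s)\frac{P_{\ell}^n}{P_{\ell}^n+(n+s)(n+2)}\deg f_*\O_X(L).$$
I would then substitute $L^{n+1}\ge M_{\ell}^{n+1}$ from Lemma \ref{L:Lnef}\eqref{L:Lnef2} on the left and the exact equality $P_{\ell}^n=L_F^n$ from Proposition \ref{P:Mnef}\eqref{P:Mnef5} on the right; since the right-hand quantity is substituted by an equality, no monotonicity argument is required and the claimed bound follows immediately. For the equality clause, observe that if equality holds in the displayed slope inequality then, chaining $L^{n+1}\ge M_{\ell}^{n+1}\ge 2(n+s)\tfrac{P_{\ell}^n}{P_{\ell}^n+(n+s)(n+2)}\deg f_*\O_X(L)$, every intermediate inequality must be an equality; in particular equality holds in the first inequality of Theorem \ref{T:Xiao-n}, whence $\mu_-(f_*\O_X(L))=0$ and $f_*\O_X(L)$ cannot be ample.

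Since this amounts to assembling results already established, there is no genuine obstacle. The only point demanding attention is that the identity of Proposition \ref{P:Mnef}\eqref{P:Mnef5} holds \emph{exactly} rather than merely as an inequality; this is precisely what the global generation of $L_F$ buys us, and it is what makes both the subcanonicity verification and the replacement $P_{\ell}^n=L_F^n$ go through cleanly.
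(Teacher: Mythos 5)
Your proposal is correct and follows essentially the same route as the paper: verify $K_{\wt F}-sP_{\ell}\geq 0$ from the canonicity of $F$, apply Theorem \ref{T:Xiao-n}, and conclude via $L^{n+1}\geq M_{\ell}^{n+1}$ (Lemma \ref{L:Lnef}\eqref{L:Lnef2}) and $P_{\ell}^n=L_F^n$ (Proposition \ref{P:Mnef}\eqref{P:Mnef5}). Your use of the exact identity $P_{\ell}\sim\mu_{\wt F}^*(L_F)$ in the subcanonicity check is a harmless simplification of the paper's argument, which carries the effective term $sZ$ with $Z=\mu^*L_F-P_{\ell}\geq 0$.
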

\begin{proof}
Since	the singularities of the $F$ are canonical, we can apply Theorem \ref{T:Xiao-n} (see the proof of Corollary \ref{C:Xiao-bir1}), using that  $L$ is nef, and hence that $L^{n+1}\geq M_{\ell}^{n+1}$ by Lemma \ref{L:Lnef}\eqref{L:Lnef2}, and that $L_F^n=P_{\ell}^n$ by Proposition \ref{P:Mnef}\eqref{P:Mnef5}. 
\end{proof}

The third (and last) slope inequality that we prove is independent of  the numerical invariants of the polarized general fiber $(F,L_F)$, and it is contained in the following

\begin{theorem}\label{T:Barja} Assume we are in the set-up  \eqref{setup} and suppose that  $L$ and $f_*\O_X(L)$ are nef.
		\begin{enumerate}
		\item  \label{T:Barja1}  Assume  there exists a  $q \in \N_{>0}$ such that  at least one of the following two conditions holds true 
		\begin{itemize}
			\item $\phi_{qL_F}$ is generically finite, i.e. condition ($B_q$) holds true;
			\item $qL_F$ is Cartier and big, i.e. condition ($C_q$) hods true.
		\end{itemize}
		Then 
		$$
		L^{n+1} \ge \frac{\deg f_*\O_X(L)}{q^n}.
		$$
		\item \label{T:Barja2} Assume that there exists a  $q \in \N_{>0}$ such that $\phi_{qL_F}$ is generically finite  (i.e. condition ($B_q$) holds true), and either $n=\dim F\geq 2$ and $\kappa(F)\geq 0$ (i.e. condition $(a)$ holds true) or $\dim F=1$ and $qL_F$ is special (i.e.  condition $(b_q)$ holds true), then
		$$
		L^{n+1} \ge 2\frac{\deg f_*\O_X(L)}{q^n}.
		$$
	\end{enumerate}

In particular, if the assumptions of either item (\ref{T:Barja1}) or item (\ref{T:Barja2}) hold and $\deg f_*\O_X(L)>0$, then $L$ is big.

\end{theorem}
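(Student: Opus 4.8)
The plan is to run the Harder--Narasimhan machinery of Section~\ref{Sec:HN} in the spirit of the proof of Theorem~\ref{T:Xiao-higher}, but to feed it the cruder, invariant-free Noether bounds and to insert the factor $q$ by hand. Since $L$ is nef, I would apply Lemma~\ref{L:inequ} with the choice of Remark~\ref{R:choices}(ii), i.e. $N_{\ell+1}=\mu^*L$ and $\mu_{\ell+1}=0$, so that $Z_{\ell+1}=0$, $N_{\ell+1}^{n+1}=L^{n+1}$ and $P_{\ell+1}=(N_{\ell+1})_{|\wt F}=\mu_{\wt F}^*(L_F)$, whence $P_{\ell+1}^n=L_F^n$. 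Both hypotheses $(B_q)$ and $(C_q)$ force $qL_F$ (and hence $L_F$, which is nef because $L$ is) to be big, so $v:=P_{\ell+1}^n=L_F^n>0$. Corollary~\ref{C:spec-ineq}\eqref{C:spec-ineq1A} then gives
$$L^{n+1}\ \ge\ \sum_{j=1}^{\ell}\bigl(d_j+d_{j+1}\bigr)(\mu_j-\mu_{j+1}),\qquad \text{where } d_j:=P_{\ell+1}^{\,n-1}\cdot P_j \text{ and } d_{\ell+1}:=v,$$
a non-decreasing sequence $d_1\le\dots\le d_{\ell+1}=v$; pairing this with the instance of Corollary~\ref{C:spec-ineq}\eqref{C:spec-ineq2} with jumps at $1<p<\ell+1$ yields the companion bound $L^{n+1}\ge v(\mu_1+\mu_p)$, exactly as in the endgame of Theorem~\ref{T:Xiao-higher}.

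The heart of the argument is a \emph{Claim} bounding the $d_j$ from below by the ranks $r_j=\rk\E_j$, and this is where the factor $q^n$ enters, through the big divisor $G:=qP_{\ell+1}=\mu_{\wt F}^*(qL_F)$. Writing $k_j:=\dim\phi_{P_j}(\wt F)$, using $h^0(\wt F,P_j)\ge r_j$ (Proposition~\ref{P:Mnef}\eqref{P:Mnef2}), and exploiting the monotonicity $G^{n-1}\cdot P_j\ge G^{n-k_j}\cdot P_j^{k_j}$ (valid since $G-P_j$ is effective by \eqref{E:Pi} and all the divisors are nef), I would establish, in case $(C_q)$ — where $G$ is Cartier, nef and big — the estimate $q^{\,n-1}d_j=G^{n-1}\cdot P_j\ge r_j-n$ via Proposition~\ref{P:Noether1bis} applied with $M=G$, $H=P_j$, $h=0$; and in case $(B_q)$ — where $\phi_{G}=\phi_{qL_F}\circ\mu_{\wt F}$ is generically finite — the saturated value $q^nv=G^n\ge h^0(\wt F,\lfloor G\rfloor)-n\ge r_\ell-n$ via Corollary~\ref{C:Noether1}\eqref{C:Noether1i}, which controls the indices $j\ge p$. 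For part~(2) one substitutes the \emph{factor-two} Noether bounds: Proposition~\ref{P:Noether1} (which requires $\kappa(F)\ge0$, i.e. condition $(a)$) for the mixed numbers together with Corollary~\ref{C:Noether1}\eqref{C:Noether1ii} for the saturated value when $n\ge2$, and Clifford's theorem (using $\lfloor qL_F\rfloor$ special, i.e. condition $(b_q)$) when $n=1$; this is precisely the mechanism producing the extra $2$.

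With these bounds in hand I would introduce the threshold $p:=\min\{j:\ P_{\ell+1}^{\,n-1}\cdot P_j=v\}$, split the weighted sum at $p$, and carry out the Abel summation against Lemma~\ref{formuletta} (recall $\deg f_*\O_X(L)=\sum_j r_j(\mu_j-\mu_{j+1})$) as in the proof of Theorem~\ref{T:Xiao-higher}, using $r_{j+1}\ge r_j+1$ from \eqref{E:ri} and $\mu_\ell\ge0$ (nefness of $f_*\O_X(L)$) to absorb the $-n$ corrections. Comparing the resulting estimate with the companion bound $L^{n+1}\ge v(\mu_1+\mu_p)$ — keeping whichever is favourable according to the size of $\mu_1+\mu_p$ — should then produce $L^{n+1}\ge \deg f_*\O_X(L)/q^n$ in case~(1) and $L^{n+1}\ge 2\deg f_*\O_X(L)/q^n$ in case~(2). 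Finally, if $\deg f_*\O_X(L)>0$ these inequalities give $L^{n+1}>0$; since $L$ is nef and a nef divisor with positive top self-intersection is big, the last assertion follows.

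I expect the principal obstacle to be case~$(C_q)$: there $\phi_{qL_F}$ need not be generically finite, so the clean saturated bound $G^n\ge r_\ell-n$ is unavailable and one must bound \emph{every} $d_j$ (including $j=\ell$) through the mixed-intersection inequality Proposition~\ref{P:Noether1bis}, which yields the power $q^{\,n-1}$ rather than $q^{\,n}$ and makes the bookkeeping in the Abel summation more delicate; the real work is to check that the constant lands \emph{exactly} on $1/q^n$ (so that the inequality is sharp at $q=1$, cf.\ Examples~\ref{Ex:curves} and~\ref{Ex:Pn}) rather than on a weaker multiple. A secondary technical point is the non-integrality of the rational numbers $P_{\ell+1}^{\,n-1}\cdot P_j$, which is exactly why the threshold and the gap estimates are phrased throughout by means of the integral divisor $G$.
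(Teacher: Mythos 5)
Your route is genuinely different from the paper's and, as it stands, it has a quantitative gap that I do not think can be repaired within your framework. You run the endgame of Theorem \ref{T:Xiao-higher}: the slice inequality of Corollary \ref{C:spec-ineq}\eqref{C:spec-ineq1A} giving $L^{n+1}\ge\sum_j(d_j+d_{j+1})(\mu_j-\mu_{j+1})$ with $d_j=P_{\ell+1}^{n-1}\cdot P_j$, a threshold $p$, the companion bound $L^{n+1}\ge v(\mu_1+\mu_p)$ from Corollary \ref{C:spec-ineq}\eqref{C:spec-ineq2}, and a dichotomy on the size of $\mu_1+\mu_p$. The problem is that your Claim bounds $d_j$ from below by $(h^0(\wt F,P_j)-k_j)/q^{n-1}$ with $k_j=\dim\phi_{P_j}(\wt F)$, so after the Abel summation against Lemma \ref{formuletta} the defects $k_j$ accumulate into a term $-C(\mu_1+\mu_p)$ with $C$ of order $n$, and the dichotomy then delivers a constant of the shape $\tfrac{2v}{q^nv+C}$. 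This reaches the target $1/q^n$ only when $q^nv=(qL_F)^n\ge C$; since $(qL_F)^n$ can be as small as $1$ (this is exactly the regime of Example \ref{Ex:Double}, which is why Theorem \ref{T:Barja} is stated with a constant independent of the fibre invariants), your argument falls short precisely where the theorem has content. The factor $2$ you extract from $P_j+P_{j+1}\ge 2P_j$ cannot compensate: that would require $2(h^0(\wt F,P_j)-k_j)\ge h^0(\wt F,P_j)$, i.e.\ $h^0(\wt F,P_j)\ge 2k_j$, which fails for instance when $k_j=n$ and $h^0(\wt F,P_j)=n+1$.

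What the paper does instead is to use the full multi-index form of Corollary \ref{C:spec-ineq}\eqref{C:spec-ineq1}, applied to the partition of $\{1,\dots,\ell\}$ into the sets $A_i=\{j:\dim\phi_{P_j}(\wt F)=i\}$, with $m_i=\min A_i$. The coefficient of $(\mu_j-\mu_{j+1})$ for $j\in A_i$ is then $\bigl(\sum_{k=0}^{i}P_j^kP_{j+1}^{i-k}\bigr)\cdot(P_{m_{i+1}}\cdots P_{m_n})\ge (i+1)\,P_j^{i}\cdot(P_{m_{i+1}}\cdots P_{m_n})$, and the Noether inequalities (Propositions \ref{P:Noether1} and \ref{P:Noether1bis}, fed the divisors $P_{m_t}$ and $\mu_{\wt F}^*(qL_F)$ as auxiliary inputs) bound the last product below by $(h^0(\wt F,P_j)-i)/q^n$. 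The crux is the elementary estimate \eqref{E:disu4}: $(i+1)\,[h^0(\wt F,P_j)-i]\ge h^0(\wt F,P_j)\ge r_j$, valid because $h^0(\wt F,P_j)\ge i+1$. The combinatorial factor $(i+1)$ exactly cancels the defect $-i$, no residual term has to be absorbed, and Lemma \ref{formuletta} concludes with no threshold, no companion bound and no dichotomy. That factor $(i+1)$ is precisely what is discarded when one passes to the slice version \eqref{C:spec-ineq1A}, and it cannot be recovered afterwards; if you rework your argument starting from Corollary \ref{C:spec-ineq}\eqref{C:spec-ineq1} with this partition, the rest of your Noether-type estimates (including your treatment of case $(C_q)$ via Proposition \ref{P:Noether1bis} with $M=\mu_{\wt F}^*(qL_F)$) slot in essentially as in the paper.
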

The proof of the above Theorem is inspired by \cite[Page 69, Claim]{Barja_Phd} (see however Remark \ref{R:Barja}).
\begin{proof}
With the notation of \S\ref{Sec:HN}, we  define a partition of the set $[\ell]:=\{1,\ldots, \ell\}$ as it follows
$$
A_i:=\{j\in [\ell]\: : \dim \phi_{P_j}(\wt F)=i\}\quad \text{ for any } 0\leq i \leq n.
$$
Define the sequence of integers $1=m_0\leq m_1\leq \ldots \leq m_{n}\leq m_{n+1}:=\ell+1$ by
\begin{equation}\label{E:mi}
m_i:=
\begin{cases}
\min\{j\in A_i\} & \text{ if } A_i\neq \emptyset, \\
m_{i+1} & \text{ if } A_i= \emptyset. 
\end{cases}
\end{equation}
Equivalently, the above sequence of integers is such that 
$$
A_i=\{m_i,\ldots, m_{i+1}-1\} \quad \text{ for any } 0\leq i \leq n.
$$	

We now apply Corollary \ref{C:spec-ineq}\eqref{C:spec-ineq1} to the above sequence of integers $1=m_0\leq m_1\leq \ldots \leq m_{n}\leq m_{n+1}:=\ell+1$ by choosing the nef  $\Q$-divisor $N_{\ell+1}:=\mu^*(L)$ on $\wt X$ (which implies that $P_{\ell+1}=\mu_{\wt F}^*(L_F)$) and $\mu_{\ell+1}=0$  (which satisfy the assumptions of loc. cit. by Remark \ref{R:choices} since $L$ is nef by hypothesis), we thus have

\begin{equation}\label{E:disu1}
\mu^*(L)^{n+1}= N_{\ell+1}^{n+1}\geq  \sum_{i=0}^{n}\sum_{j=m_i}^{m_{i+1}-1} \left(\sum_{k=0}^{i} P_{j}^k P_{j+1}^{i-k} \right)\cdot \left(P_{m_{i+1}}\cdots P_{m_{n}}\right)(\mu_{j}-\mu_{j+1}).
\end{equation}
Note that $\mu_{\ell}\geq \mu_{\ell+1}=0$ by \eqref{E:Nef-HN} and the assumption that $f_*\O_X(L)$ is nef.

We now want to prove a lower bound on the right hand side of \eqref{E:disu1}.
Using that the line bundles $\{P_1,\ldots, P_{\ell}\}$ are nef by Proposition \ref{P:Mnef}\eqref{P:Mnef1} and they form a non-decreasing sequence  by \eqref{E:Pi}, we get the inequalities of cycles $P_{j}^k P_{j+1}^{i-k}\geq P_j^i$ for any $0\leq k \leq i$, which then imply the following inequality 
\begin{equation}\label{E:disu2}
\left(\sum_{k=0}^{i} P_{j}^k P_{j+1}^{i-k} \right)\left(P_{m_{i+1}}\cdots P_{m_{n}}\right)\geq (i+1) P_j^{i}\cdot \left(P_{m_{i+1}}\cdots P_{m_{n}}\right).
\end{equation}  
We now make the following 

\un{Claim:} For any $m_i\leq j\leq m_{i+1}-1$ (or equivalently $j\in A_i$, i.e. $\dim \phi_{P_j}(\wt F)=i$), we have that 
\begin{equation}\label{E:disu3}
P_j^{i}\cdot \left(P_{m_{i+1}}\cdots P_{m_{n}}\right)\geq 
\begin{cases}
\frac{h^0(\wt F,P_j)-i}{q^n} &  \text{ if we are in case (\ref{T:Barja1}) of the theorem}; \\
\frac{2h^0(\wt F,P_j)-2i}{q^n}& \text{ if we are in case (\ref{T:Barja2}) of the theorem}.\\
\end{cases}
\end{equation}

Indeed, if we set 
$
0\leq d:=\dim \phi_{P_{\ell}}(\wt F)\leq n,
$
then it follows from the definition \eqref{E:mi} of the integers $m_i$ that 
\begin{equation}\label{E:mi-prop}
\begin{sis}
\dim \phi_{P_{m_i}}(\wt F)\geq i & \quad \text{ if } i\leq d, \\
P_{m_i}=P_{m_{n+1}}=P_{\ell+1}=\mu_{\wt F}^*(L_F) & \quad \text{ if } i>d. 
\end{sis}
\end{equation}

Therefore, if $d=n$ (which happens precisely when $\phi_{L_F}$ is generically finite by Proposition \ref{P:Mnef}\eqref{P:Mnef4}), we get that 
\begin{equation}\label{E:disu3A}
P_j^{i}\cdot \left(P_{m_{i+1}}\cdots P_{m_{n}}\right)\geq 
\begin{cases}
2h^0(\wt F,P_j)-2i & \text{ if  either } \dim F\geq 2 \text{ and } \kappa(F)\geq 0, \\
	  & \text{ or } \dim F=1\: \text{ and } \lfloor L_F \rfloor \: \text{ is special},\\
 h^0(\wt F,P_j)-i  &  \text{ otherwise}. \\
\end{cases}
\end{equation}
by applying
\begin{itemize}
\item  if $\dim F\geq 2$ and $\kappa(F)\geq 0$: Proposition \ref{P:Noether1} with $k=i$, $H=P_j$, $L_t=P_{m_t}$ for any $k+1=i+1\leq t \leq n$;   
\item if $\dim F=1$ and  $\lfloor L_F \rfloor$  is special: Clifford's theorem for $P_{m_1}$ if $i=0$ or $P_j$ if $i=1$, using that $P_{m_1}, P_j\leq P_{\ell}\leq \lfloor L_F\rfloor \leq q\lfloor L_F\rfloor$ are special;
\item otherwise: Proposition \ref{P:Noether1bis} with $k=i$, $h=n-i$, $H=P_j$, $L_t=P_{m_t}$ for any $k+1=i+1\leq t \leq n$ (and any big and nef Cartier divisor $M$).
\end{itemize}

On the other hand, if $d<n$ (i.e. if $\phi_{L_F}$ is not generically finite),  then using \eqref{E:mi-prop}, we get that 
\begin{equation}\label{E:disu3B1}
P_j^{i}\cdot \left(P_{m_{i+1}}\cdots P_{m_{n}}\right)=P_j^{i}\cdot \left(P_{m_{i+1}}\cdots P_{m_d}\right)\cdot \mu_{\wt F}^*(L_F)^{n-d}=\frac{P_j^{i}\cdot \left(P_{m_{i+1}}\cdots P_{m_d}\right)\cdot \mu_{\wt F}^*(qL_F)^{n-d}}{q^{n-d}}.
\end{equation}
We can now apply 
\begin{itemize}
\item  if $\dim F\geq 2$ and $\kappa(F)\geq 0$: Proposition \ref{P:Noether1} with $k=i$, $H=P_j$, $L_t=P_{m_t}$ for any $k+1=i+1\leq t \leq d$ and $L_s=\mu_{\wt F}^*(qL_F)$ for any $d+1\leq s \leq n$;   
\item if $\dim F=1$ and  $\lfloor q L_F \rfloor$  is special: Clifford's theorem for $\lfloor \mu_{\wt F}^*(qL_F)\rfloor=\lfloor qL_F\rfloor$;
\item  Proposition \ref{P:Noether1bis} with  $k=i$, $h=d-i$, $H=P_j$, $L_t=P_{m_t}$ for any $k+1\leq t \leq k+h=d$  and $M=q \mu_{\wt F}^*(L_F)= \mu_{\wt F}^*(qL_F)$, if $qL_F$ is Cartier and big;
\item Proposition \ref{P:Noether1bis} with  $k=i$, $h=n-i$, $H=P_j$, $L_t=P_{m_t}$ for any $k+1\leq t \leq d$  and $L_t=q \mu_{\wt F}^*(L_F)= \mu_{\wt F}^*(qL_F)$ for any $d+1\leq t \leq n$, if $\phi_{qL_F}$ is generically finite;
\end{itemize} 
in order to get that 
\begin{equation}\label{E:disu3B2}
P_j^{i}\cdot \left(P_{m_{i+1}}\cdots P_{m_d}\right)\cdot \mu_{\wt F}^*(L_F)^{n-d}\geq
\begin{cases}
2 \frac{h^0(\wt F,P_j)-i}{q^{n-d}} \ge 2 \frac{h^0(\wt F,P_j)-i}{q^{n}} & \text{ if  either } \dim F\geq 2 \text{ and } \kappa(F)\geq 0, \\
	  & \text{ or } \dim F=1\: \text{ and } \lfloor q L_F \rfloor \: \text{ is special},\\
 \frac{h^0(\wt F,P_j)-i}{q^{n-d}} \ge \frac{h^0(\wt F,P_j)-i}{q^{n}}  &  \text{ otherwise}. \\
 \end{cases}
\end{equation}

By putting together \eqref{E:disu3A}, \eqref{E:disu3B1} and \eqref{E:disu3B2}, the Claim follows.

\smallskip

Finally, observe that for any $m_i\leq j\leq m_{i+1}-1$ (or equivalently $j\in A_i$), we have that 
\begin{equation}\label{E:disu4}
(i+1)[h^0(\wt F, P_j)-i]=h^0(\wt F, P_j)+i[h^0(\wt F, P_j)-i-1]\geq h^0(\wt F, P_j)\geq r_j,
\end{equation}
where the first inequality follows from the fact that $\dim \phi_{P_j}(\wt F)=i$ (since  $j\in A_i$) and the second inequality follows from Proposition \ref{P:Mnef}\eqref{P:Mnef2}. 

We are now ready to conclude. If we set
$$
e=
\begin{cases}
\frac{1}{q^n} & \text{ if we are in case (\ref{T:Barja1}) of the theorem}; \\
\frac{2}{q^n} & \text{ if we are in case (\ref{T:Barja2}) of the theorem}.
\end{cases}
$$
then, by putting together the inequalities \eqref{E:disu1}, \eqref{E:disu2}, \eqref{E:disu3}, \eqref{E:disu4}, we get
\begin{equation*}
\begin{aligned}
L^{n+1}= & \mu^*(L)^{n+1}\geq  \sum_{i=0}^{n}\sum_{j=m_i}^{m_{i+1}-1} \left(\sum_{k=0}^{i} P_{j}^k P_{j+1}^{i-k} \right)\cdot \left(P_{m_{i+1}}\ldots P_{m_{n}}\right)(\mu_{j}-\mu_{j+1})\\
& \geq  \sum_{i=0}^{n}\sum_{j=m_i}^{m_{i+1}-1} (i+1) P_j^{i}\cdot \left(P_{m_{i+1}}\ldots P_{m_{n}}\right)(\mu_{j}-\mu_{j+1})\\
& \geq  \sum_{i=0}^{n}\sum_{j=m_i}^{m_{i+1}-1} e (i+1) [h^0(\wt F, P_i)-i]  (\mu_{j}-\mu_{j+1})\\
& \geq  \sum_{i=0}^{n}\sum_{j=m_i}^{m_{i+1}-1} e r_j (\mu_{j}-\mu_{j+1})=e\sum_{j=1}^l r_j(\mu_j-\mu_{j+1})=e\deg f_*\O_X(L),\\
\end{aligned}
\end{equation*}
where we have used $\mu_i-\mu_{i+1}\geq 0$, $\mu_{\ell+1}=0$, and, in the last equality, Lemma \ref{formuletta}. Remark that, in particular, for $i=\ell$, we are using that $\mu_{\ell}\geq 0$, which is equivalent to $f_*\O_X(L)$ being nef.
\end{proof}

\begin{remark}\label{R:Barja}
When $q=1$, Theorem \ref{T:Barja}\eqref{T:Barja1} was asserted in \cite[Page 69, Claim]{Barja_Phd}, without any assumption on $L_F$. However, consider a fibration $f:X\to T$ as in \ref{setup} such that $L$ is nef and $L_F$ is not big. In particular, also $L$ is not big, and  then $L^{n+1}=0$. Let $A$ be an ample divisor on $T$ and consider the divisor $L_q=L+qf^*A$ for $q \in \mathbb N$. Then $L_q$ is a nef divisor on $X$, $(L_q)_F=L_{F}$, $L_q^{n+1} = 0$ and 
$$
\deg f_*\O_X(L_q)=\deg f_*\O_X(L)+h^0(F,L_F)q\deg A >0
$$
for $q \gg 0$, which shows that the bigness assumption on $L_F$ is necessary. 
In works such as \cite{Bar1,BS2}, \cite[Page 69, Claim]{Barja_Phd} is applied only for divisors $L$ which are $f$-ample, so the lack of convenient assumptions on $L_F$ in \cite[Page 69, Claim]{Barja_Phd} does not affect the applications.

\end{remark}

The inequalities in Theorem \ref{T:Barja} are  sharp  in any relative dimension $n\geq 1$. Let us first examine the well-known case of families of curves.

\begin{example}\label{Ex:curves}
\noindent 
\begin{enumerate}
\item \label{Ex:curves1} Let $(f:X\to T, \sigma)$ be a normal family of stable  curves of genus $g=2$  (over a smooth projective irreducible curve $T$) such that every node of each fiber of $f$ is non-separating and consider $L=K_{X/T}$. Then it follows from \cite[Thm. 4.12]{CH} that 
\begin{equation*}
K_{X/T}^2= \frac{4g-4}{g} \deg f_*\O_X(K_{X/T})=2 \deg f_*\O_X(K_{X/T}),
\end{equation*}
which shows that this example realises the equality in Theorem \ref{T:Barja}\eqref{T:Barja2}.

\item  \label{Ex:curves2} Let $(f:X\to T, \sigma)$ be a family of stable elliptic curves (over a smooth projective irreducible curve $T$), i.e. $f:X\to T$ is a  family of nodal integral curves of arithmetic genus one and $\sigma:T\to X$ is a section of $f$ such that $\sigma(s)$ is a smooth point of the fiber $X_s:=f^{-1}(s)$ for any $s\in T$. Let $D:=\Im(\sigma)$, and consider the Cartier divisor $L:=K_{X/T}+D$ on $X$. Note that $L_F$ is a divisor of degree one on a general (and indeed any) fiber $F$ of $f$, and hence $L_F$ is Cartier and big but $\phi_{L_F}$ is not generically finite. As explained in  \cite[Chap. XIII, Example 7.11]{GAC2} and  \cite[Chap. XIV, Cor. 5.14]{GAC2}, both $L$ and $f_*\O_X(L)$ are nef and we have that 
$$L^2=\deg f_*\O_X(K_{X/T})=\deg f_*\O_X(L)\, ,$$
which shows that this example realises the equality in Theorem \ref{T:Barja}\eqref{T:Barja1} for $q=1$.

\end{enumerate}
 \end{example}

\begin{remark}\label{R:sharp-Barja}
The inequalities in Theorem \ref{T:Barja} are sharp in any relative dimension $n\geq 1$ (at least for $q=1$) as it follows from Example  \ref{Ex:Pn}. 
It would also be interesting to find examples attaining  the equality in Theorem \ref{T:Barja}\eqref{T:Barja1} and with the property that $\phi_{L_F}$ is not generically finite, generalising Example \ref{Ex:curves}\eqref{Ex:curves2} from $n=1$ to higher dimension. 
\end{remark}

\begin{remark}[Nef sub-bundles]
With the same spirit of \cite{BScanproj,BS2}, the proofs of this section can be adapted to the case where $f_*\O_X(L)$ is not nef, but it does contain some non-trivial nef subsheaf $\mathcal{G}$. The slope inequalities obtained in this way will involve invariants of $\mathcal{G}$. We do not purse this direction as we do not have any new geometrical application to propose.
\end{remark}

\section{Canonically polarized varieties}\label{Sec:KSB}

\subsection{Slope inequalities for generic slc families}\label{sec:slopeKSB}

The aim of this subsection is to obtain some slope inequalities for certain fibrations whose generic fiber is semi-log canonical and whose relative dualizing divisor satisfies some positivity property. A special case of such families are the families of KSB stable pairs over curves. We refer to \cite{Kollar_book} for the definition of singularities that we are going to introduce. 

\begin{setup}\label{N:KSBgen}
	
Let $X$ be a deminormal (i.e.\ $S_2$ and nodal in codimension one) projective variety of equidimension $n+1$. 

Let $f: X \to T$ be a fibration (i.e. $f_*\O_X=\O_T$), where $T$ is a smooth projective irreducible curve such that every irreducible component of $X$ is dominant onto $T$. In particular, $f$ is projective, flat and with connected fibers, and $X$ is connected. Let $\Delta$ be an effective $\Q$-divisor on $X$ such that no irreducible component of the support of $\Delta$ is contained in the singular locus of $X$.   

We say that $f: (X,\Delta) \to T$ is a \emph{generic lc-family} (resp. \emph{generic canonical family}, resp \emph{generic klt-family}) if $X$ is normal (hence irreducible), $K_{X/T}+\Delta$ is $\Q$-Cartier, and the general fiber  $(F,\Delta_F)$ is log canonical (resp. canonical, resp. klt).

We say that $f: (X,\Delta) \to T$ is a \emph{generic slc-family} if $K_{X/T}+\Delta$ is $\Q$-Cartier, and the general fibre $(F,\Delta_F)$ is semi-log canonical.

We say that $f: (X,\Delta) \to T$ is a \emph{KSB-stable family} if  $K_{X/T}+\Delta$ is $\Q$-Cartier and relatively ample, $\Delta$ does not contain any irreducible component of $X_t$ and none of the irreducible components of $X_t \cap \supp(\Delta)$ is contained in the singular locus of $X_t$ for any $t\in T$ 
(so that the restriction $\Delta_t:=\Delta_{|X_t}$ is well-defined for any $t\in T$), and any fiber $(X_t,\Delta_t)$ is semi-log canonical.
\end{setup}

Note that if $f: (X,\Delta) \to T$ a generic lc-family then, for a general fiber $(F,\Delta_F)$, we have that
\begin{equation}\label{E:rest-KD}
(m(K_{X/T}+\Delta))_{|F}=m(K_F+\Delta_F) \text{ for any } m\in \N.
\end{equation}
Indeed, the above equality holds on the smooth locus of $X$, and then it extends to $X$ since $X$ is normal.
 
For generic slc families, there are powerful results of Fujino \cite{Fujino}, which determine the nefness of the push-forward of the powers of the relative dualizing sheaf.

\begin{theorem}[{\cite[Theorems 1.10 and 1.11]{Fujino}}]\label{T:Fuj}
Let $f\colon (X,\Delta)\to T$ be a generic slc-family as in \ref{N:KSBgen}. 
\begin{enumerate}
	\item \label{T:Fuj1}  Assume that $\Delta$ is a reduced Weil divisor. Then $f_*\O_X(K_{X/T} + \Delta)$ is nef. 
	
	\item \label{T:Fuj2}  Assume that $\Delta$ is a reduced Weil divisor and that $K_{X/T}+\Delta$ is $f$-semiample. Then $f_*\O_X(m(K_{X/T}+\Delta))$ is nef for all integers $m\geq 1$.

	\item \label{T:Fuj3}  Assume that $m(K_{X/T}+\Delta)$ is Cartier and  $f$-globally generated for some positive integer $m$.  Then $f_*\O_X(m(K_{X/T}+\Delta))$ is nef.

\end{enumerate}

\end{theorem}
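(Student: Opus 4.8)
The plan is to deduce all three statements from the Hodge-theoretic semipositivity theory for projective morphisms, whose prototype is the Fujita--Kawamata semipositivity theorem and whose extension to the log and semi-log-canonical setting is exactly the content of the cited work; the most economical route is therefore simply to invoke \cite[Theorems 1.10 and 1.11]{Fujino}. To indicate the mechanism behind that black box, here is the strategy I would follow. First I would reduce the semi-log-canonical case to a statement over the normalization: passing to $\eta\colon W\to X$ and recording the conductor together with the different turns the problem into one about a generic lc family, at the cost of keeping track of the gluing data (an involution on the conductor) via Koll\'ar's theory of slc pairs. This is precisely where the slc hypothesis and the assumption that $\Delta$ is a reduced Weil divisor enter, since reducedness is what makes the relevant extension behave well.

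The heart of the argument is then Hodge-theoretic. Over the open locus $T^\circ\subseteq T$ where $f$ is smooth and the pair $(F,\Delta_F)$ varies nicely, the sheaf $f_*\O_X(K_{X/T}+\Delta)$ restricts to the lowest nonzero piece of the Hodge filtration of a variation of mixed Hodge structure on the fibers; the reduced boundary $\Delta$ contributes the weight filtration that makes the structure genuinely mixed rather than pure. Griffiths' curvature computation shows the Hodge metric has semipositive curvature, so this bundle is nef on $T^\circ$. Control of the metric near the finitely many punctures $T\setminus T^\circ$, via Deligne's canonical extension and Schmid's nilpotent orbit theorem, upgrades this to nefness of the canonical (Deligne) extension across all of $T$; one then identifies that extension with $f_*\O_X(K_{X/T}+\Delta)$, again using that $\Delta$ is reduced. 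This yields part (1).

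For part (2) I would pass from $K_{X/T}+\Delta$ to its $m$-th multiple by a Viehweg-type covering construction: when $K_{X/T}+\Delta$ is $f$-semiample one can form cyclic covers extracting roots of the relevant sheaf, apply the semipositivity of part (1) on the cover, and descend to obtain nefness of $f_*\O_X(m(K_{X/T}+\Delta))$. Part (3) is the parallel statement in which $f$-global generation of the Cartier divisor $m(K_{X/T}+\Delta)$ replaces $f$-semiampleness and supplies the surjectivity $f^*f_*\O_X(m(K_{X/T}+\Delta))\twoheadrightarrow \O_X(m(K_{X/T}+\Delta))$ needed to run the same covering argument.

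The main obstacle is the passage to the singular and non-normal setting. For smooth families without boundary the nefness of $f_*\omega_{X/T}$ is classical Fujita--Kawamata, and the covering tricks for pluricanonical sheaves are standard; the genuinely hard point is making sense of the variation of \emph{mixed} Hodge structure when the fibers are only slc (or built from simple normal crossing varieties after normalization) and controlling its degeneration at the boundary so that the canonical extension actually computes the pushforward. This is the substance of Fujino's and Fujino--Fujisawa's theory of mixed Hodge structures on the cohomology of such pairs, which I would use as the essential input rather than reprove.
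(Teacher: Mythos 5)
The paper gives no proof of this statement: it is imported verbatim as a citation to Fujino's Theorems 1.10 and 1.11, exactly as you propose to do. Your additional sketch of the underlying mechanism (reduction to the normalization via Koll\'ar's gluing theory, variations of mixed Hodge structure and their canonical extensions for part (1), and covering/global-generation arguments for parts (2) and (3)) is a faithful outline of Fujino's actual argument, so your approach coincides with the paper's.
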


\begin{corollary}\label{C:Fuj}
	\noindent 
	\begin{enumerate}
		\item \label{C:Fuj1} Let $f\colon (X,\Delta)\to T$ be a generic lc-family as in \ref{N:KSBgen}. Then $f_*\O_X(K_{X/T}+\Delta)$ is nef.
		
		\item \label{C:Fuj2} Let $f\colon (X,\Delta)\to T$ be a generic slc-family as in \ref{N:KSBgen} such that $K_{X/T}+\Delta$ is $f$-semiample. Then  $K_{X/T}+\Delta$ is nef. 
		
	\end{enumerate}
\end{corollary}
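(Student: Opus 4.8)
The plan is to reduce both statements to Fujino's Theorem \ref{T:Fuj}, the only discrepancy being that \ref{T:Fuj} is formulated either for a \emph{reduced} boundary (parts \eqref{T:Fuj1} and \eqref{T:Fuj2}) or for a boundary a Cartier multiple of which is relatively globally generated (part \eqref{T:Fuj3}), whereas in \ref{C:Fuj} the divisor $\Delta$ is an arbitrary effective $\Q$-divisor. For part \eqref{C:Fuj1} I would exploit that $f_*\O_X(K_{X/T}+\Delta)$ depends only on the round-down $\lfloor K_{X/T}+\Delta\rfloor$ and pass to the reduced boundary; for part \eqref{C:Fuj2} I would use the relative semiampleness to land in the hypotheses of \ref{T:Fuj}\eqref{T:Fuj3}, and then descend nefness from the push-forward to $K_{X/T}+\Delta$ itself by the argument of Lemma \ref{L:Lnef}.

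For part \eqref{C:Fuj1}, first note that since $(F,\Delta_F)$ is log canonical the coefficients of $\Delta$ lie in $[0,1]$, so that $\lfloor\Delta\rfloor$ is a reduced Weil divisor. As $K_{X/T}$ is integral we have $\lfloor K_{X/T}+\Delta\rfloor=K_{X/T}+\lfloor\Delta\rfloor$, whence
$$\O_X(K_{X/T}+\Delta)=\O_X(K_{X/T}+\lfloor\Delta\rfloor).$$
I would then argue that $f\colon(X,\lfloor\Delta\rfloor)\to T$ is again a generic slc-family with reduced boundary: the total space $X$ is normal by hypothesis, and since $0\le\lfloor\Delta\rfloor\le\Delta$ the general fibre $(F,\lfloor\Delta_F\rfloor)$ remains log canonical (decreasing the boundary only increases discrepancies). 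Granting this, Theorem \ref{T:Fuj}\eqref{T:Fuj1} applied to $(X,\lfloor\Delta\rfloor)$ gives that $f_*\O_X(K_{X/T}+\lfloor\Delta\rfloor)=f_*\O_X(K_{X/T}+\Delta)$ is nef, as desired.

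For part \eqref{C:Fuj2}, the relative semiampleness of $K_{X/T}+\Delta$ provides a positive integer $m$ such that $L:=m(K_{X/T}+\Delta)$ is Cartier and $f$-globally generated. Theorem \ref{T:Fuj}\eqref{T:Fuj3} then yields that $f_*\O_X(L)$ is nef. To conclude that $L$, and hence $K_{X/T}+\Delta$, is nef, I would run the curve-by-curve argument of Lemma \ref{L:Lnef}\eqref{L:Lnef1}: the divisor $L$ is $f$-nef (being $f$-semiample) and $L_F$ is globally generated, so for any horizontal integral curve $C\subseteq X$ the evaluation map $p^*f_*\O_X(L)\to \O_X(L)_{|C}$, with $p=f_{|C}$, is generically surjective; this writes $L_{|C}$ as an effective divisor plus a quotient of the nef bundle $p^*f_*\O_X(L)$, giving $L\cdot C\ge 0$, while vertical curves are handled directly by $f$-nefness.

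The point that requires care, and which I regard as the crux, is the verification in part \eqref{C:Fuj1} that $(X,\lfloor\Delta\rfloor)$ is genuinely a generic slc-family, i.e. that $K_{X/T}+\lfloor\Delta\rfloor$ is still $\Q$-Cartier after discarding the fractional part $\{\Delta\}=\Delta-\lfloor\Delta\rfloor$; this is automatic when $\{\Delta\}$ is itself $\Q$-Cartier, and otherwise must be arranged (for instance via a small modification of $(X,\Delta)$ that does not affect the push-forward). A secondary, more cosmetic point is that Lemma \ref{L:Lnef} is stated under set-up \ref{setup}, where $X$ is normal, whereas in part \eqref{C:Fuj2} the total space is only demi-normal as in \ref{N:KSBgen}; however its proof uses nothing beyond the evaluation map and the nefness of $f_*\O_X(L)$, so it applies verbatim in the demi-normal setting.
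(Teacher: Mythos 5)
Your treatment of part \eqref{C:Fuj2} is essentially the paper's argument: relative semiampleness gives a Cartier, $f$-globally generated multiple $k(K_{X/T}+\Delta)$, Theorem \ref{T:Fuj}\eqref{T:Fuj3} makes the push-forward nef, and then $\O_X(k(K_{X/T}+\Delta))$ is a quotient of the pull-back of a nef bundle via the (everywhere, not just generically) surjective evaluation map, hence nef. The paper concludes directly from this quotient presentation and never needs Lemma \ref{L:Lnef} or normality of $X$, so your curve-by-curve detour and the demi-normality caveat are harmless but superfluous.

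Part \eqref{C:Fuj1} has two genuine gaps. First, your opening claim that the coefficients of $\Delta$ lie in $[0,1]$ because $(F,\Delta_F)$ is lc is false: only the \emph{horizontal} components of $\Delta$ restrict to the general fibre, so vertical components may carry arbitrary coefficients and $\lfloor\Delta\rfloor$ need not be reduced. The paper first observes that $\Delta_{>1}$ is vertical, so the inclusion $f_*\O_X(K_{X/T}+\Delta_{\leq 1})\hookrightarrow f_*\O_X(K_{X/T}+\Delta)$ is an isomorphism over a dense open subset of $T$, and then invokes \cite[Lemma 2.2]{Fujino} (nefness of a locally free sheaf on a curve follows from nefness of a generically isomorphic subsheaf) to reduce to coefficients at most one. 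Second, and more seriously, the $\Q$-Cartier problem you correctly flag as the crux is not resolved by your proposal: asserting that it "must be arranged via a small modification" is not an argument, and there is no reason for $K_{X/T}+\lfloor\Delta\rfloor$ to be $\Q$-Cartier in general. The paper's fix is to pass to a log resolution $h\colon Z\to X$ with $\Delta_Z=h_*^{-1}\Delta+E$ simple normal crossing: on the smooth $Z$ every Weil divisor is Cartier, so $(Z,\lfloor\Delta_Z\rfloor)\to T$ genuinely satisfies the hypotheses of Theorem \ref{T:Fuj}\eqref{T:Fuj1} with reduced boundary, and the natural inclusion $f_*h_*\O_Z(K_{Z/T}+\lfloor\Delta_Z\rfloor)\hookrightarrow f_*\O_X(K_{X/T}+\lfloor\Delta\rfloor)$ is an isomorphism over the locus where the family is lc by \cite[Proposition 2.18]{Kollar_book}, so nefness again transfers by the same lemma of Fujino. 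Without this resolution step (or an equivalent device) your reduction to Theorem \ref{T:Fuj}\eqref{T:Fuj1} does not go through.
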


\begin{proof}
	Proof of \eqref{C:Fuj1}. When $\Delta$ is integral and reduced, this is exactly Theorem \ref{T:Fuj}(\ref{T:Fuj1}). An easy case is when $K_{X/T}+\lfloor \Delta \rfloor$ is $\Q$-Cartier restricted to the generic fiber and the coefficients of $\Delta$ are at most 1:  in this case, $f\colon (X,\lfloor \Delta \rfloor)\to T$ is a generic lc family, and we can apply Theorem \ref{T:Fuj}(\ref{T:Fuj1}) to prove that $f_*\O_X(K_{X/T}+\lfloor \Delta \rfloor)$ is nef. This last sheaf is equal to $f_*\O_X(K_{X/T}+\Delta)$ as $K_{X/T}$ is integral. We now give an argument to reduce the general case to Theorem \ref{T:Fuj}(\ref{T:Fuj1}). 
	
First we reduce to the case where all coefficients of $\Delta$ are at most one. As the family is generically lc, the divisor $\Delta_{>1}$ is vertical. We thus have an inclusion
$$ f_*\O_X(K_X+\Delta_{\leq 1})\hookrightarrow f_*\O_X(K_X+\Delta) \,, $$
which is an isomorphism on an open dense subset of $T$. By the well-know \cite[Lemma 2.2]{Fujino}, if the left hand side is nef, the right hand side is nef too, so we can assume without loss of generality that all the coefficients of $\Delta$ are smaller than or equal to one.

Let $h\colon Z \to X$ be a log-resolution of  $(X,\Delta)$ so that $\Delta_Z:= h_*^{-1}\Delta + E$ is simple normal crossing, where $E$ is the reduced exceptional divisor of $h$. Let $U$ be the open dense subset of $T$ over which $(X,\Delta)$ is lc. We have a natural inclusion
$$
i\colon	f_*h_*\O_Z(K_{Z/T} + \lfloor \Delta_Z \rfloor) \hookrightarrow f_*\O_X(K_{X/T} + \lfloor \Delta \rfloor)
$$
which is an isomorphism over $U$ by  \cite[Proposition 2.18]{Kollar_book}. 
	
Since $(Z, \Delta_Z) \to T$ is a generic lc-family, we can apply Theorem \ref{T:Fuj}(\ref{T:Fuj1}) to conclude that $f_*h_*\O_Z(K_{Z/T} + \lfloor \Delta_Z \rfloor)$ is nef.

\medskip

Proof of \eqref{C:Fuj2}. Let $k$ be a positive integer such that $\O_X(k(K_{X/T}+\Delta))$ is a locally free and $f$-globally generated.  
Then the evaluation map $f^* f_*\mathcal{O}_X(k(K_{X/T}+\Delta))\to \O_X(k(K_{X/T}+\Delta))$ is surjective.  Since $f_*\mathcal{O}_X(k(K_{X/T}+\Delta))$ is nef by Theorem \ref{T:Fuj}(\ref{T:Fuj3}), we conclude that $K_{X/T}+\Delta$ is nef.
\end{proof}
Under a stronger assumption on the singularities, we also have the following positivity result.
\begin{theorem}[{\cite[Theorem 7.1]{KP}}]\label{T:ampleness}
Let $f\colon (X,\Delta)\to T$ be a generic klt family as in \ref{N:KSBgen}. If $f$ is not isotrivial, then, for all sufficiently divisible positive integers $m$, the vector bundle $f_*\O_X(m(K_{X/T}+\Delta))$ is ample\footnote{The statement of \cite[Theorem 7.1]{KP} claims bigness rather than ampleness. The definition of big vector bundles used in loc. cit. is sometimes referred to as V-bigness, or Viehweg-bigness in the literature. On a curve, V-big is is equivalent to ample, i.e. V-big is equivalent to the strict positivity of all the slopes of the Harder-Narasimhan filtration.}, i.e. $\mu_-(f_*\O_X(m(K_{X/T}+\Delta))) >0$.
\end{theorem}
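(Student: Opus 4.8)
The plan is to deduce strict positivity of the minimal slope by combining weak positivity (which yields nefness, giving $\mu_-\geq 0$) with the maximal variation forced by non-isotriviality over a one-dimensional base. Write $\E_m:=f_*\O_X(m(K_{X/T}+\Delta))$ for $m$ sufficiently divisible, so that this sheaf is nonzero and commutes with base change over a dense open of $T$. First I would record the semipositivity input: by the weak positivity theorem for pushforwards of pluri-log-canonical sheaves in the klt setting (Viehweg's foundational results, as refined by Fujino and by Kovács--Patakfalvi), the sheaf $\E_m$ is weakly positive, and over the curve $T$ this is equivalent to $\E_m$ being nef, hence $\mu_-(\E_m)\geq 0$. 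Thus the entire content of the statement is to exclude the borderline case $\mu_-(\E_m)=0$.

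Second, I would bring in the hypothesis that $f$ is not isotrivial. Since $\dim T=1$, non-isotriviality means that the variation of $f$ equals $\dim T$, i.e.\ the family has maximal variation. The heart of the argument---Viehweg's positivity principle in the log-klt case, which is precisely the engine of \cite[Theorem 7.1]{KP}---upgrades weak positivity to bigness: under maximal variation a suitable symmetric power (or the determinant) of $\E_m$ contains an ample subsheaf, equivalently $\E_m$ is big in Viehweg's sense. The mechanism is the standard one: pass to high fibre powers $X^{(r)}:=X\times_T\cdots\times_T X$, apply weak positivity there together with Kawamata's covering trick, and compare the resulting semipositive sheaf against an ample line bundle built from the classifying map of the family to the moduli functor, whose quasi-finiteness encodes exactly the maximal variation.

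Finally, I would translate bigness into strict slope positivity. On a smooth projective curve, a nef vector bundle is ample if and only if it is Viehweg-big if and only if $\mu_->0$ (Hartshorne's characterization, cf.\ \eqref{E:Nef-HN}); this is precisely the equivalence recorded in the footnote. Combining the nefness from the first step with the bigness from the second therefore yields $\mu_-(\E_m)>0$, i.e.\ $\E_m$ is ample, as claimed.

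The main obstacle is the second step, namely establishing that maximal variation forces bigness. This requires the full weak-positivity-on-fibre-products machinery adapted to klt pairs with boundary $\Delta$ and with a possibly singular (though normal) total space $X$: one must control the behaviour of the boundary and of $K_{X/T}+\Delta$ under the base changes and cyclic covers involved, and genuinely relate the variation to an honestly ample line bundle rather than a merely weakly positive one. All of this is carried out in \cite{KP}; the curve case needed here is the most favourable situation, since bigness and ampleness coincide and none of the delicate torsion-freeness or reflexive-hull subtleties arising over a higher-dimensional base intervene.
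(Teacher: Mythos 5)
Your proposal is correct and matches the paper's treatment: the paper gives no independent proof but simply invokes \cite[Theorem 7.1]{KP} and records in the footnote exactly the translation you carry out in your last step, namely that on a curve Viehweg-bigness of a pushforward sheaf is equivalent to $\mu_->0$, i.e.\ to ampleness. Your sketch of the weak-positivity-plus-maximal-variation mechanism behind \cite[Theorem 7.1]{KP} is accurate, but it is content the paper deliberately leaves to the reference rather than reproving.
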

Theorem \ref{T:ampleness} is false if one replaces klt with lc, see \cite[Examples 7.5, 7.6 and 7.7]{KP}.

We now combine the results of Section \ref{Sec:slope} with the above nefness results of Fujino in order to obtain some slope inequalities for generic lc families. 

\begin{proposition}\label{P:slopepair}
Let $f\colon (X,\Delta)\to T$ be a generic lc-family as in \ref{N:KSBgen}. 
\begin{enumerate}
\item \label{P:slopepair1}  
Assume that there exists  $m \in \N_{>0}$ such that  at least one of the following conditions holds true 
\begin{itemize}
\item $m(K_{X/T}+\Delta)$  is Cartier, $f$-globally generated and $f$-big;
\item $K_{X/T}+\Delta$ is $f$-semiample, $\Delta$ is a reduced Weil divisor and $m(K_{F}+\Delta_F)$  is Cartier, globally generated and big.
\end{itemize}
Then
$$ 
m^{n+1}(K_{X/T}+\Delta)^{n+1} \geq 
\begin{cases}
\frac{4m^n(K_F+\Delta_F)^n}{m^n(K_F+\Delta_F)^n+2n} \deg f_*\O_X(m(K_{X/T}+\Delta)) &  \text{ if  either } \dim F\geq 2 \text{ and } \kappa(F)\geq 0, \\
	  & \text{ or } \dim F=1\: \text{ and } \Delta_F=0,\\
\frac{2m^n(K_F+\Delta_F)^n}{m^n(K_F+\Delta_F)^n+n} \deg f_*\O_X(m(K_{X/T}+\Delta)) & \text{ otherwise.} \\
\end{cases}
$$ 
 If $n\geq 2$, the family is generically klt, and one of the above inequalities is an equality for all $m$ divisible enough, then $f$ is isotrivial.
	
\item \label{P:slopepair2} 
\begin{enumerate}[(a)]
\item  \label{P:slopepair2a}  
Assume that there exist  $m,q \in \N_{>0}$ such that  $K_{X/T} + \Delta$ is $f$-semiample, $\Delta$ is a reduced Weil divisor and $\phi_{mq(K_{F} + \Delta_F)}$ is generically finite.
Then 
$$
m^{n+1}(K_{X/T}+ \Delta)^{n+1}  \geq 
\begin{cases}
2\frac{\deg f_*\O_X(m(K_{X/T}+\Delta))}{q^n} &  \text{ if  either } \dim F\geq 2 \text{ and } \kappa(F)\geq 0, \\
	  & \text{ or } \dim F=1\: \text{ and } \Delta_F=0,\\
\frac{\deg f_*\O_X(m(K_{X/T}+\Delta))}{q^n} & \text{ otherwise.} \\
\end{cases}
$$

\item  \label{P:slopepair2b}
Assume that there exists  $m, q \in \N_{>0}$ such that at least one of the following conditions holds true
\begin{itemize}
\item $m(K_{X/T}+\Delta)$  is Cartier, $f$-globally generated and $f$-big;
\item $K_{X/T}+\Delta$ is $f$-semiample, $\Delta$ is a reduced Weil divisor and $mq(K_{F}+\Delta_F)$  is Cartier, globally generated and big.
\end{itemize}
 Then 
$$
m^{n+1}(K_{X/T}+ \Delta)^{n+1} \ge \frac{\deg f_*\O_X(m(K_{X/T}+\Delta))}{q^n}.
$$

\end{enumerate}
\item \label{P:slopepair3} Assume that $K_{X/T} + \Delta$ is nef.
\begin{enumerate}[(a)]
\item \label{P:slopepair3a} If   $q \in \N_{>0}$ is such that $\phi_{q(K_{F} + \Delta_F)}$ is generically finite then 
$$(K_{X/T}+ \Delta)^{n+1} \ge 
\begin{cases}
2\frac{\deg f_*\O_X(K_{X/T}+\Delta)}{q^n}. &  \text{ if  either } \dim F\geq 2 \text{ and } \kappa(F)\geq 0, \\
	  & \text{ or } \dim F=1\: \text{ and } \Delta_F=0,\\
\frac{\deg f_*\O_X(K_{X/T}+\Delta)}{q^n} & \text{ otherwise.} \\
\end{cases}
$$
\item \label{P:slopepair3b} If   $q \in \N_{>0}$ is such that $q(K_{F} + \Delta_F)$ is Cartier and big then 
$$(K_{X/T}+ \Delta)^{n+1} \ge \frac{\deg f_*\O_X(K_{X/T}+\Delta)}{q^n}.$$
\end{enumerate}
\end{enumerate}

\end{proposition}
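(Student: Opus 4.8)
The plan is to feed each case into the general slope inequalities of Section~\ref{Sec:slope} after setting $L:=m(K_{X/T}+\Delta)$ (and $L:=K_{X/T}+\Delta$ in part~\eqref{P:slopepair3}). With this choice one has $L^{n+1}=m^{n+1}(K_{X/T}+\Delta)^{n+1}$ and, by \eqref{E:rest-KD}, $L_F=m(K_F+\Delta_F)$, so that every hypothesis imposed on $\phi_{mq(K_F+\Delta_F)}$ and on the Cartier/globally-generated/bigness of $m(K_F+\Delta_F)$ translates literally into one of the fiber conditions $(A_q)$, $(B_q)$, $(C_q)$ and the fiber positions $(a)$, $(b_q)$ used in Section~\ref{Sec:slope}. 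The one thing that does not come for free is the joint nefness of $L$ and of $f_*\O_X(L)$, which is exactly the input supplied by Fujino's results (Theorem~\ref{T:Fuj} and Corollary~\ref{C:Fuj}); so I would organize the proof as: first prove nefness of $f_*\O_X(L)$ and of $L$, then invoke the matching result of Section~\ref{Sec:slope}.

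For the nefness step I would split along the two alternative hypotheses appearing in each part. When $m(K_{X/T}+\Delta)$ is Cartier and $f$-globally generated, Theorem~\ref{T:Fuj}\eqref{T:Fuj3} gives that $f_*\O_X(L)$ is nef, and then Lemma~\ref{L:Lnef}\eqref{L:Lnef1} (with $L$ Cartier, $f$-nef because $f$-globally generated, and $L_F$ globally generated) gives that $L$ itself is nef. When instead $\Delta$ is a reduced Weil divisor and $K_{X/T}+\Delta$ is $f$-semiample, Theorem~\ref{T:Fuj}\eqref{T:Fuj2} gives nefness of $f_*\O_X(m(K_{X/T}+\Delta))$ for every $m\ge 1$, while Corollary~\ref{C:Fuj}\eqref{C:Fuj2} gives that $K_{X/T}+\Delta$, hence $L$, is nef. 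In part~\eqref{P:slopepair3}, where $K_{X/T}+\Delta$ is assumed nef outright, the nefness of the push-forward $f_*\O_X(K_{X/T}+\Delta)$ is automatic from Corollary~\ref{C:Fuj}\eqref{C:Fuj1}, valid for any generic lc-family.

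With nefness in hand, each inequality becomes a direct citation: part~\eqref{P:slopepair1} is Corollary~\ref{C:Xiao-high2} applied to $L$ (using $L_F^n=m^n(K_F+\Delta_F)^n$), parts~\eqref{P:slopepair2a} and \eqref{P:slopepair2b} are the two alternatives of Theorem~\ref{T:Barja} with the integer there taken as the present $q$ (noting $qL_F=mq(K_F+\Delta_F)$, so that $(B_q)$ resp.\ $(C_q)$ hold, and taking $q=1$ in the first alternative of \eqref{P:slopepair2b}), and parts~\eqref{P:slopepair3a}, \eqref{P:slopepair3b} are again Theorem~\ref{T:Barja}\eqref{T:Barja2} resp.\ \eqref{T:Barja1} with $m=1$. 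The factor-$4$ versus factor-$2$ (resp.\ factor-$2$ versus factor-$1$) dichotomy is inherited verbatim from the cited result. For the final isotriviality assertion of \eqref{P:slopepair1} I would combine the ``equality forces $\mu_-(f_*\O_X(L))=0$'' clause of Corollary~\ref{C:Xiao-high2} with Theorem~\ref{T:ampleness}: if the family is generically klt and equality holds for all sufficiently divisible $m$, then $\mu_-(f_*\O_X(m(K_{X/T}+\Delta)))=0$ for such $m$, contradicting the strict positivity $\mu_->0$ of Theorem~\ref{T:ampleness} unless $f$ is isotrivial.

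The bookkeeping is routine; the genuinely delicate point is the one-dimensional fiber case. For $\dim F=1$ the Noether-type input of Section~\ref{Sec:Noether} is unavailable and the improved (factor-$4$, resp.\ factor-$2$) bounds rest instead on Clifford's theorem, which requires $\lfloor qL_F\rfloor$ to be \emph{special}. One must therefore check that the hypothesis ``$\Delta_F=0$'' genuinely places us in the special range expected by conditions $(a)$/$(b_q)$ of Section~\ref{Sec:slope}, rather than silently forcing $m=q=1$. This matching between the geometric hypothesis $\Delta_F=0$ and the cohomological specialty hypothesis is the step I would scrutinize most carefully, together with verifying that the two Fujino inputs apply under exactly the stated semiampleness and reducedness hypotheses.
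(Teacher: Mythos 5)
Your proposal matches the paper's proof essentially verbatim: the paper sets $L=m(K_{X/T}+\Delta)$, invokes Theorem \ref{T:Fuj}\eqref{T:Fuj2} or \eqref{T:Fuj3} together with Corollary \ref{C:Fuj} for the nefness of $L$ and of $f_*\O_X(L)$, and then cites Corollary \ref{C:Xiao-high2}, Theorem \ref{T:Barja} and Theorem \ref{T:ampleness} exactly as you do. Your closing worry about the case $\dim F=1$ is well placed: for $\Delta_F=0$ and $m\ge 2$ the divisor $\lfloor L_F\rfloor=mK_F$ is non-special, so the factor-$4$ alternative of Corollary \ref{C:Xiao-high2} does not apply, and indeed the paper itself only uses the factor-$4$ bound for $n=1$ when $m=1$ (compare Theorem \ref{T:Nef-Away}\eqref{T:Nef-Away1}).
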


\begin{proof}
Proof of \eqref{P:slopepair1}.	Note that we are in the set-up \ref{setup} with $L=m(K_{X/T}+\Delta)$ and we can apply either Theorem  \ref{T:Fuj}\eqref{T:Fuj2} or Theorem  \ref{T:Fuj}\eqref{T:Fuj3} to get that $f_*\O_X(L)$ is nef and Corollary \ref{C:Fuj}\eqref{C:Fuj2} to get that $L$ is nef. The conclusion follows then from  Corollary \ref{C:Xiao-high2}, using \eqref{E:rest-KD}. The  equality case follows from the above mentioned results combined with Theorem \ref{T:ampleness}.
\medskip

Proof of \eqref{P:slopepair2}. We are in the set-up \ref{setup} with  $L=m(K_{X/T}+\Delta)$ and we can apply either Theorem  \ref{T:Fuj}\eqref{T:Fuj2} or Theorem  \ref{T:Fuj}\eqref{T:Fuj3} to get that $f_*\O_X(L)$ is nef and  Corollary \ref{C:Fuj}\eqref{C:Fuj2} to get that $L$ is nef.
 The conclusion follows then from Theorem \ref{T:Barja}, using \eqref{E:rest-KD}. 

\medskip

Proof of \eqref{P:slopepair3}.	We are in the set-up \ref{setup} with $L=K_{X/T}+\Delta$ and we can apply Corollary \ref{C:Fuj}\eqref{C:Fuj1} to get that $f_*\O_X(K_{X/T}+\Delta)$ is nef. The conclusion follows then from Theorem \ref{T:Barja}, using \eqref{E:rest-KD}.
\end{proof}

We now extend some of the above slope inequalities from generic lc families to generic slc families. 

\begin{theorem}\label{T:Nef-Fam}
Let $f\colon (X,\Delta)\to T$ be a generic slc-family as in \ref{N:KSBgen}.
\begin{enumerate}
\item \label{T:Nef-Fam1}
Assume that there exists  $m \in \N_{>0}$ such that  at least one of the following conditions hold true 
\begin{itemize}
\item $m(K_{X/T}+\Delta)$  is Cartier, $f$-globally generated and $f$-big;
\item $K_{X/T}+\Delta$ is $f$-semiample, $\Delta$ is a reduced Weil divisor and $m(K_{F}+\Delta_F)$  is Cartier, globally generated and big.
\end{itemize}
Let $w\in \Q_{>0}$ such that the volume of the pull-back of $K_F+\Delta_F$ to any irreducible component of the normalisation of $F$ is at least $w$. Then
$$ m^{n+1}(K_{X/T}+\Delta)^{n+1}\geq \frac{2wm^n}{wm^n+n}\deg \left(f_*\mathcal{O}_X(m(K_{X/T}+\Delta))\right).$$

\item \label{T:Nef-Fam2} 
Assume  there exist  $m,q \in \N_{>0}$ such that at least one of the following conditions holds true 
\begin{itemize}
        \item $m(K_{X/T} + \Delta)$ is Cartier, $f$-globally generated and $f$-big.
	\item $K_{X/T} + \Delta$ is $f$-semiample, $\Delta$ is a reduced Weil divisor and either $\phi_{qm(K_{F} + \Delta_F)}$ is generically finite or $qm(K_{F} + \Delta_F)$ is Cartier, globally generated and big. 
	\end{itemize}
Then 
$$
m^{n+1}(K_{X/T}+ \Delta)^{n+1} \ge \frac{\deg f_*\O_X(m(K_{X/T}+\Delta))}{q^n}.
$$

\item \label{T:Nef-Fam3}  Assume that $K_{X/T} + \Delta$ is nef and let $q\in \N_{>0}$ such that either $q(K_F+\Delta_F)$ is Cartier and big or $\phi_{q(K_F+\Delta_F)}$ is generically finite. Then
$$(K_{X/T}+\Delta)^{n+1}\geq \frac{\deg\left(f_*\mathcal{O}_X(K_{X/T}+\Delta)\right)}{q^n}.$$

\end{enumerate}
\end{theorem}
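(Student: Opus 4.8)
The plan is to reduce everything to Proposition \ref{P:slopepair} by passing to the normalization $\eta\colon W\to X$ of the demi-normal total space. Writing $\mathfrak{D}$ for the conductor divisor on $W$ and setting $\Gamma:=\mathfrak{D}+\eta^{-1}_*\Delta$, the adjunction formula for the normalization of a demi-normal pair gives $\eta^*(K_{X/T}+\Delta)=K_{W/T}+\Gamma$, and the hypothesis that $(X,\Delta)$ is a generic slc-family is exactly the statement that $(W,\Gamma)$ is generically lc. Decomposing $W=\bigsqcup_i W_i$ into its irreducible (hence connected, as $W$ is normal) components, each $g_i\colon (W_i,\Gamma_i)\to T$ is a generic lc-family in the sense of \ref{N:KSBgen}: it dominates $T$ with $K_{W_i/T}+\Gamma_i$ $\Q$-Cartier as a pull-back, and its general fibre is the normalization of an irreducible component of the slc fibre $F$ (if that general fibre is reducible one first passes to the Stein factorisation, a technical point I suppress here). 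Since $\eta$ is finite and birational onto $X$, taking top self-intersections componentwise yields the first identity $(K_{X/T}+\Delta)^{n+1}=\sum_i(K_{W_i/T}+\Gamma_i)^{n+1}=(K_{W/T}+\Gamma)^{n+1}$.

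Set $L:=m(K_{X/T}+\Delta)$ (respectively $L:=K_{X/T}+\Delta$ in part (\ref{T:Nef-Fam3})). The second, and crucial, ingredient is the degree comparison $\deg f_*\O_X(L)\le \deg g_*\O_W(\eta^*L)$. The pull-back-of-sections map of the demi-normal discussion in Section \ref{Sec:Qdiv} sheafifies over $T$ to an injection $f_*\O_X(L)\hookrightarrow g_*\O_W(\eta^*L)$ of vector bundles on the smooth curve $T$ (injective since it is injective at the generic point). I would then observe that $g_*\O_W(\eta^*L)$ is nef: in each listed case the hypotheses pull back to hypotheses on $\eta^*L=m(K_{W/T}+\Gamma)$ to which Fujino's theorem applies. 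Indeed, if $m(K_{X/T}+\Delta)$ is Cartier and $f$-globally generated, then $\eta^*L$ is Cartier and $g$-globally generated, so Theorem \ref{T:Fuj}(\ref{T:Fuj3}) applies; if $K_{X/T}+\Delta$ is $f$-semiample with $\Delta$ a reduced Weil divisor, then $\Gamma$ is again reduced Weil and $K_{W/T}+\Gamma$ is $g$-semiample, so Theorem \ref{T:Fuj}(\ref{T:Fuj2}) applies; and in part (\ref{T:Nef-Fam3}), Corollary \ref{C:Fuj}(\ref{C:Fuj1}) gives nefness directly. Granting this, the saturation $\mathcal{S}$ of $f_*\O_X(L)$ inside $g_*\O_W(\eta^*L)$ is a subbundle whose quotient is a quotient of a nef bundle, hence nef, hence of non-negative degree; therefore $\deg g_*\O_W(\eta^*L)\ge \deg\mathcal{S}\ge \deg f_*\O_X(L)$.

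With the two comparisons in hand, I would finish by applying Proposition \ref{P:slopepair} to each $W_i$ and summing. For part (\ref{T:Nef-Fam1}), Proposition \ref{P:slopepair}(\ref{P:slopepair1}) gives $m^{n+1}(K_{W_i/T}+\Gamma_i)^{n+1}\ge \frac{2m^nv_i}{m^nv_i+n}\deg (g_i)_*\O_{W_i}(\eta^*L)$, where $v_i$ is the volume of the pull-back of $K_F+\Delta_F$ to the $i$-th component of the normalization of $F$ (here I use that the $\dim\ge2$, $\kappa\ge0$ branch of that proposition only improves the coefficient, so the weaker "otherwise" coefficient is valid uniformly). Since $t\mapsto \frac{2m^nt}{m^nt+n}$ is increasing, $v_i\ge w$, and the degrees are non-negative, each coefficient is at least $\frac{2m^nw}{m^nw+n}$; summing over $i$ and combining with the identity of the first paragraph and the degree inequality of the second gives part (\ref{T:Nef-Fam1}). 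Parts (\ref{T:Nef-Fam2}) and (\ref{T:Nef-Fam3}) are identical, invoking Proposition \ref{P:slopepair}(\ref{P:slopepair2}) and (\ref{P:slopepair3}) (and Lemma \ref{L:gen_fin} to transport the generic finiteness of $\phi_{q(K_F+\Delta_F)}$ to the fibres $F_i^\nu$), with the uniform coefficient $q^{-n}$ and no volume bookkeeping. The main obstacle is precisely the degree comparison of the second paragraph: by slc gluing theory the rank of $g_*\O_W(\eta^*L)$ is in general strictly larger than that of $f_*\O_X(L)$, so one must rule out that these extra pluricanonical sections on the normalization could lower the degree — which is exactly what the nefness of $g_*\O_W(\eta^*L)$, via Fujino's semipositivity, prevents.
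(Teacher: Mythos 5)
Your proposal is correct and follows essentially the same route as the paper: normalize the demi-normal total space, use adjunction with the conductor to obtain a disjoint union of generic lc-families, decompose the top self-intersection componentwise, compare degrees via the injection of $f_*\O_X(L)$ into the nef (by Fujino) push-forward from the normalization, and conclude by applying Proposition \ref{P:slopepair} to each component. The only cosmetic difference is that you phrase the degree comparison through the saturation, whereas the paper observes directly that the quotient of the nef bundle by the image is nef and hence of non-negative degree.
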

\begin{proof}
We first make some considerations useful for all the three cases, and then we focus on the specific inequalities.

Let $\nu \colon Y\to X$ be the normalisation of $X$ and set $g:Y\xrightarrow[]{\nu} X \xrightarrow[]{f} T$. 
We have that 
\begin{equation}\label{E:pullK}
 \nu^*( K_{X/T}+\Delta)=K_{Y/T}+\Delta', 
 \end{equation}
for some  boundary $\Q$-divisor $\Delta'$ on $Y$ with the property that $g:(Y,\Delta')\to T$ is a disjoint union of generic lc-families $g_i:(Y_i,\Delta_i)\to T$, see \cite[Section 5.1 and Definition-Lemma 5.10]{Kollar_book}. The divisor $K_{X/T}+\Delta$ is nef either by assumption or by Corollary \ref{C:Fuj}\eqref{C:Fuj2}; combining this with \eqref{E:pullK} we obtain that $K_{Y_i/T}+\Delta_i$ is nef for every $i$. Moreover, if $\Delta$ is integral, $\Delta'$ is integral too.

By \eqref{E:pullK} and the projection formula, we have that 
\begin{equation}\label{E:inter-Delta}
(K_{X/T}+\Delta)^{n+1}=(K_{Y/T}+\Delta')^{n+1}=\sum_i (K_{Y_i/T}+\Delta_i)^{n+1}.
\end{equation}

Using the natural injection $\O_X\hookrightarrow \nu_*\O_Y$ coming from the normalisation map $\nu$ and \eqref{E:pullK}, we get the following injection of reflexive sheaves on  $X$
$$
\O_X(m(K_{X/T}+\Delta))\hookrightarrow \nu_*\left(\nu^*(\O_X(m(K_{X/T}+\Delta))) \right)=\nu_*\left(\O_Y(m(K_{Y/T}+\Delta')) \right)=\nu_*\left(\bigoplus_i \O_{Y_i}(m(K_{Y_i/T}+\Delta_i)) \right).
$$
By taking the push-forward along $f$, we get the following injection of locally free sheaves on $T$ 
$$V:=f_* \mathcal{O}_X(m(K_{X/T}+\Delta))\hookrightarrow  W:=\bigoplus_i g_{i*}\left(\mathcal{O}_{Y_i}(m(K_{Y_i/T}+\Delta_i))\right).$$
The locally free sheaf $W$ is nef in each of the cases we are considering by either Theorem \ref{T:Fuj} or Corollary \ref{C:Fuj}\eqref{C:Fuj1} (see also the proof of Proposition \ref{P:slopepair}); hence also the quotient $W/V$ is nef and  it has non-negative degree. Therefore,  by taking degrees in the previous inclusion we get that 
\begin{equation}\label{E:push-Delta}
\deg f_*\O_X(m(K_{X/T}+\Delta)) \leq  \sum_i \deg g_{i*}\left(\mathcal{O}_{Y_i}(m(K_{Y_i/T}+\Delta_i))\right).
\end{equation}

\medskip

Part \eqref{T:Nef-Fam1}  follows now from Proposition \ref{P:slopepair}\eqref{P:slopepair1}  applied to the generic lc-families $g_i:(Y_i,\Delta_i)\to T$, noting that 
$$
\frac{2m^n(K_{F_i}+\Delta_{F_i})^n}{m^n(K_{F_i}+\Delta_{F_i})^n+2n}\geq \frac{2m^nw}{m^nw+2n},
$$
where $(F_i,\Delta_{F_i})$ is a general fiber of $g_i:(Y_i,\Delta_i)\to T$.

\medskip

Part \eqref{T:Nef-Fam2} follows from Proposition \ref{P:slopepair}\eqref{P:slopepair2}  applied to the generic lc-families $g_i:(Y_i,\Delta_i)\to T$, and using that 
if $\phi_{m(K_F+\Delta)}$ is generically finite, the same is true for $\phi_{m(K_{F_i}+\Delta_i)}$ by Lemma \ref{L:gen_fin}.  


\medskip

Part \eqref{T:Nef-Fam3}  follows from Proposition \ref{P:slopepair}\eqref{P:slopepair3} applied to the generic lc-families $g_i:(Y_i,\Delta_i)\to T$.
\end{proof}

An interesting consequence of the above results is the following 

\begin{corollary}[Existence of slope inequalities]\label{C:existence}
	Fix an integer $n\geq 1$ and a subset $I$ of $[0,1]$ satisfying the DCC,  then there exists a  constant $s(n,I)>0$ (which depends only on $n$ and $I$) such that
	$$(K_{X/T}+\Delta)^{n+1}\geq s(n,I)\deg f_*\O_X(K_{X/T}+\Delta) $$
	for every  generic slc family $f\colon (X,\Delta)\to T$  such that $\dim X=n+1$,  the coefficients of $\Delta_{\leq 1}$ belong to $I$ and $K_{X/T}+\Delta$ is $f$-semiample and $f$-big. 
\end{corollary}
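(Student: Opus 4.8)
The plan is to deduce this uniform statement from the geometric slope inequality already established in Proposition \ref{P:slopepair}\eqref{P:slopepair3a} (equivalently Theorem \ref{T:Nef-Fam}\eqref{T:Nef-Fam3}); the only genuinely new ingredient needed is a \emph{uniform} choice of the integer $q$ appearing there, and this is exactly what the effective birationality of Hacon--McKernan--Xu supplies. First I would note that since $K_{X/T}+\Delta$ is $f$-semiample, Corollary \ref{C:Fuj}\eqref{C:Fuj2} already yields that $K_{X/T}+\Delta$ is nef, so we are in position to apply the results of Section \ref{Sec:slope}.

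Next I would reduce to the normal (generic lc) case exactly as in the proof of Theorem \ref{T:Nef-Fam}: taking the normalization $\nu\colon Y=\bigsqcup_i Y_i\to X$ and writing $\nu^*(K_{X/T}+\Delta)=K_{Y/T}+\Delta'$, one obtains generic lc-families $g_i\colon (Y_i,\Delta_i)\to T$ with $K_{Y_i/T}+\Delta_i$ nef, together with the two relations $(K_{X/T}+\Delta)^{n+1}=\sum_i (K_{Y_i/T}+\Delta_i)^{n+1}$ and $\deg f_*\O_X(K_{X/T}+\Delta)\leq \sum_i \deg g_{i*}\O_{Y_i}(K_{Y_i/T}+\Delta_i)$ coming from \eqref{E:inter-Delta} and \eqref{E:push-Delta}. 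The general fibre $(F_i,\Delta_{F_i})$ of each $g_i$ is then a normal irreducible lc pair of dimension $n$ with $K_{F_i}+\Delta_{F_i}$ big, bigness descending because $\nu$ is finite and $K_{X/T}+\Delta$ is $f$-big. The point to be careful about is that, by the structure of the normalization of a demi-normal variety \cite[Section 5.1]{Kollar_book}, the conductor enters $\Delta'$ with coefficient $1$, so the coefficients of $\Delta_{F_i}$ lie in $I':=I\cup\{1\}$, which still satisfies the DCC.

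At this stage I would invoke \cite{ACC}: there is a positive integer $q=b(n,I')$, depending only on $n$ and $I'$ and hence only on $n$ and $I$, such that $\phi_{q(K_Z+\Delta_Z)}$ is birational, in particular generically finite, for every lc pair $(Z,\Delta_Z)$ of dimension $n$ with coefficients in $I'$ and $K_Z+\Delta_Z$ big. Applying this simultaneously to all the fibres $(F_i,\Delta_{F_i})$ produces a single $q$ valid for every family in the class, so Proposition \ref{P:slopepair}\eqref{P:slopepair3a} applies to each $g_i$ and gives $(K_{Y_i/T}+\Delta_i)^{n+1}\geq q^{-n}\deg g_{i*}\O_{Y_i}(K_{Y_i/T}+\Delta_i)$. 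Summing over $i$ and substituting the two relations above yields the claim with $s(n,I):=q^{-n}=b(n,I')^{-n}>0$.

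As for where the difficulty lies: the geometric estimate is already in hand, and the real content is the \emph{uniformity} of $q$ over the unbounded family of fibres, which is precisely the role of \cite{ACC}. I expect the main obstacle, and the point most worth checking carefully, to be the bookkeeping forced by non-normality, namely passing to the normalization so that the lc effective-birationality input applies, verifying that the coefficient set enlarges only by $\{1\}$ and stays DCC, and confirming that bigness and the relevant nefness statements survive the pullback. Routing the argument through the individual lc components $g_i$, rather than through the slc fibre $F$ directly, also sidesteps the subtlety that generic finiteness of $\phi_{q(K_F+\Delta_F)}$ on the non-normal fibre does not transparently follow from generic finiteness on each normalized component.
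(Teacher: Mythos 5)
Your proposal is correct and follows essentially the same route as the paper's own proof: normalize to reduce to generic lc families $g_i\colon (Y_i,\Delta_i)\to T$, use the nefness results of Fujino to get the comparison of degrees and top self-intersections, enlarge the coefficient set by $\{1\}$ to absorb the conductor, invoke the Hacon--McKernan--Xu effective birationality constant $b(n,I)$, and apply Proposition \ref{P:slopepair}\eqref{P:slopepair3} to each component, yielding the same constant $s(n,I)=b(n,I)^{-n}$. The only point the paper makes explicit that you gloss over is that the components of $\Delta$ with coefficient $>1$ are vertical (since the general fibre is slc), so they do not contribute to $\Delta_{F_i}$; this is minor bookkeeping.
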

\begin{proof}
We can assume, without loss of generality, that $I$ contains $1$. By \cite[Theorem 1.3]{ACC}, there exists a constant $b(n,I)>0$ (which depends only on $n$ and $I$) such that $b(n,I)(K_Z+\Delta_Z)$ gives a birational morphism for all lc pairs $(Z,\Delta_Z)$ such that the dimension of $Z$ is $n$, the coefficients of $\Delta_Z$ belong to $I$, and $K_Z+\Delta_Z$ is big.  We claim that we can take
$$
s(n,I) = \frac{1}{b(n,I)^n}.
$$ 

Indeed, let $f \colon (X,\Delta) \to T$ be a generic slc family as in the statement. Following the construction of the first part of the proof of Theorem \ref{T:Nef-Fam}, consider the lc families $g_i\colon (Y_i,\Delta_i)\to T$ obtained by normalising $X$. Since $g_{i*}\left(\mathcal{O}_{Y_i}(K_{Y_i/T}+\Delta_i)\right)$ is nef by Corollary \ref{C:Fuj}\eqref{C:Fuj1},  by arguing as in the proof of Theorem \ref{T:Nef-Fam}, we get that 
\begin{equation}\label{E:push-Delta2}
\deg f_*\O_X(K_{X/T}+\Delta) \leq  \sum_i \deg g_{i*}\left(\mathcal{O}_{Y_i}(K_{Y_i/T}+\Delta_i)\right).
\end{equation}
Similarly, we get that 
\begin{equation}\label{E:inter-Delta2}
(K_{X/T}+\Delta)^{n+1}=\sum_i (K_{Y_i/T}+\Delta_i)^{n+1}.
\end{equation}

As $f$ is a generically slc family, the coefficients of $\Delta_{>1}$ are vertical, hence, if we restrict $\Delta_i$ to a generic fiber of $g_i$, we obtain a boundary divisor with coefficients in $I$ (we had to make sure that $1$ is in $I$, as $\Delta_i$ might contain divisors with coefficients one coming from the conductor). By assumption, $b(n,I)(K_{Y_i/T}+\Delta_i)$ gives a birational morphism when restricted to a generic fiber of $g_i$. Moreover, $K_{Y_i/T}+\Delta_i$ is nef by Corollary \ref{C:Fuj}\eqref{C:Fuj2} and the assumption that $K_{X/T}+\Delta$ is $f$-semiample. 
Hence, we can apply Proposition \ref{P:slopepair}\eqref{P:slopepair3} in order to get that 
\begin{equation}\label{E:ine-gi}
(K_{Y_i/T}+\Delta_i)^{n+1}\geq \frac{\deg g_{i*}\left(\mathcal{O}_{Y_i}(K_{Y_i/T}+\Delta_i)\right)}{b(n,I)^n}.
\end{equation}
We conclude by putting together \eqref{E:push-Delta2}, \eqref{E:inter-Delta2} and \eqref{E:ine-gi}. 
\end{proof}

We conclude with an application of Theorem \ref{T:Xiao-n} to generic canonical families with no boundary.

\begin{corollary}\label{C:can_sing}
Let $f\colon X\to T$ be a generic  canonical family such that $K_{X/T}$ is $f$-semiample, $\phi_{K_F}$ is a birational map and $n\geq 2$. Then 
\begin{equation*}
K_{X/T}^{n+1} \ge 2(n+1)\frac{h^0(F,K_F)-n-2}{h^0(F,K_F)}\deg f_*\O_X(K_{X/T}).
\end{equation*}
If, furthermore, $K_F$ is Cartier and globally generated, then we have 
$$
K_{X/T}^{n+1} \ge 2(n+1)\frac{K_{F}^n}{K_{F}^n +(n+1)(n+2)} \deg f_*\O_X(K_{X/T}). 
$$
\end{corollary}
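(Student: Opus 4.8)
The plan is to recognize this statement as a direct application of Corollaries \ref{C:Xiao-bir1} and \ref{C:Xiao-bir2} to the $\Q$-Cartier $\Q$-divisor $L := K_{X/T}$, with the specific choice $s=1$. First I would verify that we are in the set-up \ref{setup}: since $f\colon X\to T$ is a generic canonical family (so in particular $\Delta=0$), the total space $X$ is normal and irreducible, $f$ is a fibration to the smooth curve $T$, and $L=K_{X/T}$ is $\Q$-Cartier. By \eqref{E:rest-KD} applied with $\Delta=0$, the restriction of $L$ to a general fiber is $L_F=K_F$; hence the hypothesis that $\phi_{K_F}$ is birational says precisely that $\phi_{L_F}$ is birational, and the singularities of $F$ are canonical by assumption.

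Next I would establish the two nefness conditions required by the corollaries. Since a generic canonical family is in particular a generic lc family, Corollary \ref{C:Fuj}\eqref{C:Fuj1} gives that $f_*\O_X(K_{X/T})$ is nef. Moreover, $K_{X/T}$ is $f$-semiample by hypothesis, so Corollary \ref{C:Fuj}\eqref{C:Fuj2} (with $\Delta=0$) gives that $L=K_{X/T}$ is itself nef. Thus both $L$ and $f_*\O_X(L)$ are nef, as needed.

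The only step specific to this corollary is the choice of $s$, and here it is essentially free: since $L_F=K_F$, the condition $K_F-sL_F\geq 0$ demanded by Corollaries \ref{C:Xiao-bir1} and \ref{C:Xiao-bir2} reads $(1-s)K_F\geq 0$, which holds (with equality) for $s=1$. With $L=K_{X/T}$, $s=1$, the canonical-singularities hypothesis on $F$, and $n\geq 2$, all hypotheses of Corollary \ref{C:Xiao-bir1} are satisfied, yielding
$$K_{X/T}^{n+1} \geq 2(n+1)\frac{h^0(F,K_F)-n-2}{h^0(F,K_F)}\deg f_*\O_X(K_{X/T}),$$
which is the first inequality. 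Under the additional assumption that $K_F$ is Cartier and globally generated, the same data satisfy the hypotheses of Corollary \ref{C:Xiao-bir2}, giving
$$K_{X/T}^{n+1} \geq 2(n+1)\frac{K_F^n}{K_F^n+(n+1)(n+2)}\deg f_*\O_X(K_{X/T}),$$
the second inequality. I do not anticipate any genuine obstacle: all the analytic content is already packaged in Theorem \ref{T:Xiao-n} and its corollaries, and the work reduces to checking that ``generic canonical'' supplies exactly the canonical-singularities hypothesis, that $f$-semiampleness together with Fujino's results furnishes the nefness of both $K_{X/T}$ and $f_*\O_X(K_{X/T})$, and that the trivial choice $s=1$ is admissible.
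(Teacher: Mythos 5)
Your proposal is correct and follows exactly the paper's own argument: nefness of $K_{X/T}$ and of $f_*\O_X(K_{X/T})$ via Corollary \ref{C:Fuj}, followed by an application of Corollaries \ref{C:Xiao-bir1} and \ref{C:Xiao-bir2} with $L=K_{X/T}$ and $s=1$. The additional checks you spell out (the restriction $L_F=K_F$, the admissibility of $s=1$) are exactly the implicit verifications in the paper's two-line proof.
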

\begin{proof}
The divisor $K_{X/T}$ and the vector bundle $f_*\O_X(K_{X/T})$ are nef by Corollary \ref{C:Fuj}. We conclude by applying Corollaries \ref{C:Xiao-bir1} and \ref{C:Xiao-bir2} with $L= K_{X/T}$ and $s=1$.
\end{proof}

\subsection{Application to the moduli space of KSB stable varieties}\label{sec:KSBmoduli}

Let $\M_{n,v}$ be the Deligne-Mumford algebraic stack of KSB stable varieties of dimension $n$ and volume $v$, i.e. varieties $V$ with slc singularities, ample dualizing $\Q$-divisor $K_V$, dimension $n$ and volume $v:=K_V^n\in \Q_{>0}$.  We denote by  $\M_{n,v}^o$ the open substack of $\M_{n,v}$ parametrizing normal stable varieties. We refer to \cite{Kollar_moduli} for the definition and main properties of families of KSB stable varieties.

Using the universal family $\pi\colon \X_{n,v}\to \M_{n,v}$, we can define  the following $\Q$-Cartier $\Q$-divisors (well-defined up to $\Q$-linear equivalence) on $\M_{n,v}$:
\begin{itemize}
\item the Chow-Mumford divisor  $\lambda_{CM}:=\pi_*K_{\pi}^{n+1}$;
\item the determinant divisors $\lambda_m=c_1(\det \left(\pi_*\mathcal{O}_{\mathcal{X}}(mK_{\pi}))\right)$ for any $m\geq 0$. 

\end{itemize}
Recall that $\lambda_{CM}$ is ample on $\M_{n,v}$ by \cite{PX}, while $\lambda_m$  are nef on $\M_{n,v}$ for any $m$ big and divisible enough by \cite{Fujino} (see also Theorem \ref{T:Fuj}). The nefness of $\lambda_m$ combined with Koll\'{a}r's ampleness lemma \cite{Kollar_ampleness}, shows that $\lambda_m$ is ample on $\M_{n,v}$ if  $m$ is big and divisible enough. Let us stress that for low values of $m$, $\lambda_m$ does not need to be ample: for instance, when $n=1$, $\lambda_m$ is ample for $m\geq 2$, but just semi-ample for $m=1$.  For $n\geq 2$, we are not aware of any effective lower bound on $m$ that guarantees the ampleness of $\lambda_m$.

The aim of this section is to determine some explicit rational functions $f(m)\in \Q(m)$ for which the  $\Q$-divisor $m^{n+1}\lambda_{CM}-f(m) \lambda_m$ is \emph{nef on $\M_{n,v}$}, i.e. it intersects non-negatively all the projective curves of  $\M_{n,v}$, or \emph{nef on $\M_{n,v}$ away from the boundary},  i.e. it intersects non-negatively all the projective curves of  $\M_{n,v}$ not entirely contained in the boundary $\partial \M_{n,v}:=\M_{n,v}\setminus \M_{n,v}^o$. 
The same definitions can be given for any closed substack of $\M_{n,v}$, for example for any irreducible or connected component.  

\vspace{0.1cm}

Let us first describe some $\Q$-divisors of the form $m^{n+1}\lambda_{CM}-f(m) \lambda_m$ that are nef on $\M_{n,v}$ away from the boundary.

\begin{theorem}\label{T:Nef-Away}
Fix $n\in \N_{>0}$ and $v\in \Q_{>0}$. Let $m$ and $q$ be two positive integers.
\begin{enumerate}
\item\label{T:Nef-Away1} Assume that $mK_V$ is Cartier and globally generated for  any $V\in \M_{n,v}^o$. Then:
\begin{itemize}
\item If either $n\geq 2$ or $n=m=1$ then 
$$m^{n+1} \lambda_{CM}-\frac{4vm^n}{vm^n+2n}\lambda_m$$
is nef on $\M_{n,v}$ away from the boundary;
\item If $n=1$ and $m\geq 2$, then 
$$m^{n+1} \lambda_{CM}-\frac{2vm^n}{vm^n+n}\lambda_m$$
is nef on $\M_{n,v}$ away from the boundary
\end{itemize}

\item \label{T:Nef-Away2} If $\phi_{mqK_V}$ is generically finite for any $V\in \M_{n,v}^o$ and either $n\geq 2$ or $n=q=m=1$, then 
$$m^{n+1}\lambda_{CM}-2 \frac{\lambda_m}{q^n}$$
is nef on $\M_{n,v}$ away from the boundary.
\item\label{T:Nef-Away3} If either $mqK_V$ is Cartier  or $\phi_{mqK_V}$ is generically finite for any $V\in \M_{n,v}^o$, then 
$$
m^{n+1}\lambda_{CM}-\frac{\lambda_m}{q^n}
$$
is nef on $\M_{n,v}$ away from the boundary.
\end{enumerate}
\end{theorem}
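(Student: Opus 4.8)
The plan is to verify the defining condition of ``nef away from the boundary'' one curve at a time. Let $C\subset \M_{n,v}$ be a projective integral curve whose generic point lies in $\M_{n,v}^o$. I would pull back the universal family along the normalization $\nu\colon T\to C$ to obtain a family $f\colon X\to T$ over a smooth projective curve, with $K_{X/T}$ $\Q$-Cartier and $f$-ample and general fiber $F$ a \emph{normal} KSB-stable variety. The crucial preliminary step is to show that the total space $X$ is then automatically normal: $X$ is demi-normal (being a KSB-stable family), and every codimension-one point of $X$ is regular — for horizontal prime divisors this follows from normality of the generic fiber, and for the generic point of a fiber component from flatness over the DVR $\mathcal{O}_{T,t}$ together with the reducedness of the slc fiber $X_t$ — so $X$ is $R_1$, and being $S_2$ it is normal. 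This places us in Set-up \ref{setup} with $L=mK_{X/T}$, and by base-change compatibility of the Chow--Mumford and determinant classes it yields $\lambda_{CM}\cdot C=\frac{1}{\deg\nu}K_{X/T}^{n+1}$ and $\lambda_m\cdot C=\frac{1}{\deg\nu}\deg f_*\mathcal{O}_X(mK_{X/T})$. Each asserted nefness thus reduces to a slope inequality of the form $m^{n+1}K_{X/T}^{n+1}\ge f(m)\deg f_*\mathcal{O}_X(mK_{X/T})$ for $f$.

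Next I would install the two semipositivity hypotheses of Section \ref{Sec:slope}. Since $F$ is normal, hence lc, and $\Delta=0$, the family $f$ is a generic lc-family, so Corollary \ref{C:Fuj}\eqref{C:Fuj2} gives that $mK_{X/T}$ is nef and Theorem \ref{T:Fuj}\eqref{T:Fuj2} (or \eqref{T:Fuj3}) gives that $f_*\mathcal{O}_X(mK_{X/T})$ is nef. I would also record that $K_F$ is ample, so $F$ is of general type with $\kappa(F)=n$; in particular $\kappa(F)\ge 0$ whenever $n\ge 2$, and for $n=1$ the fiber is a smooth curve of genus $\ge 2$ on which $K_F$ is special while $mK_F$ is non-special for $m\ge 2$.

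Then I would dispatch the three parts by quoting the appropriate inequality with $L_F=mK_F$ and $L_F^n=m^nv$. For part \eqref{T:Nef-Away3} the hypothesis gives either $\phi_{mqK_F}$ generically finite or, using ampleness of $K_F$, that $mqK_F$ is Cartier and big; these are exactly conditions $(B_q)$ and $(C_q)$ for $L$, and Theorem \ref{T:Barja}\eqref{T:Barja1} delivers $m^{n+1}K_{X/T}^{n+1}\ge q^{-n}\deg f_*\mathcal{O}_X(mK_{X/T})$. For part \eqref{T:Nef-Away2} the stronger assumption $\phi_{mqK_F}$ generically finite, combined with $\kappa(F)\ge 0$ (when $n\ge 2$) or $K_F$ special (when $n=q=m=1$), lets me invoke the factor-$2$ improvement Theorem \ref{T:Barja}\eqref{T:Barja2}. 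For part \eqref{T:Nef-Away1}, where $mK_F$ is Cartier, globally generated and big, I would apply Corollary \ref{C:Xiao-high2} directly: the factor-$4$ bound $\tfrac{4m^nv}{m^nv+2n}$ holds when $n\ge 2$ (so $\kappa(F)\ge 0$) or when $n=m=1$ (so $K_F$ is special), whereas for $n=1,\ m\ge 2$ the divisor $mK_F$ is non-special and only the factor-$2$ bound $\tfrac{2m^nv}{m^nv+n}$ survives.

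The main obstacle, and precisely what makes ``away from the boundary'' genuinely stronger than the global statement of Theorem \ref{T:Nef-Mod}, is the normality of $X$ established in the first step. It is exactly the normality of the general fiber — guaranteed by restricting to curves meeting $\M_{n,v}^o$ — that forces $X$ to be normal, so that one may apply the \emph{sharp} single-fiber Noether/Castelnuovo inequalities of Section \ref{Sec:Noether} to $F$ itself rather than to the several components of the normalization of a non-normal fiber; this is what upgrades the constants from factor $2$ to factor $4$ and removes the global-generation hypothesis in part \eqref{T:Nef-Away3}. I would take care to verify the regularity of all codimension-one points of $X$ and the base-change identities for $\lambda_{CM}$ and $\lambda_m$ (with the requisite relative vanishing), since these underlie the entire reduction.
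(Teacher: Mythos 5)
Your proposal is correct and follows essentially the same route as the paper: restrict the universal family to a curve meeting $\M_{n,v}^o$, observe that normality of the general fiber forces normality of the total space, record the base-change identities for $\lambda_{CM}$ and $\lambda_m$, and then apply the slope inequalities under the Fujino nefness inputs. The only cosmetic difference is that you invoke Corollary \ref{C:Xiao-high2} and Theorem \ref{T:Barja} directly, whereas the paper routes the same reductions through the intermediate Proposition \ref{P:slopepair}.
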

The same conclusions hold if we replace $\M_{n,v}$ by any closed substack, for example any irreducible or connected component.  

\begin{proof}
We have to prove that the degree of the given divisors are non-negative on any irreducible smooth projective curve $T\subset \M_{n,v}$ not entirely contained in $\partial \M_{n,v}$. 

Let $f:X\to T$ be the restriction of the universal family $\pi:\X_{n,v}\to \M_{n,v}$ to $T$. 
By assumption, a general fiber $F$ of $f$ is normal (because it belongs to $\M_{n,v}^o$), which implies, using that all the fibers of $f$ are reduced, that $X$ is normal. Moreover, the divisor $K_{X/T}$ is $\Q$-Cartier by \cite[Theorem 2.3]{Kollar_moduli}.
Hence the fibration $f$ is  a generic lc-family as in \ref{N:KSBgen}.

Now observe that 
\begin{equation}\label{E:res-div-T}
\begin{sis}
& \deg (\lambda_{CM})_{|T}=K_{X/T}^{n+1}, \\
& \deg (\lambda_m)_{|T}=\deg f_*\O_X(mK_{X/T}),
\end{sis}
\end{equation}
where in the second equality we have used the base change property $\pi_*\O_{\X_{n,v}}(qK_{\pi})|_T=f_*\O_X(qK_{X/T})$, which follows, using the Koll\'ar condition on families of KSB stable varieties,   from the fact that  $\pi_*\O_{\X_{n,v}}(qK_{\pi})$ is locally free (see \cite[Rmk. 3.4]{Fujino} for $q\geq 2$ and \cite[Corollary 2.71]{Kollar_moduli} for $q=1$). 

The results follow by applying Proposition \ref{P:slopepair} with $\Delta=0$ and using that $K_{X/T}$ is $f$-semiample (being $f$-ample) and $K_F$ is big (being ample).  More precisely:
\begin{itemize}
\item part \eqref{T:Nef-Away1} follows from Proposition \ref{P:slopepair}\eqref{P:slopepair1};
\item part \eqref{T:Nef-Away2} follows from Proposition \ref{P:slopepair}\eqref{P:slopepair2a};
\item part \eqref{T:Nef-Away3} follows from Proposition \ref{P:slopepair}\eqref{P:slopepair2b}. 
\end{itemize}

\end{proof}

\begin{remark}\label{R:AwayCurves}
Some of the results of Theorem \ref{T:Nef-Away} are sharp for the moduli space $\ov\M_g(=\M_{1,2g-2})$ of stable curves of genus $g\geq 2$. Indeed, it follows from \cite[Prop. 4.3, Thm. 4.12]{CH} that 
\begin{equation}\label{E:AwayCurves}
\lambda_{CM}- s \lambda_1 \: \text{ is nef on } \ov\M_g \: \text{ away from the boundary } \Leftrightarrow s\leq \frac{4g-4}{g}.
\end{equation}
This shows that if $n=1$ then 
\begin{itemize}
\item  Theorem \ref{T:Nef-Away}\eqref{T:Nef-Away1} is sharp if  $m=1$ (for any volume $v$);
 \item Theorem \ref{T:Nef-Away}\eqref{T:Nef-Away2} is sharp if $m=q=1$ and $v=2$ (i.e. genus $g$ equal to $2$). 
\end{itemize}
\end{remark}

We now describe some $\Q$-divisors of the form $m^{n+1}\lambda_{CM}-f(m) \lambda_m$ that are nef on $\M_{n,v}$.

\begin{theorem}\label{T:Nef-Mod}
Fix $n\in \N_{>0}$ and $v\in \Q_{>0}$. Consider two positive integers $m$ and $q$. 
\begin{enumerate}
\item \label{T:Nef-Mod1}
Assume that $mK_V$ is Cartier and globally generated for any $V\in \M_{n,v}$ and let $w\in \Q_{>0}$ such that the volume of the pull-back of $K_F$ to any irreducible component of the normalisation of $V$ is at least $w$. Then the $\Q$-divisor 
$$m^{n+1}\lambda_{CM}-\frac{2wm^n}{wm^n+n}\lambda_m  $$
is nef on $\M_{n,v}$. 
\item \label{T:Nef-Mod2} 
If either $mqK_V$ is Cartier  or $\phi_{mqK_V}$ is generically finite for any $V\in \M_{n,v}$, then the $\Q$-divisor
$$
m^{n+1}\lambda_{CM}-\frac{\lambda_m}{q^n}
$$
is nef on $\M_{n,v}$.
\end{enumerate}
\end{theorem}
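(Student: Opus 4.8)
The plan is to reduce, exactly as in the proof of Theorem \ref{T:Nef-Away}, to verifying that the two stated $\Q$-divisors have non-negative degree on every irreducible smooth projective curve $T\subseteq \M_{n,v}$; the only---but crucial---difference is that now I must allow curves $T$ contained in the boundary $\partial\M_{n,v}$, so that the general fibre of the associated family is no longer normal but merely slc. First I would let $f\colon X\to T$ be the pull-back of the universal family $\pi\colon \X_{n,v}\to\M_{n,v}$ (passing to the normalization of $T$ if needed, since degrees may be checked after a finite cover). Since every fibre is KSB-stable, $X$ is demi-normal and $K_{X/T}$ is $\Q$-Cartier by the theory of KSB moduli (\cite[Theorem 2.3]{Kollar_moduli}), so $f\colon(X,0)\to T$ is a generic slc-family in the sense of \ref{N:KSBgen} with $K_{X/T}$ being $f$-ample. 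Exactly as in \eqref{E:res-div-T} one has $\deg(\lambda_{CM})_{|T}=K_{X/T}^{n+1}$ and $\deg(\lambda_m)_{|T}=\deg f_*\O_X(mK_{X/T})$, the latter using the base-change identity $\pi_*\O_{\X_{n,v}}(mK_\pi)_{|T}=f_*\O_X(mK_{X/T})$, which holds over all of $\M_{n,v}$ (including the boundary) because $\pi_*\O_{\X_{n,v}}(mK_\pi)$ is locally free and the Kollár condition is satisfied.

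With this dictionary in place, both inequalities reduce to the slope inequalities for generic slc-families established in Theorem \ref{T:Nef-Fam}. For part \eqref{T:Nef-Mod1} I would apply Theorem \ref{T:Nef-Fam}\eqref{T:Nef-Fam1} with $\Delta=0$: the hypothesis that $mK_V$ is Cartier and globally generated for every $V$ gives that $mK_{X/T}$ is Cartier, $f$-globally generated and $f$-big (the last because $K_V$ is ample), and the chosen $w$ bounds below the volumes of the pull-back of $K_F$ to the components of the normalization of a general fibre; this yields $m^{n+1}K_{X/T}^{n+1}\geq \frac{2wm^n}{wm^n+n}\deg f_*\O_X(mK_{X/T})$, i.e. $\deg\big(m^{n+1}\lambda_{CM}-\frac{2wm^n}{wm^n+n}\lambda_m\big)_{|T}\geq 0$. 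For part \eqref{T:Nef-Mod2} I would invoke Theorem \ref{T:Nef-Fam}\eqref{T:Nef-Fam2}, again with $\Delta=0$ and using that $K_{X/T}$ is $f$-semiample: if $\phi_{mqK_V}$ is generically finite for all $V$ we are in the generically-finite branch, while if $mqK_V$ is Cartier then $mqK_F$ is Cartier and big (being a multiple of the ample $K_F$), so condition~($C_q$) of the underlying Theorem \ref{T:Barja}\eqref{T:Barja1} holds; either way one obtains $m^{n+1}K_{X/T}^{n+1}\geq q^{-n}\deg f_*\O_X(mK_{X/T})$, giving the non-negativity of the degree of $m^{n+1}\lambda_{CM}-q^{-n}\lambda_m$ on $T$.

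The main obstacle is precisely this passage through the boundary, where the general fibre $F$ is non-normal. Unlike in Theorem \ref{T:Nef-Away}, the family $f$ is only a generic slc-family, so Proposition \ref{P:slopepair} does not apply directly; instead one must normalize the total space $X$, producing a disjoint union of generic lc-families $g_i\colon(Y_i,\Delta_i)\to T$ with conductor boundaries $\Delta_i$, and then control both the top self-intersection (via $K_{X/T}^{n+1}=\sum_i(K_{Y_i/T}+\Delta_i)^{n+1}$) and the degree of the push-forward (via the inclusion $f_*\O_X(mK_{X/T})\hookrightarrow \bigoplus_i g_{i*}\O_{Y_i}(m(K_{Y_i/T}+\Delta_i))$ together with the nefness, hence non-negative degree, of the quotient). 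This bookkeeping, which is exactly what Theorem \ref{T:Nef-Fam} packages, is where the slc hypothesis is genuinely used; once it is granted, the deduction is formally identical to the open case treated in Theorem \ref{T:Nef-Away}.
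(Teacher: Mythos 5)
Your proposal is correct and follows the paper's own proof essentially verbatim: restrict to an irreducible curve $T\subseteq \M_{n,v}$, observe that the pulled-back family is a generic slc-family with $\deg(\lambda_{CM})_{|T}=K_{X/T}^{n+1}$ and $\deg(\lambda_m)_{|T}=\deg f_*\O_X(mK_{X/T})$, and invoke Theorem \ref{T:Nef-Fam} (whose normalization argument is exactly the bookkeeping you describe in your last paragraph). The only, harmless, divergence is the citation for part \eqref{T:Nef-Mod2}: the paper quotes Theorem \ref{T:Nef-Fam}\eqref{T:Nef-Fam3}, whereas for general $m$ the statement actually needed is Theorem \ref{T:Nef-Fam}\eqref{T:Nef-Fam2} as you use it, and your observation that the ``Cartier'' branch reduces to condition $(C_q)$ of Theorem \ref{T:Barja}\eqref{T:Barja1} (so that global generation of $mqK_F$ is not required, only Cartier and big, which follows from ampleness after pulling back to the normalization) is precisely the right way to reconcile the hypotheses.
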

The same conclusions hold if we replace $\M_{n,v}$ by any closed substack, for example any irreducible or connected component.
\begin{proof}
It is enough to prove that the degree of the given divisors are non-negative on any irreducible smooth projective curve $T\subset \M_{n,v}$. 

Let $f:X\to T$ be the restriction of the universal family $\pi:\X_n\to \M_n$ to $T$. 
By  \cite[Theorem 2.3]{Kollar_moduli}, $X$ is deminormal and the divisor $K_{X/T}$ is $\Q$-Cartier. Hence, the fibration $f$ is  a generic slc-family as in \ref{N:KSBgen}.

Now observe that 
\begin{equation}\label{E:res-div-T2}
\begin{sis}
& \deg (\lambda_{CM})_{|T}=K_{X/T}^{n+1}, \\
& \deg (\lambda_m)_{|T}=\deg f_*\O_X(mK_{X/T}),
\end{sis}
\end{equation}
where in the second equality we have used the base change property $\pi_*\O_{\X_{n,v}}(qK_{\pi})|_T=f_*\O_X(qK_{X/T})$ (see the proof of Theorem \ref{T:Nef-Away}). 

The results follow by applying Theorem \ref{T:Nef-Fam}\eqref{T:Nef-Fam1} or   \ref{T:Nef-Fam}\eqref{T:Nef-Fam3} with $\Delta=0$ and using that $K_{X/T}$ is $f$-semiample (being $f$-ample) and $K_F$ is big (being ample). 
\end{proof}

\begin{remark}\label{R:NefCurves}
Some of the results in Theorem \ref{T:Nef-Mod} are sharp for the moduli space $\ov\M_g(=\M_{1,2g-2})$ of stable curves of genus $g\geq 2$. Indeed, it follows from \cite[Prop. 4.3, Thm. 4.12]{CH} that 
\begin{equation}\label{E:NefCurves}
\lambda_{CM}- a \lambda_1 \: \text{ is nef on } \ov\M_g  \Leftrightarrow a\leq 1.
\end{equation}
This shows that if $n=1$ then 
\begin{itemize}
\item  part \eqref{T:Nef-Mod1} is sharp if $m=1$ and $w=1$, which is also the minimum possible value of $w$ that satisfies the assumptions of part \eqref{T:Nef-Mod1} for $m=1$, since the volume (=degree) of the canonical divisor of a stable curve $C$ is $1$ on any elliptic tail of $C$. 
 \item  part \eqref{T:Nef-Mod2} is sharp if $m=q=1$. 
\end{itemize}
\end{remark}

\begin{remark}\label{R:asympo}
For $m$ very large, the results of Theorem \ref{T:Nef-Mod} are far from being sharp. Indeed, let us define 
\begin{equation}\label{E:s(q)}
s(m):=\sup\{t\in \R_{\geq 0} \: : \: m^{n+1}\lambda_{CM}-t \lambda_m \quad \text{ is nef on } \M_{n,v}\}.
\end{equation}
By applying the Grothendieck-Riemann-Roch formula to the universal family $\pi:\X_{n,v}\to \M_{n,v}$, it can be shown (see \cite[Sec. 2.3]{PX} or \cite[Lemma A.2]{CP}) that 
\begin{equation}\label{E:lambda-asin}
\lambda_m\sim_{\Q} \left(\frac{m^{n+1}}{(n+1)!}-\frac{m^n}{2n!}\right)\lambda_{CM}+P^{n-1}(m) \quad \text{ for all sufficiently divisible } m\in \N, 
\end{equation}
where  $P^{n-1}(m)$ is a polynomial of degree at most $n-1$ in $m$ with coefficients being $\Q$-divisors on $\M_{n,v}$.  The divisibility condition on $m$ is needed to guarantee that the relative canonical bundle $mK_{\pi}$ is Cartier, and we do not know if it is really necessary.

Hence, from the above asymptotic formula for $\lambda_m$, it follows that 
\begin{equation}\label{E:nef-asin}
s(m) \sim \frac{2(n+1)! m}{2m-(n+1)}\sim (n+1)! \quad \text{ as } m\to \infty \; \text{ and } m \text{ is sufficiently divisible}
\end{equation}

However, note that  the asymptotic formula \eqref{E:nef-asin} is not effective while the results of Theorem \ref{T:Nef-Mod}, although asymptotically worse, are however effective.
\end{remark}

Let us define the \emph{lambda Neron-Severi space} $\NS^{\Lambda}_{\Q}(\M_{n,v})$ as the linear subspace of the rational Neron-Severi vector space $\NS_{\Q}(\M_{n,v})$ spanned by the Chow-Mumford line bundle $\lambda_{CM}$ and the classes $\lambda_m$ for any $m\geq 1$, and consider the \emph{lambda nef cone}  $\Nef^{\Lambda}(\M_{n,v})\subset \NS^{\Lambda}_{\Q}(\M_{n,v})$, which is the closed convex cone equal to the intersection of the nef cone $\Nef(\M_{n,v})\subset \NS_{\Q}(\M_{n,v})$ with $\NS^{\Lambda}_{\Q}(\M_{n,v})$.

In the case of the moduli space of stable curves, i.e $n=1$, the following facts are well-known:
\begin{itemize}
\item the space $\NS^{\Lambda}_{\Q}(\M_{n,v})$ is two dimensional: a basis is given by $\lambda_{CM}$ and $\lambda_1$, while the other classes are equal to (see \cite[Chap. XIII, Thm. (7.6)]{GAC2})
$$
\lambda_m=\binom{m}{2}\lambda_{CM}+\lambda_1.
$$ 
\item the two extremal rays of the two dimensional cone $\Nef^{\Lambda}(\M_{n,v})$ are spanned by, respectively, $\lambda_1$, which is semi-ample and gives the map towards the Satake compactification (see \cite[pages 435-437]{GAC2}), and $\lambda_{CM}-\lambda_1$ (which is the class given by either Theorem \ref{T:Nef-Mod}\eqref{T:Nef-Mod1} with $m=w=1$ or Theorem \ref{T:Nef-Mod}\eqref{T:Nef-Mod2} with $m=q=1$), which is semi-ample and it gives the morphism toward the moduli stack of pseudo-stable curves (see \cite[Thm. 1.1]{HH}).

\end{itemize}

Let us conclude this section with the following two questions.

\begin{question}\label{Q:dim}
Is the dimension of $\NS^{\Lambda}_{\Q}(\M_{n,v})$ equal to $n+1$?
\end{question}

Note that, if relative canonical divisor $K_{\pi}$ of the universal family $\pi:\X_{n,v}\to \M_{n,v}$ is Cartier, then formula \eqref{E:lambda-asin} implies that $\NS^{\Lambda}_{\Q}(\M_{n,v})$ has dimension at most  $n+1$. 

\begin{question}
For which values of $n$ and $v$, do the classes $\lambda_m$ and the classes from Theorem \ref{T:Nef-Mod} give all extremal rays of $\Nef^{\Lambda}(\M_{n,v})$?
\end{question}

\section{Fano varieties}\label{Sec:Fano}

\subsection{Slope inequalities for families of K-stable and K-polystable log-Fano pairs}\label{sec:slopeFano}

In this subsection we prove some slope inequalities for families of Fano variety. We will need some stability assumption on the general member of the family.

We refer to \cite{CP,Xu_Zhuang}, to the survey \cite{survey} and the recent breakthrough \cite{LXZ} for background results about K-stability, and a comprehensive bibliography, here we recall just some notations and properties.

\begin{setup}\label{N:Kstab}
Let  $f\colon X \to T$ be a fibration from a normal projective irreducible variety $X$ of dimension $n+1$ to  a smooth projective irreducible curve $T$. 

Let $\Delta$ be a divisor on $X$ such that $-K_{X/T}-\Delta$ is $\Q$-Cartier and $f$-ample. We say that $f$ is a \emph{$\Q$-Gorenstein family of anti-canonically polarized pairs}  of dimension $n$. We denote by $(X_t,\Delta_t)$ the fiber of $f$ over $t\in T$, and we denote by  
 $v:=(-K_{X_t}-\Delta_{X_t})^n=((-K_{X/T}-\Delta)_{|X_t})^n$ the relative volume of $f$.

If the generic fiber of $f$ has klt singularities, then $f$ is a \emph{generic $\Q$-Gorenstein family of log-Fano pairs}. If all fibers of $f$ has klt singularities, then $f$ is a \emph{$\Q$-Gorenstein family of log-Fano pairs}. 

The Chow-Mumford (CM) $\Q$-divisor on $T$ is defined (up to $\Q$-linear equivalence) as
$$\lambda_{CM}(X/T):=-f_*(-K_{X/T}-\Delta)^{n+1}.$$

For a log-Fano pair $(F,\Delta_F)$, we denote by $\delta(F,\Delta_F)$ its stability threshold. A log-Fano pair is K-semistable if $\delta(F,\Delta_F)\geq 1$, it is K-stable if $\delta(F,\Delta_F)>1$. Both K-semistability and K-stability are open properties in families.

K-semistability implies that the pair is klt, so a $\Q$-Gorenstein family of anti-canonically polarized pairs with K-semistable generic fiber is a generic $\Q$-Gorenstein family of log-Fano pairs.
 
If the generic fiber of $f:X\to T$ is K-semistable, then we have
$$\deg \lambda_{CM}(X/T)=-(-K_{X/T}-\Delta)^{n+1}\geq 0.$$

\end{setup}

\begin{remark}
A K-semistable log-Fano pair, contrary to a KSB stable pair, is always klt and hence normal. Therefore, a family of anti-canonically polarized pairs with reduced fibers and K-semistable generic fiber has automatically normal total space. In other words, assuming that $X$ is normal, we are not ruling out any $\Q$-Gorenstein family of K-semistable log-Fano pairs. 
\end{remark}

\begin{remark}[Negativity in the Fano case]\label{rem:neg} Observe that, by \cite[Theorem A.13]{CPZ}, if $-K_{X/T}-\Delta$ is nef, then $f$ is locally \'{e}tale isotrivial. Combining this result with Lemma \ref{L:Lnef}\eqref{L:Lnef1}, if $f$ is not locally \'{e}tale isotrivial, then $f_*\O_X(m(-K_{X/T}-\Delta))$ is \emph{not} nef for all $m$ sufficiently divisible. This means that we do not expect to apply our results to $-K_{X/T}-\Delta$. 
However, assuming K-stability of the general fiber, we can prove some slope inequalities for convenient line bundles. The coefficients in these inequalities involve the delta invariant $\delta(F,\Delta_F)$ of $(F,\Delta_F)$, which  provides a quantitative description of the negativity of $f_*\O_X(m(-K_{X/T}-\Delta))$. 
\end{remark}

\begin{theorem}\label{T:F_slope1}
Let $f$ be a $\Q$-Gorenstein family of anti-canonically polarized pairs as in the set-up \ref{N:Kstab}, assume that a geometric fiber $(F,\Delta_F)$ is K-stable, i.e. $\delta:=\delta(F,\Delta_F)>1$. 

For any rational number $C>1$ consider the $\Q$-Cartier $\Q$-divisor on $X$
\begin{equation}\label{E:HC}
H_C:=-K_{X/T}-\Delta+C\frac{\delta}{(\delta-1)v(n+1)}f^* \lambda_{CM}(X/T).
\end{equation}

\begin{enumerate}
\item \label{T:F_slope1i} Let $q\geq \frac{1}{C-1}$ be a positive integer such that $qH_C$ is Cartier. Then  
$$q^{n+1}H_C^{n+1}\geq \deg f_*\mathcal{O}_X(qH_C).$$

 \item  \label{T:F_slope1ii} Let $q\geq \frac{1}{C-1}$ be a positive integer such that $qH_C$ is  Cartier and  $-q(K_F+\Delta_F)$ gives a generically finite map. Then
 $$q^{n+1}H_C^{n+1}\geq 2\frac{h^0(F,q(-K_{F}-\Delta_F))-n}{h^0(F,q(-K_{F}-\Delta_F))}\deg f_*\O_X(qH_C).$$
 
  \item \label{T:F_slope1iii} Let $q\geq \frac{1}{C-1}$ be a positive integer such that $qH_C$ is Cartier and $-q(K_{F}+\Delta_F)$ is globally generated. Then
	$$q^{n+1}H_C^{n+1}\geq 2\frac{q^nv }{q^nv +n} \deg f_*\O_X(qH_C)  \,.$$
\end{enumerate}
\end{theorem}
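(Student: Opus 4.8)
The plan is to deduce all three inequalities from the slope inequalities of Section~\ref{Sec:slope}, applied to the $\Q$-Cartier $\Q$-divisor $L:=qH_C$, once we know that both $L$ and $f_*\O_X(L)$ are nef. Writing $c:=\frac{C\delta}{(\delta-1)v(n+1)}$, so that $H_C=-K_{X/T}-\Delta+c\,f^*\lambda_{CM}(X/T)$ as in \eqref{E:HC}, I would first note that $f^*\lambda_{CM}(X/T)$ is numerically trivial on the fibers of $f$; hence $L_F=-q(K_F+\Delta_F)$, which is ample (the fiber being log-Fano), in particular Cartier (as $qH_C$ is Cartier) and big, and $\phi_{L_F}=\phi_{-q(K_F+\Delta_F)}$. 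Since $-K_F$ is big we have $\kappa(F)=-\infty$, so we always land in the ``otherwise'' branch of Corollaries~\ref{C:Xiao-high1} and~\ref{C:Xiao-high2}. Granting the two nefness statements, part~\eqref{T:F_slope1i} is Theorem~\ref{T:Barja}\eqref{T:Barja1} with the integer there taken to be $1$ (using that $L_F$ is Cartier and big); part~\eqref{T:F_slope1ii} is Corollary~\ref{C:Xiao-high1} (using that $\phi_{L_F}$ is generically finite and $h^0(F,L_F)=h^0(F,-q(K_F+\Delta_F))$); and part~\eqref{T:F_slope1iii} is Corollary~\ref{C:Xiao-high2} (using that $L_F$ is globally generated and $L_F^n=q^nv$).

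The whole difficulty is therefore concentrated in the nefness of $f_*\O_X(qH_C)$ and of $qH_C$. For the push-forward I would use the projection formula
\[
f_*\O_X(qH_C)\ \cong\ f_*\O_X\!\big(q(-K_{X/T}-\Delta)\big)\otimes\O_T\big(qc\,\lambda_{CM}(X/T)\big),
\]
so that tensoring by this ($\Q$-)line bundle on $T$ shifts every Harder--Narasimhan slope by $qc\deg\lambda_{CM}(X/T)$. By the nefness criterion \eqref{E:Nef-HN}, nefness of the left-hand side is equivalent to
\[
\mu_-\big(f_*\O_X(q(-K_{X/T}-\Delta))\big)\ \ge\ -\,qc\,\deg\lambda_{CM}(X/T),
\]
where $\deg\lambda_{CM}(X/T)=-(-K_{X/T}-\Delta)^{n+1}\ge 0$ by K-semistability. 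The hypotheses $C>1$ and $q\ge\frac{1}{C-1}$ enter only here: they are equivalent to $qC\ge q+1$, so (as $\deg\lambda_{CM}(X/T)\ge0$) it suffices to prove the sharper, intrinsic estimate
\[
\mu_-\big(f_*\O_X(q(-K_{X/T}-\Delta))\big)\ \ge\ -(q+1)\,\frac{\delta}{(\delta-1)\,v\,(n+1)}\,\deg\lambda_{CM}(X/T).
\]

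This last estimate is the heart of the matter and the only place where K-stability is used; I expect it to be the main obstacle. The idea is that the negativity of the minimal Harder--Narasimhan slope of $f_*\O_X(q(-K_{X/T}-\Delta))$ is governed by the stability threshold $\delta(F,\Delta_F)$: a quotient of small slope yields a filtration of the fiberwise anti-log-canonical section ring, whose associated basis-type invariant is controlled by $\delta$, and turning this into a numerical inequality (as in the circle of ideas behind the projectivity of K-moduli, cf.\ \cite{CP,CP2,Posva,Xu_Zhuang}) produces a bound of exactly the displayed shape. The delicate point is to track the normalization so that the resulting coefficient is precisely $\frac{\delta}{(\delta-1)v(n+1)}$, which is what makes the factor $C\frac{\delta}{(\delta-1)v(n+1)}$ in \eqref{E:HC} the correct one; the factor $\frac{\delta}{\delta-1}=1+\frac{1}{\delta-1}$ quantifies the distance from destabilization and blows up as $\delta\to 1^+$.

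Finally, to obtain the nefness of $L=qH_C$ itself I would run the same slope estimate for every admissible integer $q'\ge\frac{1}{C-1}$ with $q'H_C$ Cartier, concluding that $f_*\O_X(q'H_C)$ is nef for all such $q'$. Choosing $q'$ large and sufficiently divisible, $q'H_C$ is moreover $f$-globally generated (being $f$-ample), so Lemma~\ref{L:Lnef}\eqref{L:Lnef1} applies and shows that $q'H_C$, and hence $H_C$, is nef; therefore $qH_C$ is nef for the specific $q$ of each item. With both $qH_C$ and $f_*\O_X(qH_C)$ nef, the inequalities of Section~\ref{Sec:slope} recalled in the first paragraph apply and yield \eqref{T:F_slope1i}, \eqref{T:F_slope1ii} and \eqref{T:F_slope1iii}.
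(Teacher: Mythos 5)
Your endgame is exactly the paper's: apply Theorem \ref{T:Barja}\eqref{T:Barja1}, Corollary \ref{C:Xiao-high1} and Corollary \ref{C:Xiao-high2} to $L=qH_C$, using that $f^*\lambda_{CM}(X/T)$ is fiberwise trivial so $L_F=-q(K_F+\Delta_F)$ and that one always lands in the ``otherwise'' branch. The problem is that everything hinges on the nefness of $H_C$ and of $f_*\O_X(qH_C)$, and you prove neither: you correctly isolate the lower bound on $\mu_-\big(f_*\O_X(q(-K_{X/T}-\Delta))\big)$ as ``the heart of the matter'' and then only gesture at ``the circle of ideas behind the projectivity of K-moduli''. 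That is precisely the step that cannot be left heuristic, since the whole content of the theorem is the exact constant $\frac{\delta}{(\delta-1)v(n+1)}$. The paper closes this gap by citing two specific results: the nefness of $-K_{X/T}-\Delta+\frac{\delta}{(\delta-1)v(n+1)}f^*\lambda_{CM}(X/T)$ when the general fiber is K-stable with threshold $\delta$ (\cite[Thm 1.20]{CP}, also \cite[Prop. 4.9]{Xu_Zhuang} or \cite[Thm 1.3]{CP2}), which immediately gives nefness of $H_C$ because $C>1$ and $\lambda_{CM}(X/T)$ is nef; and the semipositivity statement \cite[Prop. 6.4]{CP}, which yields nefness of $f_*\O_X(D)$ for a Cartier divisor $D$ such that $D-K_{X/T}-\Delta$ is nef and $f$-ample. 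The hypothesis $q\ge\frac{1}{C-1}$ enters exactly where you guessed but in a different guise: one writes
\[
qH_C-K_{X/T}-\Delta=(q+1)\Big[-K_{X/T}-\Delta+\tfrac{qC}{q+1}\,\tfrac{\delta}{(\delta-1)v(n+1)}\,f^*\lambda_{CM}(X/T)\Big],
\]
and $qC\ge q+1$ guarantees that the bracketed divisor is again nef by \cite[Thm 1.20]{CP}, while it is $f$-ample because $-K_{X/T}-\Delta$ is; so \cite[Prop. 6.4]{CP} applies to $D=qH_C$. Without importing (or reproving) these two inputs, your argument does not get off the ground.

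Two smaller points. Your projection-formula identity $f_*\O_X(qH_C)\cong f_*\O_X(q(-K_{X/T}-\Delta))\otimes\O_T(qc\,\lambda_{CM}(X/T))$ requires $qc\,\lambda_{CM}(X/T)$ to be integral; the paper flags this divisibility issue explicitly right after the statement, and for general $q$ one should work with $\O_X(qH_C)$ itself rather than with the twisted anti-pluricanonical pushforward. Also, your route to the nefness of $qH_C$ via Lemma \ref{L:Lnef}\eqref{L:Lnef1} applied to a large divisible multiple is a legitimate alternative to citing \cite[Thm 1.20]{CP} for $H_C$ directly, but it presupposes the pushforward nefness, so it does not shrink the gap.
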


Before giving the proof, let us remark that one could rephrase the above results as inequalities between $(-K_{X/T}-\Delta)^{n+1}$ and  $\deg f_*\O_X(-q(K_{X/T}+\Delta))$, at least if $q$ is sufficiently divisible,  using the following formulas
$$
\begin{sis}
& q^{n+1}H_C^{n+1}=-q^{n+1}(-K_{X/T}-\Delta)^{n+1}\frac{\delta(C-1)+1}{\delta-1}, \\
& \deg f_*\mathcal{O}_X(qH_C)=\deg f_*\O_X(-q(K_{X/T}+\Delta))-C\frac{qh^0(-q(K_{F} + \Delta_F))}{v(n+1)}\frac{\delta}{\delta-1}(-K_{X/T}-\Delta)^{n+1},
\end{sis}
$$
where in the second formula we assumed that $q$ is sufficiently divisible so that $qC\frac{\delta}{(\delta-1)v(n+1)} \lambda_{CM}(X/T)$ is integral (and hence Cartier since $T$ is smooth).
\begin{remark}[Stability threshold in families] The stability threshold of the fiber is a lower-semicontinuous function on the base. A priori, it can take countably many values. This means that, at least if the base field is uncountable, the maximum value of $\delta$ (which gives also the best slope inequality) is obtained taking a very general fiber. If the base field is countable, to obtain the best slope inequality, we can make a base field extension and then choose a very general fiber over the greater field. The slope inequality obtained in this way holds true also for the family over the original field. The recent result \cite[Corollary 3.7]{LXZ}  shows that the minimum between the stability threshold and $(n+1)/n$ is constructible for families over a normal base, hence it attains a minimum.
\end{remark}

\begin{proof}
The $\Q$-divisor $H_C$ is nef by \cite[Thm 1.20]{CP} (or  \cite[Prop. 4.9]{Xu_Zhuang}, or \cite[Thm 1.3]{CP2}) and the fact that $\lambda_{CM}(X/T)$ is nef. 
Moreover, from \cite[Prop. 6.4]{CP} we infer that $f_*\mathcal{O}_X(qH_C)$ is nef since $qH_C$ is Cartier and the divisor 
$$
\begin{aligned}
qH_C-K_{X/T}-\Delta&=(q+1)(-K_{X/T}-\Delta)+qC\frac{\delta}{(\delta-1)v(n+1)}f^* \lambda_{CM}(X/T)=\\
&=(q+1)\left[-K_{X/T}-\Delta+\frac{qC}{q+1} \frac{\delta}{(\delta-1)v(n+1)}f^* \lambda_{CM}(X/T)\right]\\
\end{aligned}
$$
is $f$-ample (because $-K_{X/T}-\Delta$ is $f$-ample) and nef by \cite[Thm 1.20]{CP} (or  \cite[Prop. 4.9]{Xu_Zhuang}, or \cite[Thm 1.3]{CP2}), using the assumption that $q\geq \frac{1}{C-1}$ and the fact that $\lambda_{CM}(X/T)$ is nef.

Hence, by applying Theorems \ref{T:Barja}\eqref{T:Barja1}, Corollary \ref{C:Xiao-high1} and Corollary \ref{C:Xiao-high2} to $f:X\to T$ with $L=qH$ we obtain, respectively, the  inequalities in \eqref{T:F_slope1i}, \eqref{T:F_slope1ii} and \eqref{T:F_slope1iii}. 
\end{proof}

\begin{remark}
We do not know if items \eqref{T:F_slope1ii} and \eqref{T:F_slope1iii}  of Theorem \ref{T:F_slope1} holds without the assumption that $qH_C$ is Cartier, and if we can replace in item \eqref{T:F_slope1i} the assumption that $qH_C$ is Cartier with the weaker assumption that $q(-K_F-\Delta_F)$ is Cartier or with the alternative assumption that $-q(K_F+\Delta_F)$ gives a generically finite map. We do use the assumption that  $qH_C$ is Cartier when we apply  \cite[Prop. 6.4]{CP}.
\end{remark}

Theorem \ref{T:F_slope1} can be extended to the case when there is a geometric fiber $(F,\Delta_F)$ which is only K-polystable, provided that we slightly modify the family, as we know briefly indicate.  

The stability threshold of a K-polystable log-Fano pairs is always equal  to one. Given a maximal torus $\mathbb{T}$ in $\Aut(F,\Delta_F)$, we can however introduce a twisted stability threshold  $\delta_{\mathbb{T}}(F,\Delta_F)$, so that $(F,\Delta_F)$ is K-polystable if and only if $\delta_{\mathbb{T}}(F,\Delta_F)>1$, see \cite[Appendix A]{Xu_Zhuang}  (in loc. cit. the authors speak about reduced uniform K-stability, which by now is known to be equivalent to K-polystability, see \cite{LXZ}). K-polystable log-Fano pairs have a reductive automorphism group, so all maximal tori are conjugate and, in for K-polystable log-Fano pair, the twisted stability threshold does not depend on the choice of $\mathbb{T}$.

\begin{theorem}\label{T:F_slope2}
Keeping the notation as in Theorem \ref{T:F_slope1}, assuming that a geometric fiber $(F,\Delta_F)$ is K-polystable rather than K-stable. Let $\mathbb{T}$ be a maximal torus in $\Aut(F,\Delta)$, and assume that $\mathbb{T}$ acts fiberwise on $(X,\Delta)$. Replacing the stability threshold $\delta$ with the twisted stability threshold $\delta_{\mathbb{T}}:=\delta_{\mathbb{T}}(F,\Delta_F)$, the same inequalities of Theorem \ref{T:F_slope1} hold up to a base change, and up to  a birational modification of $X$ which does not modify $(F,\Delta_F)$. 

\end{theorem}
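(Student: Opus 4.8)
The plan is to reduce the K-polystable case to the already-established K-stable case of Theorem \ref{T:F_slope1} by passing to a $\mathbb{T}$-equivariant modification of the family. The obstruction to invoking Theorem \ref{T:F_slope1} directly is that for a K-polystable pair one has $\delta(F,\Delta_F)=1$, so the coefficient $\frac{\delta}{(\delta-1)v(n+1)}$ appearing in \eqref{E:HC} is undefined; the remedy is to replace $\delta$ by the twisted stability threshold $\delta_{\mathbb{T}}$, which satisfies $\delta_{\mathbb{T}}>1$ precisely because $(F,\Delta_F)$ is K-polystable, equivalently reduced uniformly K-stable (see \cite[Appendix A]{Xu_Zhuang} and \cite{YXZ}). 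Thus throughout I would work with the divisor $H_C$ of \eqref{E:HC} in which $\delta$ is systematically replaced by $\delta_{\mathbb{T}}$.

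First I would isolate the twisted positivity inputs. The proof of Theorem \ref{T:F_slope1} rests on exactly two facts: the nefness of $H_C$ and the nefness of $f_*\O_X(qH_C)$, both deduced from the positivity of the CM line bundle in \cite{CP,CP2,Xu_Zhuang}. In the K-polystable setting these positivity statements fail verbatim—the degeneracy being accounted for by the positive-dimensional automorphism group, cf.\ \cite{CPZ}—but they hold in a \emph{twisted} form controlled by $\delta_{\mathbb{T}}$: the relevant Xu--Zhuang bound (\cite[Prop.\ 4.9 and Appendix A]{Xu_Zhuang}) estimates the slope of the family from below in terms of $\delta_{\mathbb{T}}$, provided one works equivariantly with respect to the given fiberwise action of $\mathbb{T}\subseteq\Aut(F,\Delta_F)$.

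Next I would carry out the geometric modification that realizes this twist. Using the fiberwise $\mathbb{T}$-action, after a finite base change $T'\to T$—needed to split the relevant $\mathbb{T}$-torsor and to make the optimizing cocharacter integral—I would construct a $\mathbb{T}$-equivariant birational modification of $X\times_T T'$ that is an isomorphism over the generic point of $T'$, so that the general fiber $(F,\Delta_F)$ is unchanged, and on which the twisted CM divisor and the corresponding $H_C$ (defined with $\delta_{\mathbb{T}}$) acquire the nefness furnished by the twisted positivity results. The construction only modifies the special fibers in the directions of the torus action, optimally degenerating them; this is the geometric content of the twist, and Example \ref{ex:twist} shows that such a modification is genuinely necessary. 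On the resulting family $f'\colon(X',\Delta')\to T'$ I would then run the proof of Theorem \ref{T:F_slope1} verbatim, with $\delta_{\mathbb{T}}$ in place of $\delta$, applying Theorem \ref{T:Barja}\eqref{T:Barja1}, Corollary \ref{C:Xiao-high1} and Corollary \ref{C:Xiao-high2} to $L=qH_C$; since the general fiber, and hence $v$, the number $h^0(F,-q(K_F+\Delta_F))$, and the Cartier and global-generation hypotheses, are all preserved, the inequalities obtained for $f'$ are the stated inequalities for the original family.

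The main obstacle I anticipate is the construction and bookkeeping of the equivariant birational modification: one must simultaneously guarantee that it is an isomorphism over the generic fiber (so that the fiber invariants are untouched), that it is $\mathbb{T}$-compatible in the precise sense that makes $\delta_{\mathbb{T}}$ the governing threshold of the twisted CM positivity, and that neither the base change $T'\to T$ nor the modification perturbs the numerical quantities entering the slope inequality. Verifying that the twisted positivity statements of \cite{Xu_Zhuang} apply to the modified family—rather than only to an abstract test configuration—is the delicate technical point, and is where the hypothesis that $\mathbb{T}$ acts fiberwise on $(X,\Delta)$ is essential.
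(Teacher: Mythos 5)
Your proposal is correct and follows essentially the same route as the paper: replace $\delta$ by $\delta_{\mathbb{T}}$, perform a finite base change to make the optimizing cocharacter integral, pass to the twisted family, obtain nefness of $H_C$ from \cite[Prop.~4.9 and Appendix A]{Xu_Zhuang}, and then run the proof of Theorem \ref{T:F_slope1} verbatim. The only imprecision is your description of the modification as ``optimally degenerating the special fibers'': the relevant construction is the twist of \cite[Section 5]{Xu_Zhuang}, which re-glues the family along a one-parameter subgroup $\xi$ of $\mathbb{T}$ and is Zariski-locally isomorphic to the original family, so no fiber is degenerated.
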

\begin{proof}

Following  \cite[Section 5]{Xu_Zhuang}, for every one parameter subgroup $\xi$ of $\mathbb{T}$, we can define a new family $f_{\xi}:(X_{\xi},\Delta_{\xi})\to T$, which is Zariski locally isomorphic to  $f:(X,\Delta)\to T$, and it is called the twist of $(X,\Delta)$ by $\xi$.  In particular, $qH_C$ remains Cartier on the new family.

From \cite[Remark A.2, Proposition 4.5 and its proof]{Xu_Zhuang}, after a base change, we can see that there exists a $\xi$ such that the Harder-Narasimhan filtration of the family twisted by $\xi$ has non-negative $\beta_{\delta_{\mathbb{T}}}$ invariant (we refer to \cite{Xu_Zhuang} for the definition of this invariant). We can now start arguing as in the proof of Theorem \ref{T:F_slope1}. The nefness of $H_C$, with $\delta$ replaced by $\delta_{\mathbb{T}}$ and the original family replaced by the twisted one, is now guaranteed by \cite[Proposition 4.9]{Xu_Zhuang} rather than \cite[Theorem 1.20]{CP}. From now on, the argument is verbatim as in the proof of Theorem \ref{T:F_slope1}.
\end{proof}

If $\mathbb{T}$ acts only on $(F,\Delta_F)$ but not on $(X,\Delta)$, arguing similarly to \cite[Thm. 6]{LiXu}, one can make a base change and a birational modification to construct a new family as in the set-up \ref{N:Kstab} where $\mathbb{T}$ acts on all fibers. After these operations, $qH_C$ remains $\Q$-Cartier but might stop being Cartier, so one might have to increase $q$ before start arguing as in the proof of Theorem \ref{T:F_slope1}.

The following example shows that the conclusion of Theorem \ref{T:F_slope1} may fail if $(F,\Delta_F)$ is only K-polystable but not K-stable, and that indeed  the birational modification in Theorem \ref{T:F_slope2} is sometimes unavoidable.

\begin{example}\label{ex:twist}
Consider the Hirzebruch surface $\mathbb{F}_e$ (for $e\geq 0$), with its natural projection $f_e:\bF_e\to \P^1$ (and take $\Delta=0$). The fibers of this family are isomorphic to $\P^1$, which is K-polystable but not K-stable.

For every $e\geq 0$, we have 
$$\deg \lambda_{CM}(\bF_e/\P^1)=-(-K_{\bF_e/\P^1})^{2}=0.$$ 
In particular, for any rational $C>1$ the $\Q$-divisor $H_C$ of \eqref{E:HC} is equal to $-K_{\bF_e/\P^1}$. 

If $e=0$, i.e. $\bF_0$ is the trivial family $\P^1\times \P^1$, then  $\deg f_*\O_X(-qK_{\bF_e/\P^1})=0$  for every $q\geq 1$, and hence Theorem \ref{T:F_slope2} holds true without the need of a twist.
 
On the other hand, if $e\geq 1$,  the degree of the locally free sheaf $f_*\O_X(-qK_{\bF_e/\P^1})$ is strictly greater than zero for every $q\geq 1$ (more precisely, fixed $q$, it tends to infinity when $e$ grows). This shows that the twist in Theorem \ref{T:F_slope2} is in this case necessary, and indeed $f_0:\mathbb{F}_0\to \P^1$ is a twist of $f_e:\mathbb{F}_e\to \P^1$ for every $e\geq 1$, see \cite[Example 5.2]{Xu_Zhuang}.
\end{example}

\subsection{Application to the moduli space of K-semistable Fano varieties}\label{sec:Fanomoduli}

Let $\mathcal{M}_{n,v}^K$ be the algebraic stack of K-semistable Fano varieties with dimension $n$ and volume $v$. Note that $\M_{n,v}^K$ is an Artin stack of finite type. Let $\pi \colon \mathcal{X}_{n,v}\to \mathcal{M}_{n,v}^K$ be the universal family. Define the Chow-Mumford (CM) $\Q$-divisor  
(well-defined up to $\Q$-linear equivalence) 
$$\lambda_{CM}=-\pi_*(-K_{\mathcal{X}_{n,v}/\M_{n,v}^K })^{n+1}.$$
This $\Q$-divisor is $\Q$-Cartier and nef by \cite[Theorem 1.1(a)]{CP}.

Let $\mathcal{M}_{n,v}^{K,s}$ the open Deligne-Mumford substack of $\mathcal{M}^K_{n,v}$ parametrizing K-stable Fano varieties.
The stability threshold is a lower semi-continuous function, strictly greater than $1$ on $\mathcal{M}_{n,v}^{K,s}$, which in principle can assume countably many values. We can however consider as invariant the minimum between the stability threshold and $(n+1)/n$. This second invariant is lower semi-continuous  and constructible for the Zariski topology by \cite[Corollary 3.7]{LXZ}, and we call $\delta_{n,v}>1$ its minimum on $\mathcal{M}_{n,v}^{K,s}$ (we apply \cite[Corollary 3.7]{LXZ} to the normalisation of an atlas of $\mathcal{M}_{n,v}^{K,s}$).  

\begin{theorem}\label{thm:Fano_moduli}
With the above notations, let $T$ be a normal projective curve in $\mathcal{M}_{n,v}$  intersecting $\mathcal{M}_{n,v}^{K,s}$. 
For any rational number $C>1$ consider the $\Q$-Cartier $\Q$-divisor on $\X_{n,v}$
$$
\H_C:=-K_{\X_{n,v}/\M_{n,v}^K}+C\frac{\delta_{n,v}}{(\delta_{n,v}-1)v(n+1)}\pi^* \lambda_{CM}.
$$

\begin{enumerate}
\item \label{thm:Fano_moduli1} Let $q\geq \frac{1}{C-1}$ be a positive integer such that $q\H_C$ is Cartier. Then  
$$q^{n+1}\pi_*(\H_C^{n+1})- c_1 \pi_*\mathcal{O}_{\X_{n,v}}(q\H_C)$$
  intersects non-negatively $T$.

  \item \label{thm:Fano_moduli2} Let $q\geq \frac{1}{C-1}$ be a positive integer such that $q\H_C$ is Cartier and $-qK_{F}$ is globally generated for any $F\in  \mathcal{M}_{n,v}^{K,s}$. Then
	$$q^{n+1}\pi_*(\H_C^{n+1})- 2\frac{q^nv }{q^nv +n} c_1 \pi_*\O_{\X_{n,v}}(q\H_C)$$
	  intersects non-negatively $T$. 
\end{enumerate}

 
\end{theorem}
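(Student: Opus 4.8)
The plan is to restrict the universal family to the curve $T$ and then invoke Theorem~\ref{T:F_slope1}, after reabsorbing the passage from the uniform threshold $\delta_{n,v}$ to the actual stability threshold of the general fibre into a harmless rescaling of the constant $C$. First I would let $f\colon X\to T$ be the restriction of $\pi\colon \X_{n,v}\to \M_{n,v}^K$ along $T\hookrightarrow \M_{n,v}^K$ (with boundary $\Delta=0$). Since $T$ meets $\mathcal{M}_{n,v}^{K,s}$ and K-stability is an open condition, the general fibre $F$ is K-stable, so $\delta:=\delta(F)\ge \delta_{n,v}>1$; because K-semistable Fano varieties are klt and hence normal, while the fibres of $f$ are reduced, the total space $X$ is normal and $f$ is a $\Q$-Gorenstein family of log-Fano pairs in the sense of Setup~\ref{N:Kstab}, with $-K_{X/T}$ being $f$-ample. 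By compatibility of the CM class with base change one has $(\pi^*\lambda_{CM})|_X = f^*\lambda_{CM}(X/T)$, so that the restriction of $\H_C$ to $X$ equals $-K_{X/T}+C\frac{\delta_{n,v}}{(\delta_{n,v}-1)v(n+1)}f^*\lambda_{CM}(X/T)$.

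The key observation is that, writing $g(\delta)=\frac{\delta}{\delta-1}=1+\frac{1}{\delta-1}$, the function $g$ is decreasing, so $g(\delta_{n,v})\ge g(\delta)$ and the rational number $C':=C\,g(\delta_{n,v})/g(\delta)$ satisfies $C'\ge C>1$. With this choice, $\H_C|_X$ is exactly the divisor $H_{C'}$ attached to $f$ and to its genuine threshold $\delta$ via formula~\eqref{E:HC}, since $C'\frac{\delta}{\delta-1}=C\frac{\delta_{n,v}}{\delta_{n,v}-1}$. Moreover $q\ge \frac{1}{C-1}\ge \frac{1}{C'-1}$ and $qH_{C'}=q\,\H_C|_X$ is Cartier by hypothesis. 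Hence I can apply Theorem~\ref{T:F_slope1}\eqref{T:F_slope1i} to obtain $q^{n+1}H_{C'}^{n+1}\ge \deg f_*\O_X(qH_{C'})$, and, under the extra assumption that $-qK_F=-q(K_F+\Delta_F)$ is globally generated for all K-stable fibres, Theorem~\ref{T:F_slope1}\eqref{T:F_slope1iii} to obtain $q^{n+1}H_{C'}^{n+1}\ge 2\frac{q^nv}{q^nv+n}\deg f_*\O_X(qH_{C'})$.

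Finally I would translate these inequalities back into intersection numbers on the stack. By flat base change and the projection formula for the pushforward of cycles, $\big(\pi_*(\H_C^{n+1})\big)\cdot T=(\H_C|_X)^{n+1}=H_{C'}^{n+1}$ and $\big(c_1\pi_*\O_{\X_{n,v}}(q\H_C)\big)\cdot T=\deg f_*\O_X(q\,\H_C|_X)=\deg f_*\O_X(qH_{C'})$. Therefore the two displayed divisors intersect $T$ in exactly $q^{n+1}H_{C'}^{n+1}-\deg f_*\O_X(qH_{C'})\ge 0$ and $q^{n+1}H_{C'}^{n+1}-2\frac{q^nv}{q^nv+n}\deg f_*\O_X(qH_{C'})\ge 0$ respectively, which are the assertions \eqref{thm:Fano_moduli1} and \eqref{thm:Fano_moduli2}.

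The only genuinely delicate points are checking that the restricted family actually satisfies every hypothesis of Setup~\ref{N:Kstab} (normality of $X$ and $\Q$-Cartierness together with $f$-ampleness of $-K_{X/T}$, which rely on the klt-ness of K-semistable members and on the properties of the universal family) and the base-change compatibility of $\lambda_{CM}$ used to identify $(\pi^*\lambda_{CM})|_X$ with $f^*\lambda_{CM}(X/T)$. What might at first look like the main obstacle, namely that $\H_C$ is built from the global invariant $\delta_{n,v}$ rather than from the fibrewise $\delta(F)$ required by Theorem~\ref{T:F_slope1}, dissolves entirely into the monotonicity of $g$ and the rescaling $C\mapsto C'$, so that no re-derivation of the nefness statements of \cite{CP,Xu_Zhuang} is needed.
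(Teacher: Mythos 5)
Your proof is correct and follows essentially the same route as the paper: restrict the universal family to $T$, verify that the restricted family satisfies Set-up \ref{N:Kstab} with $\Delta=0$ (normality of the total space from the klt-ness of K-semistable fibres, K-stability of the general fibre from $T\cap\mathcal{M}_{n,v}^{K,s}\neq\emptyset$), apply Theorem \ref{T:F_slope1}, and translate back to intersection numbers on the stack via base change. The only difference is that you make explicit the rescaling $C\mapsto C'$ needed to pass from the uniform threshold $\delta_{n,v}$ to the fibrewise $\delta(F)\ge\delta_{n,v}$, a point the paper's proof leaves implicit; your monotonicity argument for $g(\delta)=\delta/(\delta-1)$ is exactly the right way to justify it.
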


\begin{proof}
Let $f\colon X\to T$ be the pull-back of the universal family. Note that: $X$ is normal since all the fibers of $f$ are normal (being K-semistable Fano varieties), $-K_{X/T}$ is $\Q$-Cartier and $f$-ample by the Koll\'ar condition on families of K-semistable Fano varieties and a general fiber $F$ is K-stable by the assumption that $T$ 
intersects $\mathcal{M}_{n,v}^{K,s}$.
Hence, the results follows from Theorem \ref{T:F_slope1} (with $\Delta=0$) using that 
\begin{equation}\label{E:res-div-Fano}
\begin{sis}
& \pi_*(\H_C^{n+1})\cdot T=H_C^{n+1} , \\
& c_1 \pi_*\O_{\X_{n,v}}(q\H_C)\cdot T=\deg f_*\O_X(qH_C),
\end{sis}
\end{equation}
In the second equality we have used the base change property $\pi_*\O_{\X_{n,v}}(q\H_C)_{|T}=f_*\O_X(qH_C)$, which holds because all fibers of $\pi$ are Fano varieties with klt singularities and then by Kawamata vanishing we have $R^i \pi_*\O_{\X_{n,v}}(q\H_C)=0$ for all $i>0$.

\end{proof}

The coarse moduli space of the DM stack $\M_{n,v}^{K,s}$ is a quasi-projective variety that it is not  proper in general. However, 
 if  $V\subset \M_{n,v}^{K,s}$ is a proper (and hence projective) closed subscheme, then the Chow-Mumford $\Q$-divisor $\lambda_{CM}$  is ample on $V$ (see \cite[Theorem 1.1]{CP} and \cite[Theorem 1.1]{Xu_Zhuang}), and Theorem \ref{thm:Fano_moduli} provides new nef line bundles on $V$.

It is worth mentioning that $\mathcal{M}^K_{n,v}$ has a projective good moduli space $M_{n,v}^K$, but neither
$\det\left( \pi_*\O_{\mathcal{X}}(q\H_C)  \right)$ nor $\det\left( \pi_*\O_{\mathcal{X}}(-qK_{\mathcal{X}/\mathcal{M}_{n,v}^K})  \right)$ descend, in general, to $\Q$-line bundles on $M_{n,v}^K$;
hence we do not see how to use slope inequalities to construct nef $\Q$-line bundles on $M_{n,v}^K$. On the other hand, the Chow-Mumford  $\Q$-divisor $\lambda_{CM}$ descends to an ample $\Q$-Cartier $\Q$-divisor on $M_{n,v}^K$, see \cite{Xu_Zhuang,LXZ}.

\section{Slope inequalities under other stability conditions}\label{Sec:BS}

The aim of this section is to collect some slope inequalities that are true under GIT or slope (semi)stability conditions. 
These slope inequalities are formulated in term of the following notion of positivity for divisors on the total space of a fibration  over a curve (as in the setup \ref{setup}), which was studied in detail by Barja and Stoppino in a series of papers \cite{BS1,BS2,BS3}.

\begin{definition}\label{D:f-positive}(\cite[Def. 1.3]{BS1})
Assume that we are in the set-up  \ref{setup}. We say that the divisor $L$ is \emph{$f$-positive} if 
$$
L^{n+1}\geq (n+1)\frac{L_F^n}{h^0(F,\lfloor L_F\rfloor)} \deg f_*\O_X(L). 
$$
\end{definition}

The above notion can be slightly rephrased if $f_*\O_X(L)$ has positive degree. More precisely, if  $\deg f_*\O_X(L)\neq 0$, then we define the \emph{slope} of $L$ to be
$$
s(L)=\frac{L^{n+1}}{\deg f_*\O_X(L)}.
$$ 
Then, under the assumption that $\deg f_*\O_X(L)> 0$, we have that 
\begin{equation}\label{E:posBS}
L \text{ is f-positive } \Longleftrightarrow s(L)\geq \BS(L_F):=(n+1)\frac{L_F^n}{h^0(F,\lfloor L_F\rfloor)},
\end{equation}
where $\BS$ stands for the \emph{Barja-Stoppino} invariant of the $\Q$-Cartier $\Q$-divisor $L_F$ on $F$.

Under some positivity assumption on $L_F$, the Barja-Stoppino invariant of $L_F$ admits the following lower bounds, which should be compared with the slope inequalities in Corollaries \ref{C:Xiao-high1} and \ref{C:Xiao-high2}. 
 
\begin{remark}\label{R:ineqBS}
Assume that we are in the setup \ref{setup} and that $L_F$ is nef and with generically finite associated map $\phi_{L_F}$.
\begin{enumerate}
\item \label{R:ineqBS1} We have that
$$
\BS(L_F)\geq (n+1)\frac{L_F^n}{L_F^n+n}\geq (n+1) \frac{h^0(F,\lfloor L_F\rfloor)-n}{h^0(F,\lfloor L_F\rfloor)}\geq (n+1)\frac{h^0(F,\lfloor L_F\rfloor)-n}{L_F^n+n},
$$
with equalities if and only if $L_F^n=h^0(F,\lfloor L_F\rfloor)-n$.

This follows from the fact that  $L_F^n\geq h^0(F,\lfloor L_F\rfloor)-n$, see Corollary \ref{C:Noether1}\eqref{C:Noether1i}.

\item \label{R:ineqBS2}
If, moreover, either $\dim F\geq 2$ and $\kappa(F)\geq 0$ or $\dim F=1$ and $L_F$ is special, then we have that 
$$
\BS(L_F)\geq 2(n+1)\frac{L_F^n}{L_F^n+2n}\geq 2(n+1) \frac{h^0(F,\lfloor L_F\rfloor)-n}{h^0(F,\lfloor L_F\rfloor)}\geq 4(n+1)\frac{h^0(F,\lfloor L_F\rfloor)-n}{L_F^n+2n},
$$
with equalities if and only if $L_F^n=2h^0(F,\lfloor L_F\rfloor)-2n$.

This is a consequence of the inequality  $L_F^n\geq 2h^0(F,\lfloor L_F\rfloor)-2n$, which follows from Corollary \ref{C:Noether1}\eqref{C:Noether1ii} if $\dim F\geq 2$ and from Clifford's theorem if $\dim F=1$.
\end{enumerate}
\end{remark}

If we modify $L$ by the pull-back of more and more positive divisors from the base, then the slope of $L$ will become closer and closer to $\BS(L_F)$, as we now show.

\begin{proposition}\label{P:convBS}
Assume that we are in the setup \ref{setup} and that $\deg f_*\O_X(L)> 0$. Let $A$ be an ample divisor on $T$.
Then the slopes $s(L+qf^*A)$ converge monotonically to $\BS(L_F)$ as $\N\ni q\to +\infty$.
\end{proposition}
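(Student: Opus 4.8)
The plan is to write both the numerator and denominator of $s(L+qf^*A)$ as affine-linear functions of $q$ and then to analyze the resulting M\"obius function. First I would expand the top self-intersection. Since $T$ is a curve we have $A^2=0$, hence $(f^*A)^2=f^*(A^2)=0$, so all terms of the binomial expansion beyond the first two vanish:
\[
(L+qf^*A)^{n+1}=L^{n+1}+(n+1)q\,L^n\cdot f^*A.
\]
Numerically on $T$ one has $A\equiv(\deg A)[\mathrm{pt}]$, so $f^*A\equiv(\deg A)[F]$ for a general fiber $F$, and therefore $L^n\cdot f^*A=(\deg A)\,L^n\cdot F=(\deg A)\,L_F^n$, using that $L$ restricts to $L_F$ on $F$. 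Thus the numerator equals $L^{n+1}+(n+1)(\deg A)L_F^n\,q$.

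Next I would compute the denominator. As $A$ is a Cartier divisor on the smooth curve $T$, the divisor $qf^*A$ is integral and Cartier, so $\lfloor L+qf^*A\rfloor=\lfloor L\rfloor+qf^*A$ and consequently $\O_X(L+qf^*A)=\O_X(L)\otimes f^*\O_T(qA)$ (both are reflexive and agree in codimension one). The projection formula then gives $f_*\O_X(L+qf^*A)=f_*\O_X(L)\otimes\O_T(qA)$, a twist of the locally free sheaf $f_*\O_X(L)$, whose rank is $r:=h^0(F,\lfloor L_F\rfloor)$ by \eqref{E:rk-push} together with Lemma \ref{L:Weil}. Hence
\[
\deg f_*\O_X(L+qf^*A)=\deg f_*\O_X(L)+h^0(F,\lfloor L_F\rfloor)(\deg A)\,q.
\]
This identification of the reflexive sheaf $\O_X(L+qf^*A)$ is the only point that requires care, and it is unproblematic precisely because $qf^*A$ is an integral Cartier divisor; there is no genuine obstacle in the argument.

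Finally I would set $a=L^{n+1}$, $b=(n+1)(\deg A)L_F^n$, $c=\deg f_*\O_X(L)$ and $d=h^0(F,\lfloor L_F\rfloor)\deg A$. Both $c$ and $d$ are positive, since $\deg f_*\O_X(L)>0$ by hypothesis, $r\geq 1$ because $f_*\O_X(L)\neq 0$, and $\deg A>0$ as $A$ is ample. Therefore $s(L+qf^*A)=\frac{a+bq}{c+dq}$ is defined for all $q\in\N$, and as $q\to+\infty$ it tends to $\frac bd=(n+1)\frac{L_F^n}{h^0(F,\lfloor L_F\rfloor)}=\BS(L_F)$. To obtain monotone convergence I would compute
\[
s(L+qf^*A)-\BS(L_F)=\frac{a+bq}{c+dq}-\frac bd=\frac{ad-bc}{d(c+dq)},
\]
which has constant sign $\operatorname{sgn}(ad-bc)$ and absolute value $\frac{|ad-bc|}{d(c+dq)}$ strictly decreasing in $q$. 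Hence $s(L+qf^*A)$ stays on one side of $\BS(L_F)$ and approaches it monotonically, which is the assertion.
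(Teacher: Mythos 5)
Your proof is correct and follows essentially the same route as the paper: expand $(L+qf^*A)^{n+1}$ using $(f^*A)^2=0$, identify $f_*\O_X(L+qf^*A)$ as the twist $f_*\O_X(L)\otimes\O_T(qA)$ via the projection formula, and observe that the resulting difference $s(L+qf^*A)-\BS(L_F)$ has constant sign and decreasing modulus. Your explicit M\"obius-function computation $\frac{ad-bc}{d(c+dq)}$ is just a repackaging of the paper's identity $s(L+qf^*A)-\BS(L_F)=\frac{\deg f_*\O_X(L)}{\deg f_*\O_X(L+qf^*A)}\left[s(L)-\BS(L_F)\right]$, and it makes the monotonicity in $q$ marginally more transparent.
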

\begin{proof}
Using that $(f^*A)^2=f^*(A^2)=0$ since $A$ is a divisor on a curve and the projection formula, we compute 
\begin{equation*}
\begin{sis}
& (L+qf^*A)^{n+1}=L^{n+1}+q(n+1)L_F^n\deg A, \\
& f_*\O_X(L+qf^*A)=f_*\O_X(L)\otimes \O_T(qA)\Rightarrow \deg f_*\O_X(L+qf^*A)= \deg f_*\O_X(L)+q h^0(F,\lfloor L_F\rfloor) \deg A. 
\end{sis}
\end{equation*}
From the above formulas we get that 
\begin{equation*}
\begin{sis}
& s(L+qf^*A)-\BS(L_F)=\frac{\deg f_*\O_X(L)}{\deg f_*\O_X(L+qf^*A)} \left[s(L)-\BS(L_F)\right],\\
& s(L+qf^*A)-s(L)=\frac{q\deg A\cdot h^0(F,\lfloor L_F\rfloor)}{\deg f_*\O_X(L+qf^*A)}\left[\BS(L_F)- s(L)\right].
\end{sis}
\end{equation*}
Hence, we conclude that 
\begin{equation}\label{E:sandwitch}
\begin{sis}
s(L)\leq \BS(L_F) &\Longrightarrow s(L)\leq s(L+qf^*A)\leq \BS(L_F), \\
\BS(L_F)\leq s(L) & \Longrightarrow \BS(L_F)\leq s(L+qf^*A)\leq s(L).\\
\end{sis}
\end{equation}
Formulas \eqref{E:sandwitch} imply that the sequence $s(L+qf^*A)$ converges monotonically to $\BS(L_F)$ as $q\to +\infty$.
\end{proof}

\begin{remark}
The above Proposition shows that the lower bound $s(L)\geq \BS(L_F)$  (i.e. the $f$-positivity of $L$) is the best possible lower bound we can hope for $s(L)$ in terms of numerical invariants of the general fiber $F$. 

However that there are examples of  fibrations $f$ endowed with (sufficiently positive) divisors  $L$ that are not  $f$-positive: for example, Hu-Zhang have constructed in \cite[\S 2.1, \S 2.4, \S 3]{HZ} families of smooth canonically polarized varieties of any dimension $n\geq 2$ such that the relative canonical bundle is not $f$-positive.

In subsection \ref{Sec:fam-mindeg}, we investigate the $f$-positivity for families of polarized varieties of minimal degrees and polarized hyperelliptic varieties. 
\end{remark}

The $f$-positivity of $L$ holds true provided that either $(F,L_F)$ is  Chow semistable (e.g. if it is  Hilbert semistable) or $f_*\O_X(L)$ is semistable.

\begin{theorem}(\cite[Thm. 3.3]{Bost})\label{T:BS-pos}
Assume that we are in the set-up  \ref{setup} and that, moreover,  $L$ is $f$-nef. 

If $L_F$ is a globally generated Cartier divisor and the cycle $(\phi_{L_F})_*(F)\subset \P(H^0(F,L_F)^\vee)$ is Chow semistable then $L$ is $f$-positive. 
\end{theorem}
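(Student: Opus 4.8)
The plan is to realise this as an instance of the Cornalba--Harris--Bost principle, which converts fiberwise GIT semistability into a slope inequality; the statement is then read off from the Hilbert--Mumford numerical criterion applied along the Harder--Narasimhan filtration of $\E_\ell:=f_*\O_X(L)$ studied in Section \ref{Sec:HN}. First I would set up the family of Chow points. Since $L_F$ is globally generated and Cartier, over a dense open $U\subseteq T$ the relative linear system makes $\phi_L$ a $T$-morphism $f^{-1}(U)\to \P_T(\E_\ell)|_U$, where $\E_\ell$ has rank $r_\ell=h^0(F,L_F)$ by \eqref{E:rk-push} and Lemma \ref{L:Weil}; its fibers are the $n$-dimensional cycles $(\phi_{L_F})_*(X_t)$, of degree $d:=L_F^n$. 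By properness of the Chow variety, the resulting classifying morphism $U\to \mathrm{Chow}_{r_\ell-1,\,n,\,d}$ extends uniquely across the finitely many remaining points to a morphism $c\colon T\to \mathrm{Chow}_{r_\ell-1,\,n,\,d}$; equivalently, the relative Chow form extends to a global object on $T$.

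Next I would analyse the Chow form itself. It defines a nonzero section of a line bundle $\mathcal L$ on $T$ built $GL(\E_\ell)$-equivariantly from $\E_\ell$ (a subsheaf of a symmetric/tensor power of $\E_\ell^{\vee}$ twisted by a power of $\det \E_\ell$, determined by $r_\ell$, $n$, $d$). The central computation, an intersection-theoretic identity on $\P_T(\E_\ell)\to T$ of Knudsen--Mumford / Grothendieck--Riemann--Roch type, identifies $\deg\mathcal L$ with a positive multiple of
\[
r_\ell\,L^{n+1}-(n+1)\,d\,\deg\E_\ell,
\]
which is exactly the quantity whose non-negativity is the $f$-positivity of $L$, since $\deg\E_\ell=\deg f_*\O_X(L)$. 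The hypothesis that $L$ is $f$-nef enters precisely here: it controls the contributions coming from the fibers over $T\setminus U$ where $\phi_L$ fails to be a morphism, ensuring that the extension of $c$ does not raise the above combination, so that the numerical identity yields the inequality rather than an uncontrolled correction.

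The decisive step is to deduce $\deg\mathcal L\ge 0$ from Chow semistability. By the Hilbert--Mumford criterion, semistability of the general cycle $(\phi_{L_F})_*(F)$ means that the weight $w(\mathrm{Ch}(F),\lambda)\ge 0$ for every one-parameter subgroup $\lambda$ of $SL\big(H^0(F,L_F)^{\vee}\big)$. The Harder--Narasimhan filtration $0=\E_0\subsetneq\cdots\subsetneq\E_\ell$ of \eqref{E:HNfilt}, with slopes $\mu_1>\cdots>\mu_\ell$ and ranks $r_i$, provides the canonical destabilising weighted flag of the generic fiber of $\E_\ell$. Testing the (nonzero) Chow section against the associated $\lambda$ and matching the combinatorial weight of the Chow representation against $\sum_i r_i(\mu_i-\mu_{i+1})=\deg\E_\ell$ (Lemma \ref{formuletta}) bounds $\deg\mathcal L$ below by a non-negative combination of fiberwise Hilbert--Mumford weights; hence $\deg\mathcal L\ge 0$, giving the claim.

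The main obstacle is exactly this last transition from pointwise GIT semistability to a global degree inequality on $T$. Making it rigorous requires (i) showing the extended Chow section is not identically zero and controlling its vanishing over the finitely many bad fibers, where the $f$-nefness of $L$ and properness of the Chow variety must be combined, and (ii) verifying that the HN slopes of $\E_\ell$ are the correct numerical substitutes for the weights of a destabilising one-parameter subgroup, so that the numerical criterion can be integrated over all of $T$ rather than applied at a single fiber. This is the substance of Bost's Theorem 3.3. A secondary, routine check is that the fiber-level identities $P_\ell^n=L_F^n$ and $\rk\E_\ell=h^0(F,L_F)$ used above hold, which follow from Proposition \ref{P:Mnef}\eqref{P:Mnef5} and \eqref{E:rk-push}.
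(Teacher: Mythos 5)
First, a point of calibration: the paper does not prove this statement at all --- it is imported verbatim from \cite[Thm. 3.3]{Bost}, with the remark that the special case ($L$ $f$-ample, $L_F$ very ample, Hilbert semistability) is Cornalba--Harris. So there is no in-paper proof to compare against; what you have written is an attempted reconstruction of Bost's argument. As a roadmap it has the right shape: the relative Chow form as a section of a line bundle $\mathcal L$ built $GL(\E_\ell)$-equivariantly from $\E_\ell=f_*\O_X(L)$, a Knudsen--Mumford-type identity expressing $\deg\mathcal L$ through $r_\ell L^{n+1}-(n+1)L_F^n\deg\E_\ell$, and semistability forcing non-negativity; and you correctly identify that this last quantity being non-negative is exactly $f$-positivity.

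As a proof, however, there is a genuine gap beyond the two you flag yourself, namely in the decisive step from fibrewise Chow semistability to $\deg\mathcal L\ge 0$. The Hilbert--Mumford criterion at a single fibre only constrains weights of one-parameter subgroups acting on that fibre; testing the Chow point against the particular $\lambda$ attached to the Harder--Narasimhan filtration of $\E_\ell$ does not, by itself, bound the degree of a line bundle on $T$ from below, and the HN filtration plays no role in Bost's or Cornalba--Harris's proof of this statement (it is the engine of the Xiao-style arguments in Sections \ref{Sec:HN}--\ref{Sec:slope}, which is presumably why you reached for it). The correct mechanism is invariant-theoretic: semistability of the generic Chow point guarantees a homogeneous $SL$-invariant $\Phi$ not vanishing there; applied to the relative Chow form, $\Phi$ yields a \emph{non-zero} section of a line bundle of the shape $\mathcal L^{\otimes a}\otimes(\det\E_\ell)^{\otimes -b}$, where the ratio $b/a$ is fixed by the central character of the Chow representation (it equals $(n+1)L_F^n/r_\ell$), and a non-zero section of a line bundle on a projective curve has non-negative degree; unwinding $a$ and $b$ gives precisely the slope inequality. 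Combined with the unproved intersection-theoretic identity for $\deg\mathcal L$ (where the $f$-nefness of $L$ must be used to control the fibres over which $\phi_L$ is not defined), your proposal in effect defers all of the substance to the reference --- which is, in fairness, exactly what the paper itself does by citing Bost.
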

The  special case of the above result when $L$ is $f$-ample, $L_F$ is a very ample Cartier divisor and the subvariety $\phi_{L_F}(F)\subset \P(H^0(F,L_F)^\vee)$ is Hilbert semistable was proved earlier by Cornalba-Harris \cite[Thm. 1.1]{CH} (see also \cite[Cor. 2.3]{BS1})

\begin{theorem}\label{T:ss-push}
Assume that we are in the set-up  \ref{setup} and that, moreover, $L$ is $f$-nef and $L_F$ is Cartier and globally generated. 

If $f_*\O_X(L)$ is semistable of non-negative degree then $L$ is $f$-positive. 

\end{theorem}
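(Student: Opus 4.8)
The plan is to exploit the fact that semistability of $f_*\O_X(L)$ collapses the Harder--Narasimhan filtration to a single step, so that the general numerical inequalities of Section \ref{Sec:HN} degenerate into precisely the bound defining $f$-positivity. First I would record the consequences of the hypotheses on $f_*\O_X(L)$. Since $f_*\O_X(L)$ is semistable, its HN filtration \eqref{E:HNfilt} has length $\ell=1$ and there is a single slope
$$\mu:=\mu_1=\mu\big(f_*\O_X(L)\big)=\frac{\deg f_*\O_X(L)}{r_1}=\frac{\deg f_*\O_X(L)}{h^0(F,L_F)},$$
where the last equality uses \eqref{E:rk-push} together with Lemma \ref{L:Weil}. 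By assumption $\deg f_*\O_X(L)\geq 0$, hence $\mu\geq 0$ and $f_*\O_X(L)$ is nef by \eqref{E:Nef-HN}. Combining this with the hypotheses that $L$ is $f$-nef and that $L_F$ is globally generated (i.e. $L$ is generically $f$-globally generated), Lemma \ref{L:Lnef}\eqref{L:Lnef1} gives that $L$ is nef.

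Next I would run the numerical machinery of Section \ref{Sec:HN} in the degenerate case $\ell=1$. Because $f_*\O_X(L)$ is nef, I take the third choice of $(N_{\ell+1},\mu_{\ell+1})$ in Remark \ref{R:choices}, namely $N_{\ell+1}=N_2:=M_\ell$ (which is nef by Proposition \ref{P:Mnef}\eqref{P:Mnef3}) and $\mu_{\ell+1}=\mu_2:=0$; this forces $P_{\ell+1}=P_2=(M_\ell)_{|\wt F}=P_\ell$ and $Z_{\ell+1}=Z_\ell$, so the hypotheses of Lemma \ref{L:inequ} are met. Applying Corollary \ref{C:spec-ineq}\eqref{C:spec-ineq1B} with $\ell=1$, only the index $j=1$ survives, and since $P_1=P_2=P_\ell$ and $\mu_1-\mu_2=\mu$ we obtain
$$M_\ell^{n+1}\geq \left(\sum_{k=0}^{n}P_1^k P_2^{\,n-k}\right)(\mu_1-\mu_2)=(n+1)\,P_\ell^{\,n}\,\mu.$$
Since $L_F$ is Cartier and globally generated, Proposition \ref{P:Mnef}\eqref{P:Mnef5} gives $P_\ell^{\,n}=L_F^n$. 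Finally, using that both $L$ and $f_*\O_X(L)$ are nef, Lemma \ref{L:Lnef}\eqref{L:Lnef2} yields $L^{n+1}\geq M_\ell^{n+1}$, and putting everything together,
$$L^{n+1}\geq M_\ell^{n+1}\geq (n+1)\,L_F^n\,\mu=(n+1)\frac{L_F^n}{h^0(F,L_F)}\deg f_*\O_X(L),$$
which is exactly the $f$-positivity of $L$, recalling that $\lfloor L_F\rfloor=L_F$ because $L_F$ is Cartier.

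The computation itself is short once the filtration is known to be trivial, so the genuinely delicate point is not the intersection-theoretic estimate but the passage from the $f$-nefness of $L$ to its absolute nefness. This is precisely where the global generation of $L_F$ and the nefness of $f_*\O_X(L)$ enter, through Lemma \ref{L:Lnef}\eqref{L:Lnef1}. I would therefore treat the verification of the hypotheses of that lemma---in particular keeping the Cartier versus $\Q$-Cartier bookkeeping for $L$ consistent, since \ref{L:Lnef}\eqref{L:Lnef1} is phrased for Cartier $L$ while here $L$ is only assumed $\Q$-Cartier with $L_F$ Cartier---as the step requiring the most care.
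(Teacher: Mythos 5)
Your proposal is correct and follows essentially the same route as the paper: trivial Harder--Narasimhan filtration, nefness of $f_*\O_X(L)$ and hence of $L$ via Lemma \ref{L:Lnef}, the identification $P_\ell^n=L_F^n$ from Proposition \ref{P:Mnef}\eqref{P:Mnef5}, and the key bound $M_\ell^{n+1}\geq (n+1)\mu P_\ell^n$, which the paper obtains by directly expanding $(M_1-\mu\wt F)^{n+1}\geq 0$ using Proposition \ref{P:Nnef} while you obtain it from Corollary \ref{C:spec-ineq}\eqref{C:spec-ineq1B} --- the same nefness fact in disguise. Your closing remark about the Cartier versus $\Q$-Cartier bookkeeping in Lemma \ref{L:Lnef}\eqref{L:Lnef1} is a fair observation, but it is an issue the paper's own argument shares rather than a gap in your proof.
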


The above result was proved for a Cartier divisor $L$ in \cite[Thm.1.3]{BS2} under the further assumption that  either $L$ is $f$-globally generated or $L$ is nef.
\begin{proof}
Denote by 
$$\mu:=\mu(f_*\O_X(L))=\frac{\deg f_*\O_X(L)}{\rk f_*\O_X(L)}=\frac{\deg f_*\O_X(L)}{h^0(F,\lfloor L_F\rfloor)}$$
 the slope of the locally free sheaf $f_*\O_X(L)$. Using that $F^2=0$, we compute that 
$$(L-\mu F)^{n+1}=L^{n+1}-(n+1)\frac{L_F^n}{h^0(F,\lfloor L_F\rfloor)} \deg f_*\O_X(L).$$
In particular
\begin{equation}\label{E:fpos-top}
L \quad \text{ is $f$-positive } \Longleftrightarrow (L-\mu F)^{n+1}\geq 0. 
\end{equation}

Using the notation of \S \ref{Sec:HN} (with $\ell=1$ since $f_*\O_X(L)$ is semistable) and the fact that $f_*\O_X(L)$ is nef since it is semistable of non-negative degree 
(see \eqref{E:Nef-HN}), our assumptions imply that
\begin{equation}\label{E:disug-ss}
\begin{sis}
M_1-\mu \wt F \text{ is nef } & \: \text{ by Proposition \ref{P:Nnef}} \Longrightarrow 0\leq (M_1-\mu \wt F)^{n+1}=M_1^{n+1}-(n+1)\mu P_1^n,\\
L^{n+1}\geq M_1^{n+1} & \: \text{ by  Lemma \ref{L:Lnef}\eqref{L:Lnef2},  }\\
L_F^n=P_1^n & \: \text{ by  Proposition \ref{P:Mnef}\eqref{P:Mnef5}}.\\
\end{sis}
\end{equation}
By putting together  the above formulas \eqref{E:disug-ss}, we get that 
$$
(L-\mu F)^{n+1}\geq (M_1-\mu \wt F)^{n+1}\geq 0, 
$$
and hence that $L$ is $f$-positive by \eqref{E:fpos-top}.
\end{proof}

\begin{remark}
When $L$ is $f$-globally generated, so that $M_{\ell}=L$, and $f_*\O_X(L)$ is semistable,  the proof of Theorem \ref{T:ss-push} shows that the nef threshold of $L$ with respect to a fiber $F$ is at least the slope $\mu(f_*\O_X(L))$ of $f_*\O_X(L)$. It is worth recalling that \cite[Remark 2, Section 4]{Xiao} gives an example where $f_*\O_X(L)$ is not semistable, $L-\mu(f_*\O_X(L)) F$ is not nef, but still the family is $f$-positive.
\end{remark}

\section{Examples}\label{Sec:Examples}

In this section we will compute the slope of some natural families of polarised varieties. At the end of each example, we will discuss why it is relevant for the purposes of this paper.

\subsection{Families of varieties of minimal degree and of polarized hyperelliptic varieties}\label{Sec:fam-mindeg}

In this subsection, we are going to compute slopes of several natural families  of 
\begin{itemize}
\item \emph{varieties of minimal degree}, i.e. polarized varieties $(F,L_F)$ such that $L_F$ is very ample and it embeds $F$ as a (non-degenerate) variety in $\P(H^0(F,L_F)^\vee)$ of degree $\deg F=\codim F+1$ (see \cite{EH}).
Note that, for any such family, we have that $L_F^n=h^0(F,L_F)-n$, where $n=\dim F$, so that all the inequalities in Remark \ref{R:ineqBS}\eqref{R:ineqBS1} are equalities. 

\item \emph{hyperelliptic polarized  varieties}, i.e.  polarized varieties $(\wt F,\wt L_F)$ such that $\wt L_F$ is base point free and it induces a double finite cover of a variety of minimal degree (see \cite{Fuj}).
Note that, for any such family, we have that $\wt L_F^n=2h^0(\wt F,\wt L_F)-2n$, where $n=\dim \wt F$, so that all the inequalities in Remark \ref{R:ineqBS}\eqref{R:ineqBS2} are equalities. 
\end{itemize}

In all the examples, we will use the following notation. Given a locally free sheaf $E$ of rank $r$ on a scheme $S$, we will denote by $\pi:\P_S(E)\to S$ the projective bundle of quotients of $E$ and by $H=H_E$ any tautological divisor on $\P_S(E)$, i.e. any 
effective Cartier divisor on $\P_S(E)$ such that $\O_{\P_S(E)}(H_E)=\O_{\P_S(E)}(1)$. With these convention, we have that 
\begin{equation}\label{E:form-Proj}
\begin{sis}
& \pi_*\O_{\P_S(E)}(dH_E)=
\begin{sis} 
\Sym^d(E) & \text{ for any } d\geq 0,  \\
0 & \text{ for any } d< 0.  \\
\end{sis}
\\
& H^r=\sum_{i=1}^{r} (-1)^{i-1}\pi^*(c_i(E))H^{r-i} \in A^r(\P_S(E)). 
\end{sis}
\end{equation}

Moreover, we are going to use frequently the following two nefness results. Let $E$ be a locally free sheaf on an irreducible smooth and projective curve $T$ and denote by $\mu_-(E)$ the lowest slope in the Harder-Narasimhan filtration of $E$. Then we have that (see \cite[Lemma 2.1]{Ful} and  \cite[Thm. 6.4.15]{Laz2}):
\begin{equation}\label{E:Nef-PE}
\begin{aligned}
E \text{ is nef on } T \Leftrightarrow \mu_-(E)\geq 0 \quad \text{[Hartshorne's theorem]}, \\ 
dH_E+f^*(A) \text{ is nef on } \P_T(E) \Leftrightarrow d\geq 0 \text{ and } d\mu_-(E)+\deg A \geq 0 \quad \text{[Miyaoka's theorem]}.
\end{aligned}
\end{equation}

\begin{example}\label{Ex:Pn}[Families of projective spaces and their double covers]

Let $T$ be a smooth irreducible projective curve  and let $E$ be a nef locally free sheaf on $T$ of rank $n+1$ and of positive degree. 

\un{Family of projective spaces}

Consider the projective bundle  $f:X:=\P_T(E)\to T$ and let $L:=H$ be any tautological divisor on $\P_T(E)$. 
Note that $L$ is nef since $E$ is nef and its restriction to a general (and indeed any) fiber $F\cong \P^n$ is a hyperplane divisor, and hence it is very ample.
Using \eqref{E:form-Proj}, we compute  
\begin{equation}\label{E:Pn-for1}
\begin{sis}
& f_*\O_{X}(L)=E \quad (\Rightarrow f_*\O_{X}(L) \text{ is nef}),\\
& L^{n+1}=\deg(E),
\end{sis}
\end{equation}
from which we deduce that
\begin{equation}\label{E:Pn-slope}
s(L)=1=\BS(L_F).
\end{equation}
Note that this example realises the equality in Theorem \ref{T:Barja}\eqref{T:Barja1} with $q=1$ and also it provides an example where the $f$-positivity is sharp (see \eqref{E:posBS}). 

\un{Double covers}

Fix now an integer $m\geq 2$ and a  divisor $A$ on $T$  such that 
\begin{equation*}
 |2(mH+f^*A)|\neq \emptyset \text{ and the general element } R\in |2(mH+f^*A)| \text{ is smooth.} 
\end{equation*}  

\vspace{0.1cm}

Take a general effective smooth divisor $R\in |2(mH+g^*A)|$ and denote by $\pi:\wt X\to X=\P_T(E)$ the  finite double cover ramified along $R$ and set $\wt f:\wt X\xrightarrow{\pi}X \xrightarrow{f} T$. 
Consider the nef divisor $\wt L=\pi^*(L)$ on $\wt X$.

Note that a general polarized fiber $(\wt F,\wt L_F)$ of $\wt f$ is a double finite cover of $(\P^n,H_{\P^n})$ ramified along a smooth hypersurface of degree $2m\geq 4$, and hence 
\begin{equation}\label{E:BS-doubPn}
\BS(\wt L_F)=2\BS(L_F)=2. 
\end{equation}
 
Using  \eqref{E:Pn-for1},  the projection formula and the formula  $\pi_*\O_{\wt X}=\O_{X}\oplus \O_{X}(-mH-f^*A)$, we compute: 
\begin{equation}\label{E:douPn-for1}
\begin{sis}
& \wt f_*\O_{\wt X}(\wt L)=f_*\pi_*\pi^* \O_X(L)=f_*\left(\O_{X}(L)\oplus \O_{X}(H-mH-f^*A) \right)=E \quad (\Rightarrow \wt f_*\O_{\wt X}(\wt L) \text{ is nef}),\\
& \wt L^{n+1}=2L^{n+1}=2\deg E,
\end{sis}
\end{equation}
from which we deduce that
\begin{equation}\label{E:douPn-slope}
s(\wt L)=2=\BS(\wt L_F).
\end{equation}




Note that example  realises the equality in Theorem \ref{T:Barja}\eqref{T:Barja2} with $q=1$ and also it provides an example where the $f$-positivity is sharp (see \eqref{E:posBS}).

\end{example}

\begin{example}\label{Ex:Vero}[Families of Veronese surfaces and their double covers]

Let $T$ be a smooth irreducible projective curve  and let $E$ be a nef locally free sheaf on $T$ of rank $3$ and of positive degree. 

\un{Families of Veronese surfaces}

Consider the projective bundle $f:X:=\P_T(E)\to T$ and set $L:=2H$ where  $H$ is any tautological divisor on $\P_T(E)$.
Note that $L$ is nef since $E$ is nef and a general (and indeed any) fiber $(F,\O_F(L_F))$ is isomorphic to the Veronese surface $(\P^2,\O_{\P^2}(2))$.

Using \eqref{E:form-Proj}, we compute  
\begin{equation}\label{E:Vero-for1}
\begin{sis}
& f_*(\O_{X}(L))=\Sym^ 2(E) \Rightarrow \deg f_*(\O_{X}(L))=\deg \Sym^ 2(E)=4\deg E, \\
& L^3=(2H)^{3}=8\deg(E),
\end{sis}
\end{equation}
from which we deduce that
\begin{equation}\label{E:Vero-slope}
s(L)=2=\BS(L_F).
\end{equation}

\un{Double covers}

Fix now an integer $m\geq 3$ and a  divisor $A$ on $T$  such that 
\begin{equation*}
 |2(mH+f^*A)|\neq \emptyset \text{ and the general element } R\in |2(mH+f^*A)| \text{ is smooth}.  
\end{equation*}

\vspace{0.1cm}

Take a general effective smooth divisor $R\in |2(mH+g^*A)|$ and denote by $\pi:\wt X\to X=\P_T(E)$ the  finite double cover ramified along $R$ and set $\wt f:\wt X\xrightarrow{\pi}X \xrightarrow{f} T$. 
Consider the nef divisor $\wt L=\pi^*(L)$ on $\wt X$.

Note that a general polarized fiber $(\wt F,\wt L_F)$ of $f$  is a double finite cover of $(\P^2, 2H_{\P^2})$ ramified along a smooth hypersurface of degree $2m\geq 6$, and hence 
\begin{equation}\label{E:BS-2Vero}
\BS(\wt L_F)=2\BS(L_F)=4.
\end{equation}
 
 Arguing as in Example \ref{Ex:Pn} and using \eqref{E:Vero-for1}, we get that 
  \begin{equation}\label{E:2Vero-for}
\begin{sis}
& \wt f_*\O_{\wt X}(\wt L)=f_*\O_{X}(L)=\Sym^2 E, \\
& \wt L^{n+1}=2L^{n+1}=16 \deg(E),
\end{sis}
\Rightarrow s(\wt L)=4=\BS(\wt L_F).
\end{equation}
 
Note that these families provide examples where  the $f$-positivity is sharp (see \eqref{E:posBS}). 
 



\end{example}

\begin{example}\label{Ex:Quadrics}[Families of Quadrics and their double covers]

Let $T$ be a smooth irreducible projective curve, let $E$ be a nef locally free sheaf on $T$ of rank $n+2$ and of positive degree, and let 
 $H$ be a tautological divisor on the projective bundle $h:\P_T(E)\to T$. 
 
 \un{Families of Quadrics}
 
 Consider a  divisor $A$ on $T$ such that 
 \begin{equation*}
|2H+h^*A|\neq \emptyset  \text{ and the general element in } |2H+h^*A| \text{ is normal.}\tag{*}
\end{equation*}

Take a general divisor $X\in |2H+h^*A|$ and let $f:X\to T$ be the restriction of $h$, which is a fibration of quadric hypersurfaces in $\P_T(E)$.  
Let $L:=H_{|X}$ which is a nef Cartier (since $E$ is nef) divisor on $X$.

Note that a general fiber $F$ of $f:X\to T$ is a quadric inside $\P^{n+1}$ and $L_F$ is a hyperplane divisor on $F$, and hence
\begin{equation}
\BS(L_F)=(n+1)\frac{L_F^n}{h^0(F,L_F)}=(n+1)\frac{2}{n+2}=2-\frac{2}{n+2}.
\end{equation}

The top-self intersection of $L$ on $X$ can be compute inside $\P_T(E)$ as it follows (using also \eqref{E:form-Proj})
\begin{equation}\label{E:Ltop-quad}
L^{n+1}=H^{n+1}\cdot X=H^{n+1}\cdot(2H+h^*A)=2H^{n+2}+H^{n+1}\cdot h^*A=2\deg E+\deg A.
\end{equation}
In order to compute the degree of $f_*\O_{X}(L)$, consider the exact sequence of the divisor $X\subset \P_T(E)$ tensored by $\O_{\P_T(E)}(1)$:
$$ 
0 \to \O_{\P_T(E)}(-H-h^*A)=\O_{\P_T(E)}(1)(-X)\to \O_{\P_T(E)}(1)\to \O_{\P_T(E)}(1)_{|X}=\O_{X}(L)\to 0.
$$
By taking the pushforward along $h$ and using that $h_*\O_{\P_T(E)}(-H-h^*A)=R^1h_*\O_{\P_T(E)}(-H-h^*A)=0$, we get the isomorphism 
\begin{equation}\label{E:pushO-quad}
E=h_*\O_{\P_T(E)}(1)\xrightarrow{\cong} f_*\O_{X}(L).
\end{equation}
In particular, $f_*\O_{X}(L)$ is nef by our assumption on $E$.    From \eqref{E:Ltop-quad} and \eqref{E:pushO-quad}, we get that 
\begin{equation}\label{E:slopeL-quad}
s(L)=2+\frac{\deg A}{\deg E}. 
\end{equation}
Note that 
$$
s(L)\geq \BS(L_F) \Leftrightarrow \deg A\geq -2\mu(E)=-2\frac{\deg E}{\rk E}.
$$

 \un{Double covers}

Fix now an integer $m\geq 2$ and a  divisor $B$ on $T$  such that 
\begin{equation*}
 |2(mL+f^*B)|\neq \emptyset \text{ and the general element in }  |2(mL+f^*B)| \text{ is smooth.} \tag{**}
 \end{equation*}
Take a general effective smooth divisor $R\in |2(mH+f^*B)|$ and denote by $\pi:\wt X\to X$ the  finite double cover ramified along $R$ and set $\wt f:\wt X\xrightarrow{\pi}X\xrightarrow{f} T$. 
Consider the nef divisor $\wt L:=\pi^*(L)$ on $\wt X$. 

Note that a general polarized fiber $(\wt F,\wt L_F)$ of $\wt f$  is a finite double cover of $(F,L_F)$   ramified along a divisor of $|2m L_F|$ (with $m\geq 2$), and hence 
\begin{equation}\label{E:BS-2Quadr}
\BS(\wt L_F)=2\BS(L_F)=4-\frac{4}{n+2}. 
\end{equation}
 
  Arguing as in Example \ref{Ex:Pn} and using formulas \eqref{E:Ltop-quad} and \eqref{E:pushO-quad}, we get that 
\begin{equation}\label{E:2Quadr-for}
\begin{sis}
\wt f_*\O_{\wt X}(\wt L)=f_*\O_X(L)=E, \\
\wt L^{n+1}=2L^{n+1}=2(2\deg E+\deg A),
\end{sis}
\Rightarrow s(\wt L)=4+2\frac{\deg A}{\deg E}. 
\end{equation}
 



\un{Examples of small slopes}

In order to obtain examples of small slope, we can take $T=\P^1$ and 
$$
 E=\O_{\P^1}^{\oplus n+2-r}\oplus \O_{\P^1}(d)^{\oplus r} \text{ with } 3\leq r\leq n+2 \text{ and } d\geq 1 \quad \text{ and } \deg A=-2d.
$$
With these choices, the general element $X$ of 
$$|2H+h^*(A)|=\P(H^0(\P^1, \Sym^2(E)\otimes \O_{\P^1}(A)))$$
 is a family of quadrics over $T$ of generic rank $r$ (and hence it is normal since $r\geq 3$) and, using \eqref{E:slopeL-quad}, its slope is 
 $$
 s(L)=2-\frac{2d}{rd}=2-\frac{2}{r}\leq 2-\frac{2}{n+2}=\BS(L_F),
 $$
 with equality if and only if $r=n+2$, i.e. the family $f:X\to \P^1$ is generically smooth. And the same thing is true for a double finite cover $\wt f:\wt X\to T$ of $f:X\to T$ as above. 
 
 In particular, this example shows that  Theorem \ref{T:BS-pos} can fail without the Chow semistability of the general fiber and that Theorem   \ref{T:ss-push} can fail without the slope semistability of $f_*\O_X(L)$.

\end{example}

\begin{example}\label{Ex:Scroll}[Families of Rational Normal Scrolls and their double covers]

Let $T$ be a smooth irreducible projective curve, let $E$ be a locally free sheaf of rank $2$ on $T$ and denote by $\mu_{-}(E)$ the smallest slope in the Harder-Narasimhan filtration of $E$. 
Consider the projective bundle $h:S:=\P_T(E)\to T$  and let $H_S$ be a tautological divisor on $S$. 
Fix a $n$-tuple of integers $d_1\geq \ldots \geq d_n\geq 0$ and an $n$-tuple $\{A_1,\ldots, A_n\}$ of divisors on $T$ of degree $a_i:=\deg A_i$ subject to the following assumptions
\begin{equation}\label{E:ai-assum}
a_i+d_i\mu_-(E)\geq 0 \text{ for any } 1\leq i \leq n. \tag{$\dagger$}
\end{equation}
Consider the projective bundle  $g: \P_S\left(\bigoplus_{i=1}^n \O_S(d_i)\otimes h^*\O_T(A_i) \right)\to S$ and let $H$ be a tautological divisor on it.

\un{Families of rational normal scrolls}

Consider the polarized family 
$$f: \P_S\left(\bigoplus_{i=1}^n \O_S(d_i)\otimes h^*\O_T(A_i) \right)=:X\xrightarrow{g} S\xrightarrow{h} T \text{ and } L=H. $$
Note that $L=H$ is nef on $X$ since, for each $1\leq i \leq n$, the line bundle $\O_S(d_i)\otimes h^*\O_T(A_i)$ is nef on $S$ by Miyaoka's theorem, using the assumption \eqref{E:ai-assum}.

The general (and indeed any) fiber of $f$ is the rational normal scroll 
$$F=\P_{\P^1}\left(\bigoplus_{i=1}^n \O_{\P^1}(d_i)\right)$$ 
and $L_F$ is a tautological divisor on $F$. Hence we have that 
\begin{equation}\label{E:BS-scroll}
\BS(L_F)=(n+1)\frac{L_F^n}{h^0(F,L_F)}=(n+1)\frac{\sum_i d_i}{\sum_i d_i+n}. 
\end{equation}

We now compute the sheaf $f_*\O_X(L)$ using \eqref{E:form-Proj} as it follows
$$
f_*\O_X(L)=h_*\left(g_*\left(\O_X(L)\right) \right)=h_*\left( \bigoplus_{i=1}^n \O_S(d_i)\otimes h^*\O_T(A_i)\right) =\bigoplus_{i=1}^n \Sym^{d_i}(E)\otimes \O_T(A_i).
$$
Since $\mu_-(\Sym^{d_i}(E)\otimes \O_T(A_i))=d_i\mu_-(E)+a_i\geq 0$ because of the assumption \eqref{E:ai-assum}, we conclude that $f_*\O_X(L)$ is nef by Hartshorne's theorem. 
Moreover, taking the degree in the above formula, we get 
\begin{equation}\label{E:pushO-scroll}
\deg f_*\O_X(L)=\sum_{i=1}^n \left[\deg \Sym^{d_i}(E)+(\rk \Sym^{d_i}(E))\deg \O_T(A_i)\right]= \sum_{i=1}^n\left[\binom{d_i+1}{2}\deg E+(d_i+1)a_i \right]. 
\end{equation}
Observe that, using the above formula, the assumption \eqref{E:ai-assum} and the fact that $\deg E\geq 2\mu_-$ with equality if and only if $E$ is semistable, we deduce that 
$\deg f_*\O_X(L)>0$ if and only if either  $E$ is not semistable or one of the inequalities in \eqref{E:ai-assum} is strict.

Let us now compute the top self-intersection of $L$. The non-zero Chern classes of the locally free sheaf  $\bV=\bigoplus_i \O_S(d_i)\otimes h^*\O_T(A_i)$ on $S$ are
\begin{equation*}
\begin{sis}
& c_1(\bV)=\sum_{i=1}^n \left(d_iH_S+h^*A_i\right)\in A^1(S),\\
& c_2(\bV)=\sum_{1\leq i <j\leq n}\left(d_id_j\deg E+d_ia_j +d_ja_i \right)\in \Z\cong A^2(S),
\end{sis}
\end{equation*}
where we used the formula $H_S^2=\deg E$ (see \eqref{E:form-Proj}).
Using the above formulas and by applying  \eqref{E:form-Proj} first  to the projective bundle $g:\P_S(\bV)\to S$ and then to its restriction to the divisors  $H_S$ and $h^*(A_i)$ of $S$, we get that 
$$
L^{n+1}=H^{n+1}=g^*(c_1(\bV))\cdot H^n-g^*(c_2(\bV))\cdot H^{n-1}=\sum_{i=1}^n \left[g^*\left(d_iH_S+h^*A_i\right)\cdot H^n\right]- c_2(\bV)=
$$ 
$$
=  \sum_{i=1}^n d_i (H_{|g^*{H_S}})^n + \sum_{i=1}^n (H_{|g^*(h^*(A_i))})^n-c_2(\bV)
= \sum_{i=1}^n \left[d_i  c_1(\bV)\cdot H_S)\right] + \sum_{i=1}^n \left[ c_1(\bV)\cdot h^*(A_i))\right] -c_2(\bV)=
$$
$$
= \sum_{i=1}^n \left[d_i \sum_{j=1}^n \left(d_j\deg E+a_j \right)\right] + \sum_{i=1}^n \sum_{j=1}^n a_i d_j - \sum_{1\leq i <j\leq n}\left(d_id_j\deg E+d_ia_j +d_ja_i \right)=
$$
\begin{equation}\label{E:Ltop-scroll}
=\left(\sum_{i=1}^n d_i^2+\sum_{1\leq i <j \leq n} d_id_j\right) \deg E+\sum_{i=1}^n 2d_ia_i+\sum_{1\leq i\neq j \leq n} d_ia_j.
\end{equation}
From \eqref{E:pushO-scroll} and \eqref{E:Ltop-scroll}, we get that (assuming $\deg f_*\O_X(L)>0$) 
\begin{equation}\label{E:slopeL-scroll}
s(L)=\frac{\left(\sum_{i=1}^n d_i^2+\sum_{1\leq i <j \leq n} d_id_j\right) \deg E+\sum_{i=1}^n 2d_ia_i+\sum_{1\leq i\neq j \leq n} d_ia_j}{\sum_{i=1}^n\binom{d_i+1}{2}\deg E+\sum_{i=1}^n (d_i+1)a_i}
\end{equation}

\un{Double covers}

Fix now two integers $\alpha, \beta$ such that $\alpha\geq 2$ and $\alpha d_n+\beta>0$, and a  divisor $B$ on $T$  such that 
\begin{equation*}
 |2(\alpha L+\beta g^*H_S+f^*B)|\neq \emptyset \text{ and the general element in }   |2(\alpha L+\beta g^*H_S+f^*B)| \text{ is smooth.} \tag{$\dagger$$\dagger$}
 \end{equation*}
 
 Note that the above assumption is realised by a sufficiently positive divisor $B$ on $T$ since the divisor $\alpha L+\beta g^*H_S$ is $f$-relatively very ample under the above assumptions on $\alpha$ and $\beta$ (see \cite[(5.7)]{Fuj}). 
Take a general effective smooth divisor $R\in |2(\alpha H+\beta g^*H_S+f^*B)|$ and denote by $\pi:\wt X\to X$ the  finite double cover ramified along $R$ and set $\wt f:\wt X\xrightarrow{\pi}X\xrightarrow{f} T$. 
Consider the nef divisor $\wt L:=\pi^*(L)$ on $\wt X$. 

Note that a general polarized fiber $(\wt F,\wt L_F)$ of $\wt f$  is a finite double cover of $(F,L_F)$   ramified along a divisor of $|2(\alpha L_F+\beta (H_S)_{|F})|$, and hence 
\begin{equation}\label{E:BS-2Scroll}
\BS(\wt L_F)=2\BS(L_F)=2(n+1)\frac{\sum_i d_i}{\sum_i d_i+n}.
\end{equation}
 
  Arguing as in Example \ref{Ex:Pn} and using formulas \eqref{E:pushO-scroll}, \eqref{E:Ltop-scroll} and \eqref{E:slopeL-scroll}, we get that 
\begin{equation}\label{E:2Scroll-for}
\begin{sis}
\wt f_*\O_{\wt X}(\wt L)=f_*\O_X(L), \\
\wt L^{n+1}=2L^{n+1},
\end{sis}
\Rightarrow s(\wt L)=2s(L). 
\end{equation}

\un{Special cases}

Note that if either $d_1=\ldots= d_n:=d$ (which implies that $(F,L_F)$ is Chow stable) or $\deg E=2\mu_(E)$ and $a_i+d_i\mu_-(E)=C$ for some positive constant $C$ and any $i$ (which is equivalent to the semistability of $f_*\O_X(L)$), then we have that (for all choices of $a_i$ subject to \eqref{E:ai-assum})
$$
s(L)=(n+1)\frac{d}{d+1}=\BS(L_F).
$$

In particular, we get examples where Theorem \ref{T:BS-pos} and Theorem   \ref{T:ss-push}  are sharp. And the same thing is true for a double finite cover $\wt f:\wt X\to T$ of $f:X\to T$ as above.

On the other hand, if not all the integers $d_i$ are equal among themselves then, by fixing some numbers $\{a_2,\ldots, a_n\}$ subject to \eqref{E:ai-assum} and letting $a_1\to +\infty$, we get that
$$
s(L)\xrightarrow{a_1\to +\infty} \frac{d_1+\sum_i d_i}{d_1+1}<(n+1)\frac{\sum_i d_i}{\sum_i d_i+n}=\BS(L_F),
$$ 
where we used that $d_1>\frac{\sum_i d_i}{n}$ (which follows from the fact that $d_1\geq \ldots \geq d_n$ and that not all of the $d_i$'s are equal). 
As an extreme case, if $d_1:=d\geq 2>d_2=\ldots =d_n=0$ and $a_2=\ldots =a_n=0$ then we have that 
$$
s(L)=\frac{d^2\deg E+2d a_1}{\binom{d+1}{2}\deg E+(d+1)a_1}=2\frac{d}{d+1}<(n+1)\frac{d}{d+n}=\BS(L_F).
$$ 

 In particular, this example shows that  Theorem \ref{T:BS-pos} can fail without the Chow semistability of the general fiber and that Theorem   \ref{T:ss-push} can fail without the slope semistability of $f_*\O_X(L)$. And the same thing is true for a double finite cover $\wt f:\wt X\to T$ of $f:X\to T$ as above.

\end{example}

\subsection{Families of hypersurfaces in weighted projective spaces}\label{Sec:hyperweight}

The aim of this subsection is to compute the slope of families of hypersurfaces inside a weighted projective space over $\P^1$.  

Let $\un a=(a_0,\ldots,a_{n+1})$ be a collection of positive natural numbers (for some $n\geq 1$) and consider the ($n+1$)-dimensional weighted projective space $\P(\un a):=\Proj S(\un a)$, where $S(\un a)$ is the graded polynomial algebra $k[X_0,\ldots,X_{n+1}]$  such that $X_i$ has weight  $a_i$. Without loss of generality (see \cite[Sec. 1.3]{Dol}), we can assume that $\un a$ is \emph{reduced} (or well-formed), i.e. 
$$
1=\gcd(a_0,\ldots,a_{i-1},a_{i+1}, \ldots, a_{n+1}) \quad \text{ for any } 0\leq i \leq n+1.
$$
Denote by $H_{\un a}$ any tautological divisor, i.e. Weil $\Q$-Cartier divisor such that $\O_{\P(\un a)}(H_{\un a})=\O_{\P(\un a)}(1)$. For any $m\geq 0$, denote by $S(\un a)_m$ the (finite dimensional) $k$-vector space of homogeneous elements of $S(\un a)$ of degree $m$. Set $|\un a|:=\sum_i a_i$. 

Recall the following well-known facts (see \cite[Sec. 1.4, Sec. 2.1]{Dol}):
\begin{equation}\label{E:form-Pw}
\begin{sis}
& H_{\un a}^{n+1}=\frac{1}{\prod_i a_i},\\
& H^i(\P(\un a),\O_{\P(\un a)}(m))= 
\begin{sis}
\dim S(\un a)_m & \quad \text{ if } 0=i, \\
0 & \quad \text{ if } 0<i<n+1, \\
\dim S(\un a)_{-m-|\un a|} &\quad  \text{ if } i=n+1, \\
\end{sis}\\
& mH_{\un a} \text{ is Cartier } \Leftrightarrow mH_{\un a} \text{ is Cartier and base point free } \Leftrightarrow \lcm(a_0,\ldots, a_{n+1})\vert m,\\
& K_{\P(\un a)}=-|\un a| H_{\un a}. \\
\end{sis} 
\end{equation}

Consider $\P(\un a)\times \P^1$, denote by $p_1: \P(\un a)\times \P^1\to \P(\un a)$ and $p_2: \P(\un a)\times \P^1\to \P^1$ the two projections, and set $H_1:=p_1^*H_{\un a}$ and by $H_2$ the pull-back of a tautological divisor on $\P^1$ (i.e. a fiber of $p_2$).
Fix integers $d,e, h> 0$ and $l\geq 0$ such that 
\begin{equation}\label{E:ass-int}
\begin{sis}
& \dim S(\un a)_{e}-\dim S(\un a)_{e-d}>0,\\
&\lcm(a_0,\ldots, a_{n+1})\vert d. \\
\end{sis}
\end{equation}
From the second assumption in \eqref{E:ass-int}, the divisor $dH_1+lH_2$ is Cartier and base point free; hence, the general divisor in  $|dH_1+lH_2|$ is normal and connected by Bertini's theorems. 
Fix now a normal connected hypersurface $X\in |dH_1+lH_2|$, which is endowed with the fibration $f=(p_2)_{|X}:X\to \P^1$. Consider the ample $\Q$-Cartier Weil divisor $L:=(eH_1+hH_2)_{|X}$ on $X$.

\begin{remark}\label{R:wps}
\begin{enumerate}
\noindent 
\item \label{R:wps1} The general fiber $F$ of $f$ is a normal connected $n$-dimensional hypersurface in $\P(\un a)$ which is a general element of the linear system $|dH_{\un a}|$ and the restriction of the polarisation $L$ is equal to  $L_F=(eH_{\un a})_{|F}$.    By our assumptions \eqref{E:ass-int} on $d$, $F$ is well-formed \cite[6.10]{IaFl} and quasi-smooth 
\cite[Thm. 8.1]{IaFl}; hence by the adjunction formula for  $F\subset \P(\un a)$ (see \cite[6.14]{IaFl}) and \eqref{E:form-Pw}, the canonical divisor of $F$ is equal to 
\begin{equation}\label{E:KF-hyper}
K_F=(K_{\P(\un a)}+F)_F=(d-|\un a|)(H_{\un a})_{|F}. 
\end{equation}
In particular, we have the following trichotomy 
$$
\begin{sis}
& F \text{ is Fano } \Longleftrightarrow d<|\un a|,\\
& F \text{ is CY } \Longleftrightarrow d=|\un a|,\\
& F \text{ is canonically polarized } \Longleftrightarrow d>|\un a|.\\
\end{sis}
$$
\item \label{R:wps2} By the adjunction formula for  $X\subset \P(\un a)\times \P^1$ and \eqref{E:form-Pw}, the relative canonical divisor of $f$ is equal to 
\begin{equation}\label{E:relf-hyper}
K_{X/\P^1}=(K_{\P(\un a)\times \P^1/\P^1}+X)_{|X}=\left((d-|\un a|)H_1+l H_2\right)_{|X}. 
\end{equation}
Hence we have that 
$$
L=K_{X/\P^1} \Longleftrightarrow e=d-|\un a| \text{ and } h=l.
$$
In particular, in this case the general fiber $F$ is canonically polarized. 
\end{enumerate}
\end{remark}

We now compute the numerical invariants of $L$, i.e. $L^{n+1}$ and $\deg f_*\O_X(L)$.  
Using that $H_2^2=0$ and  $H_1^{n+1}\cdot H_2=H_{\un a}^{n+1}$ and formula \eqref{E:form-Pw}, we compute the  top self-intersection of $L$  as it follows
 \begin{equation}\label{E:top-hyper}
L^{n+1}=(eH_1+hH_2)^{n+1} \cdot (dH_1+l H_2)=[e^{n+1}l+ (n+1)e^nhd] H_1^{n+1}\cdot H_2=\frac{e^{n+1}l+ (n+1)e^nhd}{\prod_i a_i}. 
\end{equation}

In order to compute $f_*\O_X(L)$, we take the exact sequence of $X\subset \P(\un a)\times \P^1$, we  twist by $\O_{\P(\un a)\times \P^1}(eH_1+hH_2)$ and then take the reflexive hulls to get the exact sequence:
$$
0 \to \O_{\P(\un a)\times \P^1}((e-d)H_1+(h-l)H_2)\to   \O_{\P(\un a)\times \P^1}(eH_1+hH_2)\to  \O_{\P(\un a)\times \P^1}(eH_1+hH_2)_{|X}=\O_X(L)\to 0. 
$$
By taking the push-forward  along $f$ of the above exact sequence, we get the following exact sequence of locally free sheaves on $\P^1$:
\begin{equation*}
0 \to \O_{\P^1}(h-l)^{\dim S(\un a)_{e-d}} \to   \O_{\P^1}(h)^{\dim S(\un a)_{e}} \to f_*\O_F(L_F)\to 0,
\end{equation*}
from which we deduce that 
\begin{equation}\label{E:push-hyper}
\deg f_*\O_X(L)= \deg  \O_{\P^1}(h)^{\dim S(\un a)_{e}} -  \O_{\P^1}(h-l)^{\dim S(\un a)_{e-d}} =h(\dim S(\un a)_{e}-\dim S(\un a)_{e-d})+l\dim S(\un a)_{e-d}.
\end{equation}
In particular, by the assumptions \eqref{E:ass-int}, we have that $\deg f_*\O_X(L)>0$, and the slope of $L$ is equal to 
\begin{equation}\label{E:slope-hyper}
s(L)=\frac{e^{n+1}l+ (n+1)e^nhd}{\prod_i a_i\cdot [h(\dim S(\un a)_{e}-\dim S(\un a)_{e-d})+l\dim S(\un a)_{e-d}]}. 
\end{equation}

\begin{remark}
	The divisor $L$ is  $f$-positive if and only if  	\begin{equation}\label{E:fpos-wps3}
	\dim S(\un a)_e\geq \left[1+(n+1)\frac{d}{e}\right]\dim S(\un a)_{e-d}.
	\end{equation}
	
	In order to show that, let us compute the Barja-Stoppino invariant of $L_F$. 
	The top-self intersection of $L_F$ is equal to (using \eqref{E:form-Pw})
	\begin{equation}\label{E:topF-hyper}
	L_F^n=(eH_{\un a})^n \cdot dH_{\un a}=\frac{e^nd}{\prod_i a_i}. 
	\end{equation}
	Arguing similarly to the above computation of $\deg f_*\O_X(L)$, it follows that 
	\begin{equation}\label{E:h0F-hyper}
	h^0(F,L_F)=h^0(\P(\un a),  \O_{\P(\un a)}(e))- h^0(\P(\un a),  \O_{\P(\un a)}(e-d))=\dim S(\un a)_{e}-\dim S(\un a)_{e-d}. 
	\end{equation}

	Hence we get that 
	\begin{equation}\label{E:BSF-hyper}
	\BS(L_F)=(n+1)\frac{L_F^n}{h^0(F,L_F)}= \frac{(n+1)e^nd}{\prod_i a_i\cdot  [\dim S(\un a)_{e}-\dim S(\un a)_{e-d}]}.
	\end{equation}
	By combining the formulas \eqref{E:slope-hyper} and \eqref{E:BSF-hyper}, we get \eqref{E:fpos-wps3}.
	
	We finally note that Inequality \eqref{E:fpos-wps3}  is trivially true if $e<d$, while we don't know if it always true for $e\geq d$. 
	 
\end{remark}

The above formula \eqref{E:slope-hyper} simplifies if we are in the following 

\subsubsection{Special case:} $1=e<d$. 

Indeed, by the first assumption in \eqref{E:ass-int}, we must have that $\dim S(\un a)_1\geq 1$, which implies that some of the weights $a_i$ must be equal to one. If we define the natural number  $0\leq u:=\{i: a_i=1\}-1\leq n+1$, then 
we have that $\dim S(\un a)_1=u+1$. Substituting into \eqref{E:slope-hyper}, we get that the slope of $L$ in this special case is equal to 
\begin{equation}\label{E:slope-spec}
s(L)=\frac{(n+1)d+\frac{l}{h}}{(u+1)\prod_i a_i}.
\end{equation}

We now consider examples of fibrations of small slopes.

\subsubsection{Example I}

Take 
$$\un a=(1,1, \alpha, \ldots,  \alpha) \quad \text{ with } \alpha \geq 2,  \quad d=m\alpha \quad \text{ with  } m\geq 1.$$ 
Note that assumptions \eqref{E:ass-int} hold true and that, by Remark \ref{R:wps}\eqref{R:wps1},   the general fiber $F$ is  canonically polarized (resp. of non-negative Kodaira dimension) if $m\geq n+2$ (resp. $m\geq n+1$). Formula \eqref{E:slope-spec} gives that 
$$
s(L)=\frac{(n+1)m\alpha+\frac{l}{h}}{2\alpha^n}\xrightarrow[\alpha \to +\infty]{} 0 \quad  \text{ if } n\geq 2.
$$
This example shows that in Theorem \ref{T:Barja}\eqref{T:Barja2} (resp. Theorem \ref{T:Barja}\eqref{T:Barja1}) the hypothesis that $\phi_{L_F}$ is generically finite  (resp. or that $L_F$ is Cartier)   cannot be dropped.

\begin{remark}\label{rem:index}
The construction above gives examples of families in any fixed dimension $n \ge 2$, in which the Gorenstein index of the general fibre goes to infinity. For simplicity, let us take $m=n+1$. Then for any odd integer $\alpha$ bigger than 2,  the Cartier index of $K_F=(\alpha-2){H_{\un a}}_{|F}$ is $\alpha$. In fact, on the one hand $\alpha K_F$ is Cartier and on the other hand $K_F^n = (\alpha-2)^n/\alpha^n$ and we conclude since $\alpha$ and $\alpha-2$ are coprime.
\end{remark}

\subsubsection{Example II}\label{Ex:Double}



Consider the Sylvester sequence $\{s_n\}_{n\in \N}$ (see the sequence  \cite[\href{https://oeis.org/A000058}{A000058}]{OEIS}) defined inductively as $s_n=1+\prod_{i=0}^{n-1}s_i$ with the initial condition $s_0=2$.  
Define $b_i:=\prod_{0\leq j\neq i \leq n-1} s_j$ for every $0\leq i \leq n-1$, and take 
$$\un a=(1,1,3b_0,\ldots, 3b_{n-1}), \quad d=3(s_{n}-1)=\prod_{i=0}^{n-1} s_i, \quad e=1, \quad h=l>0.$$
Note that assumptions \eqref{E:ass-int} hold true and that we have 
$$1+|\un a|=3+3b_0+\ldots +3b_{n-1}=3(s_n-1)=d,$$ 
which is proved by induction on $n$ using the formula $s_n=s_{n-1}^2-s_{n-1}+1$.  By Remark \ref{R:wps}\eqref{R:wps2}, we have that $L=K_{X/\P^1}$.  
Formula \eqref{E:slope-spec} gives that 
$$
s(K_{X/\P^1})=\frac{3(n+1)(s_n-1)+1}{2\cdot 3^n(s_n-1)^{n-1}}, 
$$
which is smaller than $1$ if $n\geq 2$ and it decays double exponentially as $n\to +\infty$ since $s_n$ grows doubly exponentially in $n$ (see loc. cit.).  We expect that lower slopes are possible using examples of varieties of general type with small volume as constructed in  \cite{BPT} and \cite{TW}.

This example shows that in Theorem \ref{T:Barja}\eqref{T:Barja2} (resp. Theorem \ref{T:Barja}\eqref{T:Barja1}) the hypothesis that $\phi_{L_F}$ is generically finite  (resp. or that $L_F$ is Cartier)   cannot be dropped even under the assumption that $L_F=K_F$.

\subsubsection{Example III}\label{Ex:slope1}
Take
$$
\un a=(1,1,\ldots, 1, 2,n+3) \quad \text{ with }   e=1, \quad d=2(n+3) \quad \text{ and } h=l >0.
$$ 

By Remark \ref{R:wps}\eqref{R:wps2}  we have that $L=K_{X/\P^1}$ and the general fibre $F$ of $f: X \to \P^1$ is a canonically polarised variety of dimension $n$. By \eqref{E:slope-spec} we get

$$
s(L) = \frac{1+ (n+1) 2(n+3)}{ 2(n+3) n}= \frac{1}{2n(n+3)} + \frac{n+1}{n}	.
$$

When $n$ is even, $X$ is smooth and this shows that for families $f: X \to T$ with smooth general fibre the minimum slope tends to 1 when $n$ grows.  
In \cite[Theorem 1.5]{HZ} it is shown that for $n=2$ the sharp slope is $4/3$.

\subsubsection{Example IV}\label{Ex:Xiao}
Take
$$
\un a=(1,1,8,12) \quad \text{ with }   e=2, \quad d=24 \quad \text{ and } h=l >0.
$$ 
By Remark \ref{R:wps}\eqref{R:wps2}  we have that $L=K_{X/\P^1}$ and the general fibre $F$ of $f: X \to \P^1$ is a canonically polarised (singular) surface.

Note that $\dim S(\un a)_2 = 3$ and so 
$$
4\frac{h^0(F,L_F) - 2}{h^0(F,L_F)}= \frac{4}{3}.
$$

By \eqref{E:slope-hyper} we obtain
$$
s(L) = \frac{2^3+ 3\cdot 2^2 \cdot 24}{8 \cdot 12 \cdot 3}= 1 + \frac{1}{36} < \frac{4}{3}.
$$

This shows that the assumption that $\phi_{L_F}$ is generically finite in Corollary \ref{C:Xiao-high1} can not be dropped.

\subsubsection{Example IV}\label{Ex:Xiao2}

Take
$$
\un a=(1,1,\alpha k,\beta k) \quad \text{ with }   e=k, \quad d=\alpha \beta k \quad \text{ and } h=l >0.
$$ 
where $\alpha, \beta \ge 2$ and $k$ are positive integers.

Note that $\dim S(\un a)_k = k+1$ and so 
$$
4\frac{h^0(F,L_F) - 2}{h^0(F,L_F)}= \frac{4(k-1)}{k+1}.
$$

By \eqref{E:slope-hyper} we obtain
$$
s(L) = \frac{k^3+ 3\cdot k^2 \cdot \alpha \beta k}{\alpha \beta k^2 (k+1)}= \frac{k}{\alpha \beta (k+1)}  + \frac{3k}{k+1}.
$$

If $\alpha, \beta$ and $k$ are pairwise coprime, then $L_F$ is Cartier.  For $\alpha, \beta, k \gg 0$ we have
$$
s(L) < 4\frac{h^0(F,L_F) - 2}{h^0(F,L_F)}.
$$  

This shows that the assumption $\phi_{L_F}$ generically finite in Corollary \ref{C:Xiao-high1} can not be dropped in general, even if $L_F$ is Cartier.

\bibliographystyle{apsr}

\end{document}